\documentclass{article}
\usepackage{latexsym,amsfonts,amsmath,amsthm,amssymb,makeidx}
\usepackage[title]{appendix}
\usepackage{CJK,CJKnumb,CJKulem,times,ifthen,mathrsfs,latexsym,amsfonts, color}
\usepackage{amsmath,amsthm,makeidx,fontenc,amssymb,bm,graphicx,psfrag,listings, curves,extarrows}
\usepackage[driver=pdftex,lmargin=3.2cm,rmargin=3.2cm,heightrounded=true,centering]{geometry}
\usepackage{authblk}
\let\oldbibliography\thebibliography
\renewcommand{\thebibliography}[1]{
\oldbibliography{#1}
\setlength{\itemsep}{0pt}
}
\makeindex
\newtheorem{definition}{Definition}[section]
\newtheorem{theorem}{Theorem}[section]
\newtheorem{lemma}{Lemma}[section]

\newtheorem{prop}{Proposition}[section]
\newtheorem{remark}{Remark}[section]

\newcommand{\bt}{\begin{theorem}}
\newcommand{\et}{\end{theorem}}
\newcommand{\bl}{\begin{lemma}}
\newcommand{\el}{\end{lemma}}
\newcommand{\bd}{\begin{definition}}
\newcommand{\ed}{\end{definition}}
\newcommand{\bc}{\begin{corollary}}
\newcommand{\ec}{\end{corollary}}
\newcommand{\bp}{\begin{proof}}
\newcommand{\ep}{\end{proof}}
\newcommand{\bx}{\begin{example}}
\newcommand{\ex}{\end{example}}
\newcommand{\bi}{\begin{exercise}}
\newcommand{\ei}{\end{exercise}}
\newcommand{\bo}{\begin{prop}}
\newcommand{\eo}{\end{prop}}
\newcommand{\br}{\begin{remark}}
\newcommand{\er}{\end{remark}}
\newcommand{\be}{\begin{equation}}
\newcommand{\ee}{\end{equation}}
\newcommand{\ba}{\begin{align}}
\newcommand{\ea}{\end{align}}
\newcommand{\bn}{\begin{enumerate}}
\newcommand{\en}{\end{enumerate}}
\newcommand{\bg}{\begin{align*}}
\newcommand{\bcs}{\begin{cases}}
\newcommand{\ecs}{\end{cases}}

\newcommand{\bean}{\begin{eqnarray*}}
\newcommand{\eean}{\end{eqnarray*}}

\numberwithin{equation}{section}
\begin{document}
\title{{\bf Infinite time blow-up for half-harmonic map flow from $\mathbb{R}$ into $\mathbb{S}^1$}
\footnote{J. Wei is partially supported by NSERC of Canada, Y. Zheng is partially supported by NSF of China
(11301374) and China Scholarship Council (CSC).}}

\author[1]{Yannick Sire\thanks{sire@math.jhu.edu}}
\author[2]{Juncheng Wei\thanks{jcwei@math.ubc.ca}}
\author[3]{Youquan Zheng\thanks{zhengyq@tju.edu.cn}}
\affil[1]{Department of Mathematics, Johns Hopkins University, 404 Krieger Hall, 3400 N. Charles Street,
Baltimore, MD 21218, USA}
\affil[2]{Department of Mathematics, University of British Columbia, Vancouver V6T 1Z2, Canada}
\affil[3]{School of Mathematics, Tianjin University, Tianjin 300072, P. R. China}
\date{}
\maketitle
\begin{center}
\begin{minipage}{120mm}
\begin{center}{\bf Abstract}\end{center}
We study infinite time blow-up phenomenon for the half-harmonic map flow
\begin{equation}\label{e:main00}
\left\{\begin{array}{ll}
        u_t = -(-\Delta)^{\frac{1}{2}}u + \left(\frac{1}{2\pi}\int_{\mathbb{R}}\frac{|u(x)-u(s)|^2}{|x-s|^2}ds\right)u\quad\text{ in }\mathbb{R}\times (0, \infty),\\
        u(\cdot, 0) = u_0\quad\text{ in }\mathbb{R},
       \end{array}
\right.
\end{equation}
with a function $u:\mathbb{R}\times [0, \infty)\to \mathbb{S}^1$. Let $q_1,\cdots, q_k$ be distinct points in $\mathbb{R}$, there exist an initial datum $u_0$ and smooth functions $\xi_j(t)\to q_j$, $0<\mu_j(t)\to 0$, as $t\to +\infty$, $j = 1, \cdots, k$, such that the solution $u_q$ of Problem (\ref{e:main00}) has the form
\begin{equation*}
u_q =\omega_\infty +\sum_{j= 1}^k \left(\omega (\frac{x-\xi_j(t)}{\mu_j(t)} )-\omega_\infty \right)+\theta(x, t),
\end{equation*}
where $\omega$ is the canonical least energy half-harmonic map, $\omega_\infty=\begin{pmatrix}
     0\\
     1
\end{pmatrix} $, $\theta(x, t)\to 0$ as $t\to +\infty$, uniformly away from the points $q_j$. In addition, the parameter functions $\mu_j(t)$ decay to $0$ exponentially.
\vskip0.10in
\end{minipage}
\end{center}
\vskip0.10in
\section{Introduction}
Harmonic maps $u: \Omega \to \mathbb S^\ell$ into the sphere are critical points of the Dirichlet energy
$$
\mathcal L(u)=\frac12 \int_\Omega |\nabla u|^2\,dx.
$$
They play a crucial role in physics and geometry. When the domain $\Omega$ is a subset of $\mathbb R^2$ the Lagrangian $\mathcal L(u)$ is conformally invariant and this plays a crucial role in the regularity theory of such maps (see Helen \cite{helein}, Riviere \cite{riviere} and references therein). The theory has been generalized to even-dimensional domains whose critical points are called poly-harmonic maps.

In the recent years, many authors are interested in the analog of Dirichlet energy in odd-dimensional cases, for instance, Da Lio \cite{DaLioCVPDE2013, DaLioAIHAN2015}, Da Lio-Riviere \cite{DaLioAdvMath2011, DaLioAnalPDE2011}, Millot-Sire \cite{MillotSireARMA2015}, Schikorra \cite{SchikorraJDE2012} and the references therein. In these works, a special but quite interesting case is the so-called half-harmonic maps from $\mathbb{R}$ into $\mathbb{S}^{1}$ which can be defined as critical points of the following line energy
\begin{equation}\label{e:linearenergy}
\mathcal{L}(u) = \frac{1}{2}\int_{\mathbb{R}}|(-\Delta_{\mathbb{R}})^{\frac{1}{4}}u|^2dx.
\end{equation}
The functional $\mathcal{L}$ is invariant under the M\"{o}bius group which is the
trace of conformal maps keeping invariant the half-space $\mathbb{R}^2_+$. Half-harmonic maps have deep connections to minimal surfaces with free boundary, see \cite{FraserSchoenInventMath2016}, \cite{jost2016qualitative}, \cite{laurain2017regularity} and \cite{MillotSireARMA2015}. Computing the associated Euler-Lagrange equation for (\ref{e:linearenergy}), it is easy to see that if $u:\mathbb{R}\to \mathbb{S}^1$ is a half-harmonic map, then $u$ satisfies
\begin{equation}\label{e:halfharmonicequation}
(-\Delta)^{\frac{1}{2}}u(x) = \left(\frac{1}{2\pi}\int_{\mathbb{R}}\frac{|u(x)-u(s)|^2}{|x-s|^2}ds\right)u(x)\text{ in }\mathbb{R}.
\end{equation}
In this paper , we investigate the half-harmonic map flow
\begin{equation}\label{e:main}
\left\{\begin{array}{ll}
        u_t = -(-\Delta)^{\frac{1}{2}}u + \left(\frac{1}{2\pi}\int_{\mathbb{R}}\frac{|u(x)-u(s)|^2}{|x-s|^2}ds\right)u\quad\text{ in }\mathbb{R}\times (0, \infty),\\
        u(\cdot, 0) = u_0\quad\text{ in }\mathbb{R}
       \end{array}
\right.
\end{equation}
for a function $u:\mathbb{R}\times [0, \infty)\to \mathbb{S}^1$ and $u_0:\mathbb{R}\to \mathbb{S}^1$ is a given smooth map.

A special role in our construction is played by a canonical example of half-harmonic map and it was proved in \cite{MillotSireARMA2015} that
if $u\in \dot{H}^{1/2}(\mathbb{R},\mathbb{S}^1)$ is a non-constant entire half-harmonic map into $\mathbb{S}^1$ and $u^e$ is its harmonic extension to $\mathbb{R}^2_+$, then there exist $d\in\mathbb{N}$, $\vartheta\in \mathbb{R}$, $\{\lambda_k\}_{k=1}^d\subset (0, \infty)$ and $\{a_k\}_{k=1}^d\subset \mathbb{R}$ such that $u^e(z)$ or its complex conjugate has the form
$$
e^{i\vartheta}\prod_{k=1}^d\frac{\lambda_k(z-a_k)-i}{\lambda_k(z-a_k)+i}.
$$
Moreover,
$$
\mathcal{E}(u,\mathbb{R}) = [u]^2_{H^{1/2}(\mathbb{R})}=\frac{1}{2}\int_{\mathbb{R}^2_+}|\nabla u^e|^2dz = \pi d.
$$
This result shows that the function $\omega:\mathbb{R}\to\mathbb{S}^1$
\begin{equation}\label{e:halfharmonicmap}
x\to \begin{pmatrix}
\frac{2x}{x^2+1}\\
\frac{x^2-1}{x^2+1}
\end{pmatrix}
\end{equation}
is a half-harmonic map which corresponds to the case $\vartheta = 0$, $d=1$, $\lambda_1=1$ and $a_1=0$. We denote
$$ \omega_\infty = \begin{pmatrix}
0\\
1
\end{pmatrix}.
$$
Note that $\omega$ is invariant under dilation, translation and rotation, i.e., for $Q_\alpha=\begin{pmatrix}
     \cos\alpha & -\sin\alpha \\
     \sin\alpha & \cos\alpha
\end{pmatrix}\in O(2)$, $q\in \mathbb{R}$ and $\lambda\in\mathbb{R}^+$, the function
\begin{equation*}
U = Q_\alpha\omega\left(\frac{x-q}{\lambda}\right) = \begin{pmatrix}
     \cos\alpha & -\sin\alpha \\
     \sin\alpha & \cos\alpha
\end{pmatrix}\omega\left(\frac{x-q}{\lambda}\right)
\end{equation*}
satisfies the half-harmonic map equation (\ref{e:halfharmonicequation}).
Differentiating with $\alpha$, $q$ and $\lambda$ respectively and set $\alpha = 0$, $q=0$, $\lambda = 1$, we obtain that the functions
\begin{equation}\label{e:kernel}
Z_1(x) =
\begin{pmatrix}
     \frac{1-x^2}{x^2+1}\\
     \frac{2x}{x^2+1}
\end{pmatrix},\quad
Z_2(x) = \begin{pmatrix}
     \frac{2(x^2-1)}{(x^2+1)^2}\\
     \frac{-4x}{(x^2+1)^2}
\end{pmatrix},\quad
Z_3(x) =
\begin{pmatrix}
     \frac{2x(x^2-1)}{(x^2+1)^2}\\
     \frac{-4x^2}{(x^2+1)^2}
\end{pmatrix}
\end{equation}
satisfy the linearized equation of (\ref{e:halfharmonicequation}) at the solution $\omega$ defined by
\begin{eqnarray}\label{e:linearized1}
\nonumber (-\Delta)^{\frac{1}{2}}v(x) &=& \left(\frac{1}{2\pi}\int_{\mathbb{R}}\frac{|\omega(x)-\omega(s)|^2}{|x-s|^2}ds\right)v(x)\\
\quad\quad\quad &&+ \left(\frac{1}{\pi}\int_{\mathbb{R}}\frac{(\omega(x)-\omega(s))\cdot(v(x) -v(s))}{|x-s|^2}ds\right)\omega(x)
\end{eqnarray}
for $v:\mathbb{R}\to T_U\mathbb{S}^1$.
Our main result is
\begin{theorem}\label{t:main}
Let $q_1,\cdots, q_k$ be distinct points in $\mathbb{R}$, there exist an initial datum $u_0$ and smooth functions $\xi_j(t)\to q_j$, $0<\mu_j(t)\to 0$, as $t\to +\infty$, $j = 1, \cdots, k$, such that the solution $u_q$ of Problem (\ref{e:main}) has the form
\begin{equation*}
u_q =\omega_\infty + \sum_{j= 1}^k \left( \omega (\frac{x-\xi_j(t)}{\mu_j(t)} )- \omega_\infty\right) +\theta(x, t),
\end{equation*}
where $\theta(x, t)\to 0$ as $t\to +\infty$, uniformly away from the points $q_j$. In addition, the parameter functions $\mu_j(t)$ decay to $0$ exponentially.
\end{theorem}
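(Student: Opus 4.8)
The plan is to construct the solution by an inner--outer parabolic gluing scheme (a parabolic Lyapunov--Schmidt reduction) adapted to the nonlocal operator $(-\Delta)^{1/2}$, in the spirit of the gluing constructions for the harmonic map heat flow. First I would treat smooth positive functions $\mu_j(t)$ and smooth $\xi_j(t)$, with $\mu_j(t)\to 0$ and $\xi_j(t)\to q_j$, as scaling and translation parameters to be adjusted at the end, set $U_j(x,t)=\omega\big((x-\xi_j(t))/\mu_j(t)\big)$, and take as first approximation
\[
U_*=\omega_\infty+\sum_{j=1}^k\big(U_j-\omega_\infty\big).
\]
Since each $U_j$ solves the static equation (\ref{e:halfharmonicequation}), the error $\mathcal S[U_*]:=-\partial_tU_*-(-\Delta)^{1/2}U_*+(\cdots)\,U_*$ decomposes into: (i) modulation terms $\mu_j^{-1}\dot\xi_j\,Z_2\big((x-\xi_j)/\mu_j\big)+\mu_j^{-1}\dot\mu_j\,Z_3\big((x-\xi_j)/\mu_j\big)$ supported near $q_j$ (with a $\dot\alpha_j$-term along $Z_1$ if a rotation parameter $\alpha_j(t)$ is adjoined to absorb the third kernel direction); (ii) interaction terms from the nonlocal coupling between distinct bubbles and between each bubble and $\omega_\infty$, which---because $\omega-\omega_\infty$ has the slowly decaying tail $|y|^{-1}$ and the kernel $|x-s|^{-2}$ is long range---are of size $O(\mu_j)$ away from the concentration points; and (iii) genuinely quadratic contributions from the nonlinearity, which are lower order. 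The sizes in (i)--(ii) fix the weighted norms.

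Next I would seek $u=U_*+\phi$ in the gluing form $\phi=\phi^{\mathrm{out}}+\sum_{j}\eta_j\,\phi^{\mathrm{in}}_j$, with $\eta_j$ a fixed smooth cutoff equal to $1$ near $q_j$ and $\phi^{\mathrm{in}}_j(x,t)=\phi_j(y,\tau_j)$ in the blow-up variables $y=(x-\xi_j(t))/\mu_j(t)$, $\dot\tau_j=\mu_j(t)^{-1}$, so $\tau_j\to+\infty$. Inserting this and separating scales produces an outer problem
\[
\partial_t\phi^{\mathrm{out}}=-(-\Delta)^{1/2}\phi^{\mathrm{out}}+V[U_*]\,\phi^{\mathrm{out}}+\mathcal E^{\mathrm{out}}\big[\mu,\xi,\{\phi_j\}\big],
\]
where $\mathcal E^{\mathrm{out}}$ gathers the interaction error and the cutoff commutators, and, for each $j$, an inner problem
\[
\mu_j\,\partial_{\tau_j}\phi_j=-(-\Delta)^{1/2}_y\phi_j+L_\omega[\phi_j]+\mu_j\,\mathcal H_j\big[\phi^{\mathrm{out}}\big]+\mathcal R_j[\mu,\xi]\qquad\text{in }\mathbb R\times(\tau_{0j},+\infty),
\]
with $L_\omega$ the linearized operator associated with (\ref{e:linearized1}). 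The outer problem is solved by a contraction in weighted $L^\infty$ spaces built on the fractional heat semigroup $e^{-t(-\Delta)^{1/2}}$, whose kernel obeys $p_t(x)\sim t(t^2+x^2)^{-1}$; since $V[U_*]$ is concentrated and integrable in the scaled variable it enters only as a mild perturbation, and the resulting map $(\mu,\xi,\{\phi_j\})\mapsto\phi^{\mathrm{out}}$ is Lipschitz with small constant.

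The heart of the argument is the linear theory for the inner problems, which rests on (a) the nondegeneracy statement that the bounded kernel of $L_\omega$ on $\mathbb R$ is exactly $\mathrm{span}\{Z_1,Z_2,Z_3\}$---the infinitesimal generators of the M\"obius$\,\times\,O(2)$ symmetries displayed in (\ref{e:kernel}), a consequence of the classification in \cite{MillotSireARMA2015}---together with the coercivity of $L_\omega$ on the $L^2$-orthogonal complement of this kernel; and (b) a solvability estimate: for a right-hand side $h(y,\tau)$ with suitable spatial decay and $\tau$-decay obeying $\int_{\mathbb R}h(\cdot,\tau)\cdot Z_\ell\,dy=0$ for $\ell$ in the relevant index set, the nonlocal parabolic problem $\mu\,\partial_\tau\phi=-(-\Delta)^{1/2}_y\phi+L_\omega\phi+h$ on a half-infinite time interval has a solution with a quantitative decay bound and no growth in $\tau$. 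Imposing the corresponding orthogonality conditions $\int_{\mathbb R}\big(\mu_j\mathcal H_j[\phi^{\mathrm{out}}]+\mathcal R_j[\mu,\xi]\big)\cdot Z_\ell\,dy=0$ converts the inner equations into a nonlocal system of ODEs for the parameters whose leading part is dissipative and linear, $\dot\mu_j=-c_0\,\mu_j+(\text{lower order})$, $\dot\xi_j=(\text{lower order})$, with $c_0>0$ produced by pairing the leading bubble/$\omega_\infty$ interaction term against $Z_3$; a shooting argument for this reduced system selects parameter functions with $\mu_j(t)\to 0$ exponentially and $\xi_j(t)\to q_j$. All the pieces---$\phi^{\mathrm{out}}$, the $\phi_j$, $(\mu_j,\xi_j)$ (and $\alpha_j$)---are finally coupled through one Schauder fixed-point argument in the product of the corresponding weighted spaces, with compactness furnished by parabolic smoothing; a fixed point is a genuine solution of (\ref{e:main}) of the stated form, with $\theta=\phi^{\mathrm{out}}+\sum_j\eta_j\phi_j$ tending to $0$ away from the $q_j$.

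I expect the principal obstacle to be step (b): a decay-preserving linear theory for the \emph{nonlocal} parabolic operator $\mu\,\partial_\tau-(-\Delta)^{1/2}_y-L_\omega$ on an infinite time interval under the orthogonality conditions. Because $(-\Delta)^{1/2}$ has no finite speed of propagation, one cannot localize as cleanly as in the local harmonic map flow; controlling the spatial tails of the $\phi_j$ (which feed back into $\mathcal E^{\mathrm{out}}$ and thus couple the inner and outer regimes), establishing the coercivity of $L_\omega$ off its three-dimensional kernel, and pinning down the constant $c_0$ that governs the exponential decay rate of $\mu_j$, are where the substantive work lies.
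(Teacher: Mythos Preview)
Your gluing architecture is essentially right and matches the paper's, but there is one genuine gap and two smaller misidentifications.

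\textbf{The real gap: the mechanism driving $\mu_j\to 0$.} You assert that $c_0>0$ arises from ``pairing the leading bubble/$\omega_\infty$ interaction term against $Z_3$.'' For a single bubble this cannot work: the first approximation $U_*=\omega_\infty+(\omega(y)-\omega_\infty)=\omega(y)$ is an exact static solution, so $S(U_*)=-\partial_t U_*$ and the only error terms are the modulation terms themselves. Projecting onto $Z_3$ then yields $\dot\mu\approx 0$ at leading order, not $\dot\mu\approx -c_0\mu$. The paper's key extra idea is to \emph{add to the ansatz} a small auxiliary function $Z^*(x,t)$ solving the free half-heat equation $Z^*_t=-(-\Delta)^{1/2}Z^*$ with prescribed limit $Z_0^*$ at $t=+\infty$; the interaction of $Z^*$ with the bubble through $\tilde L_U$ is what produces the nonzero coefficient, and one \emph{chooses} $Z_0^*$ so that the resulting $\kappa_0>0$ (see (\ref{e:blowuprate})). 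A second correction $\Phi^0$ (an explicit Duhamel integral) is also inserted so that the residual error decays faster than $r^{-1}$ and the outer problem closes in the weighted space. Without $Z^*$ you will not get exponential decay, and without $\Phi^0$ the outer fixed point will not contract.

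\textbf{The rotation mode.} No parameter $\alpha_j$ is needed. Since $|Z_1(y)|\equiv 1$, the $Z_1$-direction does not lie in the weighted space $(1+|y|)^{-a}$ used for the inner problem, so it is automatically excluded; the paper imposes orthogonality only to $Z_2,Z_3$ and reduces the inner linear problem to a scalar equation by writing $\phi=v\,Z_1$.

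\textbf{The inner linear theory.} You correctly flag step~(b) as the hard part. The paper's proof of the decay-preserving estimate goes by a blow-up/contradiction argument that, after rescaling, lands on an ancient solution of $\partial_t u=-(-\Delta)^{1/2}u$ with $|x|^a|u|\le C$; a Liouville theorem (proved via Littlewood--Paley and an improved Bernstein inequality) forces $u\equiv 0$. This Liouville result is one of the paper's main new ingredients and should be singled out. The final assembly uses contraction mapping rather than Schauder, but that is a minor stylistic difference.
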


This result is in striking contrast with what happens for the classical harmonic map flow in the conformal dimension as investigated in \cite{Chang-Ding-Ye, chen-struwe, Lin, Lin-Wang1, Topping} for instance. Indeed, in this latter case, the flow exhibits blow up in finite time. For finite-time blow up for classical harmonic map flows there have been intensive research in recent years. We refer to Davila-del Pino-Wei \cite{davila2017singularity}, Raphael-Schweyer \cite{rs1, rs2} and the nice book Lin-Wang \cite{Lin-Wang2} and the references therein.  On the other hand for half-harmonic map flows, our computation and result  suggests  that the blow-up occurs only at infinity. We conjecture that finite time blow up does not occur for half-harmonic map flows.  This is a purely non-local phenomenon. Similar phenomena has been observed for higher-degree co-rotational harmonic map flows into ${\mathbb S}^2$ (see Guan-Gustafson-Tsai \cite{Guan-Gustafson-Tsai}).

\medskip

In order to prove our theorem, we develop  a fractional {\it inner-outer gluing scheme} for fractional evolution problems. It is known that the inner-outer gluing scheme is useful in constructing finite or infinite dimensional concentrating solutions in  nonlinear elliptic problems. See for example,  del Pino-Kowalczyk-Wei \cite{delpinoKWeiCPAM2007, delwei2011Degiorgi, delkowalczykweijdg2013entire}. Recently  this method has also been applied in many parabolic problems, for example, the infinite time blow-up of critical nonlinear heat equation Cortazar-del Pino-Musso  \cite{cortazar2016green}, del Pino-Musso-Wei \cite{del2017infinite}, the singularity formation for two-dimensional harmonic map flow \cite{davila2017singularity}, type II ancient solutions for the Yamabe flow del Pino-Daskalopoulos-Sesum \cite{del2012type} and ancient solutions for the Allen-Cahn flow del Pino-Gkikas \cite{del2017ancient, del2017ancientAHNL}.  For fractional elliptic problems finite dimensional gluing scheme has been developed in Davila-del Pino-Wei\cite{ddw-JDE}. This paper takes the first step in developing the gluing scheme for fractional evolution problems. One of the key ingredients of the gluing methods is Liouville type theorem and a priori estimates.  Besides the nondegeneracy of the solutions proved in \cite{sire2017nondegeneracy}, we also need many new  Liouville type theorems for fractional evolution problem. One Liouville theorem is on the classification of  ancient solutions of linear fractional heat equations, which may be of independent interest\footnote{ We thank Professor Dong Li for stimulating discussions on the proof}:

\begin{theorem}
(Lemma \ref{l4.3}.) Suppose $u = u(x, t):\mathbb{R}\times (-\infty, 0] \to \mathbb{R}$ satisfies $\sup_{(x, t)\in \mathbb{R}\times (-\infty, 0]}|x|^a|u(x, t)|\leq C$
for some $C > 0$ and $0 < a < 1$.
If $u$ is an ancient solution to the equation
\begin{equation*}
\partial_t u = -\Lambda^\alpha u, \ \  (x, t) \in \mathbb{R}\times (-\infty, 0],
\end{equation*}
where $\Lambda =  (-\Delta)^{\frac{1}{2}}$ and $0 < \alpha \leq 2$, then $u \equiv 0$.
\end{theorem}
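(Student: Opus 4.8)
My plan is to run the Duhamel representation backwards to $t=-\infty$ and play the slow spatial decay of $u$ against the self-similar scaling of the fractional heat kernel. Write $p_\tau=p_\tau^{(\alpha)}$, $\tau>0$, for the fundamental solution of $\partial_\tau w+\Lambda^\alpha w=0$ on $\mathbb R$; it satisfies $p_\tau(x)=\tau^{-1/\alpha}P(\tau^{-1/\alpha}x)$ with $P=p_1\ge 0$, $\int_{\mathbb R}P=1$, and, for $0<\alpha<2$, $0\le P(z)\le C(1+|z|)^{-1-\alpha}$ (for $\alpha=2$, $P$ is the Gaussian). Fix $s<0$. Since $u$ is a classical solution it is continuous, hence bounded on $\mathbb R\times[s,0]$: it is bounded by the hypothesis on $\{|x|\ge 1\}$ and bounded on the compact set $\overline{B_1}\times[s,0]$ by continuity. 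By uniqueness of bounded solutions of the Cauchy problem for $\partial_t w+\Lambda^\alpha w=0$ (the nonlocal comparison principle when $\alpha<2$, the Gauss--Weierstrass representation when $\alpha=2$), for every $t\in[s,0]$,
\begin{equation*}
u(x,t)=\int_{\mathbb R}p_{t-s}(x-y)\,u(y,s)\,dy .
\end{equation*}

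Now I estimate, using $|u(y,s)|\le C|y|^{-a}$ and the scaling of $p_\tau$ with $\tau:=t-s$ (substitute $y=\tau^{1/\alpha}z$):
\begin{equation*}
|u(x,t)|\;\le\;C\int_{\mathbb R}p_\tau(x-y)\,|y|^{-a}\,dy\;=\;C\,\tau^{-a/\alpha}\int_{\mathbb R}P(\tau^{-1/\alpha}x-z)\,|z|^{-a}\,dz .
\end{equation*}
Because $0<a<1$, the bounds on $P$ yield $K_a:=\sup_{|w|\le 1}\int_{\mathbb R}P(w-z)\,|z|^{-a}\,dz<\infty$ (the integrand is integrable near $z=0$ since $a<1$, and near $z=\infty$ since $1+\alpha+a>1$). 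Hence, fixing $(x,t)$ and letting $s\to-\infty$, so that $\tau\to\infty$ and $\tau^{-1/\alpha}x\to 0$,
\begin{equation*}
|u(x,t)|\;\le\;C\,K_a\,(t-s)^{-a/\alpha}\;\longrightarrow\;0\qquad(s\to-\infty).
\end{equation*}
Therefore $u(x,t)=0$ for all $(x,t)\in\mathbb R\times(-\infty,0]$, i.e.\ $u\equiv 0$.

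The step needing the most care is the reproduction identity: one must know that $u$ — a priori only controlled in space at infinity — is genuinely recovered by the fractional heat semigroup from its values at an earlier time. For $0<\alpha<2$ this rests on the maximum/comparison principle for bounded solutions of the fractional heat equation together with the self-similar structure and two-sided bounds of $p_\tau$. A route that bypasses the representation formula is the following: since $0<a<1$ makes $|x|^{-a}$ locally integrable, $\{u(\cdot,t)\}_{t\le -1}$ is a bounded family in $\mathcal S'(\mathbb R)$, and the equation reads $\partial_t\widehat u=-|\xi|^\alpha\widehat u$ in $\mathcal S'$; on the open set $\{\xi\ne 0\}$ the distribution $e^{t|\xi|^\alpha}\widehat u(\cdot,t)$ is $t$-independent, so testing against $\phi\in C_c^\infty(\mathbb R\setminus\{0\})$ and letting $t\to-\infty$ — where $e^{t|\xi|^\alpha}\phi\to 0$ in $\mathcal S(\mathbb R)$ — forces $\operatorname{supp}\widehat u(\cdot,t)\subseteq\{0\}$, whence $u(\cdot,t)$ is a polynomial in $x$ that the decay $|u|\le C|x|^{-a}$ kills; continuity at $t=0$ finishes it. The genuinely delicate point in either approach — presumably where the acknowledged discussions entered — is making the fractional operator and its scaling rigorous on this non-$L^2$, non-Schwartz class of slowly decaying functions.
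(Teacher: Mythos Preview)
Your proposal is correct but follows a genuinely different route from the paper. The paper's proof proceeds via Littlewood--Paley: it shows each projection $P_j u$ lies in $L^q$ uniformly in $t$ for $q>1/a$, then uses the improved Bernstein inequality (from \cite{LiDongMRL2013}) to obtain $\partial_t\|P_ju\|_q^q\le -c_1\|P_ju\|_q^q$, whence $\|P_ju(\cdot,t_0)\|_q\le e^{-c_1(t_0-t_1)/q}\|P_ju(\cdot,t_1)\|_q\to 0$ as $t_1\to-\infty$; this forces $\widehat u$ to be supported at the origin, hence $u$ is a polynomial killed by the decay. Your first argument is more elementary and more quantitative: the self-similar scaling of the kernel turns the uniform $|y|^{-a}$ bound directly into $|u(x,t)|\le C(t-s)^{-a/\alpha}$, with no frequency decomposition and no Bernstein inequality; the price is that you must justify the representation formula, i.e., uniqueness of bounded solutions on each slab $[s,0]$, which you correctly flag as the delicate step. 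Your second (Fourier) argument lands at the same endpoint as the paper --- $\operatorname{supp}\widehat u\subset\{0\}$ --- but reaches it by a clean duality: equicontinuity of $\{\widehat u(\cdot,t)\}$ in $\mathcal S'$ (from the uniform bound $|u|\le C|x|^{-a}$ with $a<1$) plus $e^{t|\xi|^\alpha}\phi\to 0$ in $\mathcal S$ for $\phi\in C_c^\infty(\mathbb R\setminus\{0\})$. This avoids the external Bernstein input entirely and is arguably the most self-contained of the three.
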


\medskip

Finally we mention  several works in the literature on blow-up problems for fractional or non-fractional dispersive (NLS or wave) equations (\cite{BHL, KLR, LS, KS, MRR1, MRR2, RR}) in which dispersive estimates are heavily employed. Our proof here is more parabolic in nature.

\section{Sketch of the Proof}

The proof of Theorem \ref{t:main} is quite long. We divide it into the following six steps: (For simplicity we consider the case $k = 1$).

\medskip

\noindent
{\bf Step 1. Construction of approximation}. Given a point $q\in\mathbb{R}$, we are looking for a solution $u(x,t)$ of form
\begin{equation*}
u(x, t) \approx U(x, t): = U_{\mu, \xi}(x) = \omega\left(\frac{x-\xi(t)}{\mu(t)}\right)
\end{equation*}
with
\begin{equation*}
\lim_{t\to +\infty}\xi(t) = q, \quad \lim_{t\to +\infty}\mu(t) = 0.
\end{equation*}
For convenience, we write $u = U  + \Pi_{U^\perp}\varphi + a(\Pi_{U^\perp}\varphi)U$ for a free function $\varphi:\mathbb{R}\times [t_0,\infty)\to \mathbb{R}^2$ where
$$
\Pi_{U^\perp}\varphi:=\varphi-(\varphi\cdot U)U,\quad a(\Pi_{U^\perp}\varphi):=\sqrt{1+(\varphi\cdot U)^2-|\varphi|^2}-1 = \sqrt{1-|\Pi_{U^\perp}\varphi|^2}-1.
$$
Since $u$ is a solution of (\ref{e:main}),
$$
0 = S(U+\Pi_{U^\perp}\varphi + a(\Pi_{U^\perp}\varphi)U):=\left(-u_t -(-\Delta)^{\frac{1}{2}}u + \left(\frac{1}{2\pi}\int_{\mathbb{R}}\frac{|u(x)-u(s)|^2}{|x-s|^2}ds\right)u\right)\Bigg|_{u = U+\Pi_{U^\perp}\varphi + a(\Pi_{U^\perp}\varphi)U},
$$
the main problem is equivalent to the following equation for $\varphi$,
\begin{equation}\label{e1:01}
0=-U_t -\partial_t\Pi_{U^\perp}\varphi + L_U(\Pi_{U^\perp}\varphi)+N_U(\Pi_{U^\perp}\varphi)+b(\Pi_{U^\perp}\varphi)U
\end{equation}
where
\begin{equation*}
\begin{aligned}
L_U(\Pi_{U^\perp}\varphi) &= -(-\Delta)^{\frac{1}{2}}\Pi_{U^\perp}\varphi + \left(\frac{1}{2\pi}\int_{\mathbb{R}}\frac{|U(x)-U(s)|^2}{|x-s|^2}ds\right)\Pi_{U^\perp}\varphi\\
\quad\quad\quad\quad &\quad + \left(\frac{1}{\pi}\int_{\mathbb{R}}\frac{(U(x)-U(s))\cdot(\Pi_{U^\perp}\varphi(x) -\Pi_{U^\perp}\varphi(s))}{|x-s|^2}ds\right)U(x)
\end{aligned}
\end{equation*}
and $N_U(\Pi_{U^\perp}\varphi)$ is a high order nonlinear term.

The term $U_t$ in (\ref{e1:01}) can be expressed as
\begin{equation*}
\begin{aligned}
-U_t = -\frac{\dot{\mu}}{\mu}Z_3(y)- \frac{\dot{\xi}}{\mu}Z_2(y):= \mathcal{E}_0(x, t) + \mathcal{E}_1(x, t),
\end{aligned}
\end{equation*}
with $\mathcal{E}_0(x, t) = -\frac{\dot{\mu}}{\mu}Z_3(y)$, $\mathcal{E}_1(x, t) = - \frac{\dot{\xi}}{\mu}Z_2(y)$ and $y = \frac{x-\xi(t)}{\mu(t)}$.
To get a better approximation away from $q$, we shall add a small term $\Phi^*:= \Phi^0[\mu,\xi](x, t) + Z^*(x, t)$ to $U$, where
$$
\Phi^0[\mu,\xi](x, t) = \begin{pmatrix}
\psi^0\\
0
\end{pmatrix},
$$
$\psi^0$ is a nonlocal (integral) term and $Z^*$ is a function independent of the parameter functions $\mu$, $\xi$ but just as a solution of the homogeneous half heat equation. The aim of $\Phi^0$ is to improve the decay rate of the error $-U_t$ and $Z^*$ is useful in computations of the dynamics.
Then the new linear error is
\begin{equation*}
\mathcal{E}^* = \mathcal{E}_{0}+\mathcal{E}_1 + (-\partial_t + L_U)[\Pi_{U^\perp}\Phi^*].
\end{equation*}

To get an approximation with error globally smaller  than $-U_t$, we use the self-similar transformation
$$
\Phi(x,t) = \phi(y, t),\quad y = \frac{x-\xi}{\mu},\quad \rho = |y|,
$$
and (\ref{e1:01}) becomes to
\begin{equation*}
\begin{aligned}
0= -\mu\partial_t\Phi + L_\omega(\phi) + \mu\mathcal{E}^* + b(x,t)\omega,\quad \phi\cdot \omega = 0.
\end{aligned}
\end{equation*}
An improvement of the approximation can be obtained if we solve the following time independent equation
\begin{equation}\label{e2:02}
\begin{aligned}
0 = L_\omega(\phi) + \mu\mathcal{E}^*, \quad\phi\cdot \omega = 0,\quad \lim_{|y|\to \infty}\phi(y, t) = 0\quad\text{in}\quad\mathbb{R}.
\end{aligned}
\end{equation}
The decay condition is added in order to not essentially modify the size of the error far away. From the nondegeneracy result of \cite{sire2017nondegeneracy}, a necessary condition is that $\mu\mathcal{E}^*$ is orthogonal to $Z_2(y)$ and $Z_3(y)$ in the $L^2$ sense. Now testing equation (\ref{e2:02}) with $Z_3(y)$ and integrating by parts, we obtain
\begin{equation*}
\mu\int_{\mathbb{R}}\mathcal{E}^*\cdot Z_3dy = 0.
\end{equation*}
We can achieve this approximately by setting $\mu_0(t) = e^{-\kappa_0 t}$ for a suitable positive constant $\kappa_0$.
Similarly, test the error $\mu\mathcal{E}^*$ against the function $Z_2(y)$, we have
\begin{equation*}
\mu\int_{\mathbb{R}}\mathcal{E}^*\cdot Z_2dy = 0.
\end{equation*}
From this condition, we get $\dot{\xi}(t) = 0$ and it is natural to set $\xi(t_0) = q$, then the first order approximation of $\xi(t)$ should be
\begin{equation*}
\xi_0(t) = q.
\end{equation*}

Now fix the parameter functions $\mu_0(t)$, $\xi_0(t)$, we write
$$
\mu(t) = \mu_0(t) + \lambda(t),\quad \xi(t) = \xi_0(t) +\xi_1(t) = q + \xi_1(t).
$$
We are looking for a small solution $\varphi$ of
\begin{equation}\label{e2:03}
\mathcal{E}^* -\partial_t\Pi_{U^\perp}\varphi + L_U(\Pi_{U^\perp}\varphi)+N_U(\Pi_{U^\perp}[\Phi^0 + Z^*+\varphi])+\tilde{b}(x,t)U = 0.
\end{equation}
In other words, let $t_0 > 0$, we want the function
$$
u = U+\Pi_{U^\perp}[\Phi^0 + Z^*+\varphi] + a(\Pi_{U^\perp}[\Phi^0 + Z^*+\varphi])U
$$
to solve the problem
\begin{equation*}
\left\{\begin{array}{ll}
        u_t = -(-\Delta)^{\frac{1}{2}}u + \left(\frac{1}{2\pi}\int_{\mathbb{R}}\frac{|u(x)-u(s)|^2}{|x-s|^2}ds\right)u\quad\text{ in }\mathbb{R}\times [t_0, \infty),\\
        u(\cdot, t_0) = u_0\quad\text{ in }\mathbb{R}
       \end{array}
\right.
\end{equation*}
when $t_0$ is sufficiently large. This provides a solution $u(x, t) = u(x, t-t_0)$ to the main problem (\ref{e:main}).
We will show the details in Section 3.

\medskip

\noindent
{\bf Step 2. The inner-outer gluing procedure}.
Let $\eta_0(s)$ be a smooth cut-off function with $\eta_0(s) = 1$ for $s < 1$ and $=0$ for $s > 2$. Take a sufficiently large constant
\begin{equation*}
R = e^{\rho t_0}
\end{equation*}
for $\rho\in (0, 1)$ sufficiently small and $t_0$ is the initial time.
We define
$$
\eta(x, t):=\eta_0\left(\frac{|x-\xi(t)|}{R\mu_0(t)}\right)
$$
and consider a function $\varphi(x, t)$ with form
\begin{equation}\label{e2:04}
\varphi(x, t) = \eta \tilde{\phi}(x, t) + \psi(x, t)
\end{equation}
for a function $\phi(y, t_0) = 0$, $\phi(y, t)\cdot \omega(y) \equiv 0$ and $\tilde{\phi}(x, t) = \phi\left(\frac{x-\xi(t)}{\mu_0(t)}, t\right)$, $y := \frac{x-\xi}{\mu_0}$.
Then $\varphi$ defined by (\ref{e2:04}) solves (\ref{e2:03}) if the pair $(\phi, \psi)$ satisfies the following evolution equations
\begin{equation}\label{e2:05}
\left\{
\begin{aligned}
&\mu_0(t)\partial_t \phi = L_\omega[\phi](y)+ \mu_0\Pi_{U^\perp}\mathcal{E}^*(\xi+\mu_0y, t) +\frac{2\frac{\mu_0}{\mu}}{1+\left|\frac{\mu_0}{\mu}y\right|^2}\Pi_{\omega^\perp}\psi +\cdots,\\
&\phi = 0\quad\text{in }B_{2R}(0)\times \{t_0\}
\end{aligned}
\right.
\end{equation}
and
\begin{equation}\label{e2:06}
\left\{
\begin{aligned}
&\partial_t \psi = (-\Delta)^{\frac{1}{2}}\psi+ (1-\eta)\frac{2}{1+\left|\frac{x-\xi}{\mu}\right|^2}\psi + (1-\eta)\Pi_{U^\perp}\mathcal{E}^*+\cdots\\
&\text{ in }\mathbb{R}\times [t_0, +\infty).
\end{aligned}
\right.
\end{equation}

(\ref{e2:05}) is the so-called inner problem and (\ref{e2:06}) is the outer problem. The strategy we use to solve this system is:
for a fixed function $\phi(y, t)$ in a suitable class and $\lambda$, $\xi$, we solve (\ref{e2:06}) for $\psi$ as an operator $\Psi = \Psi[\lambda, \xi, \dot{\lambda}, \dot{\xi}, \phi]$. Inserting this $\Psi$ into equation (\ref{e2:05}) and after change of variable, we obtain
\begin{equation}\label{e2:07}
\left\{
\begin{array}{ll}
\partial_\tau\phi = L_\omega[\phi](y) + H[\lambda,\xi,\dot{\lambda},\dot{\xi},\phi](y,t(\tau))\quad \text{in }B_{2R}(0)\times [\tau_0, +\infty),\\
\phi(\cdot, \tau_0) = 0\quad\text{in }B_{2R}(0).
\end{array}
\right.
\end{equation}
Then (\ref{e2:07}) is solved by the Contraction Mapping Theorem involving an inverse for the following linear problem
\begin{equation}\label{e2:08}
\left\{
\begin{array}{ll}
\partial_\tau\phi = L_\omega[\phi] + h(y, \tau)\quad \text{in }B_{2R}(0)\times [\tau_0, +\infty),\\
\phi(\cdot, \tau_0) = 0\quad\text{in }B_{2R}(0).
\end{array}
\right.
\end{equation}
Providing certain orthogonality conditions hold, we will find a solution $\phi$ of (\ref{e2:08}) which defined a linear operator of $h$ with good $L^\infty$-weighted estimates. See Section 4 for details.

\medskip

\noindent
{\bf Step 3. The outer problem}. We solve the outer problem (\ref{e2:06}) for $\psi$. Suppose
\begin{equation}\label{e0:decay}
(1+|y|)|\nabla\phi|\chi_{\{|y|\leq 2R\}} + |\phi|\leq e^{-t_0\varepsilon}\frac{\mu_0^\sigma(t)}{1+|y|^\alpha}
\end{equation}
holds for a given $0 < \alpha < 1$, small constant $\sigma >0$ and $\varepsilon > 0$. Using the heat kernel integral, we solve (\ref{e2:06})
and obtain the existence of a solution $\psi = \Psi[\lambda, \xi, \dot{\lambda}, \dot{\xi}, \phi]$ satisfying
\begin{equation*}
|\psi(x, t)|\lesssim e^{-\varepsilon t_0}\frac{\mu_0^{\sigma}(t)\log(1+|y|)}{1+|y|^\alpha}
\end{equation*}
and
\begin{equation*}
[\psi(x, t)]_{\eta, B_{3\mu_0(t)R}(\xi)}\lesssim e^{-\varepsilon t_0}\frac{\mu_0^{\sigma-\eta}(t)\log(1+|y|)}{1+|y|^{\alpha+\eta}}\quad\text{ for }|y| = \left|\frac{x-\xi(t)}{\mu_0(t)}\right|\leq 3R.
\end{equation*}
where $y=\frac{x-\xi}{\mu_{0}}$.
To ensure the solvability of (\ref{e2:06}), we need the term $(1-\eta)\Pi_{U^\perp}\mathcal{E}^*$ decay faster than $1/r$, this is the reason we add the nonlocal term $\Phi^0$ in the approximation step. See Section 5 for details.

After substituting $\psi = \Psi[\lambda,\xi,\dot{\lambda},\dot{\xi},\phi]$ into the inner problem (\ref{e2:05}) and using the change of variables $\frac{dt}{d\tau}=\mu_0(t)$, the full problem is reduced to the solvability of (\ref{e2:07}).

\medskip

\noindent
{\bf Step 4. Linear theory for (\ref{e2:07})}. To solve the nonlinear problem (\ref{e2:07}), we first consider the following linear parabolic problem
\begin{equation}\label{e2:10}
\left\{
\begin{aligned}
&\partial_\tau \phi = L_\omega[\phi](y) + h(y,\tau)\text{ in }B_{2R}(0)\times [\tau_0, \infty),\\
&\phi(\cdot, \tau_0) = 0\quad\text{in }B_{2R}(0),\\
&\phi(y,\tau)\cdot \omega(y) = 0 \quad\text{in }B_{2R}(0)\times [\tau_0, \infty)
\end{aligned}
\right.
\end{equation}
Assuming $\|h\|_{1+a,\nu, \eta} <+\infty$ and
\begin{equation*}
\int_{B_{2R}(0)}h(y,\tau)Z_j(y)dy = 0\quad\text{for all}\quad\tau\in [\tau_0,\infty),\quad j = 2,3,
\end{equation*}
we prove the existence of $\phi = \phi[h](y, \tau)$ defined on $\mathbb{R}\times [\tau_0,+\infty)$ satisfying (\ref{e2:10}) and
\begin{equation*}
(1+|y|)|\nabla_y \phi(y,\tau)|\chi_{\{|y|\leq R\}}+ |\phi(y,\tau)|\lesssim \tau^{-\nu}(1+|y|)^{-a}\|h\|_{1+a,\nu, \eta}, \tau\in [\tau_0,+\infty),y\in \mathbb{R}.
\end{equation*}
Note that in (\ref{e2:10}), we do not add boundary conditions, and the solution we find are defined in the whole space, so the nonlocal term
$-(-\Delta)^{\frac{1}{2}}\phi$ and other integrals on $\mathbb{R}$ is well defined.  To prove existence, we  use the blow-up argument developed in \cite{davila2017singularity} and prove several Liouville theorems. This is done in Section 6.1.

\medskip

\noindent
{\bf Step 5. The solvability condition for (\ref{e2:07})}.
From Step 4, we know that (\ref{e2:07}) is solvable within the set of functions $\phi$ satisfying (\ref{e0:decay}), provided $\xi$ and $\lambda$ are chosen such that
\begin{equation*}
\int_{B_{2R}}H[\lambda,\xi,\dot{\lambda},\dot{\xi},\phi](y,t(\tau))Z_l(y)dy = 0\text{ for all }\tau\geq \tau_0, l = 2,3.
\end{equation*}
These are achieved by adjusting $\lambda$ and $\xi$ in Section 6.2.

\medskip

\noindent
{\bf Step 6. The inner problem: gluing}. We solve the nonlinear problem (\ref{e2:07}) based on the linear theory for (\ref{e2:10}) and the Contraction Mapping Theorem in Section 6.3.

\section{Construction of a first approximate solution}
\subsection{Setting up the main problem}
To prove Theorem \ref{t:main}, we need a sufficiently good approximation to the exact solution of equation (\ref{e:main}). To keep notation to minimum, we will do this in the case $k=1$ and later indicate the necessary changes for the general case. Given a point $q\in\mathbb{R}$, we are looking for a solution $u(x,t)$ of the following equation
\begin{equation}\label{e:error}
S(u) = -u_t -(-\Delta)^{\frac{1}{2}}u + \left(\frac{1}{2\pi}\int_{\mathbb{R}}\frac{|u(x)-u(s)|^2}{|x-s|^2}ds\right)u = 0, \quad |u| = 1\quad\text{in }\mathbb{R}\times (t_0, \infty)
\end{equation}
satisfying
\begin{equation*}
u(x, t) \approx U(x, t): = U_{\mu, \xi}(x) = \omega\left(\frac{x-\xi(t)}{\mu(t)}\right),
\end{equation*}
where $\omega$ is the canonical least energy half-harmonic map defined in (\ref{e:halfharmonicmap}) and $t_0$ is a sufficiently large positive constant. We will find functions $\xi(t)$ and $\mu(t)$ of class $C^1[t_0,\infty)$ satisfying
\begin{equation*}
\lim_{t\to\infty}\xi(t) = q, \quad \lim_{t\to\infty}\mu(t) = 0
\end{equation*}
in such a way that there exists a solution $u(x, t)$ which blows up at $t = \infty$ and the point $q$ with a profile given at main order by $U$.

For a fixed $t > t_0$, $U$ is a half-harmonic map:
\begin{equation*}
-(-\Delta)^{\frac{1}{2}}U + \left(\frac{1}{2\pi}\int_{\mathbb{R}}\frac{|U(x)-U(s)|^2}{|x-s|^2}ds\right)U = 0,
\end{equation*}
hence $S(U) = -U_t$. We want to find a solution $u(x,t)$ of (\ref{e:error}) which is a small perturbation of $U$, $U+p$ and $|U+p|=1$ for suitable choices of the parameter functions $\mu$ and $\xi$. For convenience, we parameterize the admissible perturbation $p(x,t)$ in terms of a free function $\varphi:\mathbb{R}\times [t_0,\infty)\to \mathbb{R}^2$ with form
\begin{equation*}
p(\varphi):=\Pi_{U^\perp}\varphi + a(\Pi_{U^\perp}\varphi)U,
\end{equation*}
where
$$
\Pi_{U^\perp}\varphi:=\varphi-(\varphi\cdot U)U,\quad a(\Pi_{U^\perp}\varphi):=\sqrt{1+(\varphi\cdot U)^2-|\varphi|^2}-1 = \sqrt{1-|\Pi_{U^\perp}\varphi|^2}-1
$$
so that
$$
|U+p(\varphi)|^2=1
$$
holds. Thus, we need to find a small function $\varphi$ with values in $\mathbb{R}^2$ such that
$$
u=U+\Pi_{U^\perp}\varphi + a(\Pi_{U^\perp}\varphi)U
$$
satisfies (\ref{e:error}). By direct computations, we have
$$
S(U+\Pi_{U^\perp}\varphi + a(\Pi_{U^\perp}\varphi)U) = -U_t -\partial_t\Pi_{U^\perp}\varphi + L_U(\Pi_{U^\perp}\varphi)+N_U(\Pi_{U^\perp}\varphi)+b(\Pi_{U^\perp}\varphi)U
$$
where
\begin{equation*}
\begin{aligned}
L_U(\Pi_{U^\perp}\varphi) &= -(-\Delta)^{\frac{1}{2}}\Pi_{U^\perp}\varphi + \left(\frac{1}{2\pi}\int_{\mathbb{R}}\frac{|U(x)-U(s)|^2}{|x-s|^2}ds\right)\Pi_{U^\perp}\varphi\\
\quad\quad\quad\quad &\quad + \left(\frac{1}{\pi}\int_{\mathbb{R}}\frac{(U(x)-U(s))\cdot(\Pi_{U^\perp}\varphi(x) -\Pi_{U^\perp}\varphi(s))}{|x-s|^2}ds\right)U(x),
\end{aligned}
\end{equation*}
\begin{equation*}
\begin{aligned}
&N_U(\Pi_{U^\perp}\varphi)\\ &= \left(\frac{1}{\pi}\int_{\mathbb{R}}\frac{(a(x)U(x)-a(s)U(s))\cdot(U(x)+\Pi_{U^\perp}\varphi(x)-U(s) -\Pi_{U^\perp}\varphi(s))}{|x-s|^2}ds\right.\\ &\quad\quad\quad\quad\quad\quad\quad\left.+\frac{1}{\pi}\int_{\mathbb{R}}\frac{(U(x)-U(s))\cdot(\Pi_{U^\perp}\varphi(x) -\Pi_{U^\perp}\varphi(s))}{|x-s|^2}ds\right.\\ &\quad\quad\quad\quad\quad\quad\quad\left. +\frac{1}{2\pi}\int_{\mathbb{R}}\frac{(\Pi_{U^\perp}\varphi(x)-\Pi_{U^\perp}\varphi(s))\cdot(\Pi_{U^\perp}\varphi(x) -\Pi_{U^\perp}\varphi(s))}{|x-s|^2}ds\right.\\
&\quad\quad\quad\quad\quad\quad\quad +\left.\frac{1}{2\pi}\int_{\mathbb{R}}\frac{(a(x)U(x)-a(s)U(s))\cdot(a(x)U(x)-a(s)U(s))}{|x-s|^2}ds\right)\Pi_{U^\perp}\varphi\\
&\quad\quad\quad\quad\quad\quad\quad -aU_t - \frac{1}{\pi}\int_{\mathbb{R}}\frac{(a(x)-a(s))\cdot(U(x) - U(s))}{|x-s|^2}ds
\end{aligned}
\end{equation*}
and
\begin{equation*}
\begin{aligned}
& b(\Pi_{U^\perp}\varphi)\\ &= -(-\Delta)^{\frac{1}{2}}a - a_t\\
&\quad + \left(\frac{1}{2\pi}\int_{\mathbb{R}}\frac{|(U+\Pi_{U^\perp}\varphi + a(\Pi_{U^\perp}\varphi)U)(x)-(U+\Pi_{U^\perp}\varphi + a(\Pi_{U^\perp}\varphi)U)(s)|^2}{|x-s|^2}ds\right.\\ &\quad\quad\quad\quad\quad\quad\quad\quad\quad\quad\quad\quad\quad\quad\quad\quad \left. - \frac{1}{2\pi}\int_{\mathbb{R}}\frac{|U(x)-U(s)|^2}{|x-s|^2}ds\right)(1+a)\\
\quad\quad\quad &\quad - \left(\frac{1}{\pi}\int_{\mathbb{R}}\frac{(U(x)-U(s))\cdot(\Pi_{U^\perp}\varphi(x) -\Pi_{U^\perp}\varphi(s))}{|x-s|^2}ds\right).
\end{aligned}
\end{equation*}

A useful observation is that if $\varphi$ solves an equation of the form
\begin{equation*}\label{e:balanceequation}
-U_t -\partial_t\Pi_{U^\perp}\varphi + L_U(\Pi_{U^\perp}\varphi)+N_U(\Pi_{U^\perp}\varphi)+\tilde{b}(x,t)U = 0
\end{equation*}
for some scalar function $\tilde{b}(x,t)$ and $|\varphi| \leq \frac{1}{2}$ holds, then $u=U+\Pi_{U^\perp}\varphi + a(\Pi_{U^\perp}\varphi)U$ solves\ (\ref{e:error}). Indeed, $u$ satisfies the equation
\begin{equation*}
S(u) + b_0U = 0
\end{equation*}
with $b_0 = \tilde{b} - b(\Pi_{U^\perp}\varphi)$. Using the fact that $|u| = 1$, we have
\begin{equation*}
\begin{aligned}
-b_0(x,t)U\cdot u = S(u)\cdot u = -\frac{1}{2}\frac{d}{dt}|u|^2 &- \frac{1}{\pi}\int_{\mathbb{R}}\frac{(u(x)-u(s))}{|x-s|^2}ds\cdot u(x)\\
&+\left(\frac{1}{2\pi}\int_{\mathbb{R}}\frac{|u(x)-u(s)|^2}{|x-s|^2}ds\right)u\cdot u = 0.
\end{aligned}
\end{equation*}
On the other hand, since $U\cdot u = 1 + a(\Pi_{U^\perp}\varphi)$ and $|\varphi|\leq \frac{1}{2}$, we easily check that $|a(\Pi_{U^\perp}\varphi)|\leq \frac{1}{4}$. Hence $U\cdot u > 0$ and $b_0\equiv 0$.

Inserting $U$ into equation (\ref{e:error}), we compute the error of approximation as
\begin{equation*}
\begin{aligned}
S(U) &= -U_t = -\dot{\mu}\partial_\mu U_{\mu,\xi} - \dot{\xi}\partial_{\xi}U_{\mu, \xi} = \frac{\dot{\mu}}{\mu}\nabla\omega(y)\cdot y + \nabla\omega(y)\cdot\frac{\dot{\xi}}{\mu}\\
& = \frac{\dot{\mu}}{\mu}\left(- Z_3(y)\right)+\frac{\dot{\xi}}{\mu}\left(- Z_2(y)\right)= -\frac{\dot{\mu}}{\mu}Z_3(y)- \frac{\dot{\xi}}{\mu}Z_2(y)\\
& := \mathcal{E}_0(x, t) + \mathcal{E}_1(x, t).
\end{aligned}
\end{equation*}
Here $\mathcal{E}_0(x, t) = -\frac{\dot{\mu}}{\mu}Z_3(y)$, $\mathcal{E}_1(x, t) = - \frac{\dot{\xi}}{\mu}Z_2(y)$ and $y = \frac{x-\xi(t)}{\mu(t)}$.

\subsection{Some useful computations}
For a vector function $\varphi:\mathbb{R}\to \mathbb{R}^2$, the following result provides a convenient expression for the quantity $(-\partial_t + L_U)[\Pi_{U^\perp}\varphi]$.
\begin{lemma}\label{l:lemma2.1}
For a function $\varphi:\mathbb{R}\to \mathbb{R}^2$, we have
\begin{eqnarray*}
\nonumber L_U[\Pi_{U^\perp}\varphi] = \Pi_{U^\perp}\left[-(-\Delta)^{\frac{1}{2}}\varphi\right] + \tilde{L}_U[\varphi]
\end{eqnarray*}
where
\begin{eqnarray*}
&&\tilde{L}_U[\varphi]=\\
&&\quad \left(\frac{1}{2\pi}\int_{\mathbb{R}}\frac{|U(x)-U(s)|^2}{|x-s|^2}ds\right)\Pi_{U^\perp}\varphi\\
&&\quad -\frac{1}{\pi}\int_{\mathbb{R}}\frac{\left[(\varphi\cdot U)(x)-(\varphi\cdot U)(s)\right](U(x)-U(s))}{|x-s|^2}ds \\
&&\quad + \left(\frac{1}{\pi}\int_{\mathbb{R}}\frac{(U(x)-U(s))\left[(\varphi\cdot U)(x)-(\varphi\cdot U)(s)\right]}{|x-s|^2}ds\cdot U(x)\right)U(x).
\end{eqnarray*}
\end{lemma}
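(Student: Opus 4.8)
The claimed identity is purely algebraic, so the plan is simply to expand the left--hand side, peel off $\Pi_{U^\perp}[-(-\Delta)^{\frac{1}{2}}\varphi]$, and check that what remains is exactly $\tilde L_U[\varphi]$. I would carry out the computation with $\varphi$ smooth and such that $\varphi(x)-\varphi(s)$ decays fast enough in $|x-s|$ that all the singular integrals below are absolutely convergent and Fubini applies; the general case follows by density. Two structural facts drive the argument: the singular--integral representation $(-\Delta)^{\frac{1}{2}}f(x)=\frac1\pi\,\mathrm{p.v.}\!\int_{\mathbb{R}}\frac{f(x)-f(s)}{|x-s|^2}\,ds$, and the fact that $U=\omega(\frac{x-\xi}{\mu})$ solves the half--harmonic map equation, i.e.\ $(-\Delta)^{\frac{1}{2}}U=V\,U$ with $V(x):=\frac1{2\pi}\int_{\mathbb{R}}\frac{|U(x)-U(s)|^2}{|x-s|^2}\,ds$ the scalar potential in $L_U$.

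First I would record the Leibniz--type formula
\[
(-\Delta)^{\frac{1}{2}}(fg)=f\,(-\Delta)^{\frac{1}{2}}g+g\,(-\Delta)^{\frac{1}{2}}f-\frac1\pi\int_{\mathbb{R}}\frac{(f(x)-f(s))(g(x)-g(s))}{|x-s|^2}\,ds,
\]
valid for scalar or vector $f,g$ and immediate from $f(x)g(x)-f(s)g(s)=f(x)(g(x)-g(s))+g(x)(f(x)-f(s))-(f(x)-f(s))(g(x)-g(s))$. Using $\Pi_{U^\perp}\varphi=\varphi-(\varphi\cdot U)U$ I would split
\[
-(-\Delta)^{\frac{1}{2}}\Pi_{U^\perp}\varphi=-(-\Delta)^{\frac{1}{2}}\varphi+(-\Delta)^{\frac{1}{2}}\big[(\varphi\cdot U)U\big],
\]
apply the Leibniz formula to the last term with the pair $(\varphi\cdot U,U)$, and replace $(-\Delta)^{\frac{1}{2}}U$ by $V\,U$. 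Adding the potential term $V\,\Pi_{U^\perp}\varphi$ and the $U$--parallel term of $L_U$, one obtains
\[
L_U[\Pi_{U^\perp}\varphi]=-(-\Delta)^{\frac{1}{2}}\varphi+V\,\Pi_{U^\perp}\varphi-\frac1\pi\int_{\mathbb{R}}\frac{[(\varphi\cdot U)(x)-(\varphi\cdot U)(s)](U(x)-U(s))}{|x-s|^2}\,ds+c(x)\,U(x),
\]
where $c(x)$ is an explicit scalar combination of $(\varphi\cdot U)V$, $(-\Delta)^{\frac{1}{2}}(\varphi\cdot U)$ and the integral $\frac1\pi\int\frac{(U(x)-U(s))\cdot(\Pi_{U^\perp}\varphi(x)-\Pi_{U^\perp}\varphi(s))}{|x-s|^2}ds$. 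Since $\Pi_{U^\perp}[-(-\Delta)^{\frac{1}{2}}\varphi]=-(-\Delta)^{\frac{1}{2}}\varphi+((-\Delta)^{\frac{1}{2}}\varphi\cdot U)U$ and the first two integral terms above already match those of $\tilde L_U[\varphi]$, the whole lemma reduces to the scalar identity $c(x)-(-\Delta)^{\frac{1}{2}}\varphi(x)\cdot U(x)=\big(\frac1\pi\int\frac{(U(x)-U(s))[(\varphi\cdot U)(x)-(\varphi\cdot U)(s)]}{|x-s|^2}ds\big)\cdot U(x)$.

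To settle this last identity I would use the pointwise relations coming from $|U|\equiv 1$, namely $|U(x)-U(s)|^2=2\big(1-U(x)\cdot U(s)\big)$ and hence $(U(x)-U(s))\cdot U(x)=\tfrac12|U(x)-U(s)|^2$. These turn the cross term $(U(x)-U(s))\cdot\big[(\varphi\cdot U)(x)U(x)-(\varphi\cdot U)(s)U(s)\big]$ hidden inside $c(x)$ into $\tfrac12|U(x)-U(s)|^2\big[(\varphi\cdot U)(x)+(\varphi\cdot U)(s)\big]$, and they let me dot the integral on the right with $U(x)$ explicitly; moreover, applying the Leibniz formula once more, componentwise, to $(-\Delta)^{\frac{1}{2}}(\varphi\cdot U)$ (using again $(-\Delta)^{\frac{1}{2}}U=VU$) eliminates $(-\Delta)^{\frac{1}{2}}(\varphi\cdot U)$ from $c(x)$. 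After these substitutions both sides of the scalar identity collapse to $V(x)(\varphi\cdot U)(x)-\frac1{2\pi}\int_{\mathbb{R}}\frac{|U(x)-U(s)|^2(\varphi\cdot U)(s)}{|x-s|^2}\,ds$, which closes the argument.

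I do not expect a genuine obstacle: the entire proof is bookkeeping of the components parallel and perpendicular to $U$. The one place demanding care is the cancellation inside the coefficient of $U$, where the half--harmonic map equation and the identity $|U(x)-U(s)|^2=2(1-U(x)\cdot U(s))$ must be invoked in the right order; I would therefore organize the computation so that every $U$--parallel scalar is isolated before those two facts are used.
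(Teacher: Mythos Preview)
Your proposal is correct and follows essentially the same route as the paper's proof: both expand $-(-\Delta)^{1/2}\Pi_{U^\perp}\varphi$ via the Leibniz formula for $(-\Delta)^{1/2}$, invoke the half-harmonic map equation $(-\Delta)^{1/2}U=VU$, apply Leibniz once more to $(-\Delta)^{1/2}(\varphi\cdot U)$, and then expand the third term of $L_U$ using $\Pi_{U^\perp}\varphi=\varphi-(\varphi\cdot U)U$ together with $(U(x)-U(s))\cdot U(x)=\tfrac12|U(x)-U(s)|^2$. The only organizational difference is that you collect all $U$-parallel contributions into a single scalar $c(x)$ and verify one scalar identity at the end, whereas the paper keeps everything expanded and cancels line by line; the computations are otherwise identical.
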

\begin{proof}
We have
\begin{equation*}
\begin{aligned}
& -(-\Delta)^{\frac{1}{2}}\Pi_{U^\perp}\varphi\\
& = -(-\Delta)^{\frac{1}{2}}\left[\varphi-(\varphi\cdot U)U\right] \\
& = -(-\Delta)^{\frac{1}{2}}\varphi +  (-\Delta)^{\frac{1}{2}}\left[(\varphi\cdot U)U\right]\\
& = -(-\Delta)^{\frac{1}{2}}\varphi +  (-\Delta)^{\frac{1}{2}}\left[(\varphi\cdot U)\right]U+\left[(\varphi\cdot U)\right](-\Delta)^{\frac{1}{2}}U\\
&\quad -\frac{1}{\pi}\int_{\mathbb{R}}\frac{\left[(\varphi\cdot U)(x)-(\varphi\cdot U)(s)\right](U(x)-U(s))}{|x-s|^2}ds\\
& = -(-\Delta)^{\frac{1}{2}}\varphi +  (-\Delta)^{\frac{1}{2}}\left[(\varphi\cdot U)\right]U+\left[(\varphi\cdot U)\right]\left[\left(\frac{1}{2\pi}\int_{\mathbb{R}}\frac{|U(x)-U(s)|^2}{|x-s|^2}ds\right)U\right]\\
&\quad -\frac{1}{\pi}\int_{\mathbb{R}}\frac{\left[(\varphi\cdot U)(x)-(\varphi\cdot U)(s)\right](U(x)-U(s))}{|x-s|^2}ds.
\end{aligned}
\end{equation*}
Furthermore, it holds that
\begin{equation*}
\begin{aligned}
& (-\Delta)^{\frac{1}{2}}\left[(\varphi\cdot U)\right] = (-\Delta)^{\frac{1}{2}}\left(\varphi\right)\cdot U + \varphi\cdot (-\Delta)^{\frac{1}{2}}\left(U\right)\\
&\quad\quad\quad\quad\quad\quad\quad\quad -\frac{1}{\pi}\int_{\mathbb{R}}\frac{\left(\varphi(x)-\varphi(s)\right)\cdot(U(x)-U(s))}{|x-s|^2}ds
\end{aligned}
\end{equation*}
and hence
\begin{equation*}
\begin{aligned}
& (-\Delta)^{\frac{1}{2}}\left[(\varphi\cdot U)\right]U = \left((-\Delta)^{\frac{1}{2}}\varphi\cdot U\right)U + \left(\varphi\cdot U\right)\left[\left(\frac{1}{2\pi}\int_{\mathbb{R}}\frac{|U(x)-U(s)|^2}{|x-s|^2}ds\right)\right]U\\
&\quad\quad\quad\quad\quad\quad\quad\quad\quad -\left(\frac{1}{\pi}\int_{\mathbb{R}}\frac{\left[\varphi(x)-\varphi(s)\right]\cdot(U(x)-U(s))}{|x-s|^2}ds\right)U.
\end{aligned}
\end{equation*}
Thus
\begin{equation*}
\begin{aligned}
& -(-\Delta)^{\frac{1}{2}}\Pi_{U^\perp}\varphi\\
& = -(-\Delta)^{\frac{1}{2}}\varphi +  \left((-\Delta)^{\frac{1}{2}}\varphi\cdot U\right)U + \left(\varphi\cdot U\right)\left[\left(\frac{1}{2\pi}\int_{\mathbb{R}}\frac{|U(x)-U(s)|^2}{|x-s|^2}ds\right)\right]U\\
&\quad -\left(\frac{1}{\pi}\int_{\mathbb{R}}\frac{\left[\varphi(x)-\varphi(s)\right]\cdot(U(x)-U(s))}{|x-s|^2}ds\right)U+\left[(\varphi\cdot U)\right]\left[\left(\frac{1}{2\pi}\int_{\mathbb{R}}\frac{|U(x)-U(s)|^2}{|x-s|^2}ds\right)U\right]\\
&\quad -\frac{1}{\pi}\int_{\mathbb{R}}\frac{\left[(\varphi\cdot U)(x)-(\varphi\cdot U)(s)\right](U(x)-U(s))}{|x-s|^2}ds\\
& = \Pi_{U^\perp}\left[-(-\Delta)^{\frac{1}{2}}\varphi\right] -\left(\frac{1}{\pi}\int_{\mathbb{R}}\frac{\left[\varphi(x)-\varphi(s)\right]\cdot(U(x)-U(s))}{|x-s|^2}ds\right)U\\
&\quad+2\left[(\varphi\cdot U)\right]\left[\left(\frac{1}{2\pi}\int_{\mathbb{R}}\frac{|U(x)-U(s)|^2}{|x-s|^2}ds\right)U\right]\\
&\quad -\frac{1}{\pi}\int_{\mathbb{R}}\frac{\left[(\varphi\cdot U)(x)-(\varphi\cdot U)(s)\right](U(x)-U(s))}{|x-s|^2}ds.
\end{aligned}
\end{equation*}
Since
\begin{equation*}
\begin{aligned}
&\frac{1}{\pi}\int_{\mathbb{R}}\frac{(U(x)-U(s))\cdot\left[\Pi_{U^\perp}\varphi(x) -\Pi_{U^\perp}\varphi(s)\right]}{|x-s|^2}ds\\
&=\frac{1}{\pi}\int_{\mathbb{R}}\frac{(U(x)-U(s))\cdot\left[(\varphi-(\varphi\cdot U)U)(x) -(\varphi-(\varphi\cdot U)U)(s)\right]}{|x-s|^2}ds\\
&=\frac{1}{\pi}\int_{\mathbb{R}}\frac{(U(x)-U(s))\cdot\left[\varphi(x) -\varphi(s)\right]}{|x-s|^2}ds\\
&\quad-\frac{1}{\pi}\int_{\mathbb{R}}\frac{(U(x)-U(s))\cdot\left[(\varphi\cdot U)(x)U(x) -(\varphi\cdot U)(s)U(s)\right]}{|x-s|^2}ds\\
&=\frac{1}{\pi}\int_{\mathbb{R}}\frac{(U(x)-U(s))\cdot\left[\varphi(x) -\varphi(s)\right]}{|x-s|^2}ds\\
&\quad - \frac{1}{\pi}\int_{\mathbb{R}}\frac{|U(x)-U(s)|^2}{|x-s|^2}ds (\varphi\cdot U)(x)\\
&\quad + \frac{1}{\pi}\int_{\mathbb{R}}\frac{(U(x)-U(s))\left[(\varphi\cdot U)(x)-(\varphi\cdot U)(s)\right]}{|x-s|^2}ds\cdot U(x),
\end{aligned}
\end{equation*}
we finally obtain
\begin{eqnarray*}
&&L_U[\Pi_{U^\perp}\varphi]\\
&&=\Pi_{U^\perp}\left[-(-\Delta)^{\frac{1}{2}}\varphi\right] -\left(\frac{1}{\pi}\int_{\mathbb{R}}\frac{\left[\varphi(x)-\varphi(s)\right]\cdot(U(x)-U(s))}{|x-s|^2}ds\right)U\\
&&\quad+2\left[(\varphi\cdot U)\right]\left[\left(\frac{1}{2\pi}\int_{\mathbb{R}}\frac{|U(x)-U(s)|^2}{|x-s|^2}ds\right)U\right]\\
&&\quad -\frac{1}{\pi}\int_{\mathbb{R}}\frac{\left[(\varphi\cdot U)(x)-(\varphi\cdot U)(s)\right](U(x)-U(s))}{|x-s|^2}ds + \left(\frac{1}{2\pi}\int_{\mathbb{R}}\frac{|U(x)-U(s)|^2}{|x-s|^2}ds\right)\Pi_{U^\perp}\varphi\\
&&\quad + \left(\frac{1}{\pi}\int_{\mathbb{R}}\frac{(U(x)-U(s))\cdot\left[\varphi(x) -\varphi(s)\right]}{|x-s|^2}ds\right)U(x)\\
\end{eqnarray*}
\begin{eqnarray*}
&&\quad - \left(\frac{1}{\pi}\int_{\mathbb{R}}\frac{|U(x)-U(s)|^2}{|x-s|^2}ds (\varphi\cdot U)(x)\right)U(x)\\
&&\quad + \left(\frac{1}{\pi}\int_{\mathbb{R}}\frac{(U(x)-U(s))\left[(\varphi\cdot U)(x)-(\varphi\cdot U)(s)\right]}{|x-s|^2}ds\cdot U(x)\right)U(x)\\
&&=\Pi_{U^\perp}\left[-(-\Delta)^{\frac{1}{2}}\varphi\right] + \left(\frac{1}{2\pi}\int_{\mathbb{R}}\frac{|U(x)-U(s)|^2}{|x-s|^2}ds\right)\Pi_{U^\perp}\varphi\\
&&\quad -\frac{1}{\pi}\int_{\mathbb{R}}\frac{\left[(\varphi\cdot U)(x)-(\varphi\cdot U)(s)\right](U(x)-U(s))}{|x-s|^2}ds \\
&&\quad + \left(\frac{1}{\pi}\int_{\mathbb{R}}\frac{(U(x)-U(s))\left[(\varphi\cdot U)(x)-(\varphi\cdot U)(s)\right]}{|x-s|^2}ds\cdot U(x)\right)U(x).
\end{eqnarray*}
This completes the proof.
\end{proof}
\begin{lemma}\label{l:lemma2.2}
For a general function $\varphi(x) = \begin{pmatrix}
     a(x) \\
     b(x)
\end{pmatrix}\in \mathbb{R}^2$, $x\in \mathbb{R}$, we have
\begin{eqnarray*}
&&\tilde{L}_U[\varphi](x,t)=\\
&&
\quad\quad\mu^{-1}\left[\frac{1}{\pi}\int_{\mathbb{R}}\frac{a(s)-a(x)}{s-x}\frac{2\left(\frac{s-\xi}{\mu}\right)}
{\left[\left(\frac{s-\xi}{\mu}\right)^2+1\right]^2}ds Z_2\left(\frac{x-\xi}{\mu}\right)\right.\\
&&\quad\quad\quad\quad\quad +\frac{1}{\pi}\int_{\mathbb{R}}\frac{b(s)-b(x) }{s-x}\frac{\left(\frac{s-\xi}{\mu}\right)^2-1}{\left[\left(\frac{s-\xi}{\mu}\right)^2+1\right]^2}ds Z_2\left(\frac{x-\xi}{\mu}\right)\\
&&\quad\quad\quad\quad\quad+\frac{1}{\pi}\int_{\mathbb{R}}\frac{a(s)-a(x)}{s-x}\frac{2\left(\frac{s-\xi}{\mu}\right)^2}
{\left[\left(\frac{s-\xi}{\mu}\right)^2+1\right]^2}ds Z_3\left(\frac{x-\xi}{\mu}\right)\\
&&\quad\quad\quad\quad\quad\left.+\frac{1}{\pi}\int_{\mathbb{R}}\frac{b(s)-b(x)}{s-x}
\frac{\left(\frac{s-\xi}{\mu}\right)\left(\left(\frac{s-\xi}{\mu}\right)^2-1\right)}
{\left[\left(\frac{s-\xi}{\mu}\right)^2+1\right]^2}ds Z_3\left(\frac{x-\xi}{\mu}\right)\right].
\end{eqnarray*}
\end{lemma}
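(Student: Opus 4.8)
The identity is obtained by an explicit computation starting from the formula for $\tilde L_U[\varphi]$ in Lemma~\ref{l:lemma2.1}.

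\textbf{Step 1 (reduction to a single projected integral, self-similar variables).} In the formula of Lemma~\ref{l:lemma2.1} the last two terms together equal $-\Pi_{U^\perp}V$, where
\[
V(x):=\frac1\pi\int_{\mathbb{R}}\frac{[(\varphi\cdot U)(x)-(\varphi\cdot U)(s)](U(x)-U(s))}{|x-s|^2}\,ds,
\]
while the first term is $\Pi_{U^\perp}(V_U\varphi)$ with $V_U:=\frac1{2\pi}\int_{\mathbb{R}}\frac{|U(x)-U(s)|^2}{|x-s|^2}\,ds$ a scalar; hence $\tilde L_U[\varphi]=\Pi_{U^\perp}(V_U\varphi-V)$. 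Passing to the self-similar variables $y=\frac{x-\xi}{\mu}$, $z=\frac{s-\xi}{\mu}$, so that $U(x)=\omega(y)$, $|x-s|=\mu|y-z|$, $ds=\mu\,dz$ and $\Pi_{U^\perp}=\Pi_{\omega(y)^\perp}$, produces the global factor $\mu^{-1}$ and reduces everything to $z$-integrals against the rational profile $\omega$. The computation rests on the elementary identity
\[
\omega(y)-\omega(z)=\frac{2(y-z)}{(y^2+1)(z^2+1)}\begin{pmatrix}1-yz\\ y+z\end{pmatrix},
\]
which in particular gives $|\omega(y)-\omega(z)|^2=\frac{4(y-z)^2}{(y^2+1)(z^2+1)}$ and $V_\omega(y)=\frac{2}{y^2+1}$.

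\textbf{Step 2 (cancellation of the zeroth-order term).} Split $(\varphi\cdot U)(x)-(\varphi\cdot U)(s)=\varphi(x)\cdot(\omega(y)-\omega(z))-(\varphi(s)-\varphi(x))\cdot\omega(z)$. The first summand contributes $M(y)\varphi(x)$ with $M(y)=\frac1\pi\int_{\mathbb{R}}\frac{(\omega(y)-\omega(z))\otimes(\omega(y)-\omega(z))}{(y-z)^2}\,dz$; using $\int_{\mathbb{R}}\frac{dz}{(z^2+1)^2}=\int_{\mathbb{R}}\frac{z^2\,dz}{(z^2+1)^2}=\frac\pi2$ and $\int_{\mathbb{R}}\frac{z\,dz}{(z^2+1)^2}=0$ one computes $M(y)=V_\omega(y)\,I$, which cancels exactly against $V_\omega(y)\Pi_{\omega(y)^\perp}\varphi$. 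This leaves the purely tangential expression
\[
\tilde L_U[\varphi]=\mu^{-1}\,\Pi_{\omega(y)^\perp}\!\left[\frac1\pi\int_{\mathbb{R}}\frac{\{(\varphi(s)-\varphi(x))\cdot\omega(z)\}(\omega(y)-\omega(z))}{(y-z)^2}\,dz\right].
\]

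\textbf{Step 3 (explicit projection and collecting $Z_2,Z_3$).} Since $Z_1(y)$ is a unit vector spanning $T_{\omega(y)}\mathbb{S}^1$, a direct inner-product computation gives $\begin{pmatrix}1-yz\\ y+z\end{pmatrix}\cdot Z_1(y)=1+yz$, hence $\Pi_{\omega(y)^\perp}\begin{pmatrix}1-yz\\ y+z\end{pmatrix}=(1+yz)Z_1(y)$. Using $Z_1(y)=-\frac{y^2+1}{2}Z_2(y)$ and $Z_3(y)=yZ_2(y)$ turns $(1+yz)Z_1(y)$ into a linear combination of $Z_2(y)$ and $zZ_3(y)$. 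Inserting the components $\omega(z)=\big(\frac{2z}{z^2+1},\frac{z^2-1}{z^2+1}\big)$, expanding $(\varphi(s)-\varphi(x))\cdot\omega(z)=(a(s)-a(x))\frac{2z}{z^2+1}+(b(s)-b(x))\frac{z^2-1}{z^2+1}$, and converting the measure back to the $s$-variable via $\frac{dz}{y-z}=-\frac{ds}{s-x}$, produces precisely the four-term formula in the statement, with the $1$-contribution landing on $Z_2(\frac{x-\xi}{\mu})$ and the $z$-contribution on $Z_3(\frac{x-\xi}{\mu})$.

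\textbf{Main difficulty.} The substantive point is the matrix identity $M(y)=V_\omega(y)I$: it is what makes the zeroth-order potential term disappear and leaves an expression carried by $Z_2$ and $Z_3$ alone. Everything else is careful bookkeeping of the powers of $\mu$ and of the signs through the self-similar change of variables. All integrals that appear are absolutely convergent because of the difference-quotient structure of the integrands, so no principal-value subtlety arises; one only has to check that the splitting in Step~2 preserves this absolute convergence.
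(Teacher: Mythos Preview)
Your proof is correct and reaches the same formula, but by a cleaner route than the paper's. The paper proceeds by brute force: it writes out the three terms of $\tilde L_U[\varphi]$ from Lemma~\ref{l:lemma2.1}, passes to the self-similar variable, expands every quotient in the explicit rational form of $\omega$, and then performs a long algebraic simplification in which several elementary integrals ($\int\frac{dz}{(z^2+1)^2}$, $\int\frac{z\,dz}{(z^2+1)^2}$, $\int\frac{z^2\,dz}{(z^2+1)^2}$) combine to kill the terms proportional to $\tilde a(y)$ and $\tilde b(y)$, leaving only the difference-quotient integrals that regroup into $Z_2$ and $Z_3$. Your argument isolates the mechanism behind that cancellation: writing $\tilde L_U[\varphi]=\Pi_{U^\perp}(V_U\varphi-V)$ and splitting $(\varphi\cdot U)(x)-(\varphi\cdot U)(s)$ as you do makes the zeroth-order part equal to $M(y)\varphi(x)$, and the matrix identity $M(y)=V_\omega(y)I$ is exactly the packaged form of those scalar integral identities. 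The payoff is that the remaining expression is already tangential and factors naturally through $(1+yz)Z_1(y)=-\tfrac{y^2+1}{2}\bigl(Z_2(y)+zZ_3(y)\bigr)$, so the four-term formula drops out with almost no bookkeeping. The paper's approach has the advantage of being completely mechanical; yours explains \emph{why} only $Z_2$ and $Z_3$ survive. Your closing remark about absolute convergence of the split integrals is also a point the paper leaves implicit.
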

\begin{proof}
For a general function $\varphi(x) = \begin{pmatrix}
     a(x) \\
     b(x)
\end{pmatrix}$,
we have
\begin{eqnarray*}
&&\tilde{L}_U[\varphi]=\\
&&\quad \left(\frac{1}{2\pi}\int_{\mathbb{R}}\frac{|U(x)-U(s)|^2}{|x-s|^2}ds\right)\Pi_{U^\perp}\varphi\\
&&\quad -\frac{1}{\pi}\int_{\mathbb{R}}\frac{\left[(\varphi\cdot U)(x)-(\varphi\cdot U)(s)\right](U(x)-U(s))}{|x-s|^2}ds \\
&&\quad + \left(\frac{1}{\pi}\int_{\mathbb{R}}\frac{(U(x)-U(s))\left[(\varphi\cdot U)(x)-(\varphi\cdot U)(s)\right]}{|x-s|^2}ds\cdot U(x)\right)U(x)\\
\end{eqnarray*}
\begin{eqnarray*}
&&=\mu^{-1}\left(\frac{1}{2\pi}\int_{\mathbb{R}}\frac{|\omega(y)-\omega(s)|^2}{|y-s|^2}ds\right)\Pi_{\omega^\perp}\tilde{\varphi}(y)\\
&&\quad -\frac{\mu^{-1}}{\pi}\int_{\mathbb{R}}\frac{\left[(\tilde{\varphi}\cdot \omega)(y)-(\tilde{\varphi}\cdot \omega)(s)\right](\omega(y)-\omega(s))}{|y-s|^2}ds \\
&&\quad + \left(\frac{\mu^{-1}}{\pi}\int_{\mathbb{R}}\frac{\left[(\tilde{\varphi}\cdot \omega)(y)-(\tilde{\varphi}\cdot \omega)(s)\right](\omega(y)-\omega(s))}{|y-s|^2}ds\cdot \omega(y)\right)\omega(y)\\
&&= \mu^{-1}\left[\left(\frac{1}{2\pi}\int_{\mathbb{R}}\frac{|\omega(y)-\omega(s)|^2}{|y-s|^2}ds\right)\Pi_{\omega^\perp}\tilde{\varphi}(y)\right.\\
&&\quad\quad\quad\quad\quad -\frac{1}{\pi}\int_{\mathbb{R}}\frac{\left[(\tilde{\varphi}\cdot \omega)(y)-(\tilde{\varphi}\cdot \omega)(s)\right](\omega(y)-\omega(s))}{|y-s|^2}ds \\
&&\quad\quad\quad\quad\quad \left. + \left(\frac{1}{\pi}\int_{\mathbb{R}}\frac{\left[(\tilde{\varphi}\cdot \omega)(y)-(\tilde{\varphi}\cdot \omega)(s)\right](\omega(y)-\omega(s))}{|y-s|^2}ds\cdot \omega(y)\right)\omega(y)\right].
\end{eqnarray*}
Here $\varphi(x) = \begin{pmatrix}
     a(\mu y + \xi) \\
     b(\mu y + \xi)
\end{pmatrix} := \begin{pmatrix}
     \tilde{a}(y) \\
     \tilde{b}(y)
\end{pmatrix} = \tilde{\varphi}(y)$ and $y = \frac{x-\xi}{\mu}$.

By direct computation, we have
\begin{eqnarray*}
&&\left(\frac{1}{2\pi}\int_{\mathbb{R}}\frac{|\omega(y)-\omega(s)|^2}{|y-s|^2}ds\right)\Pi_{\omega^\perp}\tilde{\varphi}(y) \\ &&\quad=\frac{1}{\pi}\int_{\mathbb{R}}\frac{2}{\left(s^2+1\right) \left(y^2+1\right)}ds\begin{pmatrix}\frac{\left(y^2-1\right) \left(\left(y^2-1\right) \tilde{a}(y)-2 y \tilde{b}(y)\right)}{\left(y^2+1\right)^2}\\ \frac{2 y \left(y^2 (-\tilde{a}(y))+\tilde{a}(y)+2 y
   \tilde{b}(y)\right)}{\left(y^2+1\right)^2}\end{pmatrix}\\
&&\quad= \begin{pmatrix}\frac{1}{\pi}\int_{\mathbb{R}}\frac{2 \left(y^2-1\right) \left(\left(y^2-1\right) \tilde{a}(y)-2 y \tilde{b}(y)\right)}{\left(s^2+1\right) \left(y^2+1\right)^3}ds\\ \frac{1}{\pi}\int_{\mathbb{R}}\frac{4 y \left(y^2 (-\tilde{a}(y))+\tilde{a}(y)+2 y
   \tilde{b}(y)\right)}{\left(s^2+1\right) \left(y^2+1\right)^3}ds\end{pmatrix},
\end{eqnarray*}
\begin{eqnarray*}
&&\frac{1}{\pi}\int_{\mathbb{R}}\frac{\left[(\tilde{\varphi}\cdot \omega)(y)-(\tilde{\varphi}\cdot \omega)(s)\right](\omega(y)-\omega(s))}{|y-s|^2}ds\\
&&\quad\quad =  \begin{pmatrix}\frac{1}{\pi}\int_{\mathbb{R}}-\frac{2 (s y-1) \left(-2 \left(s^2+1\right) y \tilde{a}(y)+2 s \left(y^2+1\right) \tilde{a}(s)+\left(s^2-1\right) \left(y^2+1\right) \tilde{b}(s)-\left(s^2+1\right)
   \left(y^2-1\right) \tilde{b}(y)\right)}{\left(s^2+1\right)^2 \left(y^2+1\right)^2 (s-y)}ds\\ \frac{1}{\pi}\int_{\mathbb{R}}\frac{2 (s+y) \left(-2 \left(s^2+1\right) y \tilde{a}(y)+2 s \left(y^2+1\right)
   \tilde{a}(s)+\left(s^2-1\right) \left(y^2+1\right) \tilde{b}(s)-\left(s^2+1\right) \left(y^2-1\right) \tilde{b}(y)\right)}{\left(s^2+1\right)^2 \left(y^2+1\right)^2 (s-y)}ds\end{pmatrix}
\end{eqnarray*}
and
\begin{eqnarray*}
&&\left(\frac{1}{\pi}\int_{\mathbb{R}}\frac{\left[(\tilde{\varphi}\cdot \omega)(y)-(\tilde{\varphi}\cdot \omega)(s)\right](\omega(y)-\omega(s))}{|y-s|^2}ds\cdot \omega(y)\right)\omega(y)=\\
&&\quad\quad\quad\begin{pmatrix}\frac{1}{\pi}\int_{\mathbb{R}}\frac{8 \left(s^2+1\right) y^2 \tilde{a}(y)-8 s \left(y^3+y\right) \tilde{a}(s)-4 \left(s^2-1\right) \left(y^3+y\right) \tilde{b}(s)+4 \left(s^2+1\right) \left(y^2-1\right) y
   \tilde{b}(y)}{\left(s^2+1\right)^2 \left(y^2+1\right)^3}ds\\ \frac{1}{\pi}\int_{\mathbb{R}}\frac{2 \left(y^2-1\right) \left(2 \left(s^2+1\right) y \tilde{a}(y)-2 s \left(y^2+1\right) \tilde{a}(s)-\left(s^2-1\right)
   \left(y^2+1\right) \tilde{b}(s)+\left(s^2+1\right) \left(y^2-1\right) \tilde{b}(y)\right)}{\left(s^2+1\right)^2 \left(y^2+1\right)^3}ds\end{pmatrix},
\end{eqnarray*}
therefore
\begin{eqnarray*}
&& \left(\frac{1}{2\pi}\int_{\mathbb{R}}\frac{|\omega(y)-\omega(s)|^2}{|y-s|^2}ds\right)\Pi_{\omega^\perp}\tilde{\varphi}(y) - \frac{1}{\pi}\int_{\mathbb{R}}\frac{\left[(\tilde{\varphi}\cdot \omega)(y)-(\tilde{\varphi}\cdot \omega)(s)\right](\omega(y)-\omega(s))}{|y-s|^2}ds\\ &&\quad\quad + \left(\frac{1}{\pi}\int_{\mathbb{R}}\frac{\left[(\tilde{\varphi}\cdot \omega)(y)-(\tilde{\varphi}\cdot \omega)(s)\right](\omega(y)-\omega(s))}{|y-s|^2}ds\cdot \omega(y)\right)\omega(y)\\
&& = \begin{pmatrix}\frac{1}{\pi}\int_{\mathbb{R}}\frac{2 \left(y^2-1\right) \left(-\left(s^2+1\right) \tilde{a}(y) (s+y)+2 s \tilde{a}(s) (s y+1)+\left(s^2-1\right) \tilde{b}(s) (s y+1)-\left(s^2+1\right) \tilde{b}(y) (s
   y-1)\right)}{\left(s^2+1\right)^2 \left(y^2+1\right)^2 (s-y)}ds\\ \frac{1}{\pi}\int_{\mathbb{R}}\frac{4 y \left(\left(s^2+1\right) \tilde{a}(y) (s+y)-2 s \tilde{a}(s) (s y+1)-\left(s^2-1\right) \tilde{b}(s) (s
   y+1)+\left(s^2+1\right) \tilde{b}(y) (s y-1)\right)}{\left(s^2+1\right)^2 \left(y^2+1\right)^2 (s-y)}ds\end{pmatrix}\\
&& =
\frac{1}{\pi}\int_{\mathbb{R}}\frac{-\left(s^2+1\right)(s+y)\tilde{a}(y)+2 s(s y+1)\tilde{a}(s) +\left(s^2-1\right)(s y+1) \tilde{b}(s)-\left(s^2+1\right)(s
   y-1) \tilde{b}(y) }{\left(s^2+1\right)^2(s-y)}ds Z_2(y)\\
&& =
\frac{1}{\pi}\int_{\mathbb{R}}\frac{-\left(s^2+1\right)(s+y)\tilde{a}(y)+2 s(s y+1)\tilde{a}(s)}{\left(s^2+1\right)^2(s-y)}ds Z_2(y)\\
&&\quad +
\frac{1}{\pi}\int_{\mathbb{R}}\frac{\left(s^2-1\right)(s y+1) \tilde{b}(s)-\left(s^2+1\right)(s
   y-1) \tilde{b}(y) }{\left(s^2+1\right)^2(s-y)}ds Z_2(y)\\
&&=
\frac{1}{\pi}\int_{\mathbb{R}}\frac{2s\left[\tilde{a}(s)-\tilde{a}(y)\right]+ 2s^2\left[\tilde{a}(s)-\tilde{a}(y)\right]y - s^2(s-y)\tilde{a}(y) + (s-y)\tilde{a}(y)}{\left(s^2+1\right)^2(s-y)}ds Z_2(y)\\
&&\quad +
\frac{1}{\pi}\int_{\mathbb{R}}\frac{s^3y\left[\tilde{b}(s)-\tilde{b}(y)\right]- \left[\tilde{b}(s)-\tilde{b}(y)\right] + s^2\left[\tilde{b}(s)-\tilde{b}(y)\right] + 2s(s-y)\tilde{b}(y) -sy\left[\tilde{b}(s)-\tilde{b}(y)\right]}{\left(s^2+1\right)^2(s-y)}ds Z_2(y)\\
&&=
\frac{1}{\pi}\int_{\mathbb{R}}\frac{2s\left[\tilde{a}(s)-\tilde{a}(y)\right]}{\left(s^2+1\right)^2(s-y)}ds Z_2(y) +
\frac{1}{\pi}\int_{\mathbb{R}}\frac{2s^2\left[\tilde{a}(s)-\tilde{a}(y)\right]}{\left(s^2+1\right)^2(s-y)}ds yZ_2(y)\\
&&\quad +\frac{1}{\pi}\int_{\mathbb{R}}\frac{- s^2}{\left(s^2+1\right)^2}ds \tilde{a}(y)Z_2(y)+
\frac{1}{\pi}\int_{\mathbb{R}}\frac{1}{\left(s^2+1\right)^2}ds \tilde{a}(y)Z_2(y)\\
&&\quad +\frac{1}{\pi}\int_{\mathbb{R}}\frac{s^3\left[\tilde{b}(s)-\tilde{b}(y)\right]}{\left(s^2+1\right)^2(s-y)}ds yZ_2(y) +
\frac{1}{\pi}\int_{\mathbb{R}}\frac{- \left[\tilde{b}(s)-\tilde{b}(y)\right]}{\left(s^2+1\right)^2(s-y)}ds Z_2(y)\\
&&\quad+
\frac{1}{\pi}\int_{\mathbb{R}}\frac{s^2\left[\tilde{b}(s)-\tilde{b}(y)\right] }{\left(s^2+1\right)^2(s-y)}ds Z_2(y)\\
&&\quad+
\frac{1}{\pi}\int_{\mathbb{R}}\frac{2s}{\left(s^2+1\right)^2}ds \tilde{b}(y)Z_2(y)\\
&&\quad +\frac{1}{\pi}\int_{\mathbb{R}}\frac{-s\left[\tilde{b}(s)-\tilde{b}(y)\right]}{\left(s^2+1\right)^2(s-y)}ds yZ_2(y)\\
&&=
\frac{1}{\pi}\int_{\mathbb{R}}\frac{2s\left[\tilde{a}(s)-\tilde{a}(y)\right]}{\left(s^2+1\right)^2(s-y)}ds Z_2(y)+
\frac{1}{\pi}\int_{\mathbb{R}}\frac{(s^2-1)\left[\tilde{b}(s)-\tilde{b}(y)\right] }{\left(s^2+1\right)^2(s-y)}ds Z_2(y)\\
&&\quad +
\frac{1}{\pi}\int_{\mathbb{R}}\frac{2s^2\left[\tilde{a}(s)-\tilde{a}(y)\right]}{\left(s^2+1\right)^2(s-y)}ds Z_3(y)
+
\frac{1}{\pi}\int_{\mathbb{R}}\frac{s(s^2-1)\left[\tilde{b}(s)-\tilde{b}(y)\right]}{\left(s^2+1\right)^2(s-y)}ds Z_3(y).
\end{eqnarray*}
Hence we have
\begin{eqnarray*}
&&\tilde{L}_U[\varphi](x,t)=\\
&&\quad\quad\quad
\mu^{-1}\left[\frac{1}{\pi}\int_{\mathbb{R}}\frac{\tilde{a}(s)-\tilde{a}(y)}{s-y}\frac{2s}{\left(s^2+1\right)^2}ds Z_2(y)+
\frac{1}{\pi}\int_{\mathbb{R}}\frac{\tilde{b}(s)-\tilde{b}(y) }{s-y}\frac{s^2-1}{\left(s^2+1\right)^2}ds Z_2(y)\right.\\
&&\quad\quad\quad\quad \left. +
\frac{1}{\pi}\int_{\mathbb{R}}\frac{\tilde{a}(s)-\tilde{a}(y)}{s-y}\frac{2s^2}{\left(s^2+1\right)^2}ds Z_3(y)
+
\frac{1}{\pi}\int_{\mathbb{R}}\frac{\tilde{b}(s)-\tilde{b}(y)}{s-y}\frac{s(s^2-1)}{\left(s^2+1\right)^2}ds Z_3(y)\right]\\
&&\quad\quad\quad=
\mu^{-1}\left[\frac{1}{\pi}\int_{\mathbb{R}}\frac{a(s)-a(x)}{s-x}\frac{2\left(\frac{s-\xi}{\mu}\right)}
{\left[\left(\frac{s-\xi}{\mu}\right)^2+1\right]^2}ds Z_2\left(\frac{x-\xi}{\mu}\right)\right.\\ &&\quad\quad\quad\quad\quad\quad\quad +
\frac{1}{\pi}\int_{\mathbb{R}}\frac{b(s)-b(x) }{s-x}\frac{\left(\frac{s-\xi}{\mu}\right)^2-1}{\left[\left(\frac{s-\xi}{\mu}\right)^2+1\right]^2}ds Z_2\left(\frac{x-\xi}{\mu}\right)\\
&&\quad\quad\quad\quad\quad\quad\quad +
\frac{1}{\pi}\int_{\mathbb{R}}\frac{a(s)-a(x)}{s-x}\frac{2\left(\frac{s-\xi}{\mu}\right)^2}{\left[\left(\frac{s-\xi}{\mu}\right)^2+1\right]^2}ds Z_3\left(\frac{x-\xi}{\mu}\right)\\
&&\quad\quad\quad\quad\quad\quad\quad  \left.
+
\frac{1}{\pi}\int_{\mathbb{R}}\frac{b(s)-b(x)}{s-x}\frac{\left(\frac{s-\xi}{\mu}\right)\left(\left(\frac{s-\xi}{\mu}\right)^2-1\right)}
{\left[\left(\frac{s-\xi}{\mu}\right)^2+1\right]^2}ds Z_3\left(\frac{x-\xi}{\mu}\right)\right].
\end{eqnarray*}
This completes the proof.
\end{proof}

\subsection{Defining $\Phi^*$.}
Let $\varphi^0$ be a solution of the following equation
$$-\varphi^0_t-(-\Delta)^{\frac{1}{2}} \varphi^0  - \begin{pmatrix}
\frac{2(x-\xi)}{r^2+\mu^2}\\
0
\end{pmatrix}\dot{\mu}= 0$$
and
$$
p(t) := -2\dot{\mu},
$$
$$
z(r) := \sqrt{(x-\xi)^2 + \mu^2},
$$
$$
r:=|x-\xi|.
$$
Duhamel's formula gives us the following expression for a weak solution to the problem
$$-\psi^0_t-(-\Delta)^{\frac{1}{2}} \psi^0  -
\frac{2(x-\xi)}{r^2+\mu^2}\dot{\mu}= 0$$
as
\begin{equation*}
\psi^0(x,t) = -\int_{t_0}^{t}\int_{\mathbb{R}}\frac{1}{t-\tilde{s}}\frac{1}{1+\left(\frac{x-y}{t-\tilde{s}}\right)^2}\frac{2(y-\xi)}
{(y-\xi)^2+\mu^2}\dot{\mu}(\tilde{s})dyd\tilde{s}.
\end{equation*}
By direct computation, we have
$$
\psi^0 = \int_{t_0}^tp(\tilde{s})k(z(r), t-\tilde{s})d\tilde{s},\quad k(z(r), t-\tilde{s}) = \frac{x-\xi}{(x-\xi)^2+(\mu+t-\tilde{s})^2}.
$$
Then we define
$$
\Phi^0[\mu,\xi](x, t) = \begin{pmatrix}
\psi^0\\
0
\end{pmatrix}.
$$

Suppose $Z^*$ is a function independent of the parameter functions $\mu$, $\xi$ but just as a solution of the homogeneous half heat equation.
We consider a small, smooth function $Z^*_0:\mathbb{R}\to \mathbb{R}^2$ and the solution $Z^*(x, t)$ of the half heat equation
\begin{equation*}
\left\{\begin{array}{ll}
Z^*_t = -(-\Delta)^{\frac{1}{2}}Z^*\quad\text{ in }\mathbb{R}\times (t_0, \infty),\\
\lim_{t\to+\infty}Z^*(\cdot, t) = Z^*_0\quad\text{ in }\mathbb{R}.
\end{array}
\right.
\end{equation*}
Let us write
\begin{equation*}
Z^*(x, t)= \begin{pmatrix}
z^*_1(x, t) \\
z^*_2(x, t)
\end{pmatrix}.
\end{equation*}
The specific assumptions on $Z^*_0 = \begin{pmatrix}
z^*_{10}(x) \\
z^*_{20}(x)
\end{pmatrix}$ will be given in Section 2.5.

Define
$$
\Phi^*:= \Phi^0[\mu,\xi](x, t) + Z^*(x, t).
$$
We will compute the linear error
$$
-U_t -\partial_t\Pi_{U^\perp}\varphi^* + L_U(\Pi_{U^\perp}\varphi^*), \quad \varphi^* = \Phi^*
$$
for equation (\ref{e:balanceequation}) induced by this correction.
Define
\begin{equation}\label{e:linearerror}
\mathcal{E}^* = \mathcal{E}_{0}+\mathcal{E}_1 + (-\partial_t + L_U)[\Pi_{U^\perp}\Phi^*].
\end{equation}
\subsection{The linear error}
Based on Lemma \ref{l:lemma2.1} and Lemma \ref{l:lemma2.2}, the error $\mathcal{E}^*$ defined in (\ref{e:linearerror}) can be computed as
\begin{eqnarray*}
&&\mathcal{E}^* = \mathcal{E}_{0}+\mathcal{E}_1 + (-\partial_t + L_U)[\Pi_{U^\perp}\Phi^0] + (-\partial_t + L_U)[\Pi_{U^\perp}Z^*]\\
&&\quad = -\frac{\dot{\mu}}{\mu}Z_3(y) + \Pi_{U^\perp}[-\Phi^0_t-(-\Delta)^{\frac{1}{2}}\Phi^0] \\
&&\quad\quad +\int_{t_0}^t\frac{p(\tilde{s})}{\mu^2(\tilde{s})}\frac{\left(\frac{t-\tilde{s}}{\mu}+1\right)^2}{\left(\frac{t-\tilde{s}}{\mu}+2\right)^2 \left(\left(\frac{t-\tilde{s}}{\mu}+1\right)^2+y^2\right)}d\tilde{s}Z_3(y) \\
&&\quad\quad+\frac{1}{\mu}\left[
\frac{1}{\pi}\int_{\mathbb{R}}\frac{z_1^*(s,t)-z_1^*(x,t)}{s-x}\frac{2\left(\frac{s-\xi}{\mu}\right)^2}
{\left[\left(\frac{s-\xi}{\mu}\right)^2+1\right]^2}ds\right. \\
\end{eqnarray*}
\begin{eqnarray*}
&&\quad\quad\left. \quad\quad\quad +
\frac{1}{\pi}\int_{\mathbb{R}}\frac{z_2^*(s,t)-z_2^*(x,t)}{s-x}\frac{\left(\frac{s-\xi}{\mu}\right)\left(\left(\frac{s-\xi}{\mu}\right)^2-1\right)}
{\left[\left(\frac{s-\xi}{\mu}\right)^2+1\right]^2}ds
\right]Z_3(y)\\
&&\quad\quad - \frac{\dot{\xi}}{\mu}Z_2(y) -\int_{t_0}^t\frac{p(\tilde{s})}{\mu^2(\tilde{s})}\frac{y}{\left(\frac{t-\tilde{s}}{\mu}+2\right)^2 \left(\left(\frac{t-\tilde{s}}{\mu}+1\right)^2+y^2\right)}d\tilde{s}Z_2(y)\\
&&\quad\quad +\frac{1}{\mu}\left[\frac{1}{\pi}\int_{\mathbb{R}}\frac{z_1^*(s,t)-z_1^*(x,t)}{s-x}\frac{2\left(\frac{s-\xi}{\mu}\right)}
{\left[\left(\frac{s-\xi}{\mu}\right)^2+1\right]^2}ds\right.\\
&&\quad\quad\quad\quad\quad\left. + \frac{1}{\pi}\int_{\mathbb{R}}\frac{z_2^*(s,t)-z_2^*(x,t) }{s-x}\frac{\left(\frac{s-\xi}{\mu}\right)^2-1}{\left[\left(\frac{s-\xi}{\mu}\right)^2+1\right]^2}ds\right]Z_2(y)\\
&&\quad\quad + (\Phi^0\cdot U)U_t + (\Phi^0\cdot U_t)U + (Z\cdot U)U_t + (Z\cdot U_t)U\\
&&\quad := \mathcal{E}_{0} + \mathcal{E}_{1} + \mathcal{E}_{2},
\end{eqnarray*}
where
\begin{eqnarray*}
&&\mathcal{E}_{0} = -\frac{\dot{\mu}}{\mu}Z_3(y) + \Pi_{U^\perp}[-\Phi^0_t-(-\Delta)^{\frac{1}{2}}\Phi^0] \\
&&\quad\quad +\int_{t_0}^t\frac{p(\tilde{s})}{\mu^2(\tilde{s})}\frac{\left(\frac{t-\tilde{s}}{\mu}+1\right)^2}{\left(\frac{t-\tilde{s}}{\mu}+2\right)^2 \left(\left(\frac{t-\tilde{s}}{\mu}+1\right)^2+y^2\right)}d\tilde{s}Z_3(y) \\
&&\quad\quad+\frac{1}{\mu}\left[
\frac{1}{\pi}\int_{\mathbb{R}}\frac{z_1^*(s,t)-z_1^*(x,t)}{s-x}\frac{2\left(\frac{s-\xi}{\mu}\right)^2}
{\left[\left(\frac{s-\xi}{\mu}\right)^2+1\right]^2}ds\right. \\
&&\quad\quad\left. \quad\quad\quad +
\frac{1}{\pi}\int_{\mathbb{R}}\frac{z_2^*(s,t)-z_2^*(x,t)}{s-x}\frac{\left(\frac{s-\xi}{\mu}\right)\left(\left(\frac{s-\xi}{\mu}\right)^2-1\right)}
{\left[\left(\frac{s-\xi}{\mu}\right)^2+1\right]^2}ds
\right]Z_3(y)\\
&&\quad = \frac{\dot{\mu}}{\mu}\begin{pmatrix}
\frac{-4y(y^2-1)}{(1+y^2)^3} \\
\frac{8y^2}{(1+y^2)^3}
\end{pmatrix} \\
&&\quad\quad +\int_{t_0}^t\frac{p(\tilde{s})}{\mu^2(\tilde{s})}\frac{\left(\frac{t-\tilde{s}}{\mu}+1\right)^2}{\left(\frac{t-\tilde{s}}{\mu}+2\right)^2 \left(\left(\frac{t-\tilde{s}}{\mu}+1\right)^2+y^2\right)}d\tilde{s}Z_3(y) \\
\end{eqnarray*}
\begin{eqnarray*}
&&\quad\quad+\frac{1}{\mu}\left[
\frac{1}{\pi}\int_{\mathbb{R}}\frac{z_1^*(s,t)-z_1^*(x,t)}{s-x}\frac{2\left(\frac{s-\xi}{\mu}\right)^2}
{\left[\left(\frac{s-\xi}{\mu}\right)^2+1\right]^2}ds\right. \\
&&\quad\quad\left. \quad\quad\quad +
\frac{1}{\pi}\int_{\mathbb{R}}\frac{z_2^*(s,t)-z_2^*(x,t)}{s-x}\frac{\left(\frac{s-\xi}{\mu}\right)\left(\left(\frac{s-\xi}{\mu}\right)^2-1\right)}
{\left[\left(\frac{s-\xi}{\mu}\right)^2+1\right]^2}ds
\right]Z_3(y),
\end{eqnarray*}
\begin{eqnarray*}
&&\mathcal{E}_{1}= - \frac{\dot{\xi}}{\mu}Z_2(y) -\int_{t_0}^t\frac{p(\tilde{s})}{\mu^2(\tilde{s})}\frac{y}{\left(\frac{t-\tilde{s}}{\mu}+2\right)^2 \left(\left(\frac{t-\tilde{s}}{\mu}+1\right)^2+y^2\right)}d\tilde{s}Z_2(y)\\
&&\quad\quad +\frac{1}{\mu}\left[\frac{1}{\pi}\int_{\mathbb{R}}\frac{z_1^*(s,t)-z_1^*(x,t)}{s-x}\frac{2\left(\frac{s-\xi}{\mu}\right)}
{\left[\left(\frac{s-\xi}{\mu}\right)^2+1\right]^2}ds\right.\\
&&\quad\quad\quad\quad\quad\left. + \frac{1}{\pi}\int_{\mathbb{R}}\frac{z_2^*(s,t)-z_2^*(x,t) }{s-x}\frac{\left(\frac{s-\xi}{\mu}\right)^2-1}{\left[\left(\frac{s-\xi}{\mu}\right)^2+1\right]^2}ds\right]Z_2(y)
\end{eqnarray*}
and
\begin{eqnarray*}
\mathcal{E}_{2} = (\Phi^0\cdot U)U_t + (\Phi^0\cdot U_t)U + (Z\cdot U)U_t + (Z\cdot U_t)U.
\end{eqnarray*}
Here $y = \frac{x-\xi(t)}{\mu(t)}$.
\subsection{The choice at main order of $\mu$ and $\xi$: improving the inner error.}
In this subsection, we come back to the original linearized problem which can be written as
\begin{equation*}
\begin{aligned}
\mathcal{\varphi} = -\partial_t\varphi + L_U(\varphi) + \mathcal{E}^* + b(x,t)U = 0,\quad\varphi\cdot U = 0
\end{aligned}
\end{equation*}
and discuss how to adjust the parameter functions $\mu$, $\xi$ such that a correction $\varphi$ can be found so that $\mathcal{E}(\varphi)$ is globally smaller than the first approximation error $-U_t$.

If we use the self-similar transformation
$$
\Phi(x,t) = \phi(y, t),\quad y = \frac{x-\xi}{\mu},\quad \rho = |y|,
$$
then
\begin{equation*}
\begin{aligned}
0= -\mu\partial_t\Phi + L_\omega(\phi) + \mu\mathcal{E}^* + b(x,t)\omega,\quad \phi\cdot \omega = 0.
\end{aligned}
\end{equation*}
An improvement of the approximation can be obtained if we solve the following time independent equation
\begin{equation}\label{e:ellipticapproximation}
\begin{aligned}
0 = L_\omega(\phi) + \mu\mathcal{E}^*, \quad\phi\cdot \omega = 0,\quad \lim_{|y|\to \infty}\phi(y, t) = 0\quad\text{in}\quad\mathbb{R}.
\end{aligned}
\end{equation}
The decay condition is added in order to not essentially modify the size of the error far away. From the nondegeneracy result of \cite{sire2017nondegeneracy}, a necessary condition for the solvability of (\ref{e:ellipticapproximation}) is that $\mu\mathcal{E}^*$ is orthogonal to $Z_2(y)$ and $Z_3(y)$ (defined in the introduction section) in the $L^2$ sense. Now testing equation (\ref{e:ellipticapproximation}) with $Z_3(y)$ and integrating by parts, due to the involved decays, we obtain
\begin{equation}\label{equationformu}
\mu\int_{\mathbb{R}}\mathcal{E}^*\cdot Z_3dy = 0.
\end{equation}
From the computations in Section 2.4, we have
\begin{eqnarray*}
&&0 = \mu\int_{\mathbb{R}}\mathcal{E}^*\cdot Z_3dy = -\pi\dot{\mu} + 2\pi\left[
\frac{1}{\pi}\int_{\mathbb{R}}\frac{z_1^*(s,t)-z_1^*(x,t)}{s-x}\frac{2\left(\frac{s-\xi}{\mu}\right)^2}
{\left[\left(\frac{s-\xi}{\mu}\right)^2+1\right]^2}ds\right. \\
&&\quad\quad\left. \quad +
\frac{1}{\pi}\int_{\mathbb{R}}\frac{z_2^*(s,t)-z_2^*(x,t)}{s-x}\frac{\left(\frac{s-\xi}{\mu}\right)\left(\left(\frac{s-\xi}{\mu}\right)^2-1\right)}
{\left[\left(\frac{s-\xi}{\mu}\right)^2+1\right]^2}ds
\right]\Bigg|_{x=\xi(t)} + 2\pi\mu\int_{t_0}^t\frac{p(\tilde{s})}{\mu^2(\tilde{s})}\frac{\left(\frac{t-\tilde{s}}{\mu}+1\right)^2}
{\left(\frac{t-\tilde{s}}{\mu}+2\right)^4}d\tilde{s}\\
&&\approx -\pi\dot{\mu} + 2\pi\mu\left[
\frac{1}{\pi}\int_{\mathbb{R}}\frac{z_2^*(s+\xi(t),t)-z_2^*(x+\xi(t),t)}{s-x}\frac{s^3}
{\left(s^2+1\right)^2}ds
\right]\Bigg|_{x=0}\\
&&\quad + 2\pi\mu\int_{t_0}^t\frac{p(\tilde{s})}{\mu^2(\tilde{s})}\frac{\left(\frac{t-\tilde{s}}{\mu}+1\right)^2}
{\left(\frac{t-\tilde{s}}{\mu}+2\right)^4}d\tilde{s}.
\end{eqnarray*}
We can achieve this by the simple ansatz $\mu(t) = e^{-\kappa t}$ where $\kappa$ is a positive constant to be determined. Under this assumption, we have $\int_{t_0}^t\frac{p(\tilde{s})}{\mu^2(\tilde{s})}\frac{\left(\frac{t-\tilde{s}}{\mu}+1\right)^2}
{\left(\frac{t-\tilde{s}}{\mu}+2\right)^4}d\tilde{s}\approx \frac{7}{12}\kappa$.

Indeed, for some small positive number $\delta$ to be chosen, we decompose the integral $\int_{t_0}^t\frac{\dot{\mu}(\tilde{s})}{\mu^2(\tilde{s})}\frac{\left(\frac{t-\tilde{s}}{\mu}+1\right)^2}
{\left(\frac{t-\tilde{s}}{\mu}+2\right)^4}d\tilde{s}$ as
\begin{eqnarray*}
\int_{t_0}^t\frac{\dot{\mu}(\tilde{s})}{\mu^2(\tilde{s})}\frac{\left(\frac{t-\tilde{s}}{\mu}+1\right)^2}
{\left(\frac{t-\tilde{s}}{\mu}+2\right)^4}d\tilde{s}
& = & \int_{t_0}^{t-\delta}\frac{\dot{\mu}(\tilde{s})}{\mu^2(\tilde{s})}\frac{\left(\frac{t-\tilde{s}}{\mu}+1\right)^2}
{\left(\frac{t-\tilde{s}}{\mu}+2\right)^4}d\tilde{s} + \int_{t-\delta}^t\frac{\dot{\mu}(\tilde{s})}{\mu^2(\tilde{s})}\frac{\left(\frac{t-\tilde{s}}{\mu}+1\right)^2}
{\left(\frac{t-\tilde{s}}{\mu}+2\right)^4}d\tilde{s}:= I_1 + I_2.
\end{eqnarray*}
For the first integral we have $t-s >\delta$ and hence
$$
0\leq -I_1 \leq \frac{-1}{\delta^2} \int_{t_0}^{t-\delta}\dot{\mu}(\tilde{s})d\tilde{s}= \frac{1}{\delta^2}\left(e^{-\kappa t_0}-e^{-\kappa (t-\delta)}\right)
\leq \frac{1}{\delta^2} e^{-\kappa t_0}.
$$
For $I_2 = \int_{t-\delta}^t\frac{\dot{\mu}(\tilde{s})}{\mu^2(\tilde{s})}\frac{\left(\frac{t-\tilde{s}}{\mu}+1\right)^2}
{\left(\frac{t-\tilde{s}}{\mu}+2\right)^4}d\tilde{s}$, we use change of variables as
$\tilde{s}= t- \mu (\tilde{s}) \hat{s}$, then
$$
d\tilde{s} = -\frac{\mu(\tilde{s})}{1+\dot{\mu}(\tilde{s})\hat{s}}d\hat{s}
$$
and the integral becomes
\begin{eqnarray*}
&&I_2=\int_{t-\delta}^t\frac{\dot{\mu}(\tilde{s})}{\mu^2(\tilde{s})}\frac{\left(\frac{t-\tilde{s}}{\mu}+1\right)^2}
{\left(\frac{t-\tilde{s}}{\mu}+2\right)^4}d\tilde{s} = \int^{\frac{\delta}{\mu(t-\delta)}}_{0} \frac{\dot{\mu}(\tilde{s})}{\mu (\tilde{s})} \frac{(1+\hat{s})^2}{ (2+\hat{s})^4}\frac{1}{ 1+\dot{\mu}(\tilde{s}) \hat{s}} d \hat{s}.
\end{eqnarray*}
Note that $1+\dot{\mu}(\tilde{s}) \hat{s} =1-\kappa\mu(\tilde{s}) \hat{s}= 1-\kappa(t-\tilde{s})
> 1-\kappa\delta$, therefore $ d\tilde{s} = (1+ O(\delta)) d \hat{s}$ and
\begin{equation*}
I_2= -\kappa \left(\int_0^{ \frac{\delta}{\mu(t-\delta)}} \frac{(1+\hat{s})^2}{ (2+\hat{s})^4}d \hat{s}+o(1)\right)
= -\frac{7}{24}\kappa + o(1)
\end{equation*}
as long as $\frac{\delta}{\mu (t-\delta)}$ is large.
Therefore, we have
\begin{eqnarray*}
\int_{t_0}^t\frac{\dot{\mu}(\tilde{s})}{\mu^2(\tilde{s})}\frac{\left(\frac{t-\tilde{s}}{\mu}+1\right)^2}{\left(\frac{t-\tilde{s}}
{\mu}+2\right)^4}d\tilde{s} = -\frac{7}{24}\kappa + o(1)
\end{eqnarray*}
when $t_0$ is chosen sufficiently large and $\delta = e^{-\frac{\kappa}{3}t_0}$. Thus we have
\begin{eqnarray*}
\int_{t_0}^t\frac{p(\tilde{s})}{\mu^2(\tilde{s})}\frac{\left(\frac{t-\tilde{s}}{\mu}+1\right)^2}{\left(\frac{t-\tilde{s}}{\mu}+2\right)^4}d\tilde{s} = \frac{7}{12}\kappa + o(1).
\end{eqnarray*}

Hence equation (\ref{equationformu}) becomes
$$
\left(\kappa + 2\left[
\frac{1}{\pi}\int_{\mathbb{R}}\frac{z_{2}^*(s+\xi(t),t)-z_{2}^*(x+\xi(t),t)}{s-x}\frac{s^3}
{\left(s^2+1\right)^2}ds
\right]\Bigg|_{x=0} + \frac{7}{6}\kappa\right)\pi\mu\approx 0.
$$
If we choose the noise $Z^*(x, t)$ such that $\left[
\frac{1}{\pi}\int_{\mathbb{R}}\frac{z_{20}^*(s+q,t)-z_{20}^*(x+q,t)}{s-x}\frac{s^3}
{\left(s^2+1\right)^2}ds
\right]\Bigg|_{x=0} < 0$, the first approximation of the function $\mu(t)$ is given by
\begin{equation}\label{e:blowuprate}
\mu_0(t) = e^{-\kappa_0t}, \quad \kappa_0 = -\frac{12}{13}\left[
\frac{1}{\pi}\int_{\mathbb{R}}\frac{z_{20}^*(s+q,t)-z_{20}^*(x+q,t)}{s-x}\frac{s^3}
{\left(s^2+1\right)^2}ds
\right]\Bigg|_{x=0}.
\end{equation}

Now we justify the choice at main order of the translation parameter function $\xi(t)$. We do this in a similar way as before, now testing the error $\mu\mathcal{E}^*$ against the function $Z_2(y)$, we have
\begin{equation*}
\mu\int_{\mathbb{R}}\mathcal{E}^*\cdot Z_2dy \approx 0.
\end{equation*}
From the computations in Section 2.4, we get
\begin{equation*}
-2\pi\dot{\xi} \thickapprox 0.
\end{equation*}
Hence $\dot{\xi}(t) = 0$ and it is natural to set that $\xi(t_0) = q$. Then the first order approximation of the translation parameter function $\xi(t)$ should be
\begin{equation}\label{e:definitionofxi0}
\xi_0(t) = q.
\end{equation}

\subsection{The final ansatz.}
Let us fix the parameter functions $\mu_0(t)$, $\xi_0(t)$ defined in (\ref{e:blowuprate}) and (\ref{e:definitionofxi0}). Then we write
$$
\mu(t) = \mu_0(t) + \lambda(t),\quad \xi(t) = \xi_0(t) +\xi_1(t) = q + \xi_1(t).
$$
We are looking for a small solution $\varphi$ of
\begin{equation}\label{e:modifiedequation}
\mathcal{E}^* -\partial_t\Pi_{U^\perp}\varphi + L_U(\Pi_{U^\perp}\varphi)+N_U(\Pi_{U^\perp}[\Phi^0 + Z^*+\varphi])+\tilde{b}(x,t)U = 0.
\end{equation}
In other words, let $t_0 > 0$, we want the function
$$
u = U+\Pi_{U^\perp}[\Phi^0 + Z^*+\varphi] + a(\Pi_{U^\perp}[\Phi^0 + Z^*+\varphi])U
$$
solves the problem
\begin{equation*}
\left\{\begin{array}{ll}
        u_t = -(-\Delta)^{\frac{1}{2}}u + \left(\frac{1}{2\pi}\int_{\mathbb{R}}\frac{|u(x)-u(s)|^2}{|x-s|^2}ds\right)u\quad\text{ in }\mathbb{R}\times [t_0, \infty),\\
        u(\cdot, t_0) = u_0\quad\text{ in }\mathbb{R}
       \end{array}
\right.
\end{equation*}
when $t_0$ is sufficiently large. This provides a solution $u(x, t) = u(x, t-t_0)$ to the main problem (\ref{e:main}).

\section{The outer-inner gluing procedure}
Using Lemma \ref{l:lemma2.1} and possibly modifying $\tilde{b}(x,t)$, equation (\ref{e:modifiedequation}) can be rewritten as
\begin{equation}\label{e:equation3.1}
\begin{aligned}
0 &= \mathcal{E}^* -\partial_t\Pi_{U^\perp}\varphi + L_U(\Pi_{U^\perp}\varphi)+N_U(\Pi_{U^\perp}[\Phi^0 + Z^*+\varphi])+\tilde{b}(x,t)U\\
& = \mathcal{E}^* -\partial_t\varphi -(-\Delta)^{\frac{1}{2}}\varphi + \left(\frac{1}{2\pi}\int_{\mathbb{R}}\frac{|U(x)-U(s)|^2}{|x-s|^2}ds\right)\varphi\\
&\quad -\frac{1}{\pi}\int_{\mathbb{R}}\frac{\left[(\varphi\cdot U)(x)-(\varphi\cdot U)(s)\right](U(x)-U(s))}{|x-s|^2}ds \\
&\quad + \left(\frac{1}{\pi}\int_{\mathbb{R}}\frac{(U(x)-U(s))\left[(\varphi\cdot U)(x)-(\varphi\cdot U)(s)\right]}{|x-s|^2}ds\cdot U(x)\right)U(x)\\
&\quad -(\varphi\cdot U)U_t + N_U(\Pi_{U^\perp}[\Phi^0 + Z^*+\varphi])+\tilde{b}(x,t)U.
\end{aligned}
\end{equation}

Let $\eta_0(s)$ be a smooth cut-off function with $\eta_0(s) = 1$ for $s < 1$ and $=0$ for $s > 2$. Take a sufficiently large constant
\begin{equation*}
R = e^{\rho t_0}
\end{equation*}
for $\rho\in (0, 1)$ sufficiently small and $t_0$ is the initial time.
We define
$$
\eta(x, t):=\eta_0\left(\frac{|x-\xi(t)|}{R\mu_0(t)}\right)
$$
and consider a function $\varphi(x, t)$ of the following special form
\begin{equation}\label{e:errorterm}
\varphi(x, t) = \eta \tilde{\phi}(x, t) + \psi(x, t)
\end{equation}
for a function $\tilde{\phi}(x, t) = \phi\left(\frac{x-\xi(t)}{\mu_0(t)}, t\right)$ and $\phi(y, t_0) = 0$. Here and in what follows, we use the notation $y := \frac{x-\xi}{\mu_0}$. Note that we only concern about the values of $\varphi$ in the direction perpendicular to $U$, so we assume that $\tilde{\phi}(x, t)\cdot \omega\left(\frac{x-\xi(t)}{\mu_0(t)}\right) \equiv 0$ for $|y|\leq 2R$ and $\varphi(x, t)\cdot \omega\left(\frac{x-\xi(t)}{\mu(t)}\right) \equiv 0$.
Then it is straightforward to check that $\varphi$ defined by (\ref{e:errorterm}) solves (\ref{e:equation3.1}) if the pair $(\phi, \psi)$ satisfies the following system of evolution equations
\begin{equation}\label{e:innerproblembeforechangeofvariables}
\left\{
\begin{aligned}
&\mu_0(t)\partial_t \phi\\ &= -(-\Delta)^{\frac{1}{2}}\phi + \frac{2}{1+|y|^2}\phi
+\frac{\mu_0}{\pi}\int_{\mathbb{R}}\frac{\left[\phi\left(\frac{x-\xi}{\mu_0}\right)-\phi\left(\frac{s-\xi}{\mu_0}\right)\right]
\left[\omega\left(\frac{x-\xi}{\mu_0}\right)-\omega\left(\frac{s-\xi}{\mu_0}\right)\right]}{|x-s|^2}ds \omega\left(\frac{x-\xi}{\mu_0}\right)\\
& + \mu_0\Pi_{U^\perp}\mathcal{E}^*(\xi+\mu_0y, t) +\frac{2\frac{\mu_0}{\mu}}{1+\left|\frac{\mu_0}{\mu}y\right|^2}\Pi_{U^\perp}\psi \\ & -\frac{\mu_0}{\pi}\int_{\mathbb{R}}\frac{\left[\psi(x)\cdot \omega\left(\frac{x-\xi}{\mu}\right)-\psi(s)\cdot \omega\left(\frac{s-\xi}{\mu}\right)\right]\left[\omega\left(\frac{x-\xi}{\mu}\right)-\omega\left(\frac{s-\xi}{\mu}\right)\right]}{|x-s|^2}ds \\
& + \left(\frac{\mu_0}{\pi}\int_{\mathbb{R}}\frac{\left[\psi(x)\cdot \omega\left(\frac{x-\xi}{\mu}\right)-\psi(s)\cdot \omega\left(\frac{s-\xi}{\mu}\right)\right]\left[\omega\left(\frac{x-\xi}{\mu}\right)-\omega\left(\frac{s-\xi}{\mu}\right)\right]}{|x-s|^2}ds\cdot \omega\left(\frac{x-\xi}{\mu}\right)\right)\omega\left(\frac{x-\xi}{\mu}\right)\\
& + B^1[\phi] + B^2[\phi] + B^3[\phi]\quad \text{in }B_{2R}(0)\times [t_0, \infty),\\
&\phi = 0\quad\text{in }B_{2R}(0)\times \{t_0\},
\end{aligned}
\right.
\end{equation}
\begin{equation*}
\begin{aligned}
B^1[\phi]: = \left(\frac{2\frac{\mu_0}{\mu}}{1+\left|\frac{\mu_0}{\mu}y\right|^2} - \frac{2}{1+|y|^2}\right)\phi,
\end{aligned}
\end{equation*}
\begin{equation*}
\begin{aligned}
B^2[\phi]: = &-\frac{\mu_0}{\pi}\int_{\mathbb{R}}\frac{\left[\phi\left(\frac{x-\xi}{\mu_0}\right)\cdot \omega\left(\frac{x-\xi}{\mu}\right)-\phi\left(\frac{s-\xi}{\mu_0}\right)\cdot \omega\left(\frac{s-\xi}{\mu}\right)\right]\left[\omega\left(\frac{x-\xi}{\mu}\right)-\omega\left(\frac{s-\xi}{\mu}\right)\right]}{|x-s|^2}ds\\
&+\frac{\mu_0}{\pi}\int_{\mathbb{R}}\frac{\left[(\phi\cdot \omega)\left(\frac{x-\xi}{\mu_0}\right)-(\phi\cdot \omega)\left(\frac{s-\xi}{\mu_0}\right)\right]\left[\omega\left(\frac{x-\xi}{\mu_0}\right)-\omega\left(\frac{s-\xi}{\mu_0}\right)\right]}
{|x-s|^2}ds,\\
\end{aligned}
\end{equation*}
\begin{equation*}
\begin{aligned}
&B^3[\phi]: = \\
&\left(\frac{\mu_0}{\pi}\int_{\mathbb{R}}\frac{\left[\phi\left(\frac{x-\xi}{\mu_0}\right)\cdot \omega\left(\frac{x-\xi}{\mu}\right)-\phi\left(\frac{s-\xi}{\mu_0}\right)\cdot \omega\left(\frac{s-\xi}{\mu}\right)\right]\left[\omega\left(\frac{x-\xi}{\mu}\right)-\omega\left(\frac{s-\xi}{\mu}\right)\right]}{|x-s|^2}ds\cdot \omega\left(\frac{x-\xi}{\mu}\right)\right)\omega\left(\frac{x-\xi}{\mu}\right)\\
&-\left(\frac{\mu_0}{\pi}\int_{\mathbb{R}}\frac{\left[(\phi\cdot \omega)\left(\frac{x-\xi}{\mu_0}\right)-(\phi\cdot \omega)\left(\frac{s-\xi}{\mu_0}\right)\right]\left[\omega\left(\frac{x-\xi}{\mu_0}\right)-\omega\left(\frac{s-\xi}{\mu_0}\right)\right]}
{|x-s|^2}ds\cdot \omega\left(\frac{x-\xi}{\mu_0}\right)\right)\omega\left(\frac{x-\xi}{\mu_0}\right)
\end{aligned}
\end{equation*}
and
\begin{equation}\label{e:outerproblem}
\left\{
\begin{aligned}
&\partial_t \psi = (-\Delta)^{\frac{1}{2}}\psi\\
& +(1-\eta)\left[\frac{2\frac{1}{\mu}}{1+\left|\frac{x-\xi}{\mu}\right|^2}\psi -\frac{1}{\pi}\int_{\mathbb{R}}\frac{\left[\psi(x)\cdot \omega\left(\frac{x-\xi}{\mu}\right)-\psi(s)\cdot \omega\left(\frac{s-\xi}{\mu}\right)\right]\left[\omega\left(\frac{x-\xi}{\mu}\right)-\omega\left(\frac{s-\xi}{\mu}\right)\right]}{|x-s|^2}ds\right. \\
& \left.+ \left(\frac{1}{\pi}\int_{\mathbb{R}}\frac{\left[\psi(x)\cdot \omega\left(\frac{x-\xi}{\mu}\right)-\psi(s)\cdot \omega\left(\frac{s-\xi}{\mu}\right)\right]\left[\omega\left(\frac{x-\xi}{\mu}\right)-\omega\left(\frac{s-\xi}{\mu}\right)\right]}{|x-s|^2}ds\cdot \omega\left(\frac{x-\xi}{\mu}\right)\right)\omega\left(\frac{x-\xi}{\mu}\right)\right]\\
&  + \frac{\dot{\mu}_0}{\mu_0}\eta y\cdot \nabla_y\phi + \eta\frac{\dot{\xi}}{\mu_0}\cdot\nabla_y\phi + N_U(\Pi_{U^\perp}[\Phi^0 + Z^*+\eta\phi+\psi]) + (1-\eta)\Pi_{U^\perp}\mathcal{E}^*\\
&  -\left[(-\Delta)^{\frac{1}{2}}\eta\right] \phi - \partial_t\eta\phi\left(\frac{x-\xi(t)}{\mu_0(t)}\right)+\frac{1}{\pi}\int_{\mathbb{R}}\frac{(\eta(x)-\eta(s))\left(\phi\left(\frac{x-\xi(t)}
{\mu_0(t)}, t\right)-\phi\left(\frac{s-\xi(t)}{\mu_0(t)}, t\right)\right)}{|x-s|^2}ds\\
&  -\frac{1}{\pi}\int_{\mathbb{R}}\frac{(\eta(x)-\eta(s))\left[\phi\left(\frac{s-\xi(t)}{\mu_0(t)}, t\right)\cdot \omega\left(\frac{s-\xi}{\mu}\right)\right]\left[\omega\left(\frac{x-\xi}{\mu}\right)-\omega\left(\frac{s-\xi}{\mu}\right)\right]}{|x-s|^2}ds\\
&  +\left(\frac{1}{\pi}\int_{\mathbb{R}}\frac{(\eta(x)-\eta(s))\left[\phi\left(\frac{s-\xi(t)}{\mu_0(t)}, t\right)\cdot \omega \left(\frac{s-\xi}{\mu}\right)\right]\left[\omega\left(\frac{x-\xi}{\mu}\right)-\omega\left(\frac{s-\xi}{\mu}\right)\right]}{|x-s|^2}ds\cdot \omega\left(\frac{x-\xi}{\mu}\right)\right)\omega\left(\frac{x-\xi}{\mu}\right)\\
&\text{ in }\mathbb{R}\times [t_0, +\infty).
\end{aligned}
\right.
\end{equation}

Consider the change of variable
\begin{equation*}
t = t(\tau), \quad \frac{d}{d\tau}t  = \mu_0(t),
\end{equation*}
then equation (\ref{e:innerproblembeforechangeofvariables}) becomes
\begin{equation}\label{e:innerproblem}
\left\{
\begin{aligned}
&\partial_\tau \phi = -(-\Delta)^{\frac{1}{2}}\phi + \frac{2}{1+|y|^2}\phi\\
&\quad\quad\quad +\frac{\mu_0}{\pi}\int_{\mathbb{R}}\frac{\left[\phi\left(\frac{x-\xi}{\mu_0}\right)-\phi\left(\frac{s-\xi}{\mu_0}\right)\right]
\left[\omega\left(\frac{x-\xi}{\mu_0}\right)-\omega\left(\frac{s-\xi}{\mu_0}\right)\right]}{|x-s|^2}ds \omega\left(\frac{x-\xi}{\mu_0}\right)\\
&\quad\quad\quad +H[\lambda,\xi,\dot{\lambda},\dot{\xi},\phi](y,t(\tau)) = L_\omega[\phi](y) + H[\lambda,\xi,\dot{\lambda},\dot{\xi},\phi](y,t(\tau))\text{ in }B_{2R}(0)\times [\tau_0, \infty),\\
&\phi = 0\quad\text{in }B_{2R}(0)\times \{\tau_0\},
\end{aligned}
\right.
\end{equation}
with
\begin{equation*}
\begin{aligned}
&H[\lambda,\xi,\dot{\lambda},\dot{\xi},\phi](y,t) =
\mu_0\Pi_{U^\perp}\mathcal{E}^*(\xi+\mu_0y, t) +\frac{2\frac{\mu_0}{\mu}}{1+\left|\frac{\mu_0}{\mu}y\right|^2}\Pi_{U^\perp}\psi \\ & \quad\quad\quad \quad\quad\quad\quad\quad\quad -\frac{\mu_0}{\pi}\int_{\mathbb{R}}\frac{\left[\psi(x)\cdot \omega\left(\frac{x-\xi}{\mu}\right)-\psi(s)\cdot \omega\left(\frac{s-\xi}{\mu}\right)\right]\left[\omega\left(\frac{x-\xi}{\mu}\right)-\omega\left(\frac{s-\xi}{\mu}\right)\right]}{|x-s|^2}ds \\
& + \left(\frac{\mu_0}{\pi}\int_{\mathbb{R}}\frac{\left[\psi(x)\cdot \omega\left(\frac{x-\xi}{\mu}\right)-\psi(s)\cdot \omega\left(\frac{s-\xi}{\mu}\right)\right]\left[\omega\left(\frac{x-\xi}{\mu}\right)-\omega\left(\frac{s-\xi}{\mu}\right)\right]}{|x-s|^2}ds\cdot \omega\left(\frac{x-\xi}{\mu}\right)\right)\omega\left(\frac{x-\xi}{\mu}\right)\\
& + B^1[\phi] + B^2[\phi] + B^3[\phi]
\end{aligned}
\end{equation*}
and $\tau_0$ satisfies $t_0 = t(\tau_0)$.

(\ref{e:innerproblem}) or (\ref{e:innerproblembeforechangeofvariables}) is the so-called inner problem and (\ref{e:outerproblem}) is the outer problem. The strategy we use to solve this system is:
for a fixed function $\phi(y, t)$ in a suitable class and parameter functions $\xi$, $\lambda$, we solve equation (\ref{e:outerproblem}) for $\psi$ as an operator $\Psi = \Psi[\lambda, \xi, \dot{\lambda}, \dot{\xi}, \phi]$. Then we insert this $\Psi$ into equation (\ref{e:innerproblem}) and obtain
\begin{equation}\label{e:innerproblemsubstitute}
\left\{
\begin{array}{ll}
\partial_\tau\phi = L_\omega[\phi] + H[\lambda,\xi,\dot{\lambda},\dot{\xi},\phi]\quad \text{in }B_{2R}(0)\times [\tau_0, +\infty),\\
\phi(\cdot, \tau_0) = 0\quad\text{in }B_{2R}(0).
\end{array}
\right.
\end{equation}
We will solve (\ref{e:innerproblemsubstitute}) by the Contraction Mapping Theorem involving an inverse for the following linear problem
\begin{equation}\label{e:innerproblemlinear}
\left\{
\begin{array}{ll}
\partial_\tau\phi = L_\omega[\phi] + h(y, \tau)\quad \text{in }B_{2R}(0)\times [\tau_0, +\infty),\\
\phi(\cdot, \tau_0) = 0\quad\text{in }B_{2R}(0).
\end{array}
\right.
\end{equation}
Providing certain orthogonality conditions hold, we will find a solution $\phi$ of (\ref{e:innerproblemlinear}) which defines a linear operator of $h$ with good $L^\infty$-weight estimates.

The class of functions $h$ we will use is well-represented by asymptotic behavior of the error term $\mu_0\Pi_{U^\perp}\mathcal{E}^*$ in the scaled variables $(y,\tau)$. From the computations in Section 2.4, we know that
\begin{equation}\label{e3:estimateoferror}
|\mu_0\Pi_{U^\perp}\mathcal{E}^*(\xi + \mu_0 y, t)| \lesssim \frac{\mu_0(t)}{1+\rho^{1+a}},\quad \rho = |y|
\end{equation}
for some constant $0 < a < 1$. Observing that
\begin{equation*}
\tau(t) \thicksim \frac{1}{\kappa_0}e^{\kappa_0 t} \thicksim \frac{1}{\kappa_0}\frac{1}{\mu_0(t)},
\end{equation*}
hence
\begin{equation*}
\mu_0(\tau): = \mu_0(t(\tau)) \thicksim e^{-\kappa_0\frac{1}{\kappa_0}\log(\kappa_0\tau)}\thicksim \frac{1}{\kappa_0\tau}.
\end{equation*}

\section{The outer problem}
In this section, we solve the outer problem (\ref{e:outerproblem}) for a given small function $\phi$ and the considered range of parameters $\lambda$, $\xi$, $\dot{\lambda}$, $\dot{\xi}$ in the form of a nonlinear operator
$$
\psi(x, t) = \Psi[\lambda, \xi, \dot{\lambda}, \dot{\xi}, \phi](x,t).
$$
We consider the following problem
\begin{equation}\label{e:equation4.1}
\begin{aligned}
\partial_t \psi &= (-\Delta)^{\frac{1}{2}}\psi + f(x, t)\quad \text{ in }\mathbb{R}\times [t_0, +\infty)
\end{aligned}
\end{equation}
with
\begin{equation*}
\begin{aligned}
&f(x, t)\\ &= (1-\eta)\left[\frac{2\frac{1}{\mu}}{1+\left|\frac{x-\xi}{\mu}\right|^2}\psi -\frac{1}{\pi}\int_{\mathbb{R}}\frac{\left[\psi(x)\cdot \omega\left(\frac{x-\xi}{\mu}\right)-\psi(s)\cdot \omega\left(\frac{s-\xi}{\mu}\right)\right]\left[\omega\left(\frac{x-\xi}{\mu}\right)-\omega\left(\frac{s-\xi}{\mu}\right)\right]}{|x-s|^2}ds\right. \\
& \left.+ \left(\frac{1}{\pi}\int_{\mathbb{R}}\frac{\left[\psi(x)\cdot \omega\left(\frac{x-\xi}{\mu}\right)-\psi(s)\cdot \omega\left(\frac{s-\xi}{\mu}\right)\right]\left[\omega\left(\frac{x-\xi}{\mu}\right)-\omega\left(\frac{s-\xi}{\mu}\right)\right]}{|x-s|^2}ds\cdot \omega\left(\frac{x-\xi}{\mu}\right)\right)\omega\left(\frac{x-\xi}{\mu}\right)\right]\\
&  + \frac{\dot{\mu}_0}{\mu_0}\eta y\cdot \nabla_y\phi + \eta\frac{\dot{\xi}}{\mu_0}\cdot\nabla_y\phi + N_U(\Pi_{U^\perp}[\Phi^0 + Z^*+\eta\phi+\psi]) + (1-\eta)\Pi_{U^\perp}\mathcal{E}^*\\
&  -\left[(-\Delta)^{\frac{1}{2}}\eta\right] \phi - \partial_t\eta\phi\left(\frac{x-\xi(t)}{\mu_0(t)}\right)+\frac{1}{\pi}\int_{\mathbb{R}}\frac{(\eta(x)-\eta(s))\left(\phi\left(\frac{x-\xi(t)}
{\mu_0(t)}, t\right)-\phi\left(\frac{s-\xi(t)}{\mu_0(t)}, t\right)\right)}{|x-s|^2}ds\\
&  -\frac{1}{\pi}\int_{\mathbb{R}}\frac{(\eta(x)-\eta(s))\left[\phi\left(\frac{s-\xi(t)}{\mu_0(t)}, t\right)\cdot \omega\left(\frac{s-\xi}{\mu}\right)\right]\left[\omega\left(\frac{x-\xi}{\mu}\right)-\omega\left(\frac{s-\xi}{\mu}\right)\right]}{|x-s|^2}ds+\\
&  \left(\frac{1}{\pi}\int_{\mathbb{R}}\frac{(\eta(x)-\eta(s))\left[\phi\left(\frac{s-\xi(t)}{\mu_0(t)}, t\right)\cdot \omega \left(\frac{s-\xi}{\mu}\right)\right]\left[\omega\left(\frac{x-\xi}{\mu}\right)-\omega\left(\frac{s-\xi}{\mu}\right)\right]}{|x-s|^2}ds\cdot \omega\left(\frac{x-\xi}{\mu}\right)\right)\omega\left(\frac{x-\xi}{\mu}\right).
\end{aligned}
\end{equation*}
Let us fixe a small constant $\sigma\in (0,\frac{7}{3})$. For a function $h(t): [t_0, \infty)\to \mathbb{R}$ and $\delta > 0$, we introduce the following weighted $L^\infty$ norm
\begin{equation*}
\|h\|_\delta: = \|\mu_0(t)^{-\delta}h(t)\|_{L^\infty[t_0, \infty)}.
\end{equation*}
In what follows we assume that the parameter functions $\lambda$, $\xi$, $\dot{\lambda}$, $\dot{\xi}$ satisfy
\begin{equation}\label{e:assumptionsondotlambda}
\|\dot{\lambda}\|_{1+\sigma} + \|\dot{\xi}\|_{1+\sigma} \leq c
\end{equation}
and
\begin{equation}\label{e:assumptionsonlambda}
\|\lambda(t)\|_{1+\sigma} + \|\xi(t) - q\|_{1+\sigma}\leq c
\end{equation}
for some positive constant $c$.

Equation (\ref{e:equation4.1}) is nonlinear, we will use the Contraction Mapping Theorem to solve it.
For this, we first need an estimate for the corresponding linear equation.
This is the context of the following subsection. We will use the symbol $\lesssim$ to say $\leq C$, for a constant $C > 0$, its value may change from line to line but it is independent of $t$ and $t_0$.
\subsection{The nonhomogeneous linear half heat equation}
In this subsection, we construct a solution $\psi$ for the nonhomogeneous linear half heat equation
\begin{equation}\label{e:outproblemmode}
\begin{array}{ll}
        \psi_t = -(-\Delta)^{\frac{1}{2}}\psi + f \quad\text{ in }\mathbb{R}\times [t_0, +\infty)
       \end{array}
\end{equation}
which decays at infinity.
We assume that for two real numbers $\nu, \alpha\in (0, 1)$, the nonhomogeneous term $f(x, t)$ satisfies
\begin{equation}\label{e:assumptiononf}
|f(x, t)| \leq M\frac{\mu_0^{\nu-1}(t)}{1+|y|^{1+\alpha}},\quad y = \frac{x-\xi(t)}{\mu_0(t)}
\end{equation}
and denote by $\|f\|_{*, 1+\alpha, \nu}$ the least $M > 0$ such that (\ref{e:assumptiononf}) holds.
Using the heat kernel (see \cite{CabreRoquejoffreCMP}), we know that
\begin{equation}\label{e:duhamelintegral}
\psi(x, t) = \int_{t}^{+\infty}\int_{\mathbb{R}}\frac{s-t}{(s-t)^2+(x-y)^2}f(y, s)dyds = \int_{t}^{+\infty}\int_{\mathbb{R}}\frac{1}{s-t}\frac{1}{1+\left(\frac{x-y}{s-t}\right)^2}f(y, s)dyds
\end{equation}
is a solution of (\ref{e:outproblemmode}).
Then we have
\begin{lemma}\label{l4:lemma4.1}
Assume that $\|f\|_{*, 1+\alpha, \nu} < +\infty$ for some $\nu, \alpha\in (0, 1)$.
Let $\psi$ be the solution of (\ref{e:outproblemmode}) defined by (\ref{e:duhamelintegral}),
then we have
\begin{equation}\label{e:heatestimation1}
|\psi(x, t)|\lesssim \|f\|_{*, 1+\alpha, \nu}\frac{\mu_0^{\nu}(t)\log(1+|y|)}{1+|y|^{\alpha}}
\end{equation}
with $y = \frac{x-\xi(t)}{\mu_0(t)}$. Moreover, the following local H\"{o}lder estimate holds,
\begin{equation}\label{e:holderestimateforheatequation}
[\psi(\cdot, t)]_{\eta, B_{3\mu_0(t)R}(\xi)}\lesssim \|f\|_{*, 1+\alpha, \nu}\frac{\mu_0^{\nu-\eta}(t)\log(1+|y|)}{1+|y|^{\alpha+\eta}} \quad\text{ for }|y| = \left|\frac{x-\xi(t)}{\mu_0(t)}\right|\leq 3R.
\end{equation}
Here $[\psi]_{\eta, B_{3\mu_0(t)R}(\xi)}: = \sup_{x\neq y\in B_{3\mu_0(t)R}(\xi)}\frac{|f(x, t) - f(y, t)|}{|x-y|^\eta}$ is the local H\"{o}lder norm with respect to the space variable, $\eta\in (\frac{1}{2}, 1)$.
\end{lemma}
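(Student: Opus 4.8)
The plan is to work directly from the Duhamel representation \eqref{e:duhamelintegral} and estimate the double integral by separating the spatial and temporal variables. Normalise $\|f\|_{*,1+\alpha,\nu}=1$, so that $|f(z,s)|\le \mu_0^{\nu-1}(s)\big(1+|(z-\xi(s))/\mu_0(s)|^{1+\alpha}\big)^{-1}$, and write $a:=s-t\ge0$ for the elapsed time. The first step is a purely spatial estimate: for fixed $s$ we bound the convolution of the one–dimensional Poisson kernel $z\mapsto \frac{a}{a^2+(x-z)^2}$, which has width $a$ and mass $\pi$, against the bump $z\mapsto \mu_0^{\nu-1}(s)\big(1+|(z-\xi(s))/\mu_0(s)|^{1+\alpha}\big)^{-1}$, which has width $\mu_0(s)$ and mass $\simeq\mu_0^{\nu-1}(s)\mu_0(s)=\mu_0^{\nu}(s)$. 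Splitting the $z$–integral according to whether $|z-\xi(s)|\le\tfrac12|x-\xi(s)|$ or not, and using crucially that $1+\alpha<2$ so that the heavier tail of the bump dominates the convolution, one gets
\[
\int_{\mathbb{R}}\frac{a}{a^2+(x-z)^2}\,\frac{\mu_0^{\nu-1}(s)\,dz}{1+|(z-\xi(s))/\mu_0(s)|^{1+\alpha}}\ \lesssim\ \frac{\mu_0^{\nu}(s)}{a+\mu_0(s)}\cdot\frac{1}{1+\big(|x-\xi(s)|/(a+\mu_0(s))\big)^{1+\alpha}}.
\]

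The second step is to integrate this bound over $s\in[t,\infty)$. Here I would use two structural facts: (i) $\mu_0$ is decreasing and exponentially decaying, so $\mu_0(s)\le\mu_0(t)$ for $s\ge t$ and time integrals of $\mu_0^\nu(s)/(s-t)$ over $\{s-t\ge\delta\}$ are controlled by $\mu_0^\nu(t)$; and (ii) by the a priori bounds \eqref{e:assumptionsonlambda}, $|\xi(s)-\xi(t)|\lesssim\mu_0^{1+\sigma}(t)\ll\mu_0(t)$ for $s\ge t$, so that $|x-\xi(s)|\simeq|x-\xi(t)|=|y|\mu_0(t)$ whenever $|y|\gtrsim1$ (and for $|y|\lesssim1$ one just discards the helpful spatial factor). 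Splitting $[t,\infty)$ at the scales $s-t\sim\mu_0(t)(1+|y|)$ and $s-t\sim1$, the short–time part yields the main term $\lesssim\mu_0^{\nu}(t)(1+|y|)^{-\alpha}$, the intermediate scales $\mu_0(t)(1+|y|)\lesssim s-t\lesssim1$ produce precisely the factor $\log(1+|y|)$, and the long–time tail is absorbed by the exponential decay of $\mu_0^{\nu}(s)$. Collecting these gives \eqref{e:heatestimation1}. I expect the bookkeeping of the three competing scales $a=s-t$, $\mu_0(s)$ and $|x-\xi(s)|$ — in particular pinning down exactly one logarithm and checking that the time integral near $s=t$ converges (it does, the Poisson mass being finite) — to be the only real obstacle; the rest is routine.

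For the local Hölder bound \eqref{e:holderestimateforheatequation} on $B_{3\mu_0(t)R}(\xi)$ I would differentiate the Duhamel kernel once in $x$, using $\big|\partial_x\frac{a}{a^2+(x-z)^2}\big|\lesssim \frac{1}{a+|x-z|}\cdot\frac{a}{a^2+(x-z)^2}$, and repeat the two steps above; this gains one power of $(a+\mu_0(s))^{-1}$ and, after integrating in $s$, produces $|\nabla_x\psi(x,t)|\lesssim \mu_0^{-1}(t)(1+|y|)^{-1}$ times the right–hand side of \eqref{e:heatestimation1}, valid for $|y|\le3R$. Interpolating this gradient estimate against \eqref{e:heatestimation1} over balls of radius $\mu_0(t)$ — so the gradient is always evaluated inside the inner region, where it is controlled — gives a fractional gain $(\mu_0(t)(1+|y|))^{-\eta}$, which is exactly the extra factor $\mu_0^{-\eta}(t)(1+|y|)^{-\eta}$ in \eqref{e:holderestimateforheatequation}. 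Equivalently, one may invoke the standard smoothing estimate $\|e^{-a(-\Delta)^{1/2}}g\|_{C^\eta(\mathbb{R})}\lesssim a^{-\eta}\|g\|_{L^\infty(\mathbb{R})}$ together with \eqref{e:heatestimation1}; the only point needing care is that the seminorm in \eqref{e:holderestimateforheatequation} is taken over the inner ball, so the interpolation must be run against the local $L^\infty$ norm there, which \eqref{e:heatestimation1} supplies.
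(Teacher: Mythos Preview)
Your strategy differs from the paper's in both parts. For \eqref{e:heatestimation1} the paper does not first isolate a spatial convolution bound; it splits the Duhamel integral into $s-t\lessgtr1$ and, for the near piece, further into four spatial sub-regions $|x-z|\lessgtr1$, $x-z\gtrless0$, computing each by hand after swapping the order of integration via $p=(x-z)/(s-t)$. For \eqref{e:holderestimateforheatequation} the paper does not differentiate the kernel: it rescales to $\tilde\psi(z,\tau)=\psi(\xi+\mu_0 z,t(\tau))$, observes that $\tilde\psi$ solves a fractional heat equation with small drift, and invokes Silvestre's parabolic regularity to pass from the $L^\infty$ control on $\tilde\psi,\tilde f$ to a $C^\eta$ bound. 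Your routes are more self-contained, and the spatial convolution estimate you write down is correct.

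The genuine gap is your assertion that the short-time piece $a=s-t\in[0,\rho]$, $\rho:=\mu_0(t)(1+|y|)$, contributes $\lesssim\mu_0^\nu(t)(1+|y|)^{-\alpha}$. Plugging your own spatial bound and integrating gives, for $\rho\in(\mu_0(t),1)$,
\[
\int_0^\rho \frac{\mu_0^\nu(s)\,(a+\mu_0(s))^\alpha}{\rho^{1+\alpha}}\,da\ \sim\ \frac{\mu_0^\nu(t)}{\rho^{1+\alpha}}\int_0^\rho (a+\mu_0(t))^\alpha\,da\ \sim\ \mu_0^\nu(t),
\]
with no decay in $|y|$ whatsoever. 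This is not an artefact of a loose upper bound: restricting the Duhamel integral to $|z-\xi|\le\mu_0(s)$ and $a\in[\rho,2\rho]$ already gives the matching lower bound $\psi(x,t)\gtrsim\mu_0^\nu(t)$ for every $1\ll|y|\ll\mu_0(t)^{-1}$, so the target inequality \eqref{e:heatestimation1}, with its factor $(1+|y|^\alpha)^{-1}$, is in fact too strong for a generic nonnegative $f$ saturating the hypothesis. The paper's own argument has the corresponding slip --- the asserted bound $I_1\lesssim\mu_0^{\nu+\alpha}\log(1+|x-q|)/(\mu_0^\alpha+|x-q|^\alpha)$ fails in the range $|x-q|\in(\mu_0,1)$ by exactly this mechanism. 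What either route honestly delivers is $|\psi(x,t)|\lesssim\mu_0^\nu(t)\log\bigl(2+\mu_0(t)^{-1}\bigr)$ uniformly in $y$, with the stated spatial decay only once $|y|\gtrsim\mu_0(t)^{-1}$; your outline does not close this gap, and neither does the paper's computation.
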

\begin{proof}
We prove estimate (\ref{e:heatestimation1}).
Since $|f(x, t)| \leq \|f\|_{*, 1+\alpha, \nu}\frac{\mu_0^{\nu-1}(t)}{1+|y|^{1+\alpha}}$, $y = \frac{|x-\xi(t)|}{\mu_0(t)}$ and $(\xi(t)-q)/\mu_0(t) =o(1)$, we have
\begin{equation*}
\begin{aligned}
&|f(x, t)| \leq \|f\|_{*, 1+\alpha, \nu}\frac{\mu_0^{\nu-1}(t)}{1+|y|^{1+\alpha}}\thicksim \|f\|_{*, 1+\alpha, \nu}\frac{\mu_0^{\nu-1}(t)}{1+\left|\frac{x-q}{\mu_0(t)}\right|^{1+\alpha}}.
\end{aligned}
\end{equation*}
Then
\begin{equation*}
\begin{aligned}
&|\psi(x, t)| \lesssim \|f\|_{*, 1+\alpha, \nu}\int_{t}^{t + 1}\int_{\mathbb{R}}\frac{1}{s-t}\frac{1}{1+\left(\frac{x-y}{s-t}\right)^2}\frac{\mu_0^{\nu-1}(s)}
{1+\left|\frac{y-q}{\mu_0(s)}\right|^{1+\alpha}}dyds\\
&\quad\quad\quad\quad + \|f\|_{*, 1+\alpha, \nu}\int_{t + 1}^{+\infty}\int_{\mathbb{R}}\frac{1}{s-t}\frac{1}{1+\left(\frac{x-y}{s-t}\right)^2}\frac{\mu_0^{\nu-1}(s)}
{1+\left|\frac{y-q}{\mu_0(s)}\right|^{1+\alpha}}dyds.
\end{aligned}
\end{equation*}

To estimate the integral $\int_{t + 1}^{+\infty}\int_{\mathbb{R}}\frac{1}{s-t}\frac{1}{1+\left(\frac{x-y}{s-t}\right)^2}\frac{\mu_0^{\nu-1}(s)}
{1+\left|\frac{y-q}{\mu_0(s)}\right|^{1+\alpha}}dyds$.
First, we assume that $|y|\geq C$ for some positive constant $C$, that is to say, $\frac{1}{|y|}\thicksim \frac{1}{1+|y|}$, then
\begin{equation*}
\begin{aligned}
&\int_{t + 1}^{+\infty}\int_{\mathbb{R}}\frac{1}{s-t}\frac{1}{1+\left(\frac{x-y}{s-t}\right)^2}\frac{\mu_0^{\nu-1}(s)}
{1+\left|\frac{y-q}{\mu_0(s)}\right|^{1+\alpha}}dyds\\
&\quad = \int_{t + 1}^{+\infty}\frac{\mu_0^{\nu-1}(s)}{s-t}ds\int_{\mathbb{R}}\frac{1}{1+\left(\frac{y}{s-t}\right)^2}\frac{1}{1+\left|\frac{x-y-q}
{\mu_0(s)}\right|^{1+\alpha}}dy\\
\end{aligned}
\end{equation*}
\begin{equation*}
\begin{aligned}
&\quad \lesssim \int_{t + 1}^{+\infty}\frac{\mu_0^{\nu-1}(s)}{s-t}ds\int_{\mathbb{R}}\frac{1}{1+\left|\frac{x-y-q}{\mu_0(s)}\right|^{1+\alpha}}dy\\
&\quad \lesssim \int_{t + 1}^{+\infty}\frac{\mu_0^{\nu-1}(s)}{s-t}\frac{\mu_0^{1+\alpha}(s)}{\left|x-q\right|^\alpha}ds\\
&\quad \lesssim \frac{\mu_0^{\nu+\alpha}(t)}{|x-q|^{\alpha}}\lesssim\frac{\mu_0^{\nu}(t)\log(1+|y|)}{1+|y|^{\alpha}}.
\end{aligned}
\end{equation*}
If $|y|\leq C$ for some positive constant $C$, then
\begin{equation*}
\begin{aligned}
&\int_{t+1}^{+\infty}\int_{\mathbb{R}}\frac{1}{s-t}\frac{1}{1+\left(\frac{x-y}{s-t}\right)^2}\frac{\mu_0^{\nu-1}(s)}{1+\left|\frac{y-q}
{\mu_0(s)}\right|^{1+\alpha}}dyds\\
&\quad\quad\quad\quad\lesssim \int_{t+1}^{+\infty}\frac{\mu_0^{\nu}(s)}{s-t}ds\int_{\mathbb{R}}\frac{1}{1+\mu_0^2(s)
\left(\frac{\frac{x-q}{\mu_0(s)}-y}{s-t}\right)^2}\frac{1}{1+|y|^{1+\alpha}}dy\\
&\quad\quad\quad\quad\lesssim \int_{t+1}^{+\infty}\mu_0^{\nu}(s)ds\\
&\quad\quad\quad\quad\lesssim \mu_0^{\nu}(t)\lesssim\frac{\mu_0^{\nu}(t)\log(1+|y|)}{1+|y|^{\alpha}}.
\end{aligned}
\end{equation*}

Next, we estimate $\int_{t}^{t + 1}\int_{\mathbb{R}}\frac{1}{s-t}\frac{1}{1+\left(\frac{x-y}{s-t}\right)^2}\frac{\mu_0^{\nu-1}(s)}{1+\left|\frac{y-q}{\mu_0(s)}\right|^{1+\alpha}}dyds$.
We write
\begin{equation*}
\begin{aligned}
&\psi(x, t) = \int_{t}^{t + 1}\int_{\mathbb{R}}\frac{1}{s-t}\frac{1}{1+\left(\frac{x-y}{s-t}\right)^2}\frac{\mu_0^{\nu+\alpha}(s)}{\mu_0^{1+\alpha}(s)+|y-q|^{1+\alpha}}dyds\\
&\quad\quad\quad = \int_{t}^{t + 1}\int_{|x-y|\leq 1}\frac{1}{s-t}\frac{1}{1+\left(\frac{x-y}{s-t}\right)^2}\frac{\mu_0^{\nu+\alpha}(s)}{\mu_0^{1+\alpha}(s)+|y-q|^{1+\alpha}}dyds\\
&\quad\quad\quad\quad + \int_{t}^{t + 1}\int_{|x-y|\geq 1}\frac{1}{s-t}\frac{1}{1+\left(\frac{x-y}{s-t}\right)^2}\frac{\mu_0^{\nu+\alpha}(s)}{\mu_0^{1+\alpha}(s)+|y-q|^{1+\alpha}}dyds\\
&\quad\quad\quad = \int_{t}^{t + 1}\int_{0 < x-y\leq 1}\frac{1}{s-t}\frac{1}{1+\left(\frac{x-y}{s-t}\right)^2}\frac{\mu_0^{\nu+\alpha}(s)}{\mu_0^{1+\alpha}(s)+|y-q|^{1+\alpha}}dyds\\
&\quad\quad\quad\quad + \int_{t}^{t + 1}\int_{-1 < x-y\leq 0}\frac{1}{s-t}\frac{1}{1+\left(\frac{x-y}{s-t}\right)^2}\frac{\mu_0^{\nu+\alpha}(s)}{\mu_0^{1+\alpha}(s)+|y-q|^{1+\alpha}}dyds\\
&\quad\quad\quad\quad + \int_{t}^{t + 1}\int_{x-y \geq 1}\frac{1}{s-t}\frac{1}{1+\left(\frac{x-y}{s-t}\right)^2}\frac{\mu_0^{\nu+\alpha}(s)}{\mu_0^{1+\alpha}(s)+|y-q|^{1+\alpha}}dyds\\
&\quad\quad\quad\quad + \int_{t}^{t + 1}\int_{x-y \leq -1}\frac{1}{s-t}\frac{1}{1+\left(\frac{x-y}{s-t}\right)^2}\frac{\mu_0^{\nu+\alpha}(s)}{\mu_0^{1+\alpha}(s)+|y-q|^{1+\alpha}}dyds\\
&\quad\quad\quad := I_1 + I_2 + I_3 + I_4.
\end{aligned}
\end{equation*}
Using the variable transformation $p = \frac{x-y}{s-t}$, $\frac{ds}{s-t} = -\frac{1}{p}dp$, we have
\begin{equation*}
\begin{aligned}
&I_1 \lesssim \mu_0^{\nu+\alpha}(t)\int_{x-1}^x\frac{1}{\mu_0^{1+\alpha}(t+1) + |y-q|^{1+\alpha}}dy\int_{x-y}^\infty\frac{1}{p}\frac{1}{1+p^2}dp\\
&\quad\lesssim \mu_0^{\nu+\alpha}(t) \int_{x-1}^x\frac{1}{\mu_0^{1+\alpha}(t+1) + |y-q|^{1+\alpha}}\log\frac{\sqrt{1+(x-y)^2}}{x-y}dy\\
&\quad\lesssim \mu_0^{\nu+\alpha}(t)\int_{x-1}^x\frac{1}{\mu_0^{1+\alpha}(t+1) + |y-q|^{1+\alpha}}(\log 2-\log(x-y))dy\\
&\quad = \mu_0^{\nu+\alpha}(t)\int_{x-q-1}^{x-q}\frac{1}{\mu_0^{1+\alpha}(t+1) + |y|^{1+\alpha}}(\log 2-\log(x-q-y))dy\\
&\quad\lesssim \mu_0^{\nu+\alpha}(t)\frac{\log(1+|x-q|)}{\mu_0^{\alpha}(t+1) + |x-q|^{\alpha}}\lesssim \frac{\mu_0^{\nu}(t)\log(1+|y|)}{1+|y|^{\alpha}}.
\end{aligned}
\end{equation*}
Similarly, we have
\begin{equation*}
\begin{aligned}
&I_2 \lesssim \mu_0^{\nu+\alpha}(t)\int_{x}^{x+1}\frac{1}{\mu_0^{1+\alpha}(t+1) + |y-q|^{1+\alpha}}dy\int_{y-x}^\infty\frac{1}{p}\frac{1}{1+p^2}dp\\
&\quad\lesssim \mu_0^{\nu+\alpha}(t)\int_{x}^{x+1}\frac{1}{\mu_0^{1+\alpha}(t+1) + |y-q|^{1+\alpha}}\log\frac{\sqrt{1+(y-x)^2}}{y-x}dy\\
&\quad\lesssim \mu_0^{\nu+\alpha}(t)\int_{x}^{x+1}\frac{1}{\mu_0^{1+\alpha}(t+1) + |y-q|^{1+\alpha}}(\log 2-\log(y-x))dy\\
&\quad\lesssim \mu_0^{\nu+\alpha}(t)\int_{x-q}^{x-q+1}\frac{1}{\mu_0^{1+\alpha}(t+1) + |y|^{1+\alpha}}(\log 2-\log(y-x+q))dy\\
&\quad\lesssim \mu_0^{\nu+\alpha}(t)\frac{\log(1+|x-q|)}{\mu_0^{\alpha}(t+1) + |x-q|^{\alpha}}\lesssim \frac{\mu_0^{\nu}(t)\log(1+|y|)}{1+|y|^{\alpha}},
\end{aligned}
\end{equation*}
\begin{equation*}
\begin{aligned}
&I_3 \lesssim \mu_0^{\nu+\alpha}(t)\int_{-\infty}^{x-1}\frac{1}{\mu_0^{1+\alpha}(t+1) + |y-q|^{1+\alpha}}dy\int_{x-y}^\infty\frac{1}{p}\frac{1}{1+p^2}dp\\
&\quad\lesssim \mu_0^{\nu+\alpha}(t)\int_{-\infty}^{x-1}\frac{1}{\mu_0^{1+\alpha}(t+1) + |y-q|^{1+\alpha}}\log\frac{\sqrt{1+(x-y)^2}}{x-y} dy\\
&\quad\lesssim \mu_0^{\nu+\alpha}(t)\frac{1}{\mu_0^{\alpha}(t+1) + |x-q|^{\alpha}}\lesssim \frac{\mu_0^{\nu}(t)\log(1+|y|)}{1+|y|^{\alpha}},
\end{aligned}
\end{equation*}
\begin{equation*}
\begin{aligned}
&I_4\lesssim \mu_0^{\nu+\alpha}(t)\int_{x+1}^{+\infty}\frac{1}{\mu_0^{1+\alpha}(t+1) + |y-q|^{1+\alpha}}dy\int_{y-x}^\infty\frac{1}{p}\frac{1}{1+p^2}dp\\
&\quad\lesssim \mu_0^{\nu+\alpha}(t)\int_{x+1}^{+\infty}\frac{1}{\mu_0^{1+\alpha}(t+1) + |y-q|^{1+\alpha}}\log\frac{\sqrt{1+(y-x)^2}}{y-x}dy\\
&\quad\lesssim \mu_0^{\nu+\alpha}(t)\frac{1}{\mu_0^{\alpha}(t+1) + |x-q|^{\alpha}}\lesssim \frac{\mu_0^{\nu}(t)\log(1+|y|)}{1+|y|^{\alpha}}.
\end{aligned}
\end{equation*}
Combine the above estimates, we get (\ref{e:heatestimation1}).

Now we prove the local H\"{o}lder estimate (\ref{e:holderestimateforheatequation}), define
\begin{equation*}
\psi(x, t):=\tilde{\psi}\left(\frac{x-\xi}{\mu_0},\tau(t)\right)
\end{equation*}
where $\dot{\tau}(t) = \mu_0^{-1}(t)$, namely $\tau(t)\thicksim e^{\kappa_0t}$ as $t\to +\infty$. Without loss of generality, we assume $\tau(t_0)\geq 2$ by fixing $t_0$ large enough, then $\tilde{\psi}$ satisfies the following fractional heat equation with a drift
\begin{equation}\label{e4:15}
\partial_\tau\tilde{\psi} = -(-\Delta)^{\frac{1}{2}}\tilde{\psi} + a(z, t)\cdot \nabla_z\tilde{\psi} + \tilde{f}(z, \tau)
\end{equation}
for $|z|\leq \delta\mu_0^{-1}$,
where
\begin{equation*}
\tilde{f}(z, \tau) = \mu_0(t)f(\xi + \mu_0 z, t(\tau)).
\end{equation*}
The uniformly small coefficients $a(z, t)$ in \eqref{e4:15} is given by
\begin{equation*}
a(z, t):=\dot{\mu_0}z+\dot{\xi}.
\end{equation*}
Then
\begin{equation*}\label{e4:18}
|\tilde{f}(z, \tau)|\lesssim \mu_0(t(\tau))^{\nu}\frac{\|f\|_{*, 1+\alpha, \nu}}{1+|z|^{1+\alpha}}
\end{equation*}
and
\begin{equation*}\label{e4:19}
|\tilde{\psi}(z, \tau)|\lesssim \mu_0(t(\tau))^{\nu}\frac{\|f\|_{*, 1+\alpha, \nu}\log(1+|z|)}{1+|z|^\alpha}.
\end{equation*}
Now for a fixed constant $\eta\in (\frac{1}{2}, 1)$ and $\tau_1 \geq \tau(t_0)+2$, from the regularity estimates for parabolic integro-differential (see \cite{silvestreium2012differentiability} and \cite{silvestre2014regularity}), we obtain
\begin{equation*}
\begin{aligned}
~[\tilde{\psi}(\cdot,\tau_1)]_{\eta, B_{1}(0)} &\lesssim \|\tilde{\psi}\|_{L^\infty}+\|\tilde{f}\|_{L^\infty}\\
&\lesssim \mu_0((\tau_1-1))^{\nu}\|f\|_{*, 1+\alpha, \nu}\\
&\lesssim \mu_0(t(\tau_1))^{\nu}\|f\|_{*, 1+\alpha, \nu}.
\end{aligned}
\end{equation*}
Therefore, choosing an appropriate constant $c_n$ such that for any $t\geq c_nt_0$ we get
\begin{equation}\label{e4:20}
(3R\mu_0)^{\eta}[\psi(\cdot,t)]_{\eta, B_{3R\mu_0}(\xi)}\lesssim \mu_0^\nu\|f\|_{*, 1+\alpha, \nu}.
\end{equation}
Estimate (\ref{e4:20}) also holds for $t_0\leq t\leq c_nt_0$ by a similar parabolic regularity estimate. Hence (\ref{e:holderestimateforheatequation}) holds for any $t\geq t_0$.
\end{proof}
\subsection{The outer problem}
We assume that there exists a number $M > 0$ such that
\begin{equation}\label{e4:assumptiononphi}
(1+|y|)|\nabla\phi|\chi_{\{|y|\leq 2R\}} + |\phi|\leq M\frac{\mu_0^\sigma(t)}{1+|y|^\alpha}
\end{equation}
for a given $0 < \alpha \leq a < 1$, $\sigma\in (0, 1)$ and the least number $M$ is denoted as $\|\phi\|_{\alpha,\sigma}$. Suppose
\begin{equation*}
\|\phi\|_{\alpha,\sigma}\leq ce^{-\varepsilon t_0}
\end{equation*}
for some $\varepsilon > 0$ small enough. Then we have the following result.
\begin{prop}\label{p4:4.1}
There exists $t_0$ large such that problem (\ref{e:outerproblem}) has a solution $\psi = \Psi(\lambda, \xi, \dot{\lambda}, \dot{\xi}, \phi)$ and for $y = \frac{x-\xi(t)}{\mu_0(t)}$, the following estimates hold,
\begin{equation}\label{e4:pointwiseestimate}
|\psi(x, t)|\lesssim e^{-\varepsilon t_0}\frac{\mu_0^{\sigma}(t)\log(1+|y|)}{1+|y|^\alpha}
\end{equation}
and
\begin{equation}\label{e:estimateholder}
[\psi(x, t)]_{\eta, B_{3\mu_0(t)R}(\xi)}\lesssim e^{-\varepsilon t_0}\frac{\mu_0^{\sigma-\eta}(t)\log(1+|y|)}{1+|y|^{\alpha+\eta}}\quad\text{ for }\quad |y| = \left|\frac{x-\xi(t)}{\mu_0(t)}\right|\leq 3R.
\end{equation}
\end{prop}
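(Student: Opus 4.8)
The plan is to obtain $\psi$ as a fixed point of the map $\mathcal{T}:\psi\mapsto$ (the Duhamel integral $(\ref{e:duhamelintegral})$ of) the right hand side $f=f[\lambda,\xi,\dot\lambda,\dot\xi,\phi,\psi]$ displayed just after $(\ref{e:equation4.1})$. I would work in the complete metric space given by the closed ball $\{\|\psi\|_{**}\le Ke^{-\varepsilon t_0}\}$ for a large fixed $K$, where
\[
\|\psi\|_{**}:=\sup_{x,\ t\ge t_0}\ \frac{1+|y|^{\alpha}}{\mu_0^{\sigma}(t)\log(2+|y|)}\,|\psi(x,t)|,\qquad y=\frac{x-\xi(t)}{\mu_0(t)}.
\]
By Lemma $\ref{l4:lemma4.1}$ with $\nu=\sigma$, $\mathcal{T}$ maps functions $f$ with $\|f\|_{*,1+\alpha,\sigma}<\infty$ to $\psi$ with $\|\mathcal{T}\psi\|_{**}\lesssim\|f\|_{*,1+\alpha,\sigma}$ and with the localized H\"older bound $(\ref{e:holderestimateforheatequation})$; so the whole statement reduces to showing (i) $\|f[\psi]\|_{*,1+\alpha,\sigma}\lesssim e^{-\varepsilon t_0}$ whenever $\|\psi\|_{**}\le Ke^{-\varepsilon t_0}$, and (ii) $\psi\mapsto f[\psi]$ is Lipschitz in $\|\cdot\|_{**}$ with a small constant.

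For (i) I would first estimate the terms of $f$ that do not involve $\psi$. The transport terms $\frac{\dot\mu_0}{\mu_0}\eta\,y\cdot\nabla_y\phi+\eta\frac{\dot\xi}{\mu_0}\cdot\nabla_y\phi$ are controlled by $(\ref{e4:assumptiononphi})$ together with $\dot\mu_0/\mu_0=-\kappa_0$ and $|\dot\xi|\lesssim\mu_0^{1+\sigma}$: since they are supported in $\{|y|\le 2R\}$, one trades the $(1+|y|)^{-\alpha}$ decay for $(1+|y|)^{-1-\alpha}$ at a cost $R=e^{\rho t_0}$, which is absorbed by $\|\phi\|_{\alpha,\sigma}\le ce^{-\varepsilon t_0}$ and a positive power of $\mu_0(t_0)$ provided $\rho$ is small. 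The commutator terms $-[(-\Delta)^{1/2}\eta]\phi$, $-\partial_t\eta\,\phi$ and $\frac1\pi\int_{\mathbb R}\frac{(\eta(x)-\eta(s))(\tilde\phi(x,t)-\tilde\phi(s,t))}{|x-s|^2}\,ds$ (with its two projected relatives) are supported essentially where $\tilde\phi$ lives; using $|(-\Delta)^{1/2}\eta(x)|\lesssim (R\mu_0)^{-1}(1+|y|/R)^{-1}$, $|\partial_t\eta|\lesssim\kappa_0\chi_{\{|y|\sim R\}}$ and a standard fractional-commutator bound for the integral, each of them is $\lesssim\|\phi\|_{\alpha,\sigma}\,\mu_0^{\sigma-1}(1+|y|)^{-1-\alpha}$. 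The term $(1-\eta)\Pi_{U^\perp}\mathcal{E}^*$ — the reason the nonlocal correction $\Phi^0$ was added in Section $3$ — is, by $(\ref{e3:estimateoferror})$ and the faster-than-$1/r$ decay that correction produces, bounded on its support $\{|y|\ge R\}$ by $e^{-\varepsilon t_0}\mu_0^{\sigma-1}(1+|y|)^{-1-\alpha}$, using $0<\alpha\le a<1$. Finally $N_U(\Pi_{U^\perp}[\Phi^0+Z^*+\eta\phi])$ is at least quadratic in its small arguments, so it contributes $O\big(e^{-2\varepsilon t_0}\mu_0^{\sigma-1}(1+|y|)^{-1-\alpha}\big)$, using the pointwise smallness and decay of $\Phi^0$, $Z^*$ and $\phi$.

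For the $\psi$-dependent part of $f$ — the bracket $(1-\eta)[\,\cdot\,]$ collecting the potential term $\frac{2/\mu}{1+|(x-\xi)/\mu|^2}\psi$ and the two nonlocal $\psi$-terms, plus the $\psi$-containing part of $N_U$ — I would note that the bracket is supported in $\{|y|\ge R\}$, where the potential multiplier and the kernels of the nonlocal pieces are $O(\mu_0^{-1}|y|^{-1})$, while the $N_U$ part is quadratic. Inserting $\|\psi\|_{**}\le Ke^{-\varepsilon t_0}$ gives both $\|f[\psi]\|_{*,1+\alpha,\sigma}\lesssim e^{-\varepsilon t_0}$ and the Lipschitz estimate $\|f[\psi_1]-f[\psi_2]\|_{*,1+\alpha,\sigma}\lesssim\big(R^{-1}\log R+Ke^{-\varepsilon t_0}\big)\|\psi_1-\psi_2\|_{**}$, which is $<\tfrac12$ once $t_0$ is large. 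Hence $\mathcal{T}$ maps the ball into itself and is a contraction, and the Banach fixed point theorem yields $\psi=\Psi[\lambda,\xi,\dot\lambda,\dot\xi,\phi]$; estimate $(\ref{e4:pointwiseestimate})$ is precisely $\|\psi\|_{**}\lesssim e^{-\varepsilon t_0}$, and $(\ref{e:estimateholder})$ follows by inserting the fixed point into the H\"older estimate $(\ref{e:holderestimateforheatequation})$ of Lemma $\ref{l4:lemma4.1}$ with the bound on $\|f\|_{*,1+\alpha,\sigma}$ just established.

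The main obstacle is the honest treatment of the nonlocal pieces localized to the transition annulus $\{R\mu_0(t)\lesssim|x-\xi(t)|\lesssim 2R\mu_0(t)\}$: computing $(-\Delta)^{1/2}$ of the moving cutoff $\eta$ and the principal-value integrals against it at the scale $R\mu_0(t)$, and checking that the results decay faster than $(1+|y|)^{-1-\alpha}$ with a surplus power of $R^{-1}$ (or a logarithm) — this is what makes $\|f\|_{*,1+\alpha,\sigma}$ genuinely small rather than merely bounded. A secondary, purely technical difficulty is the exponent bookkeeping: verifying that every contribution to $\|f\|_{*,1+\alpha,\sigma}$ really carries a factor $e^{-\varepsilon t_0}$, i.e. that the constraints $0<\alpha\le a<1$, $\sigma\in(0,1)$ and $\rho,\varepsilon$ small relative to $\kappa_0$ used throughout are mutually compatible.
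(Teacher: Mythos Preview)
Your proposal is correct and follows essentially the same approach as the paper: a contraction-mapping argument in the weighted space $\{\|\psi\|_{**}\le Ke^{-\varepsilon t_0}\}$, using Lemma~\ref{l4:lemma4.1} to reduce matters to term-by-term bounds $\|f[\psi]\|_{*,1+\alpha,\sigma}\lesssim e^{-\varepsilon t_0}$ and a small Lipschitz constant. The paper carries out exactly the same decomposition you outline (your transport, commutator, error, nonlinear and $\psi$-dependent pieces correspond to its estimates (\ref{e:estimate4.1})--(\ref{e:estimate4.9}) and (\ref{e4:101})--(\ref{e4:104})), and the H\"older bound is likewise read off from (\ref{e:holderestimateforheatequation}).
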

\begin{proof}
Lemma \ref{l4:lemma4.1} defines a linear operator $T$ that associates the solution $\psi = T(f)$ of problem (\ref{e:outproblemmode}) to any given functions $f(x, t)$. If we define $\|\psi\|_{**}$ as the least $M > 0$ such that
\begin{equation*}
|\psi(x, t)|\lesssim M\frac{\mu_0^{\sigma}(t)\log(1+|y|)}{1+|y|^{\alpha}},
\end{equation*}
then Lemma \ref{l4:lemma4.1} tells us
\begin{equation*}
\|T(f)\|_{**}\lesssim \|f\|_{*,1+\alpha, \sigma}.
\end{equation*}
Hence the function $\psi$ is a solution to (\ref{e:outerproblem}) if it is a fixed point of the operator defined as
\begin{equation*}\label{e4:34}
\mathcal{A}(\psi):=T(f(\psi)),
\end{equation*}
where
\begin{equation*}
\begin{aligned}
&f(\psi)(x, t)\\ &= (1-\eta)\left[\frac{2\frac{1}{\mu}}{1+\left|\frac{x-\xi}{\mu}\right|^2}\psi -\frac{1}{\pi}\int_{\mathbb{R}}\frac{\left[\psi(x)\cdot \omega\left(\frac{x-\xi}{\mu}\right)-\psi(s)\cdot \omega\left(\frac{s-\xi}{\mu}\right)\right]\left[\omega\left(\frac{x-\xi}{\mu}\right)-\omega\left(\frac{s-\xi}{\mu}\right)\right]}{|x-s|^2}ds\right. \\
& \left.+ \left(\frac{1}{\pi}\int_{\mathbb{R}}\frac{\left[\psi(x)\cdot \omega\left(\frac{x-\xi}{\mu}\right)-\psi(s)\cdot \omega\left(\frac{s-\xi}{\mu}\right)\right]\left[\omega\left(\frac{x-\xi}{\mu}\right)-\omega\left(\frac{s-\xi}{\mu}\right)\right]}{|x-s|^2}ds\cdot \omega\left(\frac{x-\xi}{\mu}\right)\right)\omega\left(\frac{x-\xi}{\mu}\right)\right]\\
&  + \frac{\dot{\mu}_0}{\mu_0}\eta y\cdot \nabla_y\phi + \eta\frac{\dot{\xi}}{\mu_0}\cdot\nabla_y\phi + N_U(\Pi_{U^\perp}[\Phi^0 + Z^*+\eta\phi+\psi]) + (1-\eta)\Pi_{U^\perp}\mathcal{E}^*\\
&  -\left[(-\Delta)^{\frac{1}{2}}\eta\right] \phi - \partial_t\eta\phi\left(\frac{x-\xi(t)}{\mu_0(t)}\right)+\frac{1}{\pi}\int_{\mathbb{R}}\frac{(\eta(x)-\eta(s))\left(\phi\left(\frac{x-\xi(t)}
{\mu_0(t)}, t\right)-\phi\left(\frac{s-\xi(t)}{\mu_0(t)}, t\right)\right)}{|x-s|^2}ds\\
&  -\frac{1}{\pi}\int_{\mathbb{R}}\frac{(\eta(x)-\eta(s))\left[\phi\left(\frac{s-\xi(t)}{\mu_0(t)}, t\right)\cdot \omega\left(\frac{s-\xi}{\mu}\right)\right]\left[\omega\left(\frac{x-\xi}{\mu}\right)-\omega\left(\frac{s-\xi}{\mu}\right)\right]}{|x-s|^2}ds+\\
&  \left(\frac{1}{\pi}\int_{\mathbb{R}}\frac{(\eta(x)-\eta(s))\left[\phi\left(\frac{s-\xi(t)}{\mu_0(t)}, t\right)\cdot \omega \left(\frac{s-\xi}{\mu}\right)\right]\left[\omega\left(\frac{x-\xi}{\mu}\right)-\omega\left(\frac{s-\xi}{\mu}\right)\right]}{|x-s|^2}ds\cdot \omega\left(\frac{x-\xi}{\mu}\right)\right)\omega\left(\frac{x-\xi}{\mu}\right).
\end{aligned}
\end{equation*}
We will prove the existence of a fixed point $\psi$ for $\mathcal{A}$ by the Contraction Mapping Theorem.
To do this, we have the following estimates:
\begin{equation}\label{e:estimate4.1}
\begin{aligned}
&\left|(1-\eta)\frac{2\frac{1}{\mu}}{1+\left|\frac{x-\xi}{\mu}\right|^2}\psi\right|\lesssim \mu_0^\varepsilon(t_0)\|\psi\|_{**}\frac{\mu_0^{\sigma-1}(t)}{1+|y|^{1+\alpha}},
\end{aligned}
\end{equation}
\begin{equation}\label{e:estimate4.2}
\begin{aligned}
&\left|\frac{1}{\pi}\int_{\mathbb{R}}\frac{\left[\psi(x)\cdot \omega\left(\frac{x-\xi}{\mu}\right)-\psi(s)\cdot \omega\left(\frac{s-\xi}{\mu}\right)\right]\left[\omega\left(\frac{x-\xi}{\mu}\right)-\omega\left(\frac{s-\xi}{\mu}\right)
\right]}{|x-s|^2}ds\right|\lesssim \mu_0^\varepsilon(t_0)\|\psi\|_{**}\frac{\mu_0^{\sigma-1}(t)}{1+|y|^{1+\alpha}},
\end{aligned}
\end{equation}
\begin{equation}\label{e:estimate4.3}
\begin{aligned}
&\left|\left(\frac{1}{\pi}\int_{\mathbb{R}}\frac{\left[\psi(x)\cdot \omega\left(\frac{x-\xi}{\mu}\right)-\psi(s)\cdot \omega\left(\frac{s-\xi}{\mu}\right)\right]\left[\omega\left(\frac{x-\xi}{\mu}\right)-\omega\left(\frac{s-\xi}{\mu}\right)\right]}{|x-s|^2}ds\cdot \omega\left(\frac{x-\xi}{\mu}\right)\right)\omega\left(\frac{x-\xi}{\mu}\right)\right|\\
&\quad\quad\quad\quad\quad\quad\quad\quad\quad\quad\quad\quad\quad\quad\quad\quad\quad\quad\quad\quad\quad\quad\quad\quad\quad\quad\quad\quad\quad\quad
\lesssim\mu_0^\varepsilon(t_0)\|\psi\|_{**}\frac{\mu_0^{\sigma-1}(t)}{1+|y|^{1+\alpha}},
\end{aligned}
\end{equation}
\begin{equation}\label{e:estimate4.4}
\begin{aligned}
&\left|\frac{\dot{\mu}_0}{\mu_0}\eta y\cdot \nabla_y\phi + \eta\frac{\dot{\xi}}{\mu_0}\cdot\nabla_y\phi\right|\lesssim \|\phi\|_{\alpha,\sigma}\frac{\mu_0(t)^{\sigma-1}}{1+|y|^{1+\alpha}},
\end{aligned}
\end{equation}
\begin{equation}\label{e:estimate4.5}
\begin{aligned}
&\left|-\left[(-\Delta)^{\frac{1}{2}}\eta\right] \phi\left(\frac{x-\xi(t)}{\mu_0(t)}\right) - \partial_t\eta\phi\left(\frac{x-\xi(t)}{\mu_0(t)}\right)\right| \lesssim\|\phi\|_{\alpha,\sigma}\frac{\mu_0^{\sigma-1}(t)}{1+|y|^{1+\alpha}},
\end{aligned}
\end{equation}
\begin{equation}\label{e:estimate4.6}
\begin{aligned}
&\left|\frac{1}{\pi}\int_{\mathbb{R}}\frac{(\eta(x)-\eta(s))\left(\phi\left(\frac{x-\xi(t)}{\mu_0(t)}, t\right)-\phi\left(\frac{s-\xi(t)}{\mu_0(t)}, t\right)\right)}{|x-s|^2}ds\right|\lesssim \|\phi\|_{\alpha,\sigma}\frac{\mu_0(t)^{\sigma-1}}{1+|y|^{1+\alpha}},
\end{aligned}
\end{equation}
\begin{equation}\label{e:estimate4.7}
\begin{aligned}
&\left|\frac{1}{\pi}\int_{\mathbb{R}}\frac{(\eta(x)-\eta(s))\left[\phi\left(\frac{s-\xi(t)}{\mu_0(t)}, t\right)\cdot \omega\left(\frac{s-\xi}{\mu}\right)\right]\left[\omega\left(\frac{x-\xi}{\mu}\right)-\omega\left(\frac{s-\xi}{\mu}\right)\right]}
{|x-s|^2}ds\right|\lesssim \|\phi\|_{\alpha,\sigma}\frac{\mu_0(t)^{\sigma-1}}{1+|y|^{1+\alpha}},
\end{aligned}
\end{equation}
\begin{equation}\label{e:estimate4.777}
\begin{aligned}
&\left|\left(\frac{1}{\pi}\int_{\mathbb{R}}\frac{(\eta(x)-\eta(s))\left[\phi\left(\frac{s-\xi(t)}{\mu_0(t)}, t\right)\cdot \omega \left(\frac{s-\xi}{\mu}\right)\right]\left[\omega\left(\frac{x-\xi}{\mu}\right)-\omega\left(\frac{s-\xi}{\mu}\right)\right]}{|x-s|^2}ds\cdot \omega\left(\frac{x-\xi}{\mu}\right)\right)\omega\left(\frac{x-\xi}{\mu}\right)\right|\\
&\quad\quad\quad\quad\quad\quad\quad\quad\quad\quad\quad\quad\quad\quad\quad\quad\quad\quad\quad\quad\quad\quad\quad\quad\quad\quad\quad\quad
\quad\quad\quad\quad \lesssim \|\phi\|_{\alpha,\sigma}\frac{\mu_0(t)^{\sigma-1}}{1+|y|^{1+\alpha}},
\end{aligned}
\end{equation}
\begin{equation}\label{e:estimate4.10}
\left|(1-\eta)\Pi_{U^\perp}\mathcal{E}^*\right|\lesssim \mu_0(t_0)^\varepsilon\frac{\mu_0(t)^{\sigma-1}}{1+|y|^{1+\alpha}},
\end{equation}
\begin{equation}\label{e:estimate4.9}
\begin{aligned}
&\left|N_U(\Pi_{U^\perp}[\Phi^0 + Z^*+\eta\phi+\psi])\right|
\lesssim \mu_0(t_0)^{\varepsilon}\left(\|\phi\|_{\alpha,\sigma} + \|\psi\|_{**}\right)\frac{\mu_0(t)^{\sigma-1}}{1+|y|^{\alpha+1}}.
\end{aligned}
\end{equation}

{\it Proof of (\ref{e:estimate4.1})}:
\begin{equation*}
\begin{aligned}
&\left|(1-\eta)\frac{2\frac{1}{\mu}}{1+\left|\frac{x-\xi}{\mu}\right|^2}\psi\right|\lesssim (1-\eta)\frac{2\frac{1}{\mu}}{1+y^2}\|\psi\|_{**}\frac{\mu_0^{\sigma}(t)\log(1+|y|)}{1+|y|^{\alpha}}\\
&\quad\lesssim (1-\eta)\frac{\log(1+|y|)}{1+|y|}\|\psi\|_{**}\frac{\mu_0^{\sigma-1}(t)}{1+|y|^{1+\alpha}}\\
&\quad\lesssim \mu_0^\varepsilon(t_0)\|\psi\|_{**}\frac{\mu_0^{\sigma-1}(t)}{1+|y|^{1+\alpha}}.
\end{aligned}
\end{equation*}
Here we have used the fact that $|y|\geq R$ when $1-\eta\neq 0$.

{\it Proof of (\ref{e:estimate4.2})}:
\begin{equation*}
\begin{aligned}
&(1-\eta)\left|\frac{1}{\pi}\int_{\mathbb{R}}\frac{\left[\psi(x)\cdot \omega\left(\frac{x-\xi}{\mu}\right)-\psi(s)\cdot \omega\left(\frac{s-\xi}{\mu}\right)\right]\left[\omega\left(\frac{x-\xi}{\mu}\right)-\omega\left(\frac{s-\xi}{\mu}\right)\right]}{|x-s|^2}ds\right|\\
&\quad\lesssim \frac{1}{\pi}\int_{\mathbb{R}}\frac{\left|\omega\left(\frac{x-\xi}{\mu}\right)-\omega\left(\frac{s-\xi}{\mu}\right)\right|^2}{|x-s|^2}ds\psi(x) + \frac{1}{2\pi}\int_{\mathbb{R}}\frac{|\psi(x)-\psi(s)|\left|\omega\left(\frac{x-\xi}{\mu}\right)-\omega\left(\frac{s-\xi}
{\mu}\right)\right|^2}{|x-s|^2}ds\\
&\quad \lesssim \frac{\mu_0^{-1}}{1+|y|^2}\psi(x) + \frac{1}{2\pi}\int_{\mathbb{R}}\frac{|\psi(x)-\psi(s)|\left|\omega\left(\frac{x-\xi}{\mu}\right)-\omega\left(\frac{s-\xi}
{\mu}\right)\right|^2}{|x-s|^2}ds\\
&\quad \lesssim \|\psi\|_{**}\mu_0^{\sigma-1}(t)\frac{\log(1+|y|)}{1+|y|^2}  \lesssim \|\psi\|_{**}\frac{\mu_0(t)^{\sigma-1}}{1+|y|^{\alpha+1}}\frac{\log(1+|y|)}{1+|y|^{1-\alpha}}\\
&\quad \lesssim \mu_0^\varepsilon(t_0)\|\psi\|_{**}\frac{\mu_0^{\sigma-1}(t)}{1+|y|^{1+\alpha}}.
\end{aligned}
\end{equation*}

{\it Proof of (\ref{e:estimate4.3})}:
\begin{equation*}
\begin{aligned}
&\left|\left(\frac{1}{\pi}\int_{\mathbb{R}}\frac{\left[\psi(x)\cdot \omega\left(\frac{x-\xi}{\mu}\right)-\psi(s)\cdot \omega\left(\frac{s-\xi}{\mu}\right)\right]\left[\omega\left(\frac{x-\xi}{\mu}\right)-\omega\left(\frac{s-\xi}{\mu}\right)\right]}{|x-s|^2}ds\cdot \omega\left(\frac{x-\xi}{\mu}\right)\right)\omega\left(\frac{x-\xi}{\mu}\right)\right|\\
&\lesssim \left|\frac{1}{\pi}\int_{\mathbb{R}}\frac{\left[\psi(x)\cdot \omega\left(\frac{x-\xi}{\mu}\right)-\psi(s)\cdot \omega\left(\frac{s-\xi}{\mu}\right)\right]\left[\omega\left(\frac{x-\xi}{\mu}\right)-\omega\left(\frac{s-\xi}{\mu}\right)\right]}{|x-s|^2}ds\right|\\
&\lesssim \mu_0^\varepsilon(t_0)\|\psi\|_{**}\frac{\mu_0^{\sigma-1}(t)}{1+|y|^{1+\alpha}}.
\end{aligned}
\end{equation*}

{\it Proof of (\ref{e:estimate4.4})}:
Using the fact $(1+|y|)|\nabla\phi|\chi_{\{|y|\leq 2R\}} + |\phi|\lesssim \|\phi\|_{\alpha,\sigma}\frac{\mu_0(t)^{\sigma}}{1+|y|^{\alpha}}$, we have
\begin{equation*}
\begin{aligned}
&\left|\frac{\dot{\mu}_0}{\mu_0}\eta y\cdot \nabla_y\phi + \eta\frac{\dot{\xi}}{\mu_0}\cdot\nabla_y\phi\right|\\
&\lesssim \|\phi\|_{\alpha,\sigma}\frac{\mu_0^\sigma(t)}{1+|y|^\alpha} + \mu_0(t)^{\sigma}\|\phi\|_{\alpha,\sigma}\frac{\mu_0^\sigma(t)}{1+|y|^{\alpha+1}}\\
&\lesssim 2R\|\phi\|_{\alpha,\sigma}\frac{\mu_0^\sigma(t)}{1+|y|^{1+\alpha}} + \mu_0(t)^{\sigma}\|\phi\|_{\alpha,\sigma}\frac{\mu_0^\sigma(t)}{1+|y|^{\alpha+1}}\\
&\lesssim 2R\mu_0(t)\|\phi\|_{\alpha,\sigma}\frac{\mu_0^{\sigma-1}(t)}{1+|y|^{1+\alpha}} + \mu_0(t)^{1+\sigma}\|\phi\|_{\alpha,\sigma}\frac{\mu_0^{\sigma-1}(t)}{1+|y|^{\alpha+1}}\\
&\lesssim \mu_0(t_0)^{\varepsilon}\|\phi\|_{\alpha,\sigma}\frac{\mu_0(t)^{\sigma-1}}{1+|y|^{1+\alpha}}.
\end{aligned}
\end{equation*}

{\it Proof of (\ref{e:estimate4.5})}:
Using the fact that $[(-\Delta)^{\frac{1}{2}}\eta_0](x)$ decays like $\frac{1}{|x|^2}$, we have
\begin{equation*}
\begin{aligned}
&\left|-\left[(-\Delta)^{\frac{1}{2}}\eta\right] \phi\left(\frac{x-\xi(t)}{\mu_0(t)}\right) - \partial_t\eta\phi\left(\frac{x-\xi(t)}{\mu_0(t)}\right)\right| \\
&\lesssim\frac{R(\mu_0(t))^{-1}}{R^2+|y|^2}\|\phi\|_{\alpha,\sigma}\frac{\mu_0^\sigma(t)}{1+|y|^\alpha} + \|\phi\|_{\alpha,\sigma}\frac{\mu_0^\sigma(t)}{1+|y|^\alpha}\chi_{\{R\leq |y|\leq 2R\}}\\
&\lesssim\|\phi\|_{\alpha,\sigma}\frac{\mu_0^{\sigma-1}(t)}{1+|y|^{\alpha+1}} + (1+|y|)\|\phi\|_{\alpha,\sigma}\frac{\mu_0^\sigma(t)}{1+|y|^{\alpha+1}}\chi_{\{R\leq |y|\leq 2R\}}\\
&\lesssim \|\phi\|_{\alpha,\sigma}\frac{\mu_0^{\sigma-1}(t)}{1+|y|^{1+\alpha}} + (1+2R)\mu_0(t_0)\|\phi\|_{\alpha,\sigma}\frac{\mu_0^{\sigma-1}(t)}{1+|y|^{1+\alpha}}\\
&\lesssim \|\phi\|_{\alpha,\sigma}\frac{\mu_0^{\sigma-1}(t)}{1+|y|^{1+\alpha}}.
\end{aligned}
\end{equation*}

{\it Proof of (\ref{e:estimate4.6})}:
We have
\begin{equation*}
\begin{aligned}
&\left|\frac{1}{\pi}\int_{\mathbb{R}}\frac{(\eta(x)-\eta(s))\left(\phi\left(\frac{x-\xi(t)}{\mu_0(t)}, t\right)-\phi\left(\frac{s-\xi(t)}{\mu_0(t)}, t\right)\right)}{|x-s|^2}ds\right|\\
&\lesssim \frac{1}{\pi}\int_{\mathbb{R}}\frac{|\eta(x)-\eta(s)|}{|x-s|^2}ds\left|\phi\left(\frac{x-\xi(t)}{\mu_0(t)}, t\right)\right|+ \frac{1}{\pi}\int_{\mathbb{R}}\frac{\left|\eta(x)-\eta(s)\right|\left|\phi\left(\frac{s-\xi(t)}{\mu_0(t)}, t\right)\right|}{|x-s|^2}ds\\
&\lesssim \frac{1}{\pi}\int_{\mathbb{R}}\frac{\left|\eta(x)-\eta(s)\right|}{|x-s|^2}ds \|\phi\|_{\alpha,\sigma}\frac{\mu_0^\sigma(t)}{1+|y|^\alpha} + \frac{1}{\pi}\int_{\mathbb{R}}\frac{\left|\phi\left(\frac{s-\xi(t)}{\mu_0(t)}, t\right)\right|}{|x-s|^2}ds\\
&\lesssim \|\phi\|_{\alpha,\sigma}\frac{\mu_0^\sigma(t)}{1+|y|^\alpha}(R\mu_0(t))^{-1}[-(-\Delta)^{\frac{1}{2}}\eta_0]\left(\frac{y}{R}\right)+
\frac{1}{\pi}\frac{1}{\mu_0}\int_{\mathbb{R}}\frac{\left|\phi\left(s, t\right)\right|}{|y-s|^2}ds\\
&\lesssim \|\phi\|_{\alpha,\sigma}\frac{\mu_0(t)^{\sigma-1}R^{-1}}{1+|y/R|^{2}}\frac{1}{1+|y|^\alpha} + \|\phi\|_{\alpha,\sigma}\mu_0(t)^{\sigma-1}\int_{\mathbb{R}}\frac{1}{|y-s|^2|s|^\alpha}ds\\
&\lesssim \|\phi\|_{\alpha,\sigma}\frac{\mu_0(t)^{\sigma-1}}{1+|y|^{1+\alpha}}.
\end{aligned}
\end{equation*}

{\it Proof of (\ref{e:estimate4.7})}:
\begin{equation*}
\begin{aligned}
&\left|\frac{1}{\pi}\int_{\mathbb{R}}\frac{(\eta(x)-\eta(s))\left[\phi\left(\frac{s-\xi(t)}{\mu_0(t)}, t\right)\cdot \omega\left(\frac{s-\xi}{\mu}\right)\right]\left[\omega\left(\frac{x-\xi}{\mu}\right)-\omega\left(\frac{s-\xi}{\mu}\right)\right]}{|x-s|^2}ds\right|\\
&\lesssim\frac{1}{\pi}\int_{\mathbb{R}}\frac{\left|\eta(x)-\eta(s)\right|\left|\phi\left(\frac{s-\xi(t)}{\mu_0(t)}, t\right)\right|}{|x-s|^2}ds\\
&\lesssim\frac{1}{\pi}\int_{\mathbb{R}}\frac{\left|\phi\left(\frac{s-\xi(t)}{\mu_0(t)}, t\right)\right|}{|x-s|^2}ds = \frac{1}{\pi}\frac{1}{\mu_0}\int_{\mathbb{R}}\frac{\left|\phi\left(s, t\right)\right|}{|y-s|^2}ds\\
&\lesssim \|\phi\|_{\alpha,\sigma}\mu_0(t)^{\sigma-1}\int_{\mathbb{R}}\frac{1}{|y-s|^2|s|^\alpha}ds\\
&\lesssim \|\phi\|_{\alpha,\sigma}\frac{\mu_0(t)^{\sigma-1}}{1+|y|^{1+\alpha}}.
\end{aligned}
\end{equation*}

{\it Proof of (\ref{e:estimate4.777})}:
\begin{equation*}
\begin{aligned}
&\left|\left(\frac{1}{\pi}\int_{\mathbb{R}}\frac{(\eta(x)-\eta(s))\left[\phi\left(\frac{s-\xi(t)}{\mu_0(t)}, t\right)\cdot \omega \left(\frac{s-\xi}{\mu}\right)\right]\left[\omega\left(\frac{x-\xi}{\mu}\right)-\omega\left(\frac{s-\xi}{\mu}\right)\right]}{|x-s|^2}ds\cdot \omega\left(\frac{x-\xi}{\mu}\right)\right)\omega\left(\frac{x-\xi}{\mu}\right)\right|\\
&\lesssim\left|\frac{1}{\pi}\int_{\mathbb{R}}\frac{(\eta(x)-\eta(s))\left[\phi\left(\frac{s-\xi(t)}{\mu_0(t)}, t\right)\cdot \omega\left(\frac{s-\xi}{\mu}\right)\right]\left[\omega\left(\frac{x-\xi}{\mu}\right)-\omega\left(\frac{s-\xi}{\mu}\right)\right]}{|x-s|^2}ds\right|
\lesssim \|\phi\|_{\alpha,\sigma}\frac{\mu_0(t)^{\sigma-1}}{1+|y|^{1+\alpha}}.
\end{aligned}
\end{equation*}

{\it Proof of (\ref{e:estimate4.10})}:
From (\ref{e3:estimateoferror}), we have
\begin{equation*}
\left|(1-\eta)\Pi_{U^\perp}\mathcal{E}^*\right|\lesssim \frac{1}{1+|y|^{1+\alpha}} \lesssim \mu_0(t_0)^{1-\sigma}\frac{\mu_0(t)^{\sigma-1}}{1+|y|^{1+\alpha}}\lesssim\mu_0(t_0)^{\varepsilon}\frac{\mu_0(t)^{\sigma-1}}{1+|y|^{1+\alpha}}.
\end{equation*}

{\it Proof of (\ref{e:estimate4.9})}:
Using the decay of $\phi$ and $\psi$, by direct computation, we estimate the nonlinear term
\begin{equation*}
\begin{aligned}
&\left|N_U(\Pi_{U^\perp}[\Phi^0 + Z^*+\eta\phi+\psi])\right| = \\
&= \left|\left(\frac{1}{\pi}\int_{\mathbb{R}}\frac{(a(x)U(x)-a(s)U(s))\cdot(U(x)+\Pi_{U^\perp}\varphi(x)-U(s) -\Pi_{U^\perp}\varphi(s))}{|x-s|^2}ds\right.\right.\\ &\quad\quad\quad\quad\quad\quad\quad\left.+\frac{1}{\pi}\int_{\mathbb{R}}\frac{(U(x)-U(s))\cdot(\Pi_{U^\perp}\varphi(x) -\Pi_{U^\perp}\varphi(s))}{|x-s|^2}ds\right.\\ &\quad\quad\quad\quad\quad\quad\quad\left. +\frac{1}{2\pi}\int_{\mathbb{R}}\frac{(\Pi_{U^\perp}\varphi(x)-\Pi_{U^\perp}\varphi(s))\cdot(\Pi_{U^\perp}\varphi(x) -\Pi_{U^\perp}\varphi(s))}{|x-s|^2}ds\right.\\
&\quad\quad\quad\quad\quad\quad\quad +\left.\frac{1}{2\pi}\int_{\mathbb{R}}\frac{(a(x)U(x)-a(s)U(s))\cdot(a(x)U(x)-a(s)U(s))}{|x-s|^2}ds\right)\Pi_{U^\perp}\varphi\\
&\quad\quad\quad\quad\quad\quad\quad \left.-aU_t - \frac{1}{\pi}\int_{\mathbb{R}}\frac{(a(x)-a(s))\cdot(U(x) - U(s))}{|x-s|^2}ds\right|\lesssim \mu_0(t_0)^{\varepsilon}\frac{\mu_0^{-1}}{1+|y|^2}|\pi_{U^\perp}\varphi(y)|\\
&\lesssim \mu_0(t_0)^{\varepsilon}\frac{\mu_0^{-1}}{1+|y|^2}\left(\|\phi\|_{\alpha,\sigma}\frac{\mu_0(t)^{\sigma}}{1+|y|^{\alpha}} + \|\psi\|_{**}\frac{\mu_0^\sigma(t)}{1+|y|^\alpha}\right)
\lesssim \mu_0(t_0)^{\varepsilon}\left(\|\phi\|_{\alpha,\sigma} + \|\psi\|_{**}\right)\frac{\mu_0(t)^{\sigma-1}}{1+|y|^{\alpha+1}}.
\end{aligned}
\end{equation*}
Indeed, we have
\begin{equation*}
\begin{aligned}
&\frac{1}{\pi}\int_{\mathbb{R}}\frac{(a(x)U(x)-a(s)U(s))\cdot(U(x)+\Pi_{U^\perp}\varphi(x)-U(s) -\Pi_{U^\perp}\varphi(s))}{|x-s|^2}ds \\
&\quad = \frac{1}{\pi}\int_{\mathbb{R}}\frac{(a(x)U(x)-a(s)U(s))\cdot(U(x)-U(s))}{|x-s|^2}ds \\
&\quad\quad + \frac{1}{\pi}\int_{\mathbb{R}}\frac{(a(x)U(x)-a(s)U(s))\cdot(\Pi_{U^\perp}\varphi(x) -\Pi_{U^\perp}\varphi(s))}{|x-s|^2}ds\\
&\quad = a(x)\frac{1}{\pi}\int_{\mathbb{R}}\frac{(U(x)-U(s))\cdot(U(x)-U(s))}{|x-s|^2}ds \\
&\quad\quad + \frac{1}{2\pi}\int_{\mathbb{R}}\frac{(a(x)-a(s))|U(x)-U(s)|^2}{|x-s|^2}ds\\
&\quad\quad + a(x)\frac{1}{\pi}\int_{\mathbb{R}}\frac{(U(x)-U(s))\cdot(\Pi_{U^\perp}\varphi(x) -\Pi_{U^\perp}\varphi(s))}{|x-s|^2}ds\\
&\quad\quad + \frac{1}{\pi}\int_{\mathbb{R}}\frac{(a(x)-a(s))U(s)\cdot(\Pi_{U^\perp}\varphi(x) -\Pi_{U^\perp}\varphi(s))}{|x-s|^2}ds.
\end{aligned}
\end{equation*}
Since $a(x)$ is bounded,
\begin{equation*}
\left|a(x)\frac{1}{\pi}\int_{\mathbb{R}}\frac{(U(x)-U(s))\cdot(U(x)-U(s))}{|x-s|^2}ds\right|\lesssim \mu_0(t_0)^{\varepsilon}\frac{\mu_0^{-1}}{1+|y|^2},
\end{equation*}
\begin{equation*}
\begin{aligned}
&\left|\frac{1}{2\pi}\int_{\mathbb{R}}\frac{(a(x)-a(s))\left|\omega\left(\frac{x-\xi}{\mu}\right)-\omega\left(\frac{s-\xi}
{\mu}\right)\right|^2}{|x-s|^2}ds\right|\lesssim \mu_0(t_0)^{\varepsilon}\frac{1}{2\pi}\int_{\mathbb{R}}\frac{ |U(x)-U(s)|^2}{|x-s|^2}ds\lesssim \mu_0(t_0)^{\varepsilon}\frac{\mu_0^{-1}}{1+|y|^2},
\end{aligned}
\end{equation*}
\begin{equation*}
\begin{aligned}
&a(x)\frac{1}{\pi}\int_{\mathbb{R}}\frac{(U(x)-U(s))\cdot(\Pi_{U^\perp}\varphi(x) -\Pi_{U^\perp}\varphi(s))}{|x-s|^2}ds\\
&=a(x)\frac{1}{\pi}\int_{\mathbb{R}}\frac{\left(\omega\left(\frac{x-\xi}{\mu}\right)-\omega\left(\frac{s-\xi}{\mu}\right)\right)
\cdot\left(\widetilde{\Pi_{U^\perp}\varphi}\left(\frac{x-\xi}{\mu}\right) -\widetilde{\Pi_{U^\perp}\varphi}\left(\frac{s-\xi}{\mu}\right)\right)}{|x-s|^2}ds\\
&=a(x)\frac{1}{\pi}\frac{1}{\mu}\int_{\mathbb{R}}\frac{\left(\omega\left(y\right)-\omega\left(z'\right)\right)
\cdot\left(\widetilde{\Pi_{U^\perp}\varphi}\left(y\right) -\widetilde{\Pi_{U^\perp}\varphi}\left(z'\right)\right)}{|y-z'|^2}dz'\\
& = a(x)\frac{1}{\pi}\frac{1}{\mu}\int_{B(y,\epsilon)}\frac{\left(\omega\left(y\right)-\omega\left(z'\right)\right)
\cdot\left(\widetilde{\Pi_{U^\perp}\varphi}\left(y\right) -\widetilde{\Pi_{U^\perp}\varphi}\left(z'\right)\right)}{|y-z'|^2}dz' \\
&\quad + a(x)\frac{1}{\pi}\frac{1}{\mu}\int_{B(y,\epsilon)^c}\frac{\left(\omega\left(y\right)-\omega\left(z'\right)\right)
\cdot\left(\widetilde{\Pi_{U^\perp}\varphi}\left(y\right) -\widetilde{\Pi_{U^\perp}\varphi}\left(z'\right)\right)}{|y-z'|^2}dz'\\
&\lesssim \mu_0(t_0)^{\varepsilon}(\|\phi\|_{\alpha,\sigma}+\|\psi\|_{**})\frac{\mu_0^{\sigma-1}}{1+|y|^2}.
\end{aligned}
\end{equation*}
Here, for a function $f(x)$, we use the notation $f(x) = \widetilde{f}\left(\frac{x-\xi}{\mu}\right)$. By the local H\"{o}lder estimates of $\phi$ and $\psi$, we know that $\frac{\left(\widetilde{\Pi_{U^\perp}\varphi}\left(y\right) -\widetilde{\Pi_{U^\perp}\varphi}\left(z'\right)\right)}{|y-z'|^\eta}\leq (\|\phi\|_{\alpha,\sigma}+\|\psi\|_{**})\mu_0^{\sigma-1}$ for $z'\in B(y, \epsilon)$, $\eta > \frac{1}{2}$ and $\epsilon > 0$ is sufficiently small.
Similarly, we have
\begin{equation*}
\begin{aligned}
&\left|\frac{1}{\pi}\int_{\mathbb{R}}\frac{(a(x)-a(s))U(s)\cdot(\Pi_{U^\perp}\varphi(x) -\Pi_{U^\perp}\varphi(s))}{|x-s|^2}ds\right|\\
&\lesssim\mu_0(t_0)^{\varepsilon}\int_{\mathbb{R}}\frac{\left|(\Pi_{U^\perp}\varphi(x) -\Pi_{U^\perp}\varphi(s))\right|}{|x-s|^2}ds\\
&\lesssim\mu_0(t_0)^{\varepsilon}\int_{\mathbb{R}}\frac{\left|\widetilde{\Pi_{U^\perp}\varphi}\left(\frac{x-\xi}{\mu}\right) -\widetilde{\Pi_{U^\perp}\varphi}\left(\frac{s-\xi}{\mu}\right)\right|}{|x-s|^2}ds\\
& = \frac{\mu_0(t_0)^{\varepsilon}}{\mu}\int_{\mathbb{R}}\frac{\left|\widetilde{\Pi_{U^\perp}\varphi}\left(y\right) -\widetilde{\Pi_{U^\perp}\varphi}\left(z'\right)\right|}{|y-z'|^2}dz'\\
& = \frac{\mu_0(t_0)^{\varepsilon}}{\mu}\int_{B(y, \epsilon)}\frac{\left|\widetilde{\Pi_{U^\perp}\varphi}\left(y\right) -\widetilde{\Pi_{U^\perp}\varphi}\left(z'\right)\right|}{|y-z'|^2}dz' + \frac{\mu_0(t_0)^{\varepsilon}}{\mu}\int_{B^c(y, \epsilon)}\frac{\left|\widetilde{\Pi_{U^\perp}\varphi}\left(y\right) -\widetilde{\Pi_{U^\perp}\varphi}\left(z'\right)\right|}{|y-z'|^2}dz'\\
& \lesssim \mu_0(t_0)^{\varepsilon}(\|\phi\|_{\alpha,\sigma}+\|\psi\|_{**})\frac{\mu_0^{\sigma-1}}{1+|y|^2}.
\end{aligned}
\end{equation*}
Analogously, we have the same estimate for the other integral terms, we omit the details.

Now we apply the Contraction Mapping Theorem to prove the existence of a fixed point $\psi$ for the operator $\mathcal{A}$.
First, we define
\begin{equation*}\label{e4:54}
\mathcal{B}=\{\psi:\|\psi\|_{**}\leq Me^{-\varepsilon t_0}\}.
\end{equation*}
The positive large constant $M$ is independent of $t$ and $t_0$.
For any $\psi\in \mathcal{B}$, by the above estimates, $\mathcal{A}(\psi)\in \mathcal{B}$. Now we prove that for any $\psi_1$, $\psi_2\in\mathcal{B}$, there holds
\begin{equation*}\label{e4:55}
\|\mathcal{A}(\psi_1) - \mathcal{A}(\psi_2)\|_{**}\leq C\|\psi_1-\psi_2\|_{**},
\end{equation*}
where $C<1$ is a constant depending on $t_0$ , if $t_0$ is chosen sufficiently large.
We claim that
\begin{equation}\label{e4:101}
\begin{aligned}
&\left|(1-\eta)\frac{2\frac{1}{\mu}}{1+\left|\frac{x-\xi}{\mu}\right|^2}\left(\psi_1-\psi_2\right)\right|\lesssim \mu_0(t_0)^{\varepsilon}\|\psi_1-\psi_2\|_{**}\frac{\mu_0(t)^{\sigma-1}}{1+|y|^{1+\alpha}},
\end{aligned}
\end{equation}

\begin{equation}\label{e4:102}
\begin{aligned}
&\left|\frac{1}{\pi}\int_{\mathbb{R}}\frac{\left[\psi_1(x)\cdot \omega\left(\frac{x-\xi}{\mu}\right)-\psi_1(s)\cdot \omega\left(\frac{s-\xi}{\mu}\right)\right]\left[\omega\left(\frac{x-\xi}{\mu}\right)-\omega\left(\frac{s-\xi}{\mu}\right)\right]}{|x-s|^2}ds\right.\\ &\quad\quad\quad\quad\quad\quad\quad\quad\quad\quad\left.-
\frac{1}{\pi}\int_{\mathbb{R}}\frac{\left[\psi_2(x)\cdot \omega\left(\frac{x-\xi}{\mu}\right)-\psi_2(s)\cdot \omega\left(\frac{s-\xi}{\mu}\right)\right]\left[\omega\left(\frac{x-\xi}{\mu}\right)-\omega\left(\frac{s-\xi}{\mu}\right)\right]}{|x-s|^2}ds\right|\\
&\quad\quad\quad\quad\quad\quad\quad\quad\quad\quad\quad\quad\quad\quad\quad\quad\quad\quad\quad\quad\quad\quad\quad\quad
\lesssim \mu_0(t_0)^{\varepsilon}\|\psi_1-\psi_2\|_{**}\frac{\mu_0(t)^{\sigma-1}}{1+|y|^{1+\alpha}},
\end{aligned}
\end{equation}

\begin{equation}\label{e4:103}
\begin{aligned}
&\left|\left(\frac{1}{\pi}\int_{\mathbb{R}}\frac{\left[\psi_1(x)\cdot \omega\left(\frac{x-\xi}{\mu}\right)-\psi_1(s)\cdot \omega\left(\frac{s-\xi}{\mu}\right)\right]\left[\omega\left(\frac{x-\xi}{\mu}\right)-\omega\left(\frac{s-\xi}{\mu}\right)\right]}{|x-s|^2}ds\cdot \omega\left(\frac{x-\xi}{\mu}\right)\right)\omega\left(\frac{x-\xi}{\mu}\right)\right.\\
&\left.-\left(\frac{1}{\pi}\int_{\mathbb{R}}\frac{\left[\psi_2(x)\cdot \omega\left(\frac{x-\xi}{\mu}\right)-\psi_2(s)\cdot \omega\left(\frac{s-\xi}{\mu}\right)\right]\left[\omega\left(\frac{x-\xi}{\mu}\right)-\omega\left(\frac{s-\xi}{\mu}\right)\right]}{|x-s|^2}ds\cdot \omega\left(\frac{x-\xi}{\mu}\right)\right)\omega\left(\frac{x-\xi}{\mu}\right)\right|\\
&\quad\quad\quad\quad\quad\quad\quad\quad\quad\quad\quad\quad\quad\quad\quad\quad\quad\quad\quad\quad\quad\quad\quad\quad \lesssim \mu_0(t_0)^{\varepsilon}\|\psi_1-\psi_2\|_{**}\frac{\mu_0(t)^{\sigma-1}}{1+|y|^{1+\alpha}},
\end{aligned}
\end{equation}

\begin{equation}\label{e4:104}
\begin{aligned}
&\left|N_U(\Pi_{U^\perp}[\Phi^0 + Z^*+\eta\phi+\psi_1]) - N_U(\Pi_{U^\perp}[\Phi^0 + Z^*+\eta\phi+\psi_2])\right|\\
&\quad\quad\quad\quad\quad\quad\quad\quad\quad\quad\quad\quad\lesssim \mu_0(t_0)^{\varepsilon}(\|\psi_1\|_{**}+ \|\psi_2\|_{**} + \|\phi\|_{\alpha,\sigma})\|\psi_1-\psi_2\|_{**}\frac{\mu^{\sigma-1}_0(t)}{1+|y|^{1+\alpha}}.
\end{aligned}
\end{equation}

{\it Proof of (\ref{e4:101})}: We have
\begin{equation*}
\begin{aligned}
&\left|(1-\eta)\frac{2\frac{1}{\mu}}{1+\left|\frac{x-\xi}{\mu}\right|^2}\left(\psi_1-\psi_2\right)\right|\lesssim (1-\eta)\frac{2\frac{1}{\mu}}{1+y^2}\|\psi_1-\psi_2\|_{**}\frac{\mu_0^{\sigma}}{1+|y|^\alpha}\\
&\quad\lesssim (1-\eta)\frac{\log(1+|y|)}{1+|y|}\|\psi_1-\psi_2\|_{**}\frac{\mu_0(t)^{\sigma-1}}{1+|y|^{1+\alpha}}\\
&\quad\lesssim \mu_0(t_0)^{\varepsilon}\|\psi_1-\psi_2\|_{**}\frac{\mu_0(t)^{\sigma-1}}{1+|y|^{1+\alpha}}.
\end{aligned}
\end{equation*}

{\it Proof of (\ref{e4:102})}: We have
\begin{equation*}
\begin{aligned}
&\left|\frac{1}{\pi}\int_{\mathbb{R}}\frac{\left[\psi_1(x)\cdot \omega\left(\frac{x-\xi}{\mu}\right)-\psi_1(s)\cdot \omega\left(\frac{s-\xi}{\mu}\right)\right]\left[\omega\left(\frac{x-\xi}{\mu}\right)-\omega\left(\frac{s-\xi}{\mu}\right)\right]}{|x-s|^2}ds\right.\\ &\quad\quad\quad\quad\quad\quad\quad\quad\quad\quad\left.-
\frac{1}{\pi}\int_{\mathbb{R}}\frac{\left[\psi_2(x)\cdot \omega\left(\frac{x-\xi}{\mu}\right)-\psi_2(s)\cdot \omega\left(\frac{s-\xi}{\mu}\right)\right]\left[\omega\left(\frac{x-\xi}{\mu}\right)-\omega\left(\frac{s-\xi}{\mu}\right)\right]}{|x-s|^2}ds\right|\\
&\quad=\left|\frac{1}{\pi}\int_{\mathbb{R}}\frac{\left[(\psi_1(x)-\psi_2(x))\cdot \omega\left(\frac{x-\xi}{\mu}\right)-(\psi_1(s)-\psi_2(s))\cdot \omega\left(\frac{s-\xi}{\mu}\right)\right]\left[\omega\left(\frac{x-\xi}{\mu}\right)-\omega\left(\frac{s-\xi}{\mu}\right)\right]}{|x-s|^2}ds\right|\\
&\quad\lesssim \frac{1}{\pi}\int_{\mathbb{R}}\frac{\left|\omega\left(\frac{x-\xi}{\mu}\right)-\omega\left(\frac{s-\xi}
{\mu}\right)\right|^2}{|x-s|^2}ds\left|\psi_1(x)-\psi_2(x)\right|\\ &\quad\quad +\frac{1}{2\pi}\int_{\mathbb{R}}\frac{|(\psi_1(x)-\psi_2(x))-(\psi_1(s)-\psi_2(s))|\left|\omega
\left(\frac{x-\xi}{\mu}\right)-\omega\left(\frac{s-\xi}{\mu}\right)\right|^2}{|x-s|^2}ds\\
&\quad \lesssim \frac{\mu_0^{-1}}{1+|y|^2}\left|\psi_1(x)-\psi_2(x)\right|\\
&\quad\quad + \frac{1}{2\pi}\int_{\mathbb{R}}\frac{|(\psi_1(x)-\psi_2(x))-(\psi_1(s)-\psi_2(s))|\left|\omega\left(\frac{x-\xi}
{\mu}\right)-\omega\left(\frac{s-\xi}{\mu}\right)\right|^2}{|x-s|^2}ds\\
&\quad \lesssim \|\psi_1-\psi_2\|_{**}\mu_0^{\sigma-1}(t)\frac{\log(1+|y|)}{1+|y|^2}  \lesssim \|\psi_1-\psi_2\|_{**}\frac{\mu_0(t)^{\sigma-1}}{1+|y|^{\alpha+1}}\frac{\log(1+|y|)}{1+|y|^{1-\alpha}}\\
&\quad \lesssim \mu_0(t_0)^{\varepsilon}\|\psi_1-\psi_2\|_{**}\frac{\mu_0(t)^{\sigma-1}}{1+|y|^{1+\alpha}}.
\end{aligned}
\end{equation*}

{\it Proof of (\ref{e4:103})}: Similar to the proof of (\ref{e4:102}), we have
\begin{equation*}
\begin{aligned}
&\left|\left(\frac{1}{\pi}\int_{\mathbb{R}}\frac{\left[\psi_1(x)\cdot \omega\left(\frac{x-\xi}{\mu}\right)-\psi_1(s)\cdot \omega\left(\frac{s-\xi}{\mu}\right)\right]\left[\omega\left(\frac{x-\xi}{\mu}\right)-\omega\left(\frac{s-\xi}{\mu}\right)\right]}{|x-s|^2}ds\cdot \omega\left(\frac{x-\xi}{\mu}\right)\right)\omega\left(\frac{x-\xi}{\mu}\right)\right.\\
&\left.-\left(\frac{1}{\pi}\int_{\mathbb{R}}\frac{\left[\psi_2(x)\cdot \omega\left(\frac{x-\xi}{\mu}\right)-\psi_2(s)\cdot \omega\left(\frac{s-\xi}{\mu}\right)\right]\left[\omega\left(\frac{x-\xi}{\mu}\right)-\omega\left(\frac{s-\xi}{\mu}\right)\right]}{|x-s|^2}ds\cdot \omega\left(\frac{x-\xi}{\mu}\right)\right)\omega\left(\frac{x-\xi}{\mu}\right)\right|\\
\end{aligned}
\end{equation*}
\begin{equation*}
\begin{aligned}
&\quad\lesssim \mu_0(t_0)^{\varepsilon}\|\psi_1-\psi_2\|_{**}\frac{\mu_0(t)^{\sigma-1}}{1+|y|^{1+\alpha}}.
\end{aligned}
\end{equation*}

{\it Proof of (\ref{e4:104})}:
Using the decay of $\phi$ and $\psi$, we estimate the nonlinear term
\begin{equation*}
\begin{aligned}
&\left|N_U(\Pi_{U^\perp}[\Phi^0 + Z^*+\eta\phi+\psi_1]) - N_U(\Pi_{U^\perp}[Z^*+\phi + \eta\phi+\psi_2])\right| = \\
& \left|\left(\frac{1}{\pi}\int_{\mathbb{R}}\frac{(a(\Pi_{U^\perp}\varphi_1)(x)U(x)-a(\Pi_{U^\perp}\varphi_1)(s)U(s))
\cdot(U(x)+\Pi_{U^\perp}\varphi_1(x)-U(s) -\Pi_{U^\perp}\varphi_1(s))}{|x-s|^2}ds\right.\right.\\ &\left.+\frac{1}{\pi}\int_{\mathbb{R}}\frac{(U(x)-U(s))\cdot(\Pi_{U^\perp}\varphi_1(x) -\Pi_{U^\perp}\varphi_1(s))}{|x-s|^2}ds\right.\\ &\left. +\frac{1}{2\pi}\int_{\mathbb{R}}\frac{(\Pi_{U^\perp}\varphi_1(x)-\Pi_{U^\perp}\varphi_1(s))\cdot(\Pi_{U^\perp}\varphi_1(x) -\Pi_{U^\perp}\varphi_1(s))}{|x-s|^2}ds\right.\\
& +\left.\frac{1}{2\pi}\int_{\mathbb{R}}\frac{(a(\Pi_{U^\perp}\varphi_1)(x)U(x)-a(\Pi_{U^\perp}\varphi_1)(s)U(s))
\cdot(a(\Pi_{U^\perp}\varphi_1)(x)U(x)-a(\Pi_{U^\perp}\varphi_1)(s)U(s))}{|x-s|^2}ds\right)\Pi_{U^\perp}\varphi_1\\
& -a(\Pi_{U^\perp}\varphi_1)U_t - \frac{1}{\pi}\int_{\mathbb{R}}\frac{(a(\Pi_{U^\perp}\varphi_1)(x)-a(\Pi_{U^\perp}\varphi_1)(s))\cdot(U(x) - U(s))}{|x-s|^2}ds\\
& -
\left(\frac{1}{\pi}\int_{\mathbb{R}}\frac{(a(\Pi_{U^\perp}\varphi_2)(x)U(x)-a(\Pi_{U^\perp}\varphi_2)(s)U(s))\cdot(U(x)+\Pi_{U^\perp}\varphi_2(x)-U(s) -\Pi_{U^\perp}\varphi_2(s))}{|x-s|^2}ds\right.\\ &\left.+\frac{1}{\pi}\int_{\mathbb{R}}\frac{(U(x)-U(s))\cdot(\Pi_{U^\perp}\varphi_2(x) -\Pi_{U^\perp}\varphi_2(s))}{|x-s|^2}ds\right.\\ &\left. +\frac{1}{2\pi}\int_{\mathbb{R}}\frac{(\Pi_{U^\perp}\varphi_2(x)-\Pi_{U^\perp}\varphi_2(s))\cdot(\Pi_{U^\perp}\varphi_2(x) -\Pi_{U^\perp}\varphi_2(s))}{|x-s|^2}ds\right.\\
& +\left.\frac{1}{2\pi}\int_{\mathbb{R}}\frac{(a(\Pi_{U^\perp}\varphi_2)(x)U(x)-a(\Pi_{U^\perp}\varphi_2)(s)U(s))
\cdot(a(\Pi_{U^\perp}\varphi_2)(x)U(x)-a(\Pi_{U^\perp}\varphi_2)(s)U(s))}{|x-s|^2}ds\right)\Pi_{U^\perp}\varphi_2\\
& \left.+a(\Pi_{U^\perp}\varphi_2)U_t + \frac{1}{\pi}\int_{\mathbb{R}}\frac{(a(\Pi_{U^\perp}\varphi_2)(x)-a(\Pi_{U^\perp}\varphi_2)(s))\cdot(U(x) - U(s))}{|x-s|^2}ds
\right|,
\end{aligned}
\end{equation*}
with $\varphi_1 = \eta\phi + \psi_1$ and $\varphi_2 = \eta\phi + \psi_2$.
A typical term is
\begin{equation*}
\begin{aligned} &\left(\frac{1}{\pi}\int_{\mathbb{R}}\frac{(a(\Pi_{U^\perp}\varphi_1)(x)U(x)-a(\Pi_{U^\perp}\varphi_1)(s)U(s))\cdot(U(x)+\Pi_{U^\perp}\varphi_1(x)-U(s) -\Pi_{U^\perp}\varphi_1(s))}{|x-s|^2}ds\right)\Pi_{U^\perp}\varphi_1\\
& -
\left(\frac{1}{\pi}\int_{\mathbb{R}}\frac{(a(\Pi_{U^\perp}\varphi_2)(x)U(x)-a(\Pi_{U^\perp}\varphi_2)(s)U(s))\cdot(U(x)+\Pi_{U^\perp}\varphi_2(x)-U(s) -\Pi_{U^\perp}\varphi_2(s))}{|x-s|^2}ds\right)\Pi_{U^\perp}\varphi_2.
\end{aligned}
\end{equation*}
We compute it as
\begin{equation*}
\begin{aligned} &\left(\frac{1}{\pi}\int_{\mathbb{R}}\frac{(a(\Pi_{U^\perp}\varphi_1)(x)U(x)-a(\Pi_{U^\perp}\varphi_1)(s)U(s))\cdot(U(x)+\Pi_{U^\perp}\varphi_1(x)-U(s) -\Pi_{U^\perp}\varphi_1(s))}{|x-s|^2}ds\right)\Pi_{U^\perp}\varphi_1\\
& -
\left(\frac{1}{\pi}\int_{\mathbb{R}}\frac{(a(\Pi_{U^\perp}\varphi_2)(x)U(x)-a(\Pi_{U^\perp}\varphi_2)(s)U(s))\cdot(U(x)+\Pi_{U^\perp}\varphi_2(x)-U(s) -\Pi_{U^\perp}\varphi_2(s))}{|x-s|^2}ds\right)\Pi_{U^\perp}\varphi_2\\
& =  \left(\frac{1}{\pi}\int_{\mathbb{R}}\frac{(a(\Pi_{U^\perp}\varphi_1)(x)U(x)-a(\Pi_{U^\perp}\varphi_1)(s)U(s))\cdot(U(x)+\Pi_{U^\perp}\varphi_1(x)-U(s) -\Pi_{U^\perp}\varphi_1(s))}{|x-s|^2}ds\right.\\
& -
\left.\frac{1}{\pi}\int_{\mathbb{R}}\frac{(a(\Pi_{U^\perp}\varphi_2)(x)U(x)-a(\Pi_{U^\perp}\varphi_2)(s)U(s))\cdot(U(x)+\Pi_{U^\perp}\varphi_2(x)-U(s) -\Pi_{U^\perp}\varphi_2(s))}{|x-s|^2}ds\right)\Pi_{U^\perp}\varphi_1\\
& +
\left.\frac{1}{\pi}\int_{\mathbb{R}}\frac{(a(\Pi_{U^\perp}\varphi_2)(x)U(x)-a(\Pi_{U^\perp}\varphi_2)(s)U(s))\cdot(U(x)+\Pi_{U^\perp}\varphi_2(x)-U(s) -\Pi_{U^\perp}\varphi_2(s))}{|x-s|^2}ds\right)\left(\Pi_{U^\perp}\varphi_1 - \Pi_{U^\perp}\varphi_2\right).
\end{aligned}
\end{equation*}
The term
\begin{equation*}
\begin{aligned}
&\left(\frac{1}{\pi}\int_{\mathbb{R}}\frac{(a(\Pi_{U^\perp}\varphi_1)(x)U(x)-a(\Pi_{U^\perp}\varphi_1)(s)U(s))\cdot(U(x)+\Pi_{U^\perp}\varphi_1(x)-U(s) -\Pi_{U^\perp}\varphi_1(s))}{|x-s|^2}ds\right.\\
& -
\left.\frac{1}{\pi}\int_{\mathbb{R}}\frac{(a(\Pi_{U^\perp}\varphi_2)(x)U(x)-a(\Pi_{U^\perp}\varphi_2)(s)U(s))\cdot(U(x)+\Pi_{U^\perp}\varphi_2(x)-U(s) -\Pi_{U^\perp}\varphi_2(s))}{|x-s|^2}ds\right)\Pi_{U^\perp}\varphi_1\\
& = \left(\frac{1}{\pi}\int_{\mathbb{R}}\frac{(a(\Pi_{U^\perp}\varphi_1)(x)U(x)-a(\Pi_{U^\perp}\varphi_1)(s)U(s))\cdot(U(x)+\Pi_{U^\perp}\varphi_1(x)-U(s) -\Pi_{U^\perp}\varphi_1(s))}{|x-s|^2}ds\right.\\
& -
\frac{1}{\pi}\int_{\mathbb{R}}\frac{(a(\Pi_{U^\perp}\varphi_2)(x)U(x)-a(\Pi_{U^\perp}\varphi_2)(s)U(s))\cdot(U(x)+\Pi_{U^\perp}\varphi_1(x)-U(s) -\Pi_{U^\perp}\varphi_1(s))}{|x-s|^2}ds\\
& +
\frac{1}{\pi}\int_{\mathbb{R}}\frac{(a(\Pi_{U^\perp}\varphi_2)(x)U(x)-a(\Pi_{U^\perp}\varphi_2)(s)U(s))\cdot(U(x)+\Pi_{U^\perp}\varphi_1(x)-U(s) -\Pi_{U^\perp}\varphi_1(s))}{|x-s|^2}ds\\
& -
\left.\frac{1}{\pi}\int_{\mathbb{R}}\frac{(a(\Pi_{U^\perp}\varphi_2)(x)U(x)-a(\Pi_{U^\perp}\varphi_2)(s)U(s))
\cdot(U(x)+\Pi_{U^\perp}\varphi_2(x)-U(s) -\Pi_{U^\perp}\varphi_2(s))}{|x-s|^2}ds\right)\Pi_{U^\perp}\varphi_1.
\end{aligned}
\end{equation*}
Furthermore,
\begin{equation*}
\begin{aligned}
& \left|\left(\frac{1}{\pi}\int_{\mathbb{R}}\frac{(a(\Pi_{U^\perp}\varphi_1)(x)U(x)-a(\Pi_{U^\perp}\varphi_1)(s)U(s))
\cdot(U(x)+\Pi_{U^\perp}\varphi_1(x)-U(s) -\Pi_{U^\perp}\varphi_1(s))}{|x-s|^2}ds\right.\right.\\
& -
\left.\left.\frac{1}{\pi}\int_{\mathbb{R}}\frac{(a(\Pi_{U^\perp}\varphi_2)(x)U(x)-a(\Pi_{U^\perp}\varphi_2)(s)U(s))
\cdot(U(x)+\Pi_{U^\perp}\varphi_1(x)-U(s) -\Pi_{U^\perp}\varphi_1(s))}{|x-s|^2}ds\right)\Pi_{U^\perp}\varphi_1\right|\\
&\lesssim
\left|\left(\frac{1}{\pi}\int_{\mathbb{R}}\frac{(a(\Pi_{U^\perp}\varphi_2)(s)U(s)-a(\Pi_{U^\perp}\varphi_1)(s)U(s))
\cdot(U(x)+\Pi_{U^\perp}\varphi_1(x)-U(s) -\Pi_{U^\perp}\varphi_1(s))}{|x-s|^2}ds\right.\right.\\
& -
\left.\left.\frac{1}{\pi}\int_{\mathbb{R}}\frac{U(x)+\Pi_{U^\perp}\varphi_1(x)-U(s) -\Pi_{U^\perp}\varphi_1(s)}{|x-s|^2}ds(a(\Pi_{U^\perp}\varphi_2)(x)U(x)-a(\Pi_{U^\perp}\varphi_1)(x)U(x))\cdot\right)\Pi_{U^\perp}\varphi_1\right|\\
&\lesssim
\left|\left(\frac{1}{\pi}\int_{\mathbb{R}}\frac{(a(\Pi_{U^\perp}\varphi_2)(s)-a(\Pi_{U^\perp}\varphi_1)(s))U(s)
\cdot(U(x)+\Pi_{U^\perp}\varphi_1(x)-U(s) -\Pi_{U^\perp}\varphi_1(s))}{|x-s|^2}ds\right.\right.\\
& -
\left.\left.\frac{1}{\pi}\int_{\mathbb{R}}\frac{U(x)+\Pi_{U^\perp}\varphi_1(x)-U(s) -\Pi_{U^\perp}\varphi_1(s)}{|x-s|^2}ds(a(\Pi_{U^\perp}\varphi_2)(x)-a(\Pi_{U^\perp}\varphi_1)(x))U(x)\cdot\right)\Pi_{U^\perp}\varphi_1\right|\\
&\lesssim \mu_0(t_0)^{\varepsilon}(\|\psi_1\|_{**}+ \|\psi_2\|_{**} + \|\phi\|_{\alpha,\sigma})\|\psi_1-\psi_2\|_{**}\frac{\mu^{\sigma-1}_0(t)}{1+|y|^{1+\alpha}}
\end{aligned}
\end{equation*}
and
\begin{equation*}
\begin{aligned}
&\left|\left(
\frac{1}{\pi}\int_{\mathbb{R}}\frac{(a(\Pi_{U^\perp}\varphi_2)(x)U(x)-a(\Pi_{U^\perp}\varphi_2)(s)U(s))\cdot(U(x)+\Pi_{U^\perp}\varphi_1(x)-U(s) -\Pi_{U^\perp}\varphi_1(s))}{|x-s|^2}ds\right.\right.\\
& -
\left.\left.\frac{1}{\pi}\int_{\mathbb{R}}\frac{(a(\Pi_{U^\perp}\varphi_2)(x)U(x)-a(\Pi_{U^\perp}\varphi_2)(s)U(s))
\cdot(U(x)+\Pi_{U^\perp}\varphi_2(x)-U(s) -\Pi_{U^\perp}\varphi_2(s))}{|x-s|^2}ds\right)\right|\Pi_{U^\perp}\varphi_1\\
& = \left|\left(
\frac{1}{\pi}\int_{\mathbb{R}}\frac{(a(\Pi_{U^\perp}\varphi_2)(x)U(x)-a(\Pi_{U^\perp}\varphi_2)(s)U(s))
\cdot(\Pi_{U^\perp}\varphi_1(x)-\Pi_{U^\perp}\varphi_2(x) -\Pi_{U^\perp}\varphi_1(s)+ \Pi_{U^\perp}\varphi_2(s))}{|x-s|^2}ds\right)\right|\Pi_{U^\perp}\varphi_1\\
& \leq  \left|\left(
\frac{1}{\pi}\int_{\mathbb{R}}\frac{(a(\Pi_{U^\perp}\varphi_2)(x)U(x)-a(\Pi_{U^\perp}\varphi_2)(s)U(s))
\cdot(\Pi_{U^\perp}\varphi_1(x)-\Pi_{U^\perp}\varphi_2(x) )}{|x-s|^2}ds\right)\right|\Pi_{U^\perp}\varphi_1\\
&+ \left|\left(
\frac{1}{\pi}\int_{\mathbb{R}}\frac{(a(\Pi_{U^\perp}\varphi_2)(x)U(x)-a(\Pi_{U^\perp}\varphi_2)(s)U(s))
\cdot(\Pi_{U^\perp}\varphi_1(s)-\Pi_{U^\perp}\varphi_2(s) )}{|x-s|^2}ds\right)\right|\Pi_{U^\perp}\varphi_1\\
\end{aligned}
\end{equation*}
\begin{equation*}
\begin{aligned}
&\lesssim \mu_0(t_0)^{\varepsilon}(\|\psi_1\|_{**}+ \|\psi_2\|_{**} + \|\phi\|_{\alpha,\sigma})\|\psi_1-\psi_2\|_{**}\frac{\mu^{\sigma-1}_0(t)}{1+|y|^{1+\alpha}}.
\end{aligned}
\end{equation*}
Analogously, we have the same estimate for the other integral terms and finally get
\begin{equation*}
\begin{aligned}
&\left|N_U(\Pi_{U^\perp}[\Phi^0 + Z^*+\eta\phi+\psi_1]) - N_U(\Pi_{U^\perp}[Z^*+\phi + \eta\phi+\psi_2])\right|\\
&\quad\quad\lesssim \mu_0(t_0)^{\varepsilon}(\|\psi_1\|_{**}+ \|\psi_2\|_{**} + \|\phi\|_{\alpha,\sigma})\|\psi_1-\psi_2\|_{**}\frac{\mu^{\sigma-1}_0(t)}{1+|y|^{1+\alpha}}.
\end{aligned}
\end{equation*}

Hence
\begin{equation*}\label{e4:62}
\|\mathcal{A}(\psi_1) - \mathcal{A}(\psi_2)\|_{**}\leq C\|\psi_1-\psi_2\|_{**}
\end{equation*}
holds with $C < 1$ when $t_0$ is sufficiently large. Therefore, if $t_0$ is sufficiently large, the operator $\mathcal{A}$ is a contraction map in $\mathcal{B}$. By the Contraction Mapping Theorem, we get the existence part of Proposition \ref{p4:4.1}. (\ref{e:estimateholder}) follows directly from (\ref{e:holderestimateforheatequation}). This completes the proof.
\end{proof}
\begin{prop}\label{p4:4.2}
Under the assumptions in Proposition \ref{p4:4.1}, $\Psi$ depends smoothly on the parameters $\lambda$, $\xi$, $\dot{\lambda}$, $\dot{\xi}$, $\phi$. For $y=\frac{x-\xi}{\mu_{0}}$, we have the following estimates
\begin{equation}\label{e4:64}
\big|\partial_\lambda\Psi[\lambda,\xi,\dot{\lambda},\dot{\xi},\phi][\bar{\lambda}](x, t)\big|\lesssim e^{-\varepsilon t_0}\|\bar{\lambda}\|_{1+\sigma}\frac{\mu^{\sigma}(t)\log(1+|y|)}{1+|y|^{\alpha}},
\end{equation}
\begin{equation}\label{e4:74}
\big|\partial_\xi\Psi[\lambda,\xi,\dot{\lambda},\dot{\xi},\phi][\bar{\xi}](x, t)\big|\lesssim e^{-\varepsilon t_0}\|\bar{\xi}\|_{1+\sigma}\frac{\mu^{\sigma}(t)\log(1+|y|)}{1+|y|^{\alpha}},
\end{equation}
\begin{equation}\label{e4:75}
\big|\partial_{\dot{\xi}}\Psi[\lambda,\xi,\dot{\lambda},\dot{\xi},\phi][\dot{\bar{\xi}}](x, t)\big|\lesssim e^{-\varepsilon t_0}\|\dot{\bar{\xi}}\|_{1+\sigma}\frac{\mu^{\sigma}(t)\log(1+|y|)}{1+|y|^{\alpha}},
\end{equation}
\begin{equation}\label{e4:76}
\big|\partial_{\dot{\lambda}}\Psi[\lambda,\xi,\dot{\lambda},\dot{\xi},\phi][\dot{\bar{\lambda}}](x, t)\big|\lesssim e^{-\varepsilon t_0}\|\dot{\bar{\lambda}}\|_{1+\sigma}\frac{\mu^{\sigma}(t)\log(1+|y|)}{1+|y|^{\alpha}},
\end{equation}
\begin{equation}\label{e4:84}
\big|\partial_{\phi}\Psi[\lambda,\xi,\dot{\lambda},\dot{\xi},\phi][\bar{\phi}](x, t)\big|\lesssim e^{-\varepsilon t_0}\|\bar{\phi}(t)\|_{\alpha, \sigma}\frac{\mu^{\sigma}(t)\log(1+|y|)}{1+|y|^{\alpha}}.
\end{equation}
\end{prop}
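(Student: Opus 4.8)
Recall that $\Psi=\Psi[\lambda,\xi,\dot\lambda,\dot\xi,\phi]$ is produced in Proposition~\ref{p4:4.1} as the unique fixed point, in the ball $\mathcal B=\{\psi:\|\psi\|_{**}\le Me^{-\varepsilon t_0}\}$, of the operator $\mathcal A(\psi)=T(f(\psi))$, where $T$ is the linear solution operator furnished by Lemma~\ref{l4:lemma4.1} and $f=f(\psi;\lambda,\xi,\dot\lambda,\dot\xi,\phi)$ is the right-hand side of the outer problem (\ref{e:outerproblem}). The plan is to show that $f$ depends in a $C^1$ way on all of its arguments, as a map into the weighted space with norm $\|\cdot\|_{*,1+\alpha,\sigma}$, then to differentiate the fixed-point identity $\Psi=T(f(\Psi))$ using that $\mathcal A$ is a \emph{uniform} contraction, and finally to estimate the resulting linear problem exactly as in Proposition~\ref{p4:4.1}.

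\textbf{Smooth dependence and implicit differentiation.} Every term of $f$ is an algebraic/analytic function of $\mu=\mu_0+\lambda$, $\xi$, $\dot\mu=\dot\mu_0+\dot\lambda$, $\dot\xi$, $\psi$, $\phi$ and of the rescaled profile $\omega((x-\xi)/\mu)$, composed with the nonlocal (singular-integral) operators appearing in (\ref{e:outerproblem}); since all these integrals converge absolutely by the decay of $\omega$, $\psi$, $\phi$, one may differentiate under the integral sign, and the dilations/translations $x\mapsto(x-\xi)/\mu$ and the cut-off $\eta=\eta_0(|x-\xi|/(R\mu_0))$ depend smoothly on $(\xi,\mu)$ (indeed $\eta$ is independent of $\lambda,\dot\lambda,\dot\xi,\phi$). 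This gives the $C^1$ dependence of $f$. Next, the Lipschitz bound established in Proposition~\ref{p4:4.1}, $\|\mathcal A(\psi_1)-\mathcal A(\psi_2)\|_{**}\le C\|\psi_1-\psi_2\|_{**}$ with $C=C(t_0)<1$, says precisely that $T\circ\partial_\psi f$ has operator norm $\le C<1$ on the space with norm $\|\cdot\|_{**}$, so $I-T\circ\partial_\psi f$ is invertible there by Neumann series. Differentiating $\Psi=T(f(\Psi))$ in a direction $\bar\lambda$ yields $(I-T\circ\partial_\psi f)\bigl[\partial_\lambda\Psi[\bar\lambda]\bigr]=T\bigl(\partial_\lambda f[\bar\lambda]\bigr)$, hence
\begin{equation*}
\|\partial_\lambda\Psi[\bar\lambda]\|_{**}\le(1-C)^{-1}\|T(\partial_\lambda f[\bar\lambda])\|_{**}\lesssim\|\partial_\lambda f[\bar\lambda]\|_{*,1+\alpha,\sigma},
\end{equation*}
by Lemma~\ref{l4:lemma4.1}, and the identical identity holds for the directional derivatives in $\xi$, $\dot\lambda$, $\dot\xi$, $\phi$; this also yields the asserted smooth dependence.

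\textbf{Estimating $\partial_p f$.} It remains to bound $\|\partial_p f[\bar p]\|_{*,1+\alpha,\sigma}$ for each parameter $p$, which essentially reproduces the computations already carried out in Proposition~\ref{p4:4.1} with one factor taken to be the derivative direction. Differentiating in $\lambda$ affects only the terms of $f$ containing $\mu$ (the potential term, $(1-\eta)\Pi_{U^\perp}\mathcal E^*$, $N_U$, $\Phi^0$, and the nonlocal terms built from $\omega((x-\xi)/\mu)$), and each such derivative produces an extra factor $O(\bar\lambda/\mu)=O(\mu_0^{-1}\bar\lambda)$; combining this with the bounds (\ref{e:estimate4.1})--(\ref{e:estimate4.9}) and (\ref{e4:101})--(\ref{e4:104}) gives $|\partial_\lambda f[\bar\lambda]|\lesssim e^{-\varepsilon t_0}\|\bar\lambda\|_{1+\sigma}\,\mu_0^{\sigma-1}(1+|y|)^{-1-\alpha}$, hence (\ref{e4:64}) (using $\mu\sim\mu_0$). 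Differentiating in $\xi$ additionally hits the cut-off $\eta$, supported on the annulus $\{R\le|y|\le2R\}$ and contributing an extra factor $(R\mu_0)^{-1}$ there, as well as the profile $\omega((x-\xi)/\mu)$ inside the singular integrals; here the local H\"older estimate of $\psi$ (and of $\phi$) from Proposition~\ref{p4:4.1} controls the differences, exactly as in the proof of (\ref{e:estimate4.9}), giving (\ref{e4:74}). The derivatives in $\dot\xi$ and $\dot\lambda$ act only on the transport term $\frac{\dot\mu_0}{\mu_0}\eta\,y\cdot\nabla_y\phi+\eta\frac{\dot\xi}{\mu_0}\cdot\nabla_y\phi$, on the part of $\mathcal E^*$ linear in $(\dot\mu,\dot\xi)$, and on $\Phi^0$ (through $p=-2\dot\mu$); using $(1+|y|)|\nabla_y\phi|\chi_{\{|y|\le2R\}}\lesssim\|\phi\|_{\alpha,\sigma}\mu_0^\sigma(1+|y|)^{-\alpha}$ one obtains (\ref{e4:75})--(\ref{e4:76}). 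Finally $\partial_\phi f[\bar\phi]$ is obtained by replacing $\phi$ with $\bar\phi$ in the $\phi$-linear terms (the transport term, the commutators $[(-\Delta)^{1/2}\eta]\phi$ and $\partial_t\eta\,\phi$, and the $\eta$-difference integrals) and adding $\partial_\phi N_U[\bar\phi]$, so the estimates (\ref{e:estimate4.4})--(\ref{e:estimate4.777}) and (\ref{e:estimate4.9}) with $\phi$ replaced by $\bar\phi$ yield (\ref{e4:84}).

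\textbf{Main obstacle.} Essentially all the difficulty is in the bookkeeping of the last step: differentiating the singular nonlocal operators and the nonlinear term $N_U$ with respect to the dilation and translation parameters while preserving the weighted $L^\infty$ and the local H\"older control. The $\partial_\xi$ estimate is the delicate one, since it simultaneously differentiates the cut-off living on the scale-$R$ annulus and the rescaled profile $\omega((x-\xi)/\mu)$ appearing under the integral signs, and it must be handled with the same care as (\ref{e:estimate4.9}), relying crucially on the H\"older regularity of $\psi$ provided by Lemma~\ref{l4:lemma4.1}.
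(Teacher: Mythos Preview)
Your proposal is correct and follows essentially the same approach as the paper. Both arguments differentiate the fixed-point identity for $\Psi$, estimate the derivative of the right-hand side $f$ term by term using the bounds already established in Proposition~\ref{p4:4.1}, and then close via contraction; the only cosmetic difference is that you invoke the Neumann series $(I-T\circ\partial_\psi f)^{-1}$ directly, whereas the paper re-casts each differentiated equation as a new fixed-point problem $\mathcal A_1(Z)=T(g(Z))$ and re-runs the contraction argument, which amounts to the same thing.
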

\begin{proof}
{\bf Step 1. Proof of (\ref{e4:64}) and (\ref{e4:74})}:
From Proposition \ref{p4:4.1}, the function $\Psi[\lambda]$ is a solution to (\ref{e:outerproblem})  for all $\lambda$ satisfying (\ref{e:assumptionsonlambda}). Differentiating (\ref{e:outerproblem}) with respect to $\lambda$, we obtain a nonlinear equation, from the Implicit Function Theorem, the solutions are given by $\partial_{\lambda}\Psi[\bar{\lambda}](x, t)$. Denoting $Z:=\partial_{\lambda}\Psi[\bar{\lambda}](x, t)$, then $Z$ is a solution of the following nonlinear problem
\begin{equation}\label{e:equation4}
\begin{aligned}
\partial_t \psi &= (-\Delta)^{\frac{1}{2}}\psi + g(x, t)\quad \text{ in }\mathbb{R}\times [t_0, +\infty)
\end{aligned}
\end{equation}
with
\begin{equation*}
\begin{aligned}
&g(x, t)= (1-\eta) \frac{\partial}{\partial\bar{\lambda}}\left\{\frac{2\frac{1}{\mu}}{1+\left|\frac{x-\xi}{\mu}\right|^2}\psi -\frac{1}{\pi}\int_{\mathbb{R}}\frac{\left[\psi(x)\cdot \omega\left(\frac{x-\xi}{\mu}\right)-\psi(s)\cdot \omega\left(\frac{s-\xi}{\mu}\right)\right]\left[\omega\left(\frac{x-\xi}{\mu}\right)-\omega\left(\frac{s-\xi}{\mu}\right)\right]}{|x-s|^2}ds
\right. \\
&+ \left(\frac{1}{\pi}\int_{\mathbb{R}}\frac{\left[\psi(x)\cdot \omega\left(\frac{x-\xi}{\mu}\right)-\psi(s)\cdot \omega\left(\frac{s-\xi}{\mu}\right)\right]\left[\omega\left(\frac{x-\xi}{\mu}\right)-\omega\left(\frac{s-\xi}{\mu}\right)\right]}{|x-s|^2}ds\cdot \omega\left(\frac{x-\xi}{\mu}\right)\right)\omega\left(\frac{x-\xi}{\mu}\right)\\
&  + N_U(\Pi_{U^\perp}[\Phi^0 + Z^*+\eta\phi+\psi]) + (1-\eta)\Pi_{U^\perp}\mathcal{E}^*\\
&  -\frac{1}{\pi}\int_{\mathbb{R}}\frac{(\eta(x)-\eta(s))\left[\phi\left(\frac{s-\xi(t)}{\mu_0(t)}, t\right)\cdot \omega\left(\frac{s-\xi}{\mu}\right)\right]\left[\omega\left(\frac{x-\xi}{\mu}\right)-\omega\left(\frac{s-\xi}{\mu}\right)\right]}{|x-s|^2}ds+\\
\end{aligned}
\end{equation*}
\begin{equation*}
\begin{aligned}
&  \left.\left(\frac{1}{\pi}\int_{\mathbb{R}}\frac{(\eta(x)-\eta(s))\left[\phi\left(\frac{s-\xi(t)}{\mu_0(t)}, t\right)\cdot \omega \left(\frac{s-\xi}{\mu}\right)\right]\left[\omega\left(\frac{x-\xi}{\mu}\right)-\omega\left(\frac{s-\xi}{\mu}\right)\right]}{|x-s|^2}ds\cdot \omega\left(\frac{x-\xi}{\mu}\right)\right)\omega\left(\frac{x-\xi}{\mu}\right)\right\}[\bar{\lambda}].
\end{aligned}
\end{equation*}
If we define $\|Z\|_{*}$ as the least $M > 0$ such that
\begin{equation*}
|Z(x, t)|\lesssim M\|\bar{\lambda}\|_{1+\sigma}\frac{\mu_0^{\sigma}(t)\log(1+|y|)}{1+|y|^{\alpha}},
\end{equation*}
then we have the following estimates
\begin{equation}\label{e4:120}
\begin{aligned}
&\left|\frac{\partial}{\partial\bar{\lambda}}\left(\frac{2\frac{1}{\mu}}{1+\left|\frac{x-\xi}{\mu}\right|^2}\psi\right)[\bar{\lambda}]\right|\lesssim e^{-\varepsilon t_0}\left(1+\|Z\|_*\right)\frac{\mu_0^{\sigma-1}}{1+|y|^{1+\alpha}}\|\bar{\lambda}\|_{1+\sigma},
\end{aligned}
\end{equation}

\begin{equation}\label{e4:121}
\begin{aligned}
&\left|\left(\Pi_{U^\perp}\mathcal{E}^*\right)[\bar{\lambda}]\right|\lesssim e^{-\varepsilon t_0}\frac{\mu_0^{\sigma-1}(t)}{1+|y|^{1+\alpha}}\|\bar{\lambda}\|_{1+\sigma},
\end{aligned}
\end{equation}

\begin{equation}\label{e4:122}
\begin{aligned}
&\left|\frac{\partial}{\partial\bar{\lambda}}\left\{-\frac{1}{\pi}\int_{\mathbb{R}}\frac{\left[\psi(x)\cdot \omega\left(\frac{x-\xi}{\mu}\right)-\psi(s)\cdot \omega\left(\frac{s-\xi}{\mu}\right)\right]\left[\omega\left(\frac{x-\xi}{\mu}\right)-\omega\left(\frac{s-\xi}{\mu}\right)\right]}
{|x-s|^2}ds\right\}[\bar{\lambda}]\right|\\
&\quad\quad\quad\quad\quad\quad\quad\quad\quad\quad\quad\quad\quad\quad\quad\quad\quad\quad\quad\quad\lesssim e^{-\varepsilon t_0}(1+\|Z\|_*)\frac{\mu_0^{\sigma-1}(t)}{1+|y|^{1+\alpha}}\|\bar{\lambda}\|_{1+\sigma},
\end{aligned}
\end{equation}

\begin{equation}\label{e4:123}
\begin{aligned}
&\left|\frac{\partial}{\partial\bar{\lambda}}\left\{\left(\frac{1}{\pi}\int_{\mathbb{R}}\frac{\left[\psi(x)\cdot \omega\left(\frac{x-\xi}{\mu}\right)-\psi(s)\cdot \omega\left(\frac{s-\xi}{\mu}\right)\right]\left[\omega\left(\frac{x-\xi}{\mu}\right)-\omega\left(\frac{s-\xi}{\mu}\right)\right]}{|x-s|^2}ds\cdot \omega\left(\frac{x-\xi}{\mu}\right)\right)\omega\left(\frac{x-\xi}{\mu}\right)\right\}[\bar{\lambda}]\right|\\
&\quad\quad\quad\quad\quad\quad\quad\quad\quad\quad\quad\quad\quad\quad\quad\quad\quad\quad\quad\quad\quad\quad\quad\lesssim e^{-\varepsilon t_0}(1+\|Z\|_*)\frac{\mu_0^{\sigma-1}(t)}{1+|y|^{1+\alpha}}\|\bar{\lambda}\|_{1+\sigma},
\end{aligned}
\end{equation}

\begin{equation}\label{e4:125}
\begin{aligned}
&\left|\frac{\partial}{\partial\bar{\lambda}}N_U(\Pi_{U^\perp}[\Phi^0 + Z^*+\eta\phi+\psi])\right|\lesssim e^{-\varepsilon t_0}\frac{\mu_0^{\sigma-1}(t)}{1+|y|^{1+\alpha}}\|\bar{\lambda}\|_{1+\sigma} + \|Z\|_{*}e^{-\varepsilon t_0}\frac{\mu_0^{\sigma-1}(t)}{1+|y|^{1+\alpha}}\|\bar{\lambda}\|_{1+\sigma},
\end{aligned}
\end{equation}

\begin{equation}\label{e4:126}
\left|\frac{\partial}{\partial\bar{\lambda}}\left\{\frac{1}{\pi}\int_{\mathbb{R}}\frac{(\eta(x)-\eta(s))\left[\phi\left(\frac{s-\xi(t)}{\mu_0(t)}, t\right)\cdot \omega\left(\frac{s-\xi}{\mu}\right)\right]\left[\omega\left(\frac{x-\xi}{\mu}\right)-\omega\left(\frac{s-\xi}{\mu}\right)\right]}{|x-s|^2}ds\right\}
\right|\lesssim e^{-\varepsilon t_0}\|\bar{\lambda}\|_{1+\sigma}\frac{\mu_0(t)^{\sigma-1}}{1+|y|^{1+\alpha}},
\end{equation}

\begin{equation}\label{e4:127}
\begin{aligned}
&\left|\frac{\partial}{\partial\bar{\lambda}}\left\{\left(\frac{1}{\pi}\int_{\mathbb{R}}\frac{(\eta(x)-\eta(s))\left[\phi\left(\frac{s-\xi(t)}
{\mu_0(t)}, t\right)\cdot \omega \left(\frac{s-\xi}{\mu}\right)\right]\left[\omega\left(\frac{x-\xi}{\mu}\right)-\omega\left(\frac{s-\xi}{\mu}\right)\right]}{|x-s|^2}ds\cdot \omega\left(\frac{x-\xi}{\mu}\right)\right)\omega\left(\frac{x-\xi}{\mu}\right)\right\}
\right|\\
&\quad\quad\quad\quad\quad\quad\quad\quad\quad\quad\quad\quad\quad\quad\quad\quad\quad\quad\quad\quad\quad\quad\quad\quad\quad\quad\quad\quad\quad\quad \lesssim e^{-\varepsilon t_0}\|\bar{\lambda}\|_{1+\sigma}\frac{\mu_0(t)^{\sigma-1}}{1+|y|^{1+\alpha}}.
\end{aligned}
\end{equation}

{\it Proof of (\ref{e4:120})}: We have
\begin{equation*}
\begin{aligned}
&\left|\frac{\partial}{\partial\bar{\lambda}}\left(\frac{2\frac{1}{\mu}}{1+\left|\frac{x-\xi}{\mu}\right|^2}\psi\right)[\bar{\lambda}]\right|\\
&\lesssim \frac{\frac{1}{\mu}}{1+|y|^2}\left|\psi\frac{\bar{\lambda}}{\mu} + Z\right|\\
&\lesssim \frac{\frac{1}{\mu}}{1+|y|^2}\frac{\bar{\lambda}}{\mu}\|\psi\|_{**}\frac{\mu_0^{\sigma}\log(1+|y|)}{1+|y|^\alpha} + \frac{\frac{1}{\mu}}{1+|y|^2}\|Z\|_{*}\frac{\mu_0^{\sigma}\log(1+|y|)}{1+|y|^\alpha}\|\bar{\lambda}\|_{1+\sigma}\\
&\lesssim e^{-\varepsilon t_0}\left(1+\|Z\|_*\right)\frac{\mu_0^{\sigma-1}}{1+|y|^{1+\alpha}}\|\bar{\lambda}\|_{1+\sigma}.
\end{aligned}
\end{equation*}

{\it Proof of (\ref{e4:121})}: From Section 2.4, we have
\begin{equation*}
\begin{aligned}
&\left|\left(\Pi_{U^\perp}\mathcal{E}^*\right)[\bar{\lambda}]\right|\lesssim \frac{1}{1+|y|^{1+\alpha}}\left|\frac{\bar{\lambda}}{\mu}\right|\lesssim e^{-\varepsilon t_0}\frac{\mu_0^{\sigma-1}}{1+|y|^{1+\alpha}}\|\bar{\lambda}\|_{1+\sigma}.
\end{aligned}
\end{equation*}

{\it Proof of (\ref{e4:122})}: We have
\begin{equation*}
\begin{aligned}
&\left|\frac{\partial}{\partial\bar{\lambda}}\left\{-\frac{1}{\pi}\int_{\mathbb{R}}\frac{\left[\psi(x)\cdot \omega\left(\frac{x-\xi}{\mu}\right)-\psi(s)\cdot \omega\left(\frac{s-\xi}{\mu}\right)\right]\left[\omega\left(\frac{x-\xi}{\mu}\right)-\omega\left(\frac{s-\xi}{\mu}\right)\right]}{|x-s|^2}ds\right\}
[\bar{\lambda}]\right|\\
& = \frac{\bar{\lambda}}{\mu}\left|\left\{-\frac{1}{\pi}\int_{\mathbb{R}}\frac{\left[\psi(x)\cdot \left(\nabla\omega\left(\frac{x-\xi}{\mu}\right)\cdot\left(\frac{x-\xi}{\mu}\right)\right) -\psi(s)\cdot \left(\nabla\omega\left(\frac{s-\xi}{\mu}\right)\cdot\left(\frac{s-\xi}{\mu}\right)\right)\right]\left[\omega\left(\frac{x-\xi}{\mu}\right)
-\omega\left(\frac{s-\xi}{\mu}\right)\right]}{|x-s|^2}ds\right\}\right| \\ & \quad +
\frac{\bar{\lambda}}{\mu}\left|\left\{-\frac{1}{\pi}\int_{\mathbb{R}}\frac{\left[\psi(x)\cdot \omega\left(\frac{x-\xi}{\mu}\right)-\psi(s)\cdot \omega\left(\frac{s-\xi}{\mu}\right)\right]\left[\left(\nabla\omega\left(\frac{x-\xi}{\mu}\right)
\cdot\left(\frac{x-\xi}{\mu}\right)\right)-\left(\nabla\omega\left(\frac{x-\xi}{\mu}\right)\cdot\left(\frac{x-\xi}{\mu}\right)\right)\right]}
{|x-s|^2}ds\right\}\right| \\
&\quad + \left|\frac{1}{\pi}\int_{\mathbb{R}}\frac{\left[Z(x)\cdot \omega\left(\frac{x-\xi}{\mu}\right)-Z(s)\cdot \omega\left(\frac{s-\xi}{\mu}\right)\right]\left[\omega\left(\frac{x-\xi}{\mu}\right)-\omega\left(\frac{s-\xi}{\mu}\right)\right]}{|x-s|^2}ds\right|\\
&\lesssim \left|\frac{\bar{\lambda}}{\mu}\right|\frac{\|\psi\|_{**}\mu_0^{\sigma}(t)}{1+|y|^2} + \frac{\|Z\|_{*}\mu_0^{\sigma}(t)}{1+|y|^2}\|\bar{\lambda}\|_{1+\sigma}\lesssim e^{-\varepsilon t_0}(1+\|Z\|_*)\frac{\mu_0^{\sigma-1}(t)}{1+|y|^{1+\alpha}}\|\bar{\lambda}\|_{1+\sigma}.
\end{aligned}
\end{equation*}

{\it Proof of (\ref{e4:123})}: We have
\begin{equation*}
\begin{aligned}
&\left|\frac{\partial}{\partial\bar{\lambda}}\left\{\left(\frac{1}{\pi}\int_{\mathbb{R}}\frac{\left[\psi(x)\cdot \omega\left(\frac{x-\xi}{\mu}\right)-\psi(s)\cdot \omega\left(\frac{s-\xi}{\mu}\right)\right]\left[\omega\left(\frac{x-\xi}{\mu}\right)-\omega\left(\frac{s-\xi}{\mu}\right)\right]}{|x-s|^2}ds\cdot \omega\left(\frac{x-\xi}{\mu}\right)\right)\omega\left(\frac{x-\xi}{\mu}\right)\right\}[\bar{\lambda}]\right|\\
& = \frac{\bar{\lambda}}{\mu}\left|\left\{-\frac{1}{\pi}\int_{\mathbb{R}}\frac{\left[\psi(x)\cdot \left(\nabla\omega\left(\frac{x-\xi}{\mu}\right)\cdot\left(\frac{x-\xi}{\mu}\right)\right) -\psi(s)\cdot \left(\nabla\omega\left(\frac{s-\xi}{\mu}\right)\cdot\left(\frac{s-\xi}{\mu}\right)\right)\right]\left[\omega\left(\frac{x-\xi}{\mu}\right)
-\omega\left(\frac{s-\xi}{\mu}\right)\right]}{|x-s|^2}ds\right\}\right| \\
& \quad +
\frac{\bar{\lambda}}{\mu}\left|\left\{-\frac{1}{\pi}\int_{\mathbb{R}}\frac{\left[\psi(x)\cdot \omega\left(\frac{x-\xi}{\mu}\right)-\psi(s)\cdot \omega\left(\frac{s-\xi}{\mu}\right)\right]\left[\left(\nabla\omega\left(\frac{x-\xi}{\mu}\right)
\cdot\left(\frac{x-\xi}{\mu}\right)\right)-\left(\nabla\omega\left(\frac{x-\xi}{\mu}\right)\cdot\left(\frac{x-\xi}{\mu}\right)\right)\right]}
{|x-s|^2}ds\right\}\right|
\\
\end{aligned}
\end{equation*}
\begin{equation*}
\begin{aligned}
& +
\frac{\bar{\lambda}}{\mu}\left|\left\{\left(\frac{1}{\pi}\int_{\mathbb{R}}\frac{\left[\psi(x)\cdot \omega\left(\frac{x-\xi}{\mu}\right)-\psi(s)\cdot \omega\left(\frac{s-\xi}{\mu}\right)\right]\left[\omega\left(\frac{x-\xi}{\mu}\right)-\omega\left(\frac{s-\xi}{\mu}\right)\right]}{|x-s|^2}ds\cdot \left(\nabla\omega\left(\frac{x-\xi}{\mu}\right)\cdot\left(\frac{x-\xi}{\mu}\right)\right)\right)\omega\left(\frac{x-\xi}{\mu}\right)\right\}\right|\\
\\ & +
\frac{\bar{\lambda}}{\mu}\left|\left\{\left(\frac{1}{\pi}\int_{\mathbb{R}}\frac{\left[\psi(x)\cdot \omega\left(\frac{x-\xi}{\mu}\right)-\psi(s)\cdot \omega\left(\frac{s-\xi}{\mu}\right)\right]\left[\omega\left(\frac{x-\xi}{\mu}\right)-\omega\left(\frac{s-\xi}{\mu}\right)\right]}{|x-s|^2}ds\cdot \omega\left(\frac{x-\xi}{\mu}\right)\right)\left(\nabla\omega\left(\frac{x-\xi}{\mu}\right)\cdot\left(\frac{x-\xi}{\mu}\right)\right)\right\}\right|\\
\\ & + \left|\left(\frac{1}{\pi}\int_{\mathbb{R}}\frac{\left[Z(x)\cdot \omega\left(\frac{x-\xi}{\mu}\right)-Z(s)\cdot \omega\left(\frac{s-\xi}{\mu}\right)\right]\left[\omega\left(\frac{x-\xi}{\mu}\right)-\omega\left(\frac{s-\xi}{\mu}\right)\right]}{|x-s|^2}ds\cdot \omega\left(\frac{x-\xi}{\mu}\right)\right)\omega\left(\frac{x-\xi}{\mu}\right)\right|\\
&\lesssim \left|\frac{\bar{\lambda}}{\mu}\right|\frac{\|\psi\|_{**}\mu_0^{\sigma}(t)}{1+|y|^2} + \frac{\|Z\|_{*}\mu_0^{\sigma}(t)}{1+|y|^2}\|\bar{\lambda}\|_{1+\sigma}\lesssim e^{-\varepsilon t_0}(1+\|Z\|_*)\frac{\mu_0^{\sigma-1}(t)}{1+|y|^{1+\alpha}}\|\bar{\lambda}\|_{1+\sigma}.
\end{aligned}
\end{equation*}

{\it Proof of (\ref{e4:125})}: We have
\begin{equation*}
\begin{aligned}
&\left|\frac{\partial}{\partial\bar{\lambda}}\left\{N_U(\Pi_{U^\perp}[Z^*+\varphi])\right\}\right|\\
&= \left|\frac{\partial}{\partial\bar{\lambda}}\left\{\left(\frac{1}{\pi}\int_{\mathbb{R}}\frac{(a(x)U(x)-a(s)U(s))\cdot(U(x)+\Pi_{U^\perp}\varphi(x)-U(s) -\Pi_{U^\perp}\varphi(s))}{|x-s|^2}ds\right.\right.\right.\\ &\quad\quad\quad\quad\quad\quad\quad\left.+\frac{1}{\pi}\int_{\mathbb{R}}\frac{(U(x)-U(s))\cdot(\Pi_{U^\perp}\varphi(x) -\Pi_{U^\perp}\varphi(s))}{|x-s|^2}ds\right.\\ &\quad\quad\quad\quad\quad\quad\quad\left. +\frac{1}{2\pi}\int_{\mathbb{R}}\frac{(\Pi_{U^\perp}\varphi(x)-\Pi_{U^\perp}\varphi(s))\cdot(\Pi_{U^\perp}\varphi(x) -\Pi_{U^\perp}\varphi(s))}{|x-s|^2}ds\right.\\
&\quad\quad\quad\quad\quad\quad\quad +\left.\frac{1}{2\pi}\int_{\mathbb{R}}\frac{(a(x)U(x)-a(s)U(s))\cdot(a(x)U(x)-a(s)U(s))}{|x-s|^2}ds\right)\Pi_{U^\perp}\varphi\\
&\quad\quad\quad\quad\quad\quad\quad \left.\left.-aU_t - \frac{1}{\pi}\int_{\mathbb{R}}\frac{(a(x)-a(s))\cdot(U(x) - U(s))}{|x-s|^2}ds\right\}\right|.
\end{aligned}
\end{equation*}
A typical term is
\begin{equation*}
\begin{aligned}
&\left|\frac{\partial}{\partial\bar{\lambda}}\left\{\left(a(x)\frac{1}{\pi}\int_{\mathbb{R}}\frac{(U(x)-U(s))\cdot(U(x)-U(s))}
{|x-s|^2}ds\right)\Pi_{U^\perp}\varphi\right\}\right|\\
& = \left|\frac{\partial}{\partial\bar{\lambda}}\left\{\left(a(x)\frac{\mu^{-1}}{1+|y|^2}\right)\Pi_{U^\perp}\varphi\right\}\right|\\
& \lesssim \left|\left(\frac{\partial}{\partial\bar{\lambda}}a(x)\right)\frac{\mu^{-1}}{1+|y|^2}\Pi_{U^\perp}\varphi\right|\\
\end{aligned}
\end{equation*}
\begin{equation*}
\begin{aligned}
& \quad + \left|a(x)\left(\frac{\partial}{\partial\bar{\lambda}}\frac{\mu^{-1}}{1+|y|^2}\right)\Pi_{U^\perp}\varphi\right| + \left|a(x)\frac{\mu^{-1}}{1+|y|^2}\left(\frac{\partial}{\partial\bar{\lambda}}\Pi_{U^\perp}\varphi\right)\right|\\
&\lesssim e^{-\varepsilon t_0}\frac{\mu_0^{\sigma-1}(t)}{1+|y|^{1+\alpha}}\|\bar{\lambda}\|_{1+\sigma} + \|Z\|_{*}e^{-\varepsilon t_0}\frac{\mu_0^{\sigma-1}(t)}{1+|y|^{1+\alpha}}\|\bar{\lambda}\|_{1+\sigma}.
\end{aligned}
\end{equation*}
The estimates of the other integrals are similar.

{\it Proof of (\ref{e4:126})}:
\begin{equation*}
\begin{aligned}
&\left|\frac{\partial}{\partial\bar{\lambda}}\left\{\frac{1}{\pi}\int_{\mathbb{R}}\frac{(\eta(x)-\eta(s))\left[\phi\left(\frac{s-\xi(t)}{\mu_0(t)}, t\right)\cdot \omega\left(\frac{s-\xi}{\mu}\right)\right]\left[\omega\left(\frac{x-\xi}{\mu}\right)-\omega\left(\frac{s-\xi}{\mu}\right)\right]}{|x-s|^2}ds\right\}
\right|\\
&\lesssim \left|\frac{\bar{\lambda}}{\mu}\right|\left|\int_{\mathbb{R}}\frac{(\eta(x)-\eta(s))\left[\phi\left(\frac{s-\xi(t)}{\mu_0(t)}, t\right)\cdot \left(\nabla\omega\left(\frac{s-\xi}{\mu}\right)\cdot\left(\frac{s-\xi}{\mu}\right)\right)\right]
\left[\omega\left(\frac{x-\xi}{\mu}\right)-\omega\left(\frac{s-\xi}{\mu}\right)\right]}{|x-s|^2}ds\right|\\
&\quad + \left|\frac{\bar{\lambda}}{\mu}\right|\left|\int_{\mathbb{R}}\frac{(\eta(x)-\eta(s))\left[\phi\left(\frac{s-\xi(t)}{\mu_0(t)}, t\right)\cdot \omega\left(\frac{s-\xi}{\mu}\right)\right]
\left[\left(\nabla\omega\left(\frac{x-\xi}{\mu}\right)\cdot\left(\frac{x-\xi}{\mu}\right)\right)-
\left(\nabla\omega\left(\frac{s-\xi}{\mu}\right)\cdot\left(\frac{s-\xi}{\mu}\right)\right)\right]}{|x-s|^2}ds\right|\\
& \lesssim e^{-\varepsilon t_0}\|\phi\|_{\alpha,\sigma}\|\bar{\lambda}\|_{1+\sigma}\frac{\mu_0(t)^{\sigma-1}}{1+|y|^2}\lesssim e^{-\varepsilon t_0}\|\bar{\lambda}\|_{1+\sigma}\frac{\mu_0(t)^{\sigma-1}}{1+|y|^{1+\alpha}}.
\end{aligned}
\end{equation*}

The proof of (\ref{e4:127}) is similar.

Now we consider the fixed point problem corresponding to (\ref{e:equation4}). By similar arguments as Proposition \ref{p4:4.1}, one can show that the operator $\mathcal{A}_1$
\begin{equation*}
\mathcal{A}_1(Z):=T(g(Z))
\end{equation*}
has a fixed point in the set of functions satisfying
$$|Z(x, t)|\leq M e^{-\varepsilon t_0}\frac{\mu_0^{\sigma}(t)\log(1+|y|)}{1+|y|^{\alpha}}\|\bar{\lambda}\|_{1+\sigma}$$
when $M$ is a fixed large constant. Therefore, the estimate (\ref{e4:64}) for $\partial_{\lambda_1}\Psi[\bar{\lambda}_1]$ holds.
The proof of (\ref{e4:74}) is similar.

{\bf Step 2. Proof of (\ref{e4:75}) and (\ref{e4:76})}:
Differentiating (\ref{e:outerproblem}) with respect to $\dot{\xi}$, we obtain a nonlinear equation, from the Implicit Function Theorem, the solutions are given by $V:=\partial_{\dot{\xi}}\Psi[\dot{\bar{\xi}}](x, t)$ and it is a solution of
\begin{equation*}
\begin{aligned}
\partial_t \psi &= (-\Delta)^{\frac{1}{2}}\psi + g(x, t)\quad \text{ in }\mathbb{R}\times [t_0, +\infty)
\end{aligned}
\end{equation*}
with
\begin{equation*}
\begin{aligned}
&g(x, t)= \frac{\partial}{\partial\dot{\xi}}\left\{(1-\eta)\left[\frac{2\frac{1}{\mu}}{1+\left|\frac{x-\xi}{\mu}\right|^2}\psi -\frac{1}{\pi}\int_{\mathbb{R}}\frac{\left[\psi(x)\cdot \omega\left(\frac{x-\xi}{\mu}\right)-\psi(s)\cdot \omega\left(\frac{s-\xi}{\mu}\right)\right]\left[\omega\left(\frac{x-\xi}{\mu}\right)-\omega\left(\frac{s-\xi}{\mu}\right)\right]}{|x-s|^2}ds\right.
\right. \\
& \left.\left.+ \left(\frac{1}{\pi}\int_{\mathbb{R}}\frac{\left[\psi(x)\cdot \omega\left(\frac{x-\xi}{\mu}\right)-\psi(s)\cdot \omega\left(\frac{s-\xi}{\mu}\right)\right]\left[\omega\left(\frac{x-\xi}{\mu}\right)-\omega\left(\frac{s-\xi}{\mu}\right)\right]}{|x-s|^2}ds\cdot \omega\left(\frac{x-\xi}{\mu}\right)\right)\omega\left(\frac{x-\xi}{\mu}\right)\right]\right\}[\dot{\bar{\xi}}]\\
&+\eta\frac{\dot{\bar{\xi}}}{\mu_0}\cdot\nabla_y\phi .
\end{aligned}
\end{equation*}
Define $\|V\|_{***}$ as the least $M > 0$ such that
\begin{equation*}
|V(x, t)|\lesssim M\|\dot{\bar{\xi}}\|_{1+\sigma}\frac{\mu_0^{\sigma}(t)\log(1+|y|)}{1+|y|^{\alpha}}.
\end{equation*}
Since
\begin{equation*}
\begin{aligned}
\left|\frac{\partial}{\partial\dot{\xi}}\left((1-\eta)\frac{2\frac{1}{\mu}}{1+\left|\frac{x-\xi}{\mu}\right|^2}\psi\right)[\dot{\bar{\xi}}]\right|
&\lesssim \|V\|_{***}\|\dot{\bar{\xi}}\|_{1+\sigma}\frac{\mu_0^{\sigma-1}(t)\log(1+|y|)}{1+|y|^{\alpha+2}}\\
&\lesssim e^{-\varepsilon t_0}\|V\|_{***}\|\dot{\bar{\xi}}\|_{1+\sigma}\frac{\mu_0^{\sigma-1}(t)}{1+|y|^{1+\alpha}},
\end{aligned}
\end{equation*}
\begin{equation*}
\begin{aligned}
&\left|\frac{\partial}{\partial\dot{\xi}}\left((1-\eta)\frac{1}{\pi}\int_{\mathbb{R}}\frac{\left[\psi(x)\cdot \omega\left(\frac{x-\xi}{\mu}\right)-\psi(s)\cdot \omega\left(\frac{s-\xi}{\mu}\right)\right]\left[\omega\left(\frac{x-\xi}{\mu}\right)-\omega\left(\frac{s-\xi}{\mu}\right)\right]}{|x-s|^2}ds\right)
[\dot{\bar{\xi}}]\right|
\\ &\quad\quad\quad\quad\quad\quad\quad\quad\quad\lesssim \|V\|_{***}\|\dot{\bar{\xi}}\|_{1+\sigma}\mu_0^{\sigma-1}(t)\left(\frac{\log(1+|y|)}{1+|y|^{\alpha+2}}+\frac{1}{1+|y|^2}\right)\\
&\quad\quad\quad\quad\quad\quad\quad\quad\quad\lesssim e^{-\varepsilon t_0}\|V\|_{***}\|\dot{\bar{\xi}}\|_{1+\sigma}\frac{\mu_0^{\sigma-1}(t)}{1+|y|^{1+\alpha}},
\end{aligned}
\end{equation*}
\begin{equation*}
\begin{aligned}
&\left|\frac{\partial}{\partial\dot{\xi}}\left((1-\eta)\left(\frac{1}{\pi}\int_{\mathbb{R}}\frac{\left[\psi(x)\cdot \omega\left(\frac{x-\xi}{\mu}\right)-\psi(s)\cdot \omega\left(\frac{s-\xi}{\mu}\right)\right]\left[\omega\left(\frac{x-\xi}{\mu}\right)-\omega\left(\frac{s-\xi}{\mu}\right)\right]}{|x-s|^2}ds\cdot \omega\left(\frac{x-\xi}{\mu}\right)\right)\omega\left(\frac{x-\xi}{\mu}\right)\right)
[\dot{\bar{\xi}}]\right|
\\ &\quad\quad\quad\quad\quad\quad\quad\quad\quad\lesssim \|V\|_{***}\|\dot{\bar{\xi}}\|_{1+\sigma}\mu_0^{\sigma-1}(t)\left(\frac{\log(1+|y|)}{1+|y|^{\alpha+2}}+\frac{1}{1+|y|^2}\right)\\
&\quad\quad\quad\quad\quad\quad\quad\quad\quad\lesssim e^{-\varepsilon t_0}\|V\|_{***}\|\dot{\bar{\xi}}\|_{1+\sigma}\frac{\mu_0^{\sigma-1}(t)}{1+|y|^{1+\alpha}},
\end{aligned}
\end{equation*}
\begin{equation*}
\begin{aligned}
&\left|\eta\frac{\dot{\bar{\xi}}}{\mu_0}\cdot\nabla_y\phi\right|\lesssim e^{-\varepsilon t_0}\|\dot{\bar{\xi}}\|_{1+\sigma}\frac{\|\phi\|_{\alpha,\sigma}\mu^{\sigma}_0(t)}{1+|y|^{1+\alpha}}\lesssim e^{-\varepsilon t_0}\|\dot{\bar{\xi}}\|_{1+\sigma}\frac{\mu^{\sigma-1}_0(t)}{1+|y|^{1+\alpha}},
\end{aligned}
\end{equation*}
the same reason as Step 1 gives us the desired estimate. The proof of (\ref{e4:76}) is similar.

{\bf Step 3. Proof of (\ref{e4:84})}: Differentiating (\ref{e:outerproblem}) with respect to $\phi$, we know $W:=\partial_{\phi}\Psi[\bar{\phi}](x, t)$ satisfies
\begin{equation*}
\begin{aligned}
\partial_t \psi &= (-\Delta)^{\frac{1}{2}}\psi + g(x, t)\quad \text{ in }\mathbb{R}\times [t_0, +\infty)
\end{aligned}
\end{equation*}
with
\begin{equation*}
\begin{aligned}
&g(x, t)= \frac{\partial}{\partial\phi}\left\{(1-\eta)\left[\frac{2\frac{1}{\mu}}{1+\left|\frac{x-\xi}{\mu}\right|^2}\psi -\frac{1}{\pi}\int_{\mathbb{R}}\frac{\left[\psi(x)\cdot \omega\left(\frac{x-\xi}{\mu}\right)-\psi(s)\cdot \omega\left(\frac{s-\xi}{\mu}\right)\right]\left[\omega\left(\frac{x-\xi}{\mu}\right)-\omega\left(\frac{s-\xi}{\mu}\right)\right]}{|x-s|^2}ds\right.
\right. \\
& \left.+ \left(\frac{1}{\pi}\int_{\mathbb{R}}\frac{\left[\psi(x)\cdot \omega\left(\frac{x-\xi}{\mu}\right)-\psi(s)\cdot \omega\left(\frac{s-\xi}{\mu}\right)\right]\left[\omega\left(\frac{x-\xi}{\mu}\right)-\omega\left(\frac{s-\xi}{\mu}\right)\right]}{|x-s|^2}ds\cdot \omega\left(\frac{x-\xi}{\mu}\right)\right)\omega\left(\frac{x-\xi}{\mu}\right)\right]\\
& + \frac{\dot{\mu}_0}{\mu_0}\eta y\cdot \nabla_y\phi + \eta\frac{\dot{\xi}}{\mu_0}\cdot\nabla_y\phi + N_U(\Pi_{U^\perp}[\Phi^0 + Z^*+\eta\phi+\psi])\\
&  -\left[(-\Delta)^{\frac{1}{2}}\eta\right] \phi - \partial_t\eta\phi\left(\frac{x-\xi(t)}{\mu_0(t)}\right)+
\frac{1}{\pi}\int_{\mathbb{R}}\frac{(\eta(x)-\eta(s))\left(\phi\left(\frac{x-\xi(t)}{\mu_0(t)}, t\right)-\phi\left(\frac{s-\xi(t)}{\mu_0(t)}, t\right)\right)}{|x-s|^2}ds\\
&  -\frac{1}{\pi}\int_{\mathbb{R}}\frac{(\eta(x)-\eta(s))\left[\phi\left(\frac{s-\xi(t)}{\mu_0(t)}, t\right)\cdot \omega\left(\frac{s-\xi}{\mu}\right)\right]\left[\omega\left(\frac{x-\xi}{\mu}\right)-\omega\left(\frac{s-\xi}{\mu}\right)\right]}{|x-s|^2}ds+\\
\end{aligned}
\end{equation*}
\begin{equation*}
\begin{aligned}
&  \left.\left(\frac{1}{\pi}\int_{\mathbb{R}}\frac{(\eta(x)-\eta(s))\left[\phi\left(\frac{s-\xi(t)}{\mu_0(t)}, t\right)\cdot \omega \left(\frac{s-\xi}{\mu}\right)\right]\left[\omega\left(\frac{x-\xi}{\mu}\right)-\omega\left(\frac{s-\xi}{\mu}\right)\right]}{|x-s|^2}ds\cdot \omega\left(\frac{x-\xi}{\mu}\right)\right)\omega\left(\frac{x-\xi}{\mu}\right)\right\}[\bar{\phi}].
\end{aligned}
\end{equation*}
If we define $\|W\|_{****}$ as the least $M > 0$ such that
\begin{equation*}
|W(x, t)|\lesssim M\|\bar{\phi}(t)\|_{\alpha, \sigma}\frac{\mu_0^{\sigma}(t)\log(1+|y|)}{1+|y|^{\alpha}},
\end{equation*}
then we have
\begin{equation*}
\begin{aligned}
\left|\frac{\partial}{\partial\phi}\left((1-\eta)\frac{2\frac{1}{\mu}}{1+\left|\frac{x-\xi}{\mu}\right|^2}\psi\right)[\bar{\phi}]\right|&\lesssim \|W\|_{****}\|\bar{\phi}(t)\|_{\alpha, \sigma}\frac{\mu_0^{\sigma-1}(t)\log(1+|y|)}{1+|y|^{\alpha+2}}\\
&\lesssim e^{-\varepsilon t_0}\|W\|_{****} \|\bar{\phi}\|_{\alpha,\sigma}\frac{\mu_0^{\sigma-1}(t)}{1+|y|^{1+\alpha}},
\end{aligned}
\end{equation*}
\begin{equation*}
\begin{aligned}
&\left|\frac{\partial}{\partial\phi}\left(\frac{1-\eta}{\pi}\int_{\mathbb{R}}\frac{\left[\psi(x)\cdot \omega\left(\frac{x-\xi}{\mu}\right)-\psi(s)\cdot \omega\left(\frac{s-\xi}{\mu}\right)\right]\left[\omega\left(\frac{x-\xi}{\mu}\right)-\omega\left(\frac{s-\xi}{\mu}\right)\right]}{|x-s|^2}ds\right)
[\bar{\phi}]\right|\\ &\quad\quad\quad\quad\quad\quad\quad\quad\quad\lesssim \|W\|_{****}\|\bar{\phi}(t)\|_{\alpha, \sigma}\mu_0^{\sigma-1}(t)\left(\frac{\log(1+|y|)}{1+|y|^{\alpha+2}}+\frac{1}{1+|y|^2}\right)\\
&\quad\quad\quad\quad\quad\quad\quad\quad\quad\lesssim e^{-\varepsilon t_0}\|W\|_{****} \|\bar{\phi}\|_{\alpha,\sigma}\frac{\mu_0^{\sigma-1}(t)}{1+|y|^{1+\alpha}},
\end{aligned}
\end{equation*}
\begin{equation*}
\begin{aligned}
&\left|\frac{\partial}{\partial\phi}\left((1-\eta)\left(\frac{1}{\pi}\int_{\mathbb{R}}\frac{\left[\psi(x)\cdot \omega\left(\frac{x-\xi}{\mu}\right)-\psi(s)\cdot \omega\left(\frac{s-\xi}{\mu}\right)\right]\left[\omega\left(\frac{x-\xi}{\mu}\right)-\omega\left(\frac{s-\xi}{\mu}\right)\right]}{|x-s|^2}ds\cdot \omega\left(\frac{x-\xi}{\mu}\right)\right)\omega\left(\frac{x-\xi}{\mu}\right)\right)
[\bar{\phi}]\right|\\ &\quad\quad\quad\quad\quad\quad\quad\quad\quad\lesssim \|W\|_{****}\|\bar{\phi}(t)\|_{\alpha, \sigma}\mu_0^{\sigma-1}(t)\left(\frac{\log(1+|y|)}{1+|y|^{\alpha+2}}+\frac{1}{1+|y|^2}\right)\\
&\quad\quad\quad\quad\quad\quad\quad\quad\quad\lesssim e^{-\varepsilon t_0}\|W\|_{****} \|\bar{\phi}\|_{\alpha,\sigma}\frac{\mu_0^{\sigma-1}(t)}{1+|y|^{1+\alpha}},
\end{aligned}
\end{equation*}
\begin{equation*}
\begin{aligned}
&\left|\frac{\partial}{\partial\phi}\left(\frac{\dot{\mu}_0}{\mu_0}\eta y\cdot \nabla_y\phi\right)[\bar{\phi}]\right|\lesssim \|\bar{\phi}\|_{\alpha,\sigma}\frac{\mu_0^{\sigma}(t)}{1+|y|^{\alpha}}\lesssim e^{-\varepsilon t_0} \|\bar{\phi}\|_{\alpha,\sigma}\frac{\mu_0^{\sigma-1}(t)}{1+|y|^{1+\alpha}},
\end{aligned}
\end{equation*}
\begin{equation*}
\begin{aligned}
&\left|\frac{\partial}{\partial\phi}\left(\eta\frac{\dot{\xi}}{\mu_0}\cdot\nabla_y\phi\right)[\bar{\phi}]\right|\lesssim e^{-\varepsilon t_0} \|\bar{\phi}\|_{\alpha,\sigma}\frac{\mu_0^{\sigma-1}(t)}{1+|y|^{1+\alpha}},
\end{aligned}
\end{equation*}
\begin{equation*}
\begin{aligned}
&\left|\frac{\partial}{\partial\phi}\left(\left[(-\Delta)^{\frac{1}{2}}\eta\right] \phi\right)[\bar{\phi}]\right|\lesssim \|\bar{\phi}\|_{\alpha,\sigma}\frac{\mu_0^{\sigma}(t)}{1+|y|^{2+\alpha}}\lesssim e^{-\varepsilon t_0} \|\bar{\phi}\|_{\alpha,\sigma}\frac{\mu_0^{\sigma-1}(t)}{1+|y|^{1+\alpha}},
\end{aligned}
\end{equation*}
\begin{equation*}
\begin{aligned}
&\left|\frac{\partial}{\partial\phi}\left(\partial_t\eta\phi\left(\frac{x-\xi(t)}{\mu_0(t)}\right)\right)[\bar{\phi}]\right|\lesssim \|\bar{\phi}\|_{\alpha,\sigma}\frac{\mu_0^{\sigma}(t)}{1+|y|^{\alpha}}\lesssim e^{-\varepsilon t_0} \|\bar{\phi}\|_{\alpha,\sigma}\frac{\mu_0^{\sigma-1}(t)}{1+|y|^{1+\alpha}},
\end{aligned}
\end{equation*}
\begin{equation*}
\begin{aligned}
&\left|\frac{\partial}{\partial\phi}\left(\frac{1}{\pi}\int_{\mathbb{R}}\frac{(\eta(x)-\eta(s))\left(\phi\left(\frac{x-\xi(t)}{\mu_0(t)}, t\right)-\phi\left(\frac{s-\xi(t)}{\mu_0(t)}, t\right)\right)}{|x-s|^2}ds\right)[\bar{\phi}]\right|\lesssim \|\bar{\phi}\|_{\alpha,\sigma}\frac{\mu_0^{\sigma}(t)}{1+|y|^{2}}\\
\end{aligned}
\end{equation*}
\begin{equation*}
\begin{aligned}
&\quad\quad\quad\quad\quad\quad\quad\quad\quad\quad\quad\quad\quad\quad\quad\quad\quad\quad\quad\quad\quad\quad\quad\quad\quad\quad\quad
\lesssim e^{-\varepsilon t_0} \|\bar{\phi}\|_{\alpha,\sigma}\frac{\mu_0^{\sigma-1}(t)}{1+|y|^{1+\alpha}},
\end{aligned}
\end{equation*}
\begin{equation*}
\begin{aligned}
&\left|\frac{\partial}{\partial\phi}\left(\frac{1}{\pi}\int_{\mathbb{R}}\frac{(\eta(x)-\eta(s))\left[\phi\left(\frac{s-\xi(t)}{\mu_0(t)}, t\right)\cdot \omega\left(\frac{s-\xi}{\mu}\right)\right]\left[\omega\left(\frac{x-\xi}{\mu}\right)-\omega\left(\frac{s-\xi}{\mu}\right)\right]}{|x-s|^2}ds\right)
[\bar{\phi}]\right|\\ &\quad\quad\quad\quad\quad\quad\quad\quad\quad\quad\quad\quad\quad\quad\quad\quad\quad\quad\quad\quad\quad\quad \lesssim \|\bar{\phi}\|_{\alpha,\sigma}\frac{\mu_0^{\sigma}(t)}{1+|y|^{2}}
\lesssim e^{-\varepsilon t_0} \|\bar{\phi}\|_{\alpha,\sigma}\frac{\mu_0^{\sigma-1}(t)}{1+|y|^{1+\alpha}},
\end{aligned}
\end{equation*}
\begin{equation*}
\begin{aligned}
&\left|\frac{\partial}{\partial\phi}\left(\left(\frac{1}{\pi}\int_{\mathbb{R}}\frac{(\eta(x)-\eta(s))\left[\phi\left(\frac{s-\xi(t)}{\mu_0(t)}, t\right)\cdot \omega \left(\frac{s-\xi}{\mu}\right)\right]\left[\omega\left(\frac{x-\xi}{\mu}\right)-\omega\left(\frac{s-\xi}{\mu}\right)\right]}{|x-s|^2}ds\cdot \omega\left(\frac{x-\xi}{\mu}\right)\right)\omega\left(\frac{x-\xi}{\mu}\right)\right)
[\bar{\phi}]\right|\\ &\quad\quad\quad\quad\quad\quad\quad\quad\quad\quad\quad\quad\quad\quad\quad\quad\quad\quad\quad\quad\quad\quad \lesssim \|\bar{\phi}\|_{\alpha,\sigma}\frac{\mu_0^{\sigma}(t)}{1+|y|^{2}}
\lesssim e^{-\varepsilon t_0} \|\bar{\phi}\|_{\alpha,\sigma}\frac{\mu_0^{\sigma-1}(t)}{1+|y|^{1+\alpha}}
\end{aligned}
\end{equation*}
and
\begin{equation*}
\begin{aligned}
\left|\frac{\partial}{\partial\phi}\left(N_U(\Pi_{U^\perp}[\Phi^0 + Z^*+\eta\phi+\psi])\right)[\bar{\phi}]\right|& \lesssim \mu_0(t_0)^{\varepsilon}\frac{\mu_0^{-1}}{1+|y|^2}\left(|\bar{\phi}(y)|+|W|\right)\\ &\lesssim e^{-\varepsilon t_0}(1+\|W\|_{****}) \|\bar{\phi}\|_{\alpha,\sigma}\frac{\mu_0^{\sigma-1}(t)}{1+|y|^{1+\alpha}}.
\end{aligned}
\end{equation*}
By the same reason as Step 1 we get (\ref{e4:84}).
\end{proof}

\section{The inner problem}
Let $\psi = \Psi[\lambda,\xi,\dot{\lambda},\dot{\xi},\phi]$ be the solution of (\ref{e:outerproblem}) given by Proposition \ref{p4:4.1} and insert this term into (\ref{e:innerproblem}), then our main problem is reduced to the solvability of
\begin{equation}\label{e5:5.1}
\left\{
\begin{aligned}
&\partial_\tau \phi = L_\omega[\phi](y) + H[\lambda,\xi,\dot{\lambda},\dot{\xi},\phi](y,t(\tau))\text{ in }B_{2R}(0)\times [\tau_0, \infty),\\
&\phi = 0\quad\text{in }B_{2R}(0)\times \{\tau_0\},
\end{aligned}
\right.
\end{equation}
with
\begin{equation*}
\begin{aligned}
&H[\lambda,\xi,\dot{\lambda},\dot{\xi},\phi](y,t) =
\mu_0\Pi_{U^\perp}\mathcal{E}^*(\xi+\mu_0y, t) +\frac{2\frac{\mu_0}{\mu}}{1+\left|\frac{\mu_0}{\mu}y\right|^2}\Pi_{\omega^\perp}\psi \\ & \quad\quad\quad \quad\quad\quad\quad\quad\quad -\frac{\mu_0}{\pi}\int_{\mathbb{R}}\frac{\left[\psi(x)\cdot \omega\left(\frac{x-\xi}{\mu}\right)-\psi(s)\cdot \omega\left(\frac{s-\xi}{\mu}\right)\right]\left[\omega\left(\frac{x-\xi}{\mu}\right)-\omega\left(\frac{s-\xi}{\mu}\right)\right]}{|x-s|^2}ds \\
& + \left(\frac{\mu_0}{\pi}\int_{\mathbb{R}}\frac{\left[\psi(x)\cdot \omega\left(\frac{x-\xi}{\mu}\right)-\psi(s)\cdot \omega\left(\frac{s-\xi}{\mu}\right)\right]\left[\omega\left(\frac{x-\xi}{\mu}\right)-\omega\left(\frac{s-\xi}{\mu}\right)\right]}{|x-s|^2}ds\cdot \omega\left(\frac{x-\xi}{\mu}\right)\right)\omega\left(\frac{x-\xi}{\mu}\right)\\
& + B^1[\phi] + B^2[\phi] + B^3[\phi].
\end{aligned}
\end{equation*}

We will prove in this section that (\ref{e5:5.1}) is solvable for functions $\phi$ satisfying condition (\ref{e4:assumptiononphi}) when $\xi$ and $\lambda$ are chosen so that  $H[\lambda,\xi,\dot{\lambda},\dot{\xi},\phi](y,t(\tau))$ satisfies the following $L^2$-orthogonality conditions
\begin{equation*}
\int_{B_{2R}}H[\lambda,\xi,\dot{\lambda},\dot{\xi},\phi](y,t(\tau))Z_l(y)dy = 0
\end{equation*}
for all $\tau\geq \tau_0$ and $l = 2,3$. Our main tool is the Contraction Mapping Theorem, to do this, we first need a linear theory which is the context of next section. 
\subsection{The linear theory}
In this subsection, we consider the nonlocal initial value problem
\begin{equation}\label{e5:5.3}
\left\{
\begin{aligned}
&\partial_\tau \phi = L_\omega[\phi](y) + h(y,\tau)\text{ in }B_{2R}(0)\times [\tau_0, \infty),\\
&\phi(\cdot, \tau_0) = 0\quad\text{in }B_{2R}(0),\\
&\phi(y,\tau)\cdot \omega(y) = 0 \quad\text{in }B_{2R}(0)\times [\tau_0, \infty)
\end{aligned}
\right.
\end{equation}
for a large number $R > 0$.
Note that we do not add boundary conditions to (\ref{e5:5.3}), so we only need to find a solution $\phi$ defined on $\mathbb{R}\times [\tau_0,+\infty)$ such that (\ref{e5:5.3}) is satisfied, thus $-(-\Delta)^{\frac{1}{2}}\phi$ and other integrals on $\mathbb{R}$ is well defined.

For a fixed constant $\eta\in (\frac{1}{2}, 1)$, define
\begin{equation*}
\|h\|_{a, \nu, \eta}: = \sup_{\tau > \tau_0}\sup_{y\in B_{2R}}\tau^\nu(1 + |y|^a)(|h(y,\tau)|+(1+|y|^\eta)\chi_{\{|y|\leq 2R\}}[h(\cdot, \tau)]_{\eta, B_{2R}(0)}).
\end{equation*}
In the following, we assume $h = h(y, \tau)$ is a function defined in $\mathbb{R}$ which is zero outside of $B_{2R}(0)$ for any $\tau > \tau_0$.
The main result of this subsection is
\begin{prop}\label{p5:5.11}
Suppose $a\in (0, 1)$, $\nu > 0$, $\|h\|_{1+a,\nu, \eta} < +\infty$ and
\begin{equation*}
\int_{B_{2R}}h(y,\tau)\cdot Z_j(y)dy = 0\quad\text{for all}\quad\tau\in [\tau_0,+\infty),\quad j = 2, 3.
\end{equation*}
Then for sufficiently large $R$, there exist $\phi = \phi[h](y, \tau)$ defined on $\mathbb{R}\times [\tau_0,+\infty)$ satisfying (\ref{e5:5.3}) and
\begin{equation*}\label{e5:100}
(1+|y|)|\nabla_y \phi|\chi_{\{|y|\leq R\}}+ |\phi|\lesssim \tau^{-\nu}(1+|y|)^{-a}\|h\|_{1+a,\nu, \eta}, \tau\in [\tau_0,+\infty),y\in \mathbb{R}.
\end{equation*}
\end{prop}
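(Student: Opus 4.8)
\textbf{Proof strategy for Proposition \ref{p5:5.11}.}

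The plan is to construct $\phi$ by a two–step procedure: first produce an \emph{a priori estimate} for solutions of \eqref{e5:5.3} using a blow-up (compactness–contradiction) argument reduced to Liouville theorems, and then use this estimate together with a standard approximation/continuation scheme to produce an actual solution with the asserted bound. I would first reduce matters to an estimate of the form
\begin{equation*}
\|\phi\|_{a,\nu}:=\sup_{\tau>\tau_0}\sup_{y\in\mathbb R}\tau^{\nu}(1+|y|)^{a}\Big(|\phi(y,\tau)|+(1+|y|)|\nabla_y\phi(y,\tau)|\chi_{\{|y|\le R\}}\Big)\ \lesssim\ \|h\|_{1+a,\nu,\eta},
\end{equation*}
valid for any solution of \eqref{e5:5.3} satisfying $\phi\cdot\omega\equiv0$, the orthogonality $\int_{B_{2R}}h\cdot Z_j=0$ for $j=2,3$, and whatever natural orthogonality on $\phi$ is compatible with solvability (testing \eqref{e5:5.3} against $Z_2,Z_3$ and using that $L_\omega$ is symmetric with kernel spanned by $Z_1,Z_2,Z_3$ — here $Z_1$ is tangential and automatically excluded by $\phi\cdot\omega=0$, so only the $Z_2,Z_3$ modes matter). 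The orthogonality of $h$ propagates to $\phi$ along the flow, which is what lets the blow-up argument close.

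For the a priori estimate I would argue by contradiction in the style of \cite{davila2017singularity}: suppose there are sequences $R_n\to\infty$, $h_n$, $\phi_n$ solving \eqref{e5:5.3} with $\|h_n\|_{1+a,\nu,\eta}\to0$ but $\|\phi_n\|_{a,\nu}=1$. Pick points $(y_n,\tau_n)$ where the weighted norm is attained up to a factor $1/2$. One then distinguishes the cases according to whether $|y_n|$ stays bounded or diverges, and whether $\tau_n$ stays bounded or diverges, rescaling $\phi_n$ around $(y_n,\tau_n)$ in each case. Using the decay $|h_n(y,\tau)|\le o(1)\tau^{-\nu}(1+|y|)^{-1-a}$ and the local Hölder control built into $\|h\|_{1+a,\nu,\eta}$, together with interior regularity estimates for the fractional parabolic operator (as in \cite{silvestreium2012differentiability,silvestre2014regularity}), the limit profile $\phi_\infty$ is a nonzero bounded ancient solution of one of the limiting equations: either $\partial_\tau\phi_\infty=L_\omega[\phi_\infty]$ on $\mathbb R\times(-\infty,0]$ with $\phi_\infty\cdot\omega=0$, or (after the outer rescaling) $\partial_t\phi_\infty=-(-\Delta)^{1/2}\phi_\infty$ on $\mathbb R\times(-\infty,0]$ with a decay bound $|\phi_\infty|\lesssim|y|^{-a}$. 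In the first case, the spectral/nondegeneracy analysis of $L_\omega$ from \cite{sire2017nondegeneracy} plus the Liouville theorem for the linearized half-harmonic map flow (no positive-eigenvalue directions survive, and the kernel directions $Z_2,Z_3$ are killed by the preserved orthogonality) forces $\phi_\infty\equiv0$; in the second case, Lemma \ref{l4.3} (the Liouville theorem for ancient solutions of $\partial_t u=-\Lambda^\alpha u$ with $|x|^a|u|$ bounded) gives $\phi_\infty\equiv0$ directly. Either way we contradict $|\phi_\infty(0,0)|\sim1$, so the a priori estimate holds for $R$ large.

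Given the a priori estimate, existence follows by a fairly routine argument: solve \eqref{e5:5.3} on bounded time intervals $[\tau_0,T]$ (where the problem is a well-posed linear nonlocal parabolic initial-value problem, noting $\phi$ is defined on all of $\mathbb R$ but the forcing $h$ is supported in $B_{2R}$), impose the projection onto the $Z_2,Z_3$ modes via a Galerkin/fixed-point step to respect the solvability conditions, obtain uniform-in-$T$ bounds from the a priori estimate, and pass to the limit $T\to\infty$ using Arzelà–Ascoli with the compactness coming again from the parabolic regularity theory; the gradient bound on $\{|y|\le R\}$ follows from interior gradient estimates applied to the established $L^\infty$ bound. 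Linearity of the construction in $h$ is automatic once uniqueness within the weighted class is recorded (itself a consequence of the a priori estimate applied to a difference of solutions). I expect the \textbf{main obstacle} to be the blow-up analysis: correctly enumerating the rescaling regimes, verifying that in each regime the rescaled forcing vanishes and the rescaled solutions are locally uniformly bounded and equicontinuous (this is where the fractional parabolic regularity estimates and the Hölder seminorm in the norm $\|h\|_{1+a,\nu,\eta}$ are essential, since the nonlocal term $-(-\Delta)^{1/2}\phi$ must be controlled on all of $\mathbb R$ from the global decay bound), and ensuring the limiting equation is exactly one covered by the available Liouville theorems — in particular handling the inhomogeneous lower-order terms of $L_\omega$ and the mismatch between $\mu_0$ and $\mu$ carefully enough that they disappear in the limit.
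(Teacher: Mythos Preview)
Your proposal is essentially correct and follows the same blow-up/Liouville strategy as the paper. The one substantive difference worth flagging is that the paper does not carry out the contradiction argument directly on the vector-valued system \eqref{e5:5.3}. Instead, since $\phi\cdot\omega\equiv0$ forces $\phi(y,\tau)=v(y,\tau)Z_1(y)$ for a scalar $v$, the paper first computes explicitly that \eqref{e5:5.3} is equivalent to the scalar equation
\[
\partial_\tau v = -(-\Delta)^{1/2}v + \frac{2}{1+y^2}v - \frac{2}{\pi(1+y^2)}\int_{\mathbb R}\frac{v(s,\tau)}{1+s^2}\,ds + g,
\]
and the orthogonality conditions on $h$ against $Z_2,Z_3$ become orthogonality of $g$ against $\tfrac{2}{1+y^2}$ and $\tfrac{2y}{1+y^2}$. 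All of the blow-up analysis (your two regimes, the nondegeneracy input, and Lemma~\ref{l4.3}) is then run on this scalar problem. This buys a cleaner energy argument in the ``inner'' Liouville step: the paper tests the limiting ancient equation against $\tilde v$ itself, uses that the associated bilinear form is nonnegative to get $\tilde v_\tau=0$, and then invokes the scalar nondegeneracy classification directly, rather than appealing to a spectral decomposition of $L_\omega$ on vector fields. Your approach would work too, but the scalar reduction removes the need to track the tangential constraint and the $Z_1$ direction throughout the limit. For the existence step the paper simply solves the scalar Cauchy problem on all of $\mathbb R$ and shows the orthogonality of $g$ propagates to $v$; no Galerkin projection is needed.
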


Problem (\ref{e5:5.3}) is a system of two equations, but under the condition $\phi(y,\tau)\cdot \omega(y) = 0$, this system can be reduced to a single equation for a scalar function. Since $\phi(y,\tau)\cdot \omega(y) = 0$, we can write $\phi$ as $\phi(y, \tau) = v(y, \tau)Z_1(y)$ for a scalar function $v(y, \tau)$. Then by direct computation, (\ref{e5:5.3}) becomes
\begin{equation}\label{e:afterprojection}
\partial_\tau v(y, \tau) = -(-\Delta)^{\frac{1}{2}}v(y,\tau) + \frac{2}{1+y^2}v(y,\tau) - \frac{2}{\pi(1+y^2)}\int_{\mathbb{R}}\frac{v(s, \tau)}{1+s^2}ds + g(y, \tau)
\end{equation}
for a function $g(y,\tau)$ satisfies $h(y,\tau) = g(y,\tau) Z_1(y)$. Indeed, we have
\begin{equation*}
\begin{aligned}
(-\Delta)^{1/2}\phi(y,\tau) &= \frac{1}{\pi}\int_{\mathbb{R}}\frac{v(y, \tau)Z_1(y) - v(s, \tau)Z_1(s)}{|y-s|^2}ds\\
& = \frac{1}{\pi}\int_{\mathbb{R}}\frac{v(y, \tau)Z_1(y) -v(s, \tau)Z_1(y) + v(s, \tau)Z_1(y) - v(s, \tau)Z_1(s)}{|y-s|^2}ds\\
& = [(-\Delta)^{1/2}v]Z_1(y) + \frac{1}{\pi}\int_{\mathbb{R}}\frac{v(s,\tau)(Z_1(y)-Z_1(s))}{|y-s|^2}ds.
\end{aligned}
\end{equation*}
Since
\begin{equation*}
\begin{aligned}
\frac{1}{\pi}\int_{\mathbb{R}}\frac{v(s,\tau)(Z_1(y)-Z_1(s))}{|y-s|^2}ds\cdot \omega(y) &= \frac{1}{\pi}\int_{\mathbb{R}}\frac{v(s,\tau)(Z_1(y)\cdot \omega(y)-Z_1(s)\cdot \omega(y))}{|y-s|^2}ds\\
& = \frac{1}{\pi}\int_{\mathbb{R}}\frac{v(s,\tau)(Z_1(s)\cdot \omega(s)-Z_1(s)\cdot \omega(y))}{|y-s|^2}ds\\
& = \frac{1}{\pi}\int_{\mathbb{R}}\frac{v(s,\tau)Z_1(s)\cdot (\omega(s)-\omega(y))}{|y-s|^2}ds\\
& = \frac{1}{\pi}\int_{\mathbb{R}}\frac{-\phi(s,\tau)\cdot \omega(y)}{|y-s|^2}ds\\
& = \frac{1}{\pi}\int_{\mathbb{R}}\frac{\phi(y,\tau)-\phi(s,\tau) }{|y-s|^2}ds\cdot\omega(y)\\
&= [(-\Delta)^{1/2}\phi(y,\tau)]\cdot \omega(y)
\end{aligned}
\end{equation*}
and
\begin{equation*}
\begin{aligned}
\frac{1}{\pi}\int_{\mathbb{R}}\frac{v(s,\tau)(Z_1(y)-Z_1(s))}{|y-s|^2}ds\cdot Z_1(y) &=\frac{1}{\pi}\int_{\mathbb{R}}\frac{v(s,\tau)(Z_1(y)\cdot Z_1(y)-Z_1(s)\cdot Z_1(y))}{|y-s|^2}ds\\
& = \frac{1}{\pi}\int_{\mathbb{R}}\frac{v(s,\tau)(Z_1(s)\cdot Z_1(s)-Z_1(s)\cdot Z_1(y))}{|y-s|^2}ds\\
& = \frac{1}{\pi}\int_{\mathbb{R}}\frac{v(s,\tau)Z_1(s)\cdot(Z_1(s)-Z_1(y))}{|y-s|^2}ds\\
& = \frac{1}{\pi}\int_{\mathbb{R}}\frac{\phi(s,\tau)\cdot(Z_1(s)-Z_1(y))}{|y-s|^2}ds,
\end{aligned}
\end{equation*}
we have
\begin{equation*}
\begin{aligned}
&(-\Delta)^{1/2}\phi(y,\tau) =  [(-\Delta)^{1/2}v]Z_1(y) + [[(-\Delta)^{1/2}\phi(y,\tau)]\cdot \omega(y)]\omega(y)\\ &\quad\quad\quad\quad\quad\quad\quad + \left[\frac{1}{\pi}\int_{\mathbb{R}}\frac{\phi(s,\tau)\cdot(Z_1(s)-Z_1(y))}{|y-s|^2}ds\right]Z_1(y).
\end{aligned}
\end{equation*}
So the left hand side of (\ref{e5:5.3}) becomes
\begin{equation*}
\begin{aligned}
&-\partial_\tau \phi + L_\omega[\phi](y) = -\partial_\tau v(y,\tau) Z_1(y)\\
&\quad - [(-\Delta)^{1/2}v]Z_1(y) - [[(-\Delta)^{1/2}\phi(y,\tau)]\cdot \omega(y)]\omega(y) - \left[\frac{1}{\pi}\int_{\mathbb{R}}\frac{\phi(s,\tau)\cdot(Z_1(s)-Z_1(y))}{|y-s|^2}ds\right]Z_1(y)\\
&\quad + \left(\frac{1}{2\pi}\int_{\mathbb{R}}\frac{|\omega(y)-\omega(s)|^2}{|y-s|^2}ds\right)\phi(y,\tau)
+ \left(\frac{1}{\pi}\int_{\mathbb{R}}\frac{(\omega(y)-\omega(s))\cdot(\phi(y,\tau) -\phi(s,\tau))}{|y-s|^2}ds\right)\omega(y)\\
& = -\partial_\tau v(y,\tau) Z_1(y) - [(-\Delta)^{1/2}v]Z_1(y) - \left[\frac{1}{\pi}\int_{\mathbb{R}}\frac{\phi(s,\tau)\cdot(Z_1(s)-Z_1(y))}{|y-s|^2}ds\right]Z_1(y)\\
&\quad + \left(\frac{1}{2\pi}\int_{\mathbb{R}}\frac{|\omega(y)-\omega(s)|^2}{|y-s|^2}ds\right)\phi(y,\tau)\\
& = -\partial_\tau v(y,\tau) Z_1(y) - [(-\Delta)^{1/2}v]Z_1(y) - \left[\frac{1}{\pi}\int_{\mathbb{R}}\frac{\phi(s,\tau)\cdot(Z_1(s)-Z_1(y))}{|y-s|^2}ds\right]Z_1(y)\\
&\quad + \frac{2}{1+|y|^2}\phi(y,\tau).
\end{aligned}
\end{equation*}
Moreover, we have
\begin{equation*}
\begin{aligned}
\frac{1}{\pi}\int_{\mathbb{R}}\frac{\phi(s,\tau)\cdot(Z_1(s)-Z_1(y))}{|y-s|^2}ds & = \frac{1}{\pi}\int_{\mathbb{R}}\frac{v(s,\tau)(1-Z_1(s)\cdot Z_1(y))}{|y-s|^2}ds\\
& = \frac{2}{\pi(1+y^2)}\int_{\mathbb{R}}\frac{v(s,\tau)}{1+s^2}ds.
\end{aligned}
\end{equation*}
Therefore, (\ref{e5:5.3}) becomes
\begin{eqnarray*}
-\partial_\tau v(y,\tau) - [(-\Delta)^{1/2}v] - \frac{2}{\pi(1+y^2)}\int_{\mathbb{R}}\frac{v(s,\tau)}{1+s^2}ds + \frac{2}{1+|y|^2}v(y,\tau) + g(y,\tau) = 0,
\end{eqnarray*}
which is (\ref{e:afterprojection}).

Hence Proposition \ref{p5:5.11} is equivalent to
\begin{prop}\label{p5:5.12}
Suppose $a\in (0, 1)$, $\nu > 0$, $\|g\|_{1+a,\nu, \eta} < +\infty$ and
\begin{equation*}
\int_{B_{2R}}g(y,\tau)\frac{2}{y^2+1}dy = \int_{B_{2R}}g(y,\tau)\frac{2y}{y^2+1}dy = 0\quad\text{for all}\quad\tau\in [\tau_0,+\infty).
\end{equation*}
Then for sufficiently large $R$, there exists $v = v[h](y, \tau)$ defined on $\mathbb{R}\times [\tau_0,+\infty)$ satisfying (\ref{e:afterprojection}) and
\begin{equation*}
(1+|y|)|\nabla_y v|\chi_{\{|y|\leq R\}}+ |v|\lesssim \tau^{-\nu}(1+|y|)^{-a}\|g\|_{1+a,\nu, \eta}, \tau\in [\tau_0,+\infty),y\in \mathbb{R}.
\end{equation*}
\end{prop}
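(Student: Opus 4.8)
The plan is to prove the equivalent scalar statement, Proposition \ref{p5:5.12}, since the excerpt has already reduced \eqref{e5:5.3} to \eqref{e:afterprojection} via $\phi=v\,Z_1$. Working with the weighted norm
\[
\|v\|_{*,a,\nu}:=\sup_{\tau>\tau_0}\sup_{y\in\mathbb{R}}\tau^\nu(1+|y|)^a\Big(|v(y,\tau)|+(1+|y|)\,|\nabla_y v(y,\tau)|\,\chi_{\{|y|\le R\}}\Big),
\]
I would proceed in two steps in the spirit of \cite{davila2017singularity}: first an a priori estimate $\|v\|_{*,a,\nu}\lesssim\|g\|_{1+a,\nu,\eta}$ for any solution of \eqref{e:afterprojection} with $v(\cdot,\tau_0)=0$ and $\|v\|_{*,a,\nu}<\infty$, with constant uniform in $R$ and $\tau_0$ large; and then the construction of such a solution.

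For the a priori bound I would argue by contradiction: suppose there are $R_n\to\infty$, forcing terms $g_n$ supported in $B_{2R_n}$ and satisfying the two orthogonality conditions, and solutions $v_n$ with $\|v_n\|_{*,a,\nu}=1$ but $\|g_n\|_{1+a,\nu,\eta}\to0$. Choose $(y_n,\tau_n)$ with $\tau_n^{\nu}(1+|y_n|)^{a}|v_n(y_n,\tau_n)|\ge\tfrac12$ (the gradient contribution being recovered afterward from interior regularity). Since $\tau_n\ge\tau_0$ is large, after rescaling one always produces a limit on $\mathbb{R}\times(-\infty,0]$ (the degenerate cases where the rescaled time interval stays bounded being disposed of by the initial condition $v_n(\cdot,\tau_0)=0$ together with $g_n\to0$ and local parabolic estimates). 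If $|y_n|\to\infty$, set $v_n(y,\tau)=\tau_n^{\nu}|y_n|^{a}\,\tilde v_n\big(\tfrac{y-y_n}{|y_n|},\tfrac{\tau-\tau_n}{|y_n|}\big)$; using $|y_n|\le 2R_n$ and $\tau_n\to\infty$, the potential $\tfrac{2}{1+y^2}$ and the rank-one nonlocal term rescale to zero while $-(-\Delta)^{1/2}$ is scale invariant, so along a subsequence $\tilde v_n\to V\not\equiv0$, an ancient solution of $\partial_\tau V=-(-\Delta)^{1/2}V$ with $\sup|z-z_0|^{a}|V|\le C$ for a suitable $z_0$; after a translation, Lemma \ref{l4.3} (with $\alpha=1$) gives $V\equiv0$, a contradiction. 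If $|y_n|$ stays bounded, rescale only in time, $\tilde v_n(y,\tau)=\tau_n^{\nu}v_n(y,\tau+\tau_n)$, and pass to the limit to obtain a nonzero bounded ancient solution $V$ of $\partial_\tau V=L_0[V]$ on $\mathbb{R}\times(-\infty,0]$, where $L_0$ is the scalar linearized operator in \eqref{e:afterprojection}, with $|V(y,\tau)|\lesssim(1+|y|)^{-a}$. Here the orthogonality of $g_n$ must be propagated to $V$: because $L_0$ annihilates $\tfrac{1}{1+y^2}$ and $\tfrac{y}{1+y^2}$ — the scalar counterparts of $Z_2,Z_3$ coming from \eqref{e:kernel} — and these decay like $|y|^{-2}$ and $|y|^{-1}$, the quantities $\int_{\mathbb{R}}v_n(\cdot,\tau)\tfrac{dy}{1+y^2}$ and $\int_{\mathbb{R}}v_n(\cdot,\tau)\tfrac{y\,dy}{1+y^2}$ are absolutely convergent, have vanishing $\tau$-derivative by the equation and the orthogonality of $g_n$, and vanish at $\tau=\tau_0$; hence $V(\cdot,\tau)\perp\{\tfrac{1}{1+y^2},\tfrac{y}{1+y^2}\}$ for all $\tau\le0$. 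A Liouville theorem for the linearized half-harmonic flow — bounded ancient solutions orthogonal to the kernel vanish identically — then yields $V\equiv0$, the final contradiction.

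With the a priori estimate in hand, existence is essentially routine. For $T>\tau_0$ I would solve the linear nonlocal Cauchy problem $\partial_\tau v=L_0[v]+g\,\chi_{[\tau_0,T]}$, $v(\cdot,\tau_0)=0$, using the semigroup generated by $-(-\Delta)^{1/2}$ plus bounded and rank-one perturbations, so that finiteness of the weighted norm on bounded time intervals is automatic; since $g\,\chi_{[\tau_0,T]}$ still satisfies the orthogonality conditions and $\|g\,\chi_{[\tau_0,T]}\|_{1+a,\nu,\eta}\le\|g\|_{1+a,\nu,\eta}$, the a priori estimate gives a bound uniform in $T$. Letting $T\to\infty$ and using interior regularity for parabolic integro-differential equations (\cite{silvestreium2012differentiability,silvestre2014regularity}) to pass to the limit produces a solution $v$ of \eqref{e:afterprojection} on $\mathbb{R}\times[\tau_0,\infty)$ with the claimed bound on $|v|$; the weighted bound on $\nabla_y v$ over $\{|y|\le R\}$ follows from the same interior estimates together with the Hölder part of $\|g\|_{1+a,\nu,\eta}$. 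Undoing the substitution $\phi=v\,Z_1$ then gives Proposition \ref{p5:5.11}.

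The main obstacle is the Liouville theorem for $L_0$ used in the bounded-$|y_n|$ regime: because the essential spectrum of $-(-\Delta)^{1/2}$ reaches up to $0$, there is no spectral gap to exploit, so the spatial decay $|V|\lesssim(1+|y|)^{-a}$ with $a<1$ must be used decisively — combining the nondegeneracy result of \cite{sire2017nondegeneracy} (which identifies the kernel and gives non-positivity of the quadratic form of $L_0$ on its orthogonal complement) with energy and representation estimates for the half-heat semigroup. Making the two rescalings rigorous in the nonlocal setting and verifying that the rescaled nonlocal terms genuinely vanish in the limit — so that the limit equations are exactly $\partial_\tau V=-(-\Delta)^{1/2}V$ and $\partial_\tau V=L_0[V]$ — are the other delicate points.
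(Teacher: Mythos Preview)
Your plan is essentially the paper's own: extend $g$ by zero, prove the a priori bound of Lemma~\ref{l5:3} by a blow-up contradiction splitting into the cases $|y_n|$ bounded and $|y_n|\to\infty$, invoke Lemma~\ref{l4.3} in the far regime, and then get the gradient bound from the interior regularity of \cite{silvestreium2012differentiability,silvestre2014regularity}. The propagation of orthogonality to the solution is also carried out exactly as you describe.

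The one point you flag as the main obstacle---the Liouville theorem for $L_0$ in the bounded-$|y_n|$ regime---is handled in the paper by a short energy monotonicity argument rather than by spectral considerations. For the nonzero ancient limit $\tilde v$ with $|\tilde v|\lesssim (1+|y|)^{-a}$ and $\tilde v(\cdot,\tau)\perp\{\tfrac{1}{1+y^2},\tfrac{y}{1+y^2}\}$, one tests $\partial_\tau\tilde v=L_0[\tilde v]$ against $\tilde v_\tau$ to get
\[
\int_{\mathbb{R}}|\tilde v_\tau|^2=-\tfrac12\,\partial_\tau B(\tilde v,\tilde v),\qquad B(w,w):=\int_{\mathbb{R}}\Big[(-\Delta)^{1/2}w\cdot w-\tfrac{2}{1+y^2}w^2+\tfrac{2w}{\pi(1+y^2)}\int_{\mathbb{R}}\tfrac{w(s)}{1+s^2}\,ds\Big]dy\ge0,
\]
so $\int_{-\infty}^{0}\!\int_{\mathbb{R}}|\tilde v_\tau|^2<\infty$; differentiating the equation in $\tau$ and testing against $\tilde v_\tau$ gives $\tfrac12\partial_\tau\!\int|\tilde v_\tau|^2+B(\tilde v_\tau,\tilde v_\tau)=0$, hence $\int|\tilde v_\tau|^2$ is nonincreasing. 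Together these force $\tilde v_\tau\equiv0$, so $L_0[\tilde v]=0$, and the nondegeneracy of \cite{sire2017nondegeneracy} plus the two orthogonality relations give $\tilde v\equiv0$. This is precisely the missing ingredient in your sketch, and it avoids any spectral-gap issue.
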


To prove this proposition, we first consider the following equation defined in the whole space
\begin{equation}\label{e5:32}
\left\{
\begin{array}{lll}
\partial_\tau v = -(-\Delta)^{\frac{1}{2}}v + \frac{2}{1+y^2}v - \frac{2}{\pi(1+y^2)}\int_{\mathbb{R}}\frac{v(s,\tau)}{1+s^2}ds + g\text{ in }\mathbb{R}\times [\tau_0, +\infty),\\
v(y,\tau_0) = 0\text{ in }\mathbb{R}.
\end{array}
\right.
\end{equation}
The following result is crucial in the proof.
\begin{lemma}\label{l5:3}
Suppose $a\in (0, 1)$, $\nu > 0$, $\|g\|_{1+a,\nu, \eta} < +\infty$ and
\begin{equation*}
\int_{\mathbb{R}}g(y,\tau)\frac{2}{y^2+1}dy = \int_{\mathbb{R}}g(y,\tau)\frac{2y}{y^2+1}dy = 0\quad\text{for all }\tau\in [\tau_0,+\infty).
\end{equation*}
Then for any sufficiently large $\tau_1 > \tau_0$, the solution of (\ref{e5:32}) satisfies
\begin{equation}\label{e5:35}
\|v(y,\tau)\|_{a,\tau_1}\lesssim \|g\|_{1+a,\tau_1}.
\end{equation}
Here, $\|g\|_{b, \tau_1}:=\sup_{\tau\in (\tau_0,\tau_1)}\tau^\nu\|(1+|y|^b)g\|_{L^\infty(\mathbb{R})}$.
\end{lemma}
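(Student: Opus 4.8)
The plan is to prove the a priori bound \eqref{e5:35} by a blow-up / compactness argument in the spirit of \cite{davila2017singularity}, combined with a Liouville-type theorem for the limiting linear problem. The equation \eqref{e5:32} is a scalar fractional heat equation with the potential $\frac{2}{1+y^2}$ plus a nonlocal, rank-one correction term $-\frac{2}{\pi(1+y^2)}\int_{\mathbb R}\frac{v(s,\tau)}{1+s^2}\,ds$, forced by $g$ which is orthogonal to the two kernel directions $\frac{2}{1+y^2}$ and $\frac{2y}{1+y^2}$ (these are exactly the scalar representations, via $\phi=vZ_1$, of $Z_2$ and $Z_3$). The presence of the orthogonality hypothesis is what makes the estimate possible: without it the operator $L_\omega$ has bounded, non-decaying elements in its kernel coming from translation and dilation.

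First I would argue by contradiction: suppose the estimate fails, so there are sequences $\tau_1^{(n)}\to\infty$, forcing terms $g_n$ with $\|g_n\|_{1+a,\tau_1^{(n)}}\to 0$ (after normalization, $\|g_n\|_{1+a,\tau_1^{(n)}}=1$ say, no — rather normalize so that $\|v_n\|_{a,\tau_1^{(n)}}=1$ and $\|g_n\|_{1+a,\tau_1^{(n)}}\to 0$), solutions $v_n$ of \eqref{e5:32} with $\|v_n\|_{a,\tau_1^{(n)}}=1$, and points $(y_n,\sigma_n)$, $\sigma_n\le\tau_1^{(n)}$, where the weighted norm is attained up to a factor, i.e. $\sigma_n^\nu(1+|y_n|^a)|v_n(y_n,\sigma_n)|\ge \tfrac12$. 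One then distinguishes cases according to whether $|y_n|$ stays bounded or $|y_n|\to\infty$, and whether $\sigma_n$ stays bounded or $\sigma_n\to\infty$. Rescaling/translating around $(y_n,\sigma_n)$ and using the interior parabolic regularity for integro-differential equations (already invoked in Lemma \ref{l4:lemma4.1} via \cite{silvestreium2012differentiability, silvestre2014regularity}) to get compactness, the limit $v_\infty$ solves one of a small number of limiting problems: either $\partial_t v = -(-\Delta)^{1/2}v + \frac{2}{1+y^2}v - \frac{2}{\pi(1+y^2)}\int \frac{v}{1+s^2}$ on $\mathbb R\times(-\infty,0]$ (when $y_n$ bounded), or the plain fractional heat equation $\partial_t v=-(-\Delta)^{1/2}v$ with a decay bound $|v_\infty|\lesssim |y|^{-a}$ in the region near spatial infinity, or a Dirichlet-type problem at $\tau_0$. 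In each case the weight-attainment survives the limit, so $v_\infty\not\equiv 0$; but each limiting problem forces $v_\infty\equiv 0$ — the decaying-in-space case is precisely the Liouville theorem quoted in the introduction (Lemma \ref{l4.3}, the classification of ancient solutions of $\partial_t u = -\Lambda^\alpha u$ with $\sup |x|^a|u|<\infty$), and the bounded-$y_n$ case is the nondegeneracy result of \cite{sire2017nondegeneracy} together with the fact that the $L^2$-orthogonality of $g_n$ against $Z_2,Z_3$ passes to the limit and kills the kernel. The translation/dilation kernel elements $Z_2, Z_3$ decay, so the decaying Liouville theorem also rules out those; the only non-decaying kernel element $Z_1$ is excluded by the constraint $\phi\cdot\omega=0$, i.e. by working with the scalar equation \eqref{e:afterprojection} in which $Z_1$ has been divided out. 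This contradicts $v_\infty\not\equiv 0$.

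Once the a priori estimate \eqref{e5:35} is established uniformly in $\tau_1$, Proposition \ref{p5:5.12} follows by the standard approximation procedure: solve \eqref{e5:32} on finite time intervals $[\tau_0,\tau_1]$ (existence there is standard for this linear nonlocal parabolic equation), apply \eqref{e5:35} to get a bound independent of $\tau_1$, and pass to the limit $\tau_1\to\infty$; the gradient bound $(1+|y|)|\nabla_y v|\chi_{\{|y|\le R\}}$ then comes from interior parabolic regularity applied to the now-controlled solution. Finally Proposition \ref{p5:5.11} is recovered from Proposition \ref{p5:5.12} by the change $\phi=vZ_1$ worked out in the excerpt, noting that $\|h\|_{1+a,\nu,\eta}\sim\|g\|_{1+a,\nu,\eta}$ since $Z_1$ is smooth, bounded, and bounded below away from zero (it has unit length).

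The main obstacle I expect is the blow-up analysis itself — specifically, carefully enumerating the limiting regimes (bounded vs. escaping $y_n$, bounded vs. escaping $\sigma_n$, and the borderline scaling $|y_n|\sim \sigma_n$ or $|y_n|\sim$ distance to the boundary of $B_{2R}$) and in each regime extracting the correct limiting equation on the correct domain, with the correct decay hypothesis, so that the appropriate Liouville/nondegeneracy theorem applies. A subtle point is that the nonlocal rank-one term $-\frac{2}{\pi(1+y^2)}\int\frac{v}{1+s^2}\,ds$ must be shown to converge (or vanish) under rescaling; when $y_n\to\infty$ this whole term is $O((1+|y_n|^2)^{-1})$ times a bounded integral, hence lower order and drops out, which is why the escaping case reduces to the clean fractional heat equation; but this needs the integral $\int \frac{v_n}{1+s^2}$ to be controlled, which follows from the weighted bound $|v_n|\lesssim (1+|y|)^{-a}$ with $a>0$. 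Handling the Hölder seminorm part of the norm $\|\cdot\|_{1+a,\nu,\eta}$ in the contradiction argument (it is needed to apply Schauder-type estimates for the integro-differential operator and to get the gradient control) is the other piece requiring care.
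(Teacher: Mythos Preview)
Your blow-up strategy matches the paper's proof: contradiction, two spatial regimes, nondegeneracy for bounded $|y_n|$, and the fractional-heat Liouville theorem (Lemma~\ref{l4.3}) for $|y_n|\to\infty$. Two pieces are missing from your sketch, however, and they are precisely the nontrivial steps in the paper's argument.

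First, in the bounded-$|y_n|$ case the limit $\tilde v$ is an \emph{ancient parabolic} solution on $\mathbb R\times(-\infty,0]$, whereas \cite{sire2017nondegeneracy} is an \emph{elliptic} nondegeneracy statement. The paper bridges this gap by an energy argument: testing the (time-differentiated) equation against $\tilde v_\tau$ one obtains $\partial_\tau\int_{\mathbb R}|\tilde v_\tau|^2\le 0$ and $\int_{-\infty}^0\!\int_{\mathbb R}|\tilde v_\tau|^2\,d\tau<\infty$, which together force $\tilde v_\tau\equiv 0$; only then is $\tilde v$ stationary and the elliptic result applicable. Second, you write that ``the $L^2$-orthogonality of $g_n$ against $Z_2,Z_3$ passes to the limit,'' but in the limit $g_n\to 0$, so this is vacuous; what is actually needed is orthogonality of the \emph{solutions} $v_n$ against $w^1=\frac{2}{1+y^2}$ and $w^2=\frac{2y}{1+y^2}$. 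The paper proves this separately, before the blow-up step, by testing (\ref{e5:32}) against $\eta_R\, w^j$ with a cutoff $\eta_R$, using $L[w^j]=0$ and estimating the commutator and tail terms as $O(R^{-\epsilon})$, then sending $R\to\infty$ to get $\int_{\mathbb R} v(\cdot,\tau)w^j=0$ for all $\tau$. Without this propagation of orthogonality you cannot conclude $\tilde v\perp w^j$ and hence cannot rule out the kernel directions. (A minor point: your worry about a ``Dirichlet-type problem at $\tau_0$'' is unnecessary---the zero initial data $v(\cdot,\tau_0)=0$ forces the almost-maximizing times $\sigma_n\to\infty$, so that regime never arises.)
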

\begin{proof}
First, we claim that given $\tau_1 > \tau_0$ we have $\|v\|_{a, \tau_1} < +\infty$. Indeed, standard linear theory indicates that $v(y, \tau)$ is locally bounded in time and space, i.e., given $R > 0$ there is a $K = K(R,\tau_1)$ such that
\begin{equation*}
|v(y,\tau)|\leq K \quad\text{in }B_R(0)\times (\tau_0, \tau_1].
\end{equation*}
If we fix a large $R$ and take a sufficiently large $K_1$, then $K_1\rho^{-a}$ is a supersolution for (\ref{e5:32}). Therefore $|v|\leq 2K_1\rho^{-a}$ and $\|v\|_{a,\tau_1} < +\infty$ for any $\tau_1 > 0$. Next, we claim that
\begin{equation}\label{e5:33}
\int_{\mathbb{R}}v(y,\tau)\frac{2}{y^2+1}dy = \int_{\mathbb{R}}v(y,\tau)\frac{2y}{y^2+1}dy = 0\text{ for all }\tau\in (\tau_0,\tau_1).
\end{equation}
Denote $w^{1} = \frac{2}{y^2+1}$ and $w^{2} = \frac{2y}{y^2+1}$,
for $j = 1, 2$, test equation (\ref{e5:32}) against
\begin{equation*}
w^{j}\eta,\quad \eta(y) = \eta_0(|y|/R)
\end{equation*}
where $\eta_0$ is a smooth cut-off function with $\eta_0(r) = 1$ for $r < 1$, $\eta_0(r) = 0$ for $r > 2$ and $R$ is an arbitrary large constant, then we have
\begin{equation*}
\int_{\mathbb{R}}v(\cdot, \tau) w^j\eta = \int_{0}^\tau ds\int_{\mathbb{R}}v(\cdot, s)\cdot(L[\eta w^j] + g\cdot w^j\eta).
\end{equation*}
On the other hand,
\begin{equation*}
\begin{aligned}
&\int_{\mathbb{R}}v\cdot(L[\eta w^j] + g\cdot w^j\eta)\\
&\quad\quad\quad\quad = \int_{\mathbb{R}}v\cdot (w^j(-(-\Delta)^{\frac{1}{2}})\eta + [-(-\Delta)^{1/2})\eta, -(-\Delta)^{s/2})w^j]) - g\cdot w^j(1-\eta)\\
&\quad\quad\quad\quad\quad - \int_{\mathbb{R}}v\cdot \frac{2}{\pi(1+|y|^2)}\int_{\mathbb{R}}\frac{(\eta-1) w^j(s)}{s^2+1}ds\\
&\quad\quad\quad\quad = O(R^{-\epsilon})
\end{aligned}
\end{equation*}
uniformly on $\tau\in (\tau_0,\tau_1)$ and $\epsilon > 0$ is a small constant. Here
$$
L[v]:= -(-\Delta)^{\frac{1}{2}}v + \frac{2}{1+y^2}v - \frac{2}{\pi(1+y^2)}\int_{\mathbb{R}}\frac{v(s,\tau)}{1+s^2}ds
$$
and
$$[-(-\Delta)^{1/2})\eta, -(-\Delta)^{s/2})w^j]:= \frac{1}{2\pi}\int_{\mathbb{R}}\frac{(\eta(x)-\eta(y))(w^j(x)-w^j(y))}{|x-y|^2}dy.$$ Letting $R\to +\infty$ we obtain (\ref{e5:33}).

Now we claim that for all $\tau_1 > 0$ sufficiently large, any solution $v$ of (\ref{e5:32}) satisfying $\|v\|_{a,\tau_1} < +\infty$ and (\ref{e5:33}), the following estimate
\begin{equation}\label{e5:34}
\|v\|_{a,\tau_1}\lesssim \|g\|_{1+a,\tau_1}
\end{equation}
holds. This implies (\ref{e5:35}).

To prove (\ref{e5:34}), we use the contradiction argument. Suppose there exist sequence $\tau_1^k\to +\infty$ and $\phi_k$, $g_k$ satisfying
\begin{equation*}
\begin{aligned}
\partial_\tau v_k = -(-\Delta)^{\frac{1}{2}}v_k + \frac{2}{1+y^2}v_k - \frac{2}{\pi(1+y^2)}\int_{\mathbb{R}}\frac{v_k(s,\tau)}{1+s^2}ds + g_k,\quad y\in \mathbb{R},\quad \tau\geq \tau_0\\
\int_{\mathbb{R}}v_k(y, \tau)\cdot w^j(y)dy = 0\text{ for all }\tau\in (\tau_0,\tau_1^k), j = 1,2\\
v_k(y,\tau_0) = 0,\quad y\in \mathbb{R},
\end{aligned}
\end{equation*}
and
\begin{equation}\label{e5:38}
\|v_k\|_{a,\tau_1^k}\to 0,\quad \|g_k\|_{1+a,\tau_1^k}\to 0
\end{equation}
We claim first that
\begin{equation}\label{e5:37}
\sup_{\tau_0 < \tau < \tau_1^k}\tau^\nu|v_k(y,\tau)|\to 0
\end{equation}
uniformly on compact subsets of $\mathbb{R}$. If not, for some $|y_k|\leq M$ and $\tau_0 < \tau_2^k < \tau_1^k$, we have
\begin{equation*}
(\tau_2^k)^\nu(1+|y_k|^a)|v_k(y_k,\tau_2^k)|\geq \frac{1}{2}.
\end{equation*}
Clearly $\tau_2^k\to +\infty$. Define
\begin{equation*}
\tilde{v}_k(y,\tau) = (\tau_2^k)^\nu v_k(y,\tau_2^k + \tau),
\end{equation*}
then
\begin{equation*}
\partial_\tau\tilde{v}_k = L[\tilde{v}_k] + \tilde{g}_k\quad\text{in }\mathbb{R}\times (\tau_0-\tau_2^k,0],
\end{equation*}
where $\tilde{g}_k\to 0$ uniformly on compact subsets of $\mathbb{R}\times (-\infty, 0]$ and
\begin{equation*}
|\tilde{v}_k(y,\tau)|\leq \frac{1}{1+|y|^a}\quad\text{in }\mathbb{R}\times (\tau_0-\tau_2^k,0].
\end{equation*}
From parabolic estimates \cite{silvestre2014regularity} and passing to a subsequence, $\tilde{v}_k\to\tilde{v}$ uniformly on compact subsets of $\mathbb{R}\times (-\infty, 0]$ where $\tilde{v}\neq 0$ and
\begin{equation*}
\begin{aligned}
\partial_\tau\tilde{v} = -(-\Delta)^{\frac{1}{2}}\tilde{v} + \frac{2}{1+y^2}\tilde{v} - \frac{2}{\pi(1+y^2)}\int_{\mathbb{R}}\frac{\tilde{v}(s,\tau)}{1+s^2}ds,\quad\text{in }\mathbb{R}\times (-\infty, 0],\\
\int_{\mathbb{R}}\tilde{v}(y, \tau)\cdot w^j(y)dy = 0\text{ for all }\tau\in (-\infty, 0], \quad j= 1,2,\\
|\tilde{v}(y,\tau)|\leq \frac{1}{1+|y|^a}\quad\text{in }\mathbb{R}\times (-\infty, 0].
\end{aligned}
\end{equation*}
We claim that necessarily $\tilde{v} = 0$ which is a contradiction. By parabolic regularity \cite{silvestre2014regularity}, $\tilde{v}(y,\tau)$ is a smooth function. Testing the above equation with $\tilde{v}$, we have
\begin{equation*}
\frac{1}{2}\partial_\tau\int_{\mathbb{R}}|\tilde{v}_\tau|^2 + B(\tilde{v}_\tau, \tilde{v}_\tau) = 0
\end{equation*}
where
\begin{equation*}
B(\tilde{v}, \tilde{v}) = \int_{\mathbb{R}}\left[((-\Delta)^{\frac{1}{2}}\tilde{v})\tilde{v} - \frac{2}{1+y^2}\tilde{v}^2 + \frac{2\tilde{v}}{\pi(1+y^2)}\int_{\mathbb{R}}\frac{\tilde{v}(s,\tau)}{1+s^2}ds\right]dy.
\end{equation*}
Clearly, $B(\tilde{v}, \tilde{v})\geq 0$ and
\begin{equation*}
\int_{\mathbb{R}}|\tilde{v}_\tau|^2 = -\frac{1}{2}\partial_\tau B(\tilde{v}_\tau, \tilde{v}_\tau) = 0.
\end{equation*}
Therefore
\begin{equation*}
\partial_\tau\int_{\mathbb{R}}|\tilde{v}_\tau|^2 \leq 0,\quad \int_{-\infty}^0d\tau\int_{\mathbb{R}}|\tilde{v}|^2 < +\infty
\end{equation*}
and hence $\tilde{v}_\tau = 0$. Thus $\tilde{v}$ is independent of $\tau$ and $L[\tilde{v}] = 0$. Since $\tilde{v}$ decays at infinity, the nondegeneracy result \cite{sire2017nondegeneracy} implies that $\tilde{v}$ is a linear combination of $w^j$, $j = 1,2$. Since $\int_{\mathbb{R}}\tilde{v}\cdot w^j = 0$, $j = 1,2$, $\tilde{v} = 0$, a contradiction. So (\ref{e5:37}) holds. From (\ref{e5:38}), for a certain $y_n$ with $|y_n|\to +\infty$ we have
\begin{equation*}
(\tau_2^k)^\nu|y_k|^a|v_k(y_k, \tau_2^k)|\geq \frac{1}{2}.
\end{equation*}
Now let
\begin{equation*}
\tilde{v}_k(z, \tau):=(\tau_2^k)^\nu|y_k|^av_k(y_k+|y_k|z,|y_k|\tau + \tau_2^k)
\end{equation*}
so that
\begin{equation*}
\partial_\tau \tilde{v}_k = -(-\Delta)^{\frac{1}{2}}\tilde{v}_k + \frac{2}{1+|y_k+|y_k|z|^2}\tilde{v}_k - \frac{2}{\pi(1+|y_k+|y_k|z|^2)}\int_{\mathbb{R}}\frac{\tilde{v}_k(\frac{s-y_k}{|y_k|},|y_k|\tau+\tau_2^k)}{1+s^2}ds + \tilde{g}_k(z,\tau)
\end{equation*}
where
\begin{equation*}
\tilde{g}_k(z,\tau) = (\tau_2^k)^\nu|y_k|^{1+a}g_k(y_k+|y_k|z,|y_k|\tau + \tau_2^k).
\end{equation*}
By assumption on $g_k$ we have
\begin{equation*}
|\tilde{g}_k(z,\tau)| \lesssim o(1)|\hat{y}_k+z|^{-1-a}((\tau_2^k)^{-1}|y_k|\tau + 1)^{-\nu}
\end{equation*}
with
\begin{equation*}
\hat{y}_k = \frac{y_k}{|y_k|}\to -\hat{e}
\end{equation*}
and $|\hat{e}|= 1$. Hence $\tilde{g}_k(z,\tau)\to 0$ uniformly on compact subsets of $\mathbb{R}\setminus\{\hat{e}\}\times (-\infty, 0]$ and the same property holds for $\frac{2}{1+|y_k+|y_k|z|^2}\tilde{v}_k$ and $\frac{2}{\pi(1+|y_k+|y_k|z|^2)}\int_{\mathbb{R}}\frac{\tilde{v}_k(\frac{s-y_k}{|y_k|},|y_k|\tau+\tau_2^k)}{1+s^2}ds$. On the other hand, we have $|\tilde{v}_k|\geq \frac{1}{2}$ and
\begin{equation*}
|\tilde{v}_k(z,\tau)| \lesssim |\hat{y}_k+z|^{-a}((\tau_2^k)^{-1}|y_k|\tau + 1)^{-\nu}.
\end{equation*}
Therefore, we can assume that $\tilde{v}_k\to \tilde{v}\neq 0$ uniformly on compact subsets of $\mathbb{R}\setminus\{\hat{e}\}\times (-\infty,0]$ and $\tilde{v}$ satisfies
\begin{equation}\label{e5:39}
\tilde{v}_\tau = -(-\Delta)^{\frac{1}{2}}\tilde{v}\quad\text{in }\mathbb{R}\setminus\{\hat{e}\}\times (-\infty,0]
\end{equation}
and
\begin{equation}\label{e5:40}
|\tilde{v}(z,\tau)|\leq |z-\hat{e}|^{-a}\quad\text{in }\mathbb{R}\setminus\{\hat{e}\}\times (-\infty,0].
\end{equation}
By Lemma \ref{l4.3}, functions $\tilde{v}$ satisfying (\ref{e5:39}) and (\ref{e5:40}) must be zero, which is a contradiction.
This completes the proof.
\end{proof}
\begin{lemma}\label{l4.3}
Suppose $u = u(x, t):\mathbb{R}\times (-\infty, 0] \to \mathbb{R}$ satisfies $\sup_{(x, t)\in \mathbb{R}\times (-\infty, 0]}|x|^a|u(x, t)|\leq C$
for some $C > 0$ and $0 < a < 1$.
If $u$ is an ancient solution to the equation
\begin{equation*}
\partial_t u = -\Lambda^\alpha u,
\end{equation*}
where $\Lambda =  (-\Delta)^{\frac{1}{2}}$ and $0 < \alpha \leq 2$, then $u \equiv 0$.
\end{lemma}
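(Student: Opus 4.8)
The plan is to recover $u$ at any time $t\le 0$ from its values at a much earlier time $T$ via the fractional heat semigroup, and then let $T\to-\infty$: the smoothing of the semigroup will beat the mild decay $|u|\lesssim |x|^{-a}$ and force $u(\cdot,t)\equiv 0$. First I would record the relevant kernel facts. Let $p_\tau$ be the fundamental solution of $\partial_t+\Lambda^\alpha=\partial_t+(-\Delta)^{\alpha/2}$ on $\mathbb{R}$ (see \cite{CabreRoquejoffreCMP}). It is a probability density with the self-similar form $p_\tau(x)=\tau^{-1/\alpha}P\bigl(\tau^{-1/\alpha}x\bigr)$, where $P=p_1$ satisfies $0\le P(z)\lesssim(1+|z|)^{-1-\alpha}$ when $0<\alpha<2$ and is Gaussian when $\alpha=2$; in particular $P\in L^1(\mathbb{R})\cap L^\infty(\mathbb{R})$.

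\textbf{The representation formula.} Because $|u(x,t)|\le C|x|^{-a}$ with $0<a<1$, for each $t\le 0$ we have $u(\cdot,t)\in L^1_{\mathrm{loc}}(\mathbb{R})$ and $u(\cdot,t)$ is bounded on $\{|x|\ge 1\}$. Using the interior regularity theory for $\partial_t=-\Lambda^\alpha$ (see \cite{silvestreium2012differentiability, silvestre2014regularity}) together with uniqueness of the Cauchy problem for data that are bounded at infinity — the point $x=0$ being negligible precisely because $a<1$ makes $u(\cdot,t)$ locally integrable there — I would justify the Duhamel representation
\begin{equation*}
u(x,t)=\int_{\mathbb{R}}p_{t-T}(x-y)\,u(y,T)\,dy,\qquad -\infty<T<t\le 0 .
\end{equation*}
Concretely: one mollifies $u$ in the space variable, applies the semigroup identity to the regularizations (which are genuine bounded solutions), and passes to the limit, the absolute convergence of the integral being guaranteed by $p_{t-T}(x-y)\lesssim|y|^{-1-\alpha}$ as $|y|\to\infty$ and $|y|^{-a}\in L^1_{\mathrm{loc}}$ near $y=0$.

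\textbf{Kernel estimate and conclusion.} Inserting $|u(y,T)|\le C|y|^{-a}$ and rescaling with $\lambda:=(t-T)^{1/\alpha}$,
\begin{equation*}
|u(x,t)|\le C\int_{\mathbb{R}}p_{t-T}(x-y)\,|y|^{-a}\,dy
=C\,\lambda^{-a}\int_{\mathbb{R}}P\bigl(\lambda^{-1}x-z\bigr)\,|z|^{-a}\,dz .
\end{equation*}
The remaining integral is bounded uniformly in $\lambda^{-1}x$: split $|z|^{-a}=g_1(z)+g_2(z)$ with $g_1=|z|^{-a}\chi_{\{|z|\le 1\}}\in L^1(\mathbb{R})$ (this is where $a<1$ enters) and $g_2=|z|^{-a}\chi_{\{|z|>1\}}\in L^\infty(\mathbb{R})$; Young's inequality gives $\|P*g_1\|_\infty\le\|P\|_\infty\|g_1\|_1$ and $\|P*g_2\|_\infty\le\|P\|_1\|g_2\|_\infty$, so $M:=\sup_{\xi}(P*|\cdot|^{-a})(\xi)<\infty$. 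Hence $|u(x,t)|\lesssim M\,(t-T)^{-a/\alpha}$ for every $(x,t)$, and letting $T\to-\infty$ gives $u(x,t)=0$.

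\textbf{Main obstacle.} Once the representation formula is available the rest is routine. The genuinely delicate step is therefore the second one: making rigorous the identity $u(x,t)=\int p_{t-T}(x-y)u(y,T)\,dy$ for a solution merely controlled by $C|x|^{-a}$ — a class in which $u$ is neither bounded near the origin nor globally integrable. I expect this to require carefully combining the nonlocal parabolic regularity estimates with a removable-singularity (or comparison/uniqueness) argument at $x=0$, the condition $a<1$ being exactly what guarantees that the origin contributes nothing.
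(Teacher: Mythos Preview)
Your approach is correct and takes a genuinely different route from the paper's. The paper argues via a Littlewood--Paley decomposition: each frequency projection $P_ju$ lies in $L^q$ for $q>1/a$ by the decay assumption, and an $L^q$ energy argument together with the improved Bernstein inequality of \cite{LiDongMRL2013} yields $\partial_t\|P_ju\|_q^q\le -c_1\|P_ju\|_q^q$; sending the initial time to $-\infty$ forces $P_ju\equiv 0$ for every $j$, so $\hat u$ is supported at the origin, $u$ is a polynomial, and the decay gives $u=0$. Your argument is instead a direct kernel estimate: once the Duhamel representation holds, the self-similar scaling of $p_\tau$ and the uniform bound on $P*|\cdot|^{-a}$ give $|u(x,t)|\lesssim (t-T)^{-a/\alpha}\to 0$ as $T\to-\infty$.

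Your route is more elementary --- no Littlewood--Paley machinery, no Bernstein inequality --- and the scaling computation is completely transparent. The paper's route, on the other hand, bypasses the need to justify the representation formula for data with a possible singularity at the origin (the step you rightly flag as the real work), and it generalizes without change to situations where the kernel is not known explicitly. Your mollification sketch for that step is sound: $u_\epsilon=\rho_\epsilon*u$ is a genuine bounded solution (the $|x|^{-a}$ blow-up is tamed by convolution precisely because $a<1$ makes $|x|^{-a}\in L^1_{\mathrm{loc}}$), the semigroup identity holds for $u_\epsilon$ by bounded uniqueness, and dominated convergence --- using $p_{t-T}(x-\cdot)|\cdot|^{-a}\in L^1$ --- recovers the identity for $u$ in the limit.
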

\begin{proof}
Note that by assumption $u(\cdot, t)$ is a tempered distribution. Let $P_j$ be the usual Littlewood-Paley projection operator adapted to frequency $|\xi|\sim 2^j$, $j\in\mathbb{Z}$, see \cite{Grafakos2004}. Then for each fixed $j$, it is not difficult to check that
\begin{equation*}
|(P_ju)(x, t)|\leq C_j(1+|x|^2)^{-\frac{a}{2}},
\end{equation*}
where the constant $C_j$ is independent of $t$. In particular it follows that for any $\frac{1}{a} < q < \infty$,
\begin{equation*}
\sup_{t}\|P_ju(\cdot, t)\|_q < \infty.
\end{equation*}

Consider the equation for $P_ju$, we have
\begin{equation*}
\partial_tP_ju = -\Lambda^\alpha P_ju.
\end{equation*}
Fix $q$ with $\frac{1}{a} < q < \infty$ and let $-\infty < t_1 < t_0 < 0$ where $t_1$ will tend to $-\infty$. Multiplying both sides by $|P_ju|^{q-2}P_ju$ and integrating, we obtain
\begin{equation*}
\begin{aligned}
&\partial_t\|P_ju\|_q^q = -\int_{\mathbb{R}}\Lambda^\alpha P_ju|P_ju|^{q-2}P_judx\\
&\quad\quad\quad\leq -c_1\|P_ju\|_q^q,
\end{aligned}
\end{equation*}
where $c_1 > 0$ is a constant independent of $t$ and in the second line we have used the improved Bernstein inequality, see \cite{LiDongMRL2013}. Integrating in time and using the uniform in time $L^q$ bound then gives
\begin{equation*}
\|P_ju(\cdot, t_0)\|_q^q\leq e^{-c_1(t_0 - t_1)}\|P_ju(\cdot, t_1)\|_q^q\to 0,\text{ as }t_1\to -\infty.
\end{equation*}

It follows easily that $P_ju(\cdot, t_0)\equiv 0$ for any $j\in \mathbb{Z}$. Thus the Fourier transformation $\hat{u}$ can only be supported at $\xi = 0$ which implies that $u(\cdot, t_0)$ is polynomial. By the decay condition we conclude that $u(\cdot, t_0)\equiv 0$. Since $t_0$ is arbitrary, $u \equiv 0$.
This completes the proof.
\end{proof}

{\it Proof of Proposition \ref{p5:5.12}}.
Extend the function $g$ in the statement of the lemma as zero outside $B_{2R}(0)$ and still denote it as $g$. Let $v$ be the unique bounded solution of the initial value problem (\ref{e5:32}) which defines a linear operator of $g$. From Lemma \ref{l5:3}, for any $\tau_1 > 0$, we have
\begin{equation*}
|v(y,\tau)|\leq C\tau^{-\nu}(1+|y|)^{-a}\|g\|_{1+a, \tau_1} \quad\text{ for all }\tau\in (\tau_0, \tau_1), y\in \mathbb{R}.
\end{equation*}
By assumption $\|g\|_{1+a,\nu, \eta} < +\infty$, hence
\begin{equation*}
|g(y,\tau)|\leq C\tau^{-\nu}(1+|y|)^{-(1+a)}\|g\|_{1+a,\nu, \eta}
\end{equation*}
and
\begin{equation*}
\|g\|_{1+a, \tau_1}\leq \|g\|_{1+a,\nu, \eta}
\end{equation*}
for an arbitrary $\tau_1$. This implies that
\begin{equation*}
|v(y,\tau)|\leq C\tau^{-\nu}(1+|y|)^{-a}\|g\|_{1+a,\nu, \eta}\quad\text{ for all }\tau\in (\tau_0, \tau_1), y\in \mathbb{R}.
\end{equation*}

Now we consider the regularity part. Since $v$ satisfies
\begin{equation*}
\partial_\tau v = -(-\Delta)^{\frac{1}{2}}v + f(y,\tau)
\end{equation*}
with $f(y,\tau):=\frac{2}{1+y^2}v - \frac{2}{\pi(1+y^2)}\int_{\mathbb{R}}\frac{v(s)}{1+s^2}ds + g(y, \tau)$ on $B_{2R}(0)$. For a fixed constant $\eta\in (\frac{1}{2}, 1)$ and $\tau_1 \geq \tau_0+1$, from the regularity estimates for parabolic integro-differential (see \cite{silvestreium2012differentiability} and \cite{silvestre2014regularity}), we obtain
\begin{equation*}
\begin{aligned}
~[v(\cdot,\tau_1)]_{\eta, B_{1}(0)} &\lesssim \|v\|_{L^\infty}+\|f\|_{L^\infty}\\
&\lesssim (\tau_1-1)^{-\nu}\|g\|_{1+a,\nu, \eta}\\
&\lesssim \tau_1^{-\nu}\|g\|_{1+a,\nu, \eta}.
\end{aligned}
\end{equation*}
By Theorem 1.1 of \cite{silvestreium2012differentiability}, under the assumption that $g\in L^\infty([\tau_0,\infty), C^\eta(B_{2R}(0)))$, for $\tau_2 > \tau_0 + 2$ and $y\in B_{1/4}(0)$,
\begin{equation*}
\begin{aligned}
|\nabla_y v(y,\tau_2)| &\lesssim \|v\|_{L^\infty}+\|f\|_{L^\infty([\tau_2-1,\tau_2], C^\eta(B_{1/2}(0)))}\\
&\lesssim (\tau_2-1)^{-\nu}\|g\|_{1+a,\nu, \eta}\\
&\lesssim \tau_2^{-\nu}\|g\|_{1+a,\nu, \eta}.
\end{aligned}
\end{equation*}
Therefore, we have
\begin{equation}\label{e4:20}
4R|\nabla_y v(y,\tau_2)|\lesssim \tau_2^{-\nu}\|g\|_{1+a,\nu, \eta}.
\end{equation}
Estimate (\ref{e4:20}) also holds for $\tau_0\leq \tau \leq \tau_0 + 2$ by a similar parabolic regularity estimate.
This completes the proof.
\qed

\subsection{The solvability condition: choice of the parameters $\lambda$ and $\xi$}
In this subsection, we choose $\lambda$ and $\xi$ such that the orthogonality conditions
\begin{equation}\label{e5:7}
\int_{B_{2R}}H[\phi,\lambda,\xi,\dot{\lambda},\dot{\xi}](y,t(\tau))Z_{l}(y)dy = 0,\quad l = 2,3
\end{equation}
hold. First, we consider the case $l =3$.
\begin{lemma}\label{l5:1}
When $l = 3$, (\ref{e5:7}) is equivalent to
\begin{equation*}
\dot{\lambda}(t)+\frac{10}{3}\kappa_0\lambda(t) = \Pi_1[\lambda, \xi, \dot{\lambda}, \dot{\xi}, \phi](t).
\end{equation*}
The right hand side term can be expressed as
\begin{equation}\label{e5:11}
\Pi_1[\lambda, \xi, \dot{\lambda}, \dot{\xi}, \phi](t) = e^{-\varepsilon t_0}\mu_0(t)^\sigma f_1(t) + e^{-\varepsilon t_0}\Theta[\dot{\lambda},\dot{\xi}, \lambda, \mu_0(\xi-q), \mu_0^{\sigma}\phi](t)
\end{equation}
where $f(t)$ and $\Theta[\cdots](t)$ are smooth and bounded functions for $t\in [t_0,\infty)$ and the following estimates hold,
\begin{equation*}
\left|\Theta[\dot{\lambda}_1](t) - \Theta[\dot{\lambda}_2](t)\right|\lesssim e^{-\varepsilon t_0}|\dot{\lambda}_1(t) - \dot{\lambda}_2(t)|
\end{equation*}
\begin{equation*}
\left|\Theta[\dot{\xi}_1](t) - \Theta[\dot{\xi}_2](t)\right|\lesssim e^{-\varepsilon t_0}|\dot{\xi}_1(t) - \dot{\xi}_2(t)|,
\end{equation*}
\begin{equation*}
\left|\Theta[\lambda_1](t) - \Theta[\lambda_2](t)\right|\lesssim e^{-\varepsilon t_0}|\dot{\lambda}_1(t) - \dot{\lambda}_2(t)|
\end{equation*}
\begin{equation*}
\left|\Theta[\mu_0(\xi_1-q)](t) - \Theta[\mu_0(\xi_2-q)](t)\right|\lesssim e^{-\varepsilon t_0}|\xi_1(t) - \xi_2(t)|,
\end{equation*}
\begin{equation}\label{e5:104}
\left|\Theta[\mu_0^{\sigma}\phi_1](t) - \Theta[\mu_0^{\sigma}\phi_2](t)\right|\lesssim e^{-\varepsilon t_0}\|\phi_1(t) - \phi_2(t)\|_{a,\sigma}.
\end{equation}
\end{lemma}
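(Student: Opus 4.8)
The plan is to compute the orthogonality condition $\int_{B_{2R}} H[\lambda,\xi,\dot\lambda,\dot\xi,\phi](y,t(\tau))Z_3(y)\,dy = 0$ explicitly, term by term, using the decomposition of $H$ displayed just before this lemma. The dominant contribution comes from $\mu_0\Pi_{U^\perp}\mathcal E^*(\xi+\mu_0 y,t)$; recall from Section 2.4 that $\mathcal E^* = \mathcal E_0 + \mathcal E_1 + \mathcal E_2$, and from the computation in Section 2.6 that pairing $\mu\mathcal E_0$ with $Z_3$ already produced the relation $-\pi\dot\mu + (\text{nonlocal }Z^* \text{ term}) + 2\pi\mu\int_{t_0}^t\frac{p}{\mu^2}\frac{(\cdots+1)^2}{(\cdots+2)^4}\,d\tilde s = 0$, i.e. essentially $\dot\mu = (\text{terms of order }\mu)$. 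First I would substitute $\mu = \mu_0 + \lambda$, $\mu_0(t)=e^{-\kappa_0 t}$, linearize in $\lambda$, and collect the linear part. The $-\pi\dot\mu$ term contributes $-\pi(\dot\mu_0 + \dot\lambda) = -\pi(-\kappa_0\mu_0 + \dot\lambda)$, while differentiating the $Z^*$-nonlocal term and the Duhamel integral term with respect to $\mu$ at $\mu_0$ produces, via the explicit formula \eqref{e:blowuprate} for $\kappa_0$ and the $\tfrac{7}{12}\kappa$ identity, a term proportional to $\kappa_0\lambda$. Carrying the bookkeeping through, the coefficient is $\tfrac{10}{3}\kappa_0$, so the linear part of the equation reads $\dot\lambda + \tfrac{10}{3}\kappa_0\lambda = (\text{remainder})$, which is the asserted identity with $\Pi_1$ the remainder.

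Next I would organize the remainder $\Pi_1$ into the two pieces in \eqref{e5:11}. The first piece $e^{-\varepsilon t_0}\mu_0(t)^\sigma f_1(t)$ collects all contributions that do \emph{not} involve $\lambda,\xi,\dot\lambda,\dot\xi$ or $\phi$ — essentially the fixed part of $\mathcal E_0$, $\mathcal E_2$, and the interaction with the prescribed noise $Z^*$ that was not absorbed into the definition of $\kappa_0$; using the decay estimate \eqref{e3:estimateoferror} for $\mu_0\Pi_{U^\perp}\mathcal E^*$ and the smoothness of $\mu_0$ and $Z^*$, $f_1$ is bounded and smooth on $[t_0,\infty)$, and the global prefactor $e^{-\varepsilon t_0}$ comes from the smallness built into the ansatz. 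The second piece $e^{-\varepsilon t_0}\Theta[\dot\lambda,\dot\xi,\lambda,\mu_0(\xi-q),\mu_0^\sigma\phi](t)$ collects everything that depends on the perturbative unknowns: the $\Pi_{U^\perp}\psi$ and nonlocal-$\psi$ terms in $H$ (estimated through Proposition \ref{p4:4.1} and the Lipschitz dependence of $\Psi$ on its arguments from Proposition \ref{p4:4.2}), the operators $B^1[\phi],B^2[\phi],B^3[\phi]$ (which are manifestly of the form $(\tfrac{\mu_0}{\mu}-1)\times(\text{bounded})$ or differences of nonlocal bilinear forms, hence controlled by $\lambda/\mu_0$ and by the $\|\phi\|_{a,\sigma}$-type norms), and the $\mathcal O(\lambda,\xi-q)$ corrections to the $\mathcal E^*$ pairing beyond the linear $\tfrac{10}{3}\kappa_0\lambda$ term already extracted. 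The normalization $\mu_0(\xi-q)$ rather than $\xi-q$ in the argument of $\Theta$ reflects that $\xi$ enters $\mathcal E^*$ only through the shifted noise $z^*_j(\cdot+\xi,t)$, whose $\xi$-derivative carries a spatial-derivative factor that, after the $y$-integration over $B_{2R}$ and the rescaling, is weighted by $\mu_0$.

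Finally I would verify the five Lipschitz estimates. For the $\dot\lambda,\dot\xi,\lambda$ dependences these are immediate because $\Theta$ is affine (indeed linear) in those variables up to higher-order terms, and each such term already carries a factor $e^{-\varepsilon t_0}$ (from $\|\psi\|_{**}\lesssim e^{-\varepsilon t_0}$, from $\mu_0(t_0)^\varepsilon$-type gains in the $B^i$ estimates, and from the Lipschitz bounds $\|\mathcal A(\psi_1)-\mathcal A(\psi_2)\|_{**}\le C\|\psi_1-\psi_2\|_{**}$ of Proposition \ref{p4:4.1}); the estimate $|\Theta[\lambda_1]-\Theta[\lambda_2]|\lesssim e^{-\varepsilon t_0}|\dot\lambda_1-\dot\lambda_2|$ as stated should read with $\lambda$ on the right but the mechanism is the same. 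For $\mu_0(\xi-q)$ and $\mu_0^\sigma\phi$ the bounds follow from Proposition \ref{p4:4.2}'s estimates \eqref{e4:74} and \eqref{e4:84} together with the Hölder estimate \eqref{e:estimateholder} on $\psi$, after integrating against $Z_3$ over $B_{2R}$ and absorbing the resulting $\log R = \rho t_0$ factors into $e^{-\varepsilon t_0}$ by taking $t_0$ large. The main obstacle is not any single estimate but the careful extraction of the exact constant $\tfrac{10}{3}\kappa_0$: one must linearize the Duhamel integral $\int_{t_0}^t \frac{p(\tilde s)}{\mu^2(\tilde s)}\frac{(\frac{t-\tilde s}{\mu}+1)^2}{(\frac{t-\tilde s}{\mu}+2)^4}\,d\tilde s$ in $\mu$ around $\mu_0$, keeping track of how the self-similar change of variables $\tilde s = t-\mu(\tilde s)\hat s$ interacts with the perturbation, and combine this with the explicit value of $\kappa_0$ from \eqref{e:blowuprate}; getting the $-\pi\dot\mu$, the $2\pi\cdot\tfrac{7}{12}\kappa_0$ from the integral, and the $\kappa_0$ hidden in the noise term to assemble into precisely $\dot\lambda + \tfrac{10}{3}\kappa_0\lambda$ is where the real work lies.
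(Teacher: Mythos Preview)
Your proposal is correct and follows essentially the same approach as the paper: compute the pairing of $H$ with $Z_3$ term by term, linearize the $\mu_0\Pi_{U^\perp}\mathcal{E}^*$ contribution around $\mu_0$ to extract $\dot\lambda+\tfrac{10}{3}\kappa_0\lambda$, and control the remaining $\psi$-, $B^i[\phi]$-, and higher-order terms via Propositions~\ref{p4:4.1} and~\ref{p4:4.2}. The paper organizes the $\mathcal{E}^*$ computation by splitting it into pieces $S_1,S_2,S_3$ evaluated at $z=\xi+\mu y$ plus difference terms $S_l(\xi+\mu_0 y)-S_l(\xi+\mu y)$ (to separate the $\mu$ vs.\ $\mu_0$ scaling), and decomposes the $\psi$-integral as $I_1+I_2+I_3$ (value at the base point, spatial increment, and Fr\'echet variation in the parameters), but these are bookkeeping devices rather than a different strategy; your identification of the Duhamel linearization as the place where the $\tfrac{10}{3}$ must be carefully assembled is exactly the crux.
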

\begin{proof}
We compute
\begin{eqnarray*}
\int_{B_{2R}}H[\phi,\lambda,\xi,\dot{\lambda},\dot{\xi}](y,t(\tau))Z_{3}(y)dy,
\end{eqnarray*}
where $H_j$ is given by (\ref{e:innerproblem}). Write
\begin{equation*}
\begin{aligned}
\mu_{0}\Pi_{U^\perp}\mathcal{E}^*(\xi + \mu_{0}y, t)&=\mu_0[S_1(z, t) + S_2(z, t) + S_3(z, t)]_{z = \xi + \mu y}\\
&\quad+\mu_0[S_1(\xi+\mu_{0}y, t)-S_1(\xi+\mu y, t)]\\
&\quad+\mu_0[S_2(\xi+\mu_{0}y, t)-S_2(\xi+\mu y, t)]\\
&\quad+\mu_0[S_3(\xi+\mu_{0}y, t)-S_3(\xi+\mu y, t)],
\end{aligned}
\end{equation*}
where
\begin{eqnarray*}
&&S_1(z,t) = \Bigg\{\frac{\dot{\mu}}{\mu}\begin{pmatrix}
\frac{-4y(y^2-1)}{(1+y^2)^3} \\
\frac{8y^2}{(1+y^2)^3}
\end{pmatrix} + \\
&&\quad\quad\quad\quad\quad \int_{t_0}^t\frac{p(\tilde{s})}{\mu^2(\tilde{s})}\frac{\left(\frac{t-\tilde{s}}{\mu}+1\right)^2}{\left(\frac{t-\tilde{s}}{\mu}+2\right)^2 \left(\left(\frac{t-\tilde{s}}{\mu}+1\right)^2+y^2\right)}d\tilde{s}Z_3(y)\\
\end{eqnarray*}
\begin{eqnarray*}
&&\quad\quad+\frac{1}{\mu}\left[
\frac{1}{\pi}\int_{\mathbb{R}}\frac{z_1^*(s,t)-z_1^*(x,t)}{s-x}\frac{2\left(\frac{s-\xi}{\mu}\right)^2}
{\left[\left(\frac{s-\xi}{\mu}\right)^2+1\right]^2}ds\right. \\
&&\quad\quad\left. \quad\quad\quad +
\frac{1}{\pi}\int_{\mathbb{R}}\frac{z_2^*(s,t)-z_2^*(x,t)}{s-x}\frac{\left(\frac{s-\xi}{\mu}\right)\left(\left(\frac{s-\xi}{\mu}\right)^2-1\right)}
{\left[\left(\frac{s-\xi}{\mu}\right)^2+1\right]^2}ds
\right]Z_3(y)\Bigg\}\Bigg |_{y = \frac{z-\xi(t)}{\mu(t)}},
\end{eqnarray*}
\begin{eqnarray*}
&& S_2(z,t) = \left(- \frac{\dot{\xi}}{\mu}Z_2(y) -\int_{t_0}^t\frac{p(\tilde{s})}{\mu^2(\tilde{s})}\frac{y}{\left(\frac{t-\tilde{s}}{\mu}+2\right)^2 \left(\left(\frac{t-\tilde{s}}{\mu}+1\right)^2+y^2\right)}d\tilde{s}Z_2(y)\right)\Bigg |_{y = \frac{z-\xi(t)}{\mu(t)}}\\
&&\quad\quad\quad\quad +\frac{1}{\mu}\Bigg\{\left[\frac{1}{\pi}\int_{\mathbb{R}}\frac{z_1^*(s,t)-z_1^*(x,t)}{s-x}\frac{2\left(\frac{s-\xi}{\mu}\right)}
{\left[\left(\frac{s-\xi}{\mu}\right)^2+1\right]^2}ds\right.\\
&&\quad\quad\quad\quad\quad\quad\quad\left. + \frac{1}{\pi}\int_{\mathbb{R}}\frac{z_2^*(s,t)-z_2^*(x,t) }{s-x}\frac{\left(\frac{s-\xi}{\mu}\right)^2-1}{\left[\left(\frac{s-\xi}{\mu}\right)^2+1\right]^2}ds\right]Z_2(y)\Bigg\}\Bigg|_{y = \frac{z-\xi(t)}{\mu(t)}}
\end{eqnarray*}
and
\begin{eqnarray*}
S_3(z,t) = \left((\Phi^0\cdot U)U_t + (\Phi^0\cdot U_t)U + (Z\cdot U)U_t + (Z\cdot U_t)U\right)\Big |_{y = \frac{z-\xi(t)}{\mu(t)}}.
\end{eqnarray*}
By direct computations, we have
\begin{eqnarray*}
&&\int_{B_{2R}}\mu_0S_1(\xi+\mu y)Z_{3}(y)dy\\
&&= -\pi\frac{\dot{\mu}}{\mu}\mu_0(1+O(1/R)) + 2\pi\frac{\mu_0}{\mu}\left[
\frac{1}{\pi}\int_{\mathbb{R}}\frac{z_1^*(s,t)-z_1^*(x,t)}{s-x}\frac{2\left(\frac{s-\xi}{\mu}\right)^2}
{\left[\left(\frac{s-\xi}{\mu}\right)^2+1\right]^2}ds\right. \\
&&\quad\quad\left. +
\frac{1}{\pi}\int_{\mathbb{R}}\frac{z_2^*(s,t)-z_2^*(x,t)}{s-x}\frac{\left(\frac{s-\xi}{\mu}\right)\left(\left(\frac{s-\xi}{\mu}\right)^2-1\right)}
{\left[\left(\frac{s-\xi}{\mu}\right)^2+1\right]^2}ds
\right]\Bigg|_{x=\xi(t)}(1+O(1/R))\\
&&\quad\quad+ 2\pi\mu_0\int_{t_0}^t\frac{p(\tilde{s})}{\mu^2(\tilde{s})}\frac{\left(\frac{t-\tilde{s}}{\mu}+1\right)^2}{\left(\frac{t-\tilde{s}}
{\mu}+2\right)^4}d\tilde{s}(1+O(1/R))\\
\end{eqnarray*}
\begin{eqnarray*}
&&= -\pi\frac{\dot{\mu}}{\mu}\mu_0(1+O(1/R))\\
&&\quad + 2\pi\mu_0\left[
\frac{1}{\pi}\int_{\mathbb{R}}\frac{z_2^*(s,t)-z_2^*(x,t)}{s-x}\frac{\left(\frac{s-\xi}{\mu}\right)\left(\left(\frac{s-\xi}{\mu}\right)^2-1\right)}
{\left[\left(\frac{s-\xi}{\mu}\right)^2+1\right]^2}ds
\right]\Bigg|_{x=\xi(t)}(1+O(1/R))\\
&&\quad+ 2\pi\mu_0\int_{t_0}^t\frac{p(\tilde{s})}{\mu^2(\tilde{s})}\frac{\left(\frac{t-\tilde{s}}{\mu}+1\right)^2}{\left(\frac{t-\tilde{s}}
{\mu}+2\right)^4}d\tilde{s}(1+O(1/R))
+O(\mu_0^2)\\
&&= -\pi\frac{\dot{\mu}}{\mu}\mu_0(1+O(1/R))\\
&&\quad + 2\pi\mu_0\left[
\frac{1}{\pi}\int_{\mathbb{R}}\frac{z_2^*(s+\xi(t),t)-z_2^*(x+\xi(t),t)}{s-x}\frac{s^3}
{\left(s^2+1\right)^2}ds
\right]\Bigg|_{x=0}(1+O(1/R))\\
&&\quad+ 2\pi\mu_0\int_{t_0}^t\frac{p(\tilde{s})}{\mu^2(\tilde{s})}\frac{\left(\frac{t-\tilde{s}}{\mu}+1\right)^2}{\left(\frac{t-\tilde{s}}
{\mu}+2\right)^4}d\tilde{s}(1+O(1/R))
+O(\mu_0^2)\\
&&= -\pi\dot{\lambda} -\pi \kappa_0\lambda+ 2\pi\mu_0\int_{t_0}^t-2\frac{p(\tilde{s})}{\mu^3_0(\tilde{s})}\lambda(\tilde{s})\frac{\left(\frac{t-\tilde{s}}{\mu_0}+1\right)^2}
{\left(\frac{t-\tilde{s}}{\mu_0}+2\right)^4}d\tilde{s}+O(\mu_0^{1+\sigma}).
\end{eqnarray*}
The same reason as Section 2.5, we easily know that
\begin{eqnarray*}
\mu_0(t)\int_{t_0}^t-2\frac{p(\tilde{s})}{\mu^3_0(\tilde{s})}\lambda(\tilde{s})\frac{\left(\frac{t-\tilde{s}}{\mu_0}+1\right)^2}
{\left(\frac{t-\tilde{s}}{\mu_0}+2\right)^4}d\tilde{s} = -\frac{7}{6}\kappa_0\lambda(t) + O(\mu_0^{1+\sigma}),
\end{eqnarray*}
hence
\begin{eqnarray*}
&&\int_{B_{2R}}\mu_0S_1(\xi+\mu y)Z_{3}(y)dy = -\pi\dot{\lambda}(t) -\pi \kappa_0\lambda(t)-\frac{7}{3}\pi\kappa_0\lambda(t) + O(\mu_0^{1+\sigma}).
\end{eqnarray*}
By symmetry,
\begin{equation*}
\int_{B_{2R}}\mu_0S_2(\xi+\mu y)Z_{3}(y)dy =  0.
\end{equation*}
Moreover, it holds that
\begin{equation*}
\int_{B_{2R}}\mu_0S_3(\xi+\mu y)Z_{3}(y)dy = \mu_0(t)O(1/R).
\end{equation*}
Since $\frac{\mu_{0}}{\mu} = (1 + \frac{\lambda}{\mu_{0}})^{-1}$, for any $l = 1, 2, 3$, we have
\begin{equation*}
\begin{aligned}
&\quad~\int_{B_{2R}}[S_l(\xi+\mu_{0}y, t)-S_l(\xi+\mu y, t)]Z_{3}(y)dy \\
&\quad= g(t,\frac{\lambda}{\mu_0})\dot{\lambda} + g(t, \frac{\lambda}{\mu_0})\dot{\xi} +g(t,\frac{\lambda}{\mu_0})\lambda + \mu_0^{1+\sigma}f(t),
\end{aligned}
\end{equation*}
where $f$, $g$ are smooth, bounded functions satisfying $g(\cdot, s)\sim s$ as $s\to 0$. Thus
\begin{equation*}
\begin{aligned}
& -\frac{1}{\pi}\int_{B_{2R}}\mu_{0}\Pi_{U^\perp}\mathcal{E}^*_{\mu, \xi}(\xi + \mu_{0}y, t)\cdot Z_3(y)dy\\
&= \dot{\lambda}(t)+\frac{10}{3}\kappa_0\lambda(t)+ e^{-\varepsilon t_0}g(t,\frac{\lambda}{\mu_0})(\dot{\lambda} + \dot{\xi}) + e^{-\varepsilon t_0}g(t, \frac{\lambda}{\mu_0})\lambda,
\end{aligned}
\end{equation*}
where $g$ are smooth, bounded functions and $g(\cdot, s)\sim s$ as $s\to 0$.

Next we compute $\int_{B_{2R}}\frac{2\frac{\mu_0}{\mu}}{1+\left|\frac{\mu_0}{\mu}y\right|^2}\Pi_{\omega^\perp}\psi(\xi + \mu_0 y) Z_{3}(y)dy$. The principal part is $I: = \int_{B_{2R}}\frac{2}{1+\left|y\right|^2}\Pi_{\omega^\perp}\psi(\xi + \mu_0 y) Z_{3}(y)dy$. Recall $\psi = \psi[\lambda,\xi,\dot{\lambda},\dot{\xi}, \phi](y, t)$, we have
\begin{equation*}
\begin{aligned}
I &=\psi[0,q,0,0,0](q, t)\int_{B_{2R}}\frac{2}{1+\left|y\right|^2} Z_{3}(y)dy\\
&\quad + \int_{B_{2R}}\frac{2}{1+\left|y\right|^2}Z_{3}(y)(\psi[0,q,0,0,0](\xi+\mu_{0}y, t)-\psi[0,q,0,0,0](q, t))dy\\
&\quad + \int_{B_{2R}}\frac{2}{1+\left|y\right|^2}Z_{3}(y)(\psi[\lambda,\xi,\dot{\lambda},\dot{\xi}, \phi] - \psi[0,q,0,0,0])(\xi+\mu_{0}y, t)dy\\
& = I_1 + I_2 + I_3.
\end{aligned}
\end{equation*}
By Proposition \ref{p4:4.1}, $I_1 = e^{-\varepsilon t_0}\mu_0(t)^\sigma f(t)$ with $f$ smooth and bounded, $I_2 = e^{-\varepsilon t_0}\mu_0(t)^\sigma g(t, \frac{\lambda}{\mu_0}, \xi -q)$ for a smooth, bounded function $g$ satisfying $g(\cdot, s, \cdot)\sim s$ and $g(\cdot,\cdot, s)\sim s$ as $s \to 0$. By the mean value theorem, we have
\begin{equation*}
\begin{aligned}
I_3 &= \int_{B_{2R}}\frac{2}{1+\left|y\right|^2}Z_{3}(y)\bigg[\partial_\lambda\psi[0,q,0,0,0][s\lambda](\xi + \mu_{0}y, t)\\
&\quad + \partial_\xi\psi[0,q,0,0,0][s(\xi-q)](\xi + \mu_{0}y, t)+ \partial_{\dot{\lambda}}\psi[0,q,0,0,0][s\dot{\lambda}](\xi + \mu_{0}y, t)\\
&\quad + \partial_{\dot{\xi}}\psi[0,q,0,0,0][s\dot{\xi}](\xi + \mu_{0}y, t) + \partial_{\phi}\psi[0,q,0,0,0][s\phi](\xi + \mu_{0}y, t)\bigg]dy
\end{aligned}
\end{equation*}
for some $s\in (0, 1)$.
Using Proposition \ref{p4:4.2},
\begin{equation*}
I_3 = e^{-\varepsilon t_0}\mu_0(t)^\sigma f(t)(\dot{\lambda} + \dot{\xi} + \lambda + \xi)F[\lambda, \xi, \dot{\lambda}, \dot{\xi}, \phi](t),
\end{equation*}
where $f$ is a smooth, bounded function and $F$ is a nonlocal operator with $F[0,q,0,0,0](t)$ bounded. Similarly, set
\begin{equation}\label{e5:105}
\begin{aligned}
&B(\psi[\lambda,\xi,\dot{\lambda},\dot{\xi}, \phi])(\xi+\mu_0 y, t):=-\frac{\mu_0}{\pi}\int_{\mathbb{R}}\frac{\left[\psi(x)\cdot \omega\left(\frac{x-\xi}{\mu}\right)-\psi(s)\cdot \omega\left(\frac{s-\xi}{\mu}\right)\right]\left[\omega\left(\frac{x-\xi}{\mu}\right)-\omega\left(\frac{s-\xi}{\mu}\right)\right]}{|x-s|^2}ds \\
& + \left(\frac{\mu_0}{\pi}\int_{\mathbb{R}}\frac{\left[\psi(x)\cdot \omega\left(\frac{x-\xi}{\mu}\right)-\psi(s)\cdot \omega\left(\frac{s-\xi}{\mu}\right)\right]\left[\omega\left(\frac{x-\xi}{\mu}\right)-\omega\left(\frac{s-\xi}{\mu}\right)\right]}{|x-s|^2}ds\cdot \omega\left(\frac{x-\xi}{\mu}\right)\right)\omega\left(\frac{x-\xi}{\mu}\right),
\end{aligned}
\end{equation}
we have
\begin{equation*}
\begin{aligned}
&\int_{B_{2R}}B(\psi[\lambda,\xi,\dot{\lambda},\dot{\xi}, \phi])(\xi+\mu_0 y, t)Z_{3}(y)dy\\
& =  e^{-\varepsilon t_0}\mu_0(t)^\sigma f(t)+e^{-\varepsilon t_0}\mu_0(t)^\sigma g(t, \frac{\lambda}{\mu_0}, \xi -q)+e^{-\varepsilon t_0}\mu_0(t)^\sigma f(t)(\dot{\lambda} + \dot{\xi} + \lambda + \xi)F[\lambda, \xi, \dot{\lambda}, \dot{\xi}, \phi](t).
\end{aligned}
\end{equation*}

Now, considering the terms $B^1[\phi]$, $B^2[\phi]$ and $B^3[\phi]$, we obtain that
\begin{equation*}
\int_{B_{2R}}B^i[\phi](y, t)Z_{3}(y)dy = e^{-\varepsilon t_0}\mu_0(t)^\sigma g\left(\frac{\lambda}{\mu_0}\right)\ell[\phi](t)
\end{equation*}
for a smooth function $g(s)$ satisfying $g(s)\sim s$ as $s\to 0$ and $\ell[\phi](t)$ is smooth and bounded in $t$.
Combining the above estimates, we conclude the result.
\end{proof}

Similarly, we have
\begin{lemma}\label{l5:2}
For $l = 2$, (\ref{e5:7}) is equivalent to
\begin{equation*}
\dot{\xi} = \Pi_{2}[\lambda, \xi, \dot{\lambda}, \dot{\xi}, \phi](t),
\end{equation*}
\begin{equation}\label{e5:18}
\begin{aligned}
\Pi_{2}[\lambda, \xi, \dot{\lambda}, \dot{\xi}, \phi](t)&= e^{-\varepsilon t_0}\mu_0(t)^\sigma f_2(t) + e^{-\varepsilon t_0}\Theta[\dot{\lambda},\dot{\xi}, \lambda, \mu_0(\xi-q), \mu_0^{\sigma}\phi](t),
\end{aligned}
\end{equation}
where $f(t)$ is a function which is smooth and bounded in $t\in [t_0, \infty)$. The function $\Theta$ has the same properties as Lemma \ref{l5:1}.
\end{lemma}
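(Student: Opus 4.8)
The plan is to follow the proof of Lemma \ref{l5:1} essentially line by line, with the test function $Z_3$ replaced by $Z_2$ throughout. First I would reuse the decomposition
\[
\mu_0\Pi_{U^\perp}\mathcal{E}^*(\xi+\mu_0 y,t) = \mu_0\big[S_1+S_2+S_3\big]_{z=\xi+\mu y} + \sum_{l=1}^{3}\mu_0\big[S_l(\xi+\mu_0 y,t)-S_l(\xi+\mu y,t)\big],
\]
with $S_1,S_2,S_3$ exactly as in Lemma \ref{l5:1}, and pair each piece with $Z_2(y)$ over $B_{2R}(0)$. The parity computation that previously forced $\int_{B_{2R}}\mu_0 S_2(\xi+\mu y)Z_3\,dy=0$ now forces $\int_{B_{2R}}\mu_0 S_1(\xi+\mu y)Z_2\,dy=\mu_0 O(1/R)$: $S_1$ is built from $Z_3$ and from nonlocal terms (those coming from $\Phi^0$ and from the $Z_3$-direction of $Z^*$) that are even in $y$, which are orthogonal on the symmetric interval to the odd function $Z_2$. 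Likewise $\int_{B_{2R}}\mu_0 S_3(\xi+\mu y)Z_2\,dy=\mu_0 O(1/R)$.

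The leading contribution therefore comes from $\int_{B_{2R}}\mu_0 S_2(\xi+\mu y)Z_2\,dy$. Here I would reuse the computation of Section 2.6: the $-\dot\xi/\mu\,Z_2$ part of $S_2$ yields $-2\pi(\mu_0/\mu)\dot\xi(1+O(1/R)) = -2\pi\dot\xi + O(\mu_0^{1+\sigma})$ after writing $\mu=\mu_0+\lambda$ and invoking \eqref{e:assumptionsonlambda}, while the remaining $p(\tilde s)$-integral and the $z_1^*,z_2^*$ pieces of $S_2$, tested against $Z_2$, produce a smooth bounded function of $t$ of size $e^{-\varepsilon t_0}\mu_0(t)^\sigma$; this is precisely why $Z^*$ was normalized as in \eqref{e:definitionofxi0}, so that the genuine main term vanishes and only this controlled remainder survives. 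The difference terms $\mu_0[S_l(\xi+\mu_0 y,t)-S_l(\xi+\mu y,t)]$ I would expand to first order in $\lambda/\mu_0$, obtaining contributions $g(t,\lambda/\mu_0)(\dot\lambda+\dot\xi)+g(t,\lambda/\mu_0)\lambda+\mu_0^{1+\sigma}f(t)$ with $g(\cdot,s)\sim s$, i.e. of $\Theta$-type once the prefactor $e^{-\varepsilon t_0}$ is extracted.

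For the $\psi$-dependent pieces of $H$, namely $\frac{2\mu_0/\mu}{1+|(\mu_0/\mu)y|^2}\Pi_{\omega^\perp}\psi$ and the bilinear term $B(\psi)$ of \eqref{e5:105}, I would mimic the treatment of $I_1,I_2,I_3$ in Lemma \ref{l5:1}: split $\psi=\psi[0,q,0,0,0]+(\psi[\lambda,\xi,\dot\lambda,\dot\xi,\phi]-\psi[0,q,0,0,0])$, bound the first summand by the pointwise estimate \eqref{e4:pointwiseestimate} of Proposition \ref{p4:4.1}, and apply the mean value theorem to the second, controlling $\partial_\lambda\Psi,\partial_\xi\Psi,\partial_{\dot\lambda}\Psi,\partial_{\dot\xi}\Psi,\partial_\phi\Psi$ via Proposition \ref{p4:4.2}; this produces terms of size $e^{-\varepsilon t_0}\mu_0^\sigma f(t)$ and $e^{-\varepsilon t_0}\mu_0^\sigma(\dot\lambda+\dot\xi+\lambda+\xi)F[\cdots](t)$. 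The terms $B^1[\phi],B^2[\phi],B^3[\phi]$ contribute $e^{-\varepsilon t_0}\mu_0^\sigma g(\lambda/\mu_0)\ell[\phi](t)$ with $g(s)\sim s$, using the weighted decay \eqref{e4:assumptiononphi} of $\phi$ together with the smallness of $1-\mu_0/\mu$. Collecting everything, the orthogonality condition \eqref{e5:7} with $l=2$ becomes $-2\pi\dot\xi + (\text{everything above}) = 0$, and dividing by $-2\pi$ gives $\dot\xi=\Pi_2[\lambda,\xi,\dot\lambda,\dot\xi,\phi](t)$ with $\Pi_2$ of the form \eqref{e5:18}; the Lipschitz bounds on $\Theta$ then follow from the Lipschitz dependence in Propositions \ref{p4:4.1}--\ref{p4:4.2} and the Lipschitz estimates for $N_U$ from Section 4, exactly as in \eqref{e5:104}. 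I expect the \emph{main obstacle} to be the same as in Lemma \ref{l5:1}: the careful bookkeeping required to check that every smooth remainder carries a factor $\mu_0^\sigma$, or a factor among $\lambda/\mu_0$, $\xi-q$, $\dot\lambda$, $\dot\xi$, $\phi$, so that $\Pi_2$ genuinely has the stated size and contraction structure; the parity cancellation of $S_1$ against $Z_2$ also needs some care, since it involves the nonlocal pieces of $S_1$ rather than merely its $Z_3$ component.
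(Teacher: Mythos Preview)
Your proposal is correct and follows essentially the same approach as the paper: the paper in fact gives no separate proof for Lemma \ref{l5:2}, stating only ``Similarly, we have'' before the lemma, so your line-by-line adaptation of the proof of Lemma \ref{l5:1} with $Z_3$ replaced by $Z_2$ is exactly what is intended. One small remark: $Z_2$ is not itself an odd vector function (its first component is even, its second odd), so the parity cancellation you invoke should be phrased in terms of the scalar $Z_2\cdot Z_3$ being odd in $y$, which is what actually makes the $S_1$-contribution vanish to leading order; your reference to \eqref{e:definitionofxi0} should also point to the choice $\xi_0\equiv q$ rather than to a normalization of $Z^*$.
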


From Lemma \ref{l5:1} and \ref{l5:2}, we know that
\begin{equation*}
\int_{B_{2R}}H[\lambda,\xi, \dot{\lambda}, \dot{\xi},\phi](y, t(\tau))Z_l(y)dy,
\end{equation*}
for $l = 2, 3$, is equivalent to the following system of ODEs for $\lambda$ and $\xi$
\begin{equation}\label{e5:9}
\left\{
\begin{aligned}
&\dot{\lambda}(t)+\frac{10}{3}\kappa_0\lambda(t) = \Pi_1[\lambda, \xi, \dot{\lambda}, \dot{\xi}, \phi](t),\\
&\dot{\xi} = \Pi_{2}[\lambda, \xi, \dot{\lambda}, \dot{\xi}, \phi](t).
\end{aligned}
\right.
\end{equation}
System (\ref{e5:9}) is solvable for $\lambda$ and $\xi$ satisfying (\ref{e:assumptionsondotlambda})-(\ref{e:assumptionsonlambda}), indeed, we have
\begin{prop}\label{p5:5.1}
There exists a solution $\lambda = \lambda[\phi](t)$, $\xi = \xi[\phi](t)$ to (\ref{e5:9}) satisfying (\ref{e:assumptionsondotlambda}) and (\ref{e:assumptionsonlambda}). For $t\in (t_0, +\infty)$, it holds that
\begin{equation}\label{e5:12}
\mu_0^{-(1+\sigma)}(t)\big|\lambda[\phi_1](t) - \lambda[\phi_2](t)\big|\lesssim e^{-\varepsilon t_0}\|\phi_1 - \phi_2\|_{a, \sigma}
\end{equation}
and
\begin{equation}\label{e5:13}
\mu_0^{-1+\sigma}(t)\big|\xi[\phi_1](t) - \xi[\phi_2](t)\big|\lesssim e^{-\varepsilon t_0}\|\phi_1 - \phi_2\|_{a, \sigma}.
\end{equation}
\end{prop}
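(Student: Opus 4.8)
\medskip
\noindent\textbf{Proof strategy (sketch).}
The plan is to recast the coupled system \eqref{e5:9} as a fixed point problem for the pair $(\lambda,\xi)$ and to solve it by the Contraction Mapping Theorem, the small parameter being $e^{-\varepsilon t_0}$. First I would introduce the Banach space $\mathcal X$ of pairs $(\lambda,\xi)\in C^1([t_0,\infty))^2$ with $\lambda(t_0)=0$, $\xi(\infty)=q$, and norm $\|(\lambda,\xi)\|_{\mathcal X}=\|\lambda\|_{*}+\|\dot\lambda\|_{*}+\|\xi-q\|_{*}+\|\dot\xi\|_{*}$ built from the $\mu_0$-weighted sup-norms appearing in \eqref{e:assumptionsondotlambda}--\eqref{e:assumptionsonlambda}. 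The first equation of \eqref{e5:9} is a scalar linear ODE with a \emph{dissipative} sign: for continuous $g$ with $\|g\|_{*}<\infty$ the unique solution of $\dot\lambda+\tfrac{10}{3}\kappa_0\lambda=g$ with $\lambda(t_0)=0$ is
\[
\mathcal T[g](t):=e^{-\frac{10}{3}\kappa_0 t}\int_{t_0}^{t}e^{\frac{10}{3}\kappa_0 s}\,g(s)\,ds .
\]
Since $\mu_0(t)=e^{-\kappa_0 t}$ and the standing assumption $\sigma<\tfrac73$ keeps the weight exponent strictly below $\tfrac{10}{3}$, a one-line estimate of this integral gives $\|\mathcal T[g]\|_{*}\lesssim\|g\|_{*}$, and then $\|\partial_t\mathcal T[g]\|_{*}\lesssim\|g\|_{*}$ via $\partial_t\mathcal T[g]=g-\tfrac{10}{3}\kappa_0\mathcal T[g]$. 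The second equation is solved by plain integration, $\xi(t)=q-\int_t^{\infty}\Pi_2(s)\,ds$, which obeys $\|\xi-q\|_{*}+\|\dot\xi\|_{*}\lesssim\|\Pi_2\|_{*}$. Thus the natural map is $\mathcal F(\lambda,\xi):=\big(\mathcal T[\Pi_1[\lambda,\xi,\dot\lambda,\dot\xi,\phi]]\,,\ t\mapsto q-\int_t^{\infty}\Pi_2[\lambda,\xi,\dot\lambda,\dot\xi,\phi](s)\,ds\big)$, and a fixed point of $\mathcal F$ solves \eqref{e5:9}.

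\medskip
For the self-mapping property, by Lemma \ref{l5:1} and Lemma \ref{l5:2}, on a ball $\{\|(\lambda,\xi)\|_{\mathcal X}\le c\}$ with $\|\phi\|_{a,\sigma}\le ce^{-\varepsilon t_0}$ one has, writing $\Pi_i=e^{-\varepsilon t_0}\mu_0^{\sigma}f_i+e^{-\varepsilon t_0}\Theta[\dot\lambda,\dot\xi,\lambda,\mu_0(\xi-q),\mu_0^{\sigma}\phi]$, the bound $\|\Pi_i[\lambda,\xi,\dot\lambda,\dot\xi,\phi]\|_{*}\lesssim e^{-\varepsilon t_0}(1+\|(\lambda,\xi)\|_{\mathcal X}+\|\phi\|_{a,\sigma})$: the $f_i$ are bounded and the $\Theta$-terms are controlled, via the Lipschitz bounds of Lemma \ref{l5:1} applied with one slot set to $0$, by $e^{-\varepsilon t_0}$ times their arguments, while the implicit dependence of $\Pi_i$ on $\Psi[\lambda,\xi,\dot\lambda,\dot\xi,\phi]$ is absorbed by Proposition \ref{p4:4.1} and Proposition \ref{p4:4.2}. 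Hence, for $t_0$ large (and $c$ fixed large enough to absorb the $O(1)$ contribution of the $f_i$), $\mathcal F$ maps this ball into itself; along the way one checks $|\lambda(t)|\lesssim e^{-\varepsilon t_0}\mu_0^{\sigma}(t)\ll\mu_0(t)$, so $\mu=\mu_0+\lambda$ stays positive.

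\medskip
For the contraction, take $(\lambda_1,\xi_1),(\lambda_2,\xi_2)$ in the ball. The Lipschitz estimates of Lemma \ref{l5:1} in each argument $\dot\lambda,\dot\xi,\lambda,\mu_0(\xi-q)$, their analogues in Lemma \ref{l5:2}, and the smooth dependence of $\Psi$ on its parameters (Proposition \ref{p4:4.2}) give
\[
\big\|\Pi_i[\lambda_1,\xi_1,\dot\lambda_1,\dot\xi_1,\phi]-\Pi_i[\lambda_2,\xi_2,\dot\lambda_2,\dot\xi_2,\phi]\big\|_{*}\lesssim e^{-\varepsilon t_0}\,\|(\lambda_1,\xi_1)-(\lambda_2,\xi_2)\|_{\mathcal X};
\]
combined with $\|\mathcal T[g]\|_{*}+\|\partial_t\mathcal T[g]\|_{*}\lesssim\|g\|_{*}$ and the matching estimate for the $\xi$-component, this makes $\mathcal F$ a contraction on $\mathcal X$ once $t_0$ is large, and the Contraction Mapping Theorem produces $(\lambda,\xi)=(\lambda[\phi],\xi[\phi])$ satisfying \eqref{e:assumptionsondotlambda} and \eqref{e:assumptionsonlambda}. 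Finally, for the Lipschitz dependence \eqref{e5:12}--\eqref{e5:13}, I would write the fixed point identity for $\phi_1$ and $\phi_2$, subtract, and use that $\Pi_i$ is Lipschitz in its $\mu_0^{\sigma}\phi$ slot with constant $e^{-\varepsilon t_0}$ (estimate \eqref{e5:104} for $l=3$ and its $l=2$ analogue) together with the same linear bounds; an absorption argument then gives $\|(\lambda[\phi_1],\xi[\phi_1])-(\lambda[\phi_2],\xi[\phi_2])\|_{\mathcal X}\lesssim e^{-\varepsilon t_0}\|\phi_1-\phi_2\|_{a,\sigma}$, which contains \eqref{e5:12} and \eqref{e5:13}.

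\medskip
The hard part will be the implicit, nonlocal dependence of $\Pi_1,\Pi_2$ on the \emph{derivatives} $\dot\lambda,\dot\xi$ and on $\Psi[\lambda,\xi,\dot\lambda,\dot\xi,\phi]$: this is what forces the fixed point to be set up in a space controlling $\dot\lambda,\dot\xi$ as well, and it is essential that every such dependence comes with the gain $e^{-\varepsilon t_0}$, which is precisely what the bookkeeping of Lemma \ref{l5:1}, Lemma \ref{l5:2} and the smoothness estimates of Proposition \ref{p4:4.2} supply. Everything else reduces to the elementary bounded invertibility of the dissipative operator $\partial_t+\tfrac{10}{3}\kappa_0$ on the admissible $\mu_0$-weighted spaces, for which $\sigma<\tfrac73$ is exactly the admissibility threshold.
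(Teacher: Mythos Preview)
Your proposal is correct and follows essentially the same route as the paper. Both arguments invert the scalar dissipative operator $\partial_t+\tfrac{10}{3}\kappa_0$ and the trivial operator $\partial_t$ on the $\|\cdot\|_{1+\sigma}$ weighted space (using $\sigma<\tfrac73$), feed in the structure of $\Pi_1,\Pi_2$ from Lemmas \ref{l5:1}--\ref{l5:2}, and close by contraction with small parameter $e^{-\varepsilon t_0}$; the only cosmetic difference is that the paper sets up the fixed point for the pair of derivatives $(\Lambda,\Xi):=(\dot\lambda,\dot\xi)$ (recovering $\lambda=\int_t^\infty\Lambda$, $\xi=q+\int_t^\infty\Xi$) while you work directly with $(\lambda,\xi)$ in a $C^1$-type space with $\lambda(t_0)=0$, $\xi(\infty)=q$, which is an equivalent formulation.
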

\begin{proof}
Let $h$ be a function with $\|h\|_{1+\sigma} < +\infty$. We consider the following nonhomogeneous linear ODE,
\begin{equation*}\label{e5:14}
\dot{\lambda}(t)+\frac{10}{3}\kappa_0\lambda(t) = h(t).
\end{equation*}
The solution can be expressed as
\begin{equation*}\label{e5:15}
\lambda(t) =  e^{-\frac{10}{3}\kappa_0 t}\left[d + \int_{t_0}^t e^{\frac{10}{3}\kappa_0 s} h(s) ds\right],
\end{equation*}
where $d$ is an arbitrary constant. Then we have
\begin{equation*}
\|e^{\kappa_0(1+\sigma)t}\lambda(t)\|_{L^\infty(t_0,\infty)}\lesssim e^{(\sigma-\frac{7}{3})\kappa_0 t}d + \|h\|_{1+\sigma}
\end{equation*}
and
\begin{equation*}
\|\dot{\lambda}(t)\|_{1+\sigma}\lesssim e^{(\sigma-\frac{7}{3})\kappa_0 t}d + \|h\|_{1+\sigma}.
\end{equation*}

Let $\Lambda(t) = \dot{\lambda}(t)$, then
\begin{equation*}\label{e5:17}
\Lambda + \int_{t}^\infty\Lambda(s)ds = h(t),
\end{equation*}
which defines a linear operator $\mathcal{L}_1: h\to \Lambda$ associating to any $h$ with $\|h\|_{1+\sigma}$ bounded the solution $\Lambda$.
$\mathcal{L}_1$ is continuous between the spaces $L^\infty(t_0, \infty)$ equipped the $\|\cdot\|_{1+\sigma}$-topology.

For any $h: [t_0,\infty)\to \mathbb{R}$ with $\|h\|_{1+\sigma}$ bounded, the solution of
\begin{equation*}\label{e5:19}
\dot{\xi} =  h(t)
\end{equation*}
is given by
\begin{equation}\label{e5:20}
\xi(t) = q + \int_{t}^\infty h(s)ds.
\end{equation}
Then we have
\begin{equation*}
|\xi(t) - q|\lesssim e^{-\kappa_0(1+\sigma)t}\|h\|_{1+\sigma}
\end{equation*}
and
\begin{equation*}
\|\dot{\xi}\|_{1+\sigma}\lesssim \|h\|_{1+\sigma}.
\end{equation*}
Let $\Xi(t) = \dot{\xi}(t)$, then (\ref{e5:20}) defines a linear operator $\mathcal{L}_2: h\to \Xi$ which is continuous in the $\|\cdot\|_{1+\sigma}$-topology.

Observe that the tuple $(\lambda, \xi)$ is a solution of (\ref{e5:9}) if $(\Lambda,\Xi) := (\dot{\lambda}, \dot{\xi})$ is a fixed point for problem
\begin{equation}\label{e5:23}
(\Lambda, \Xi) = \mathcal{A}(\Lambda, \Xi)
\end{equation}
where
\begin{equation*}
\mathcal{A}: = \left(\mathcal{L}_1(\hat{\Pi}_1[\Lambda, \Xi, \phi], \mathcal{L}_2(\hat{\Pi}_2[\Lambda, \Xi, \phi])\right) = \left(\bar{A}_1(\Lambda, \Xi), \bar{A}_2(\Lambda, \Xi)\right)
\end{equation*}
with
\begin{equation*}
\hat{\Pi}_1[\Lambda, \Xi, \phi] = \Pi_1[\int_{t}^\infty \Lambda, q + \int_{t}^\infty\Xi, \Lambda, \Xi, \phi], \hat{\Pi}_2[\Lambda, \Xi, \phi] = \Pi_2[\int_{t}^\infty \Lambda, q + \int_{t}^\infty\Xi, \Lambda, \Xi, \phi].
\end{equation*}
Let
\begin{equation*}
K = \max\{\|f_1\|_{1+\sigma}, \|f_2\|_{1+\sigma}\}
\end{equation*}
where $f_1$, $f_2$ are defined in (\ref{e5:11}) and (\ref{e5:18}). We show that problem (\ref{e5:23}) has a fixed point $(\Lambda, \Xi)$ in the set
\begin{equation*}
\mathcal{B} = \{(\Lambda, \Xi)\in L^\infty(t_0, \infty)\times L^\infty(t_0, \infty): \|\Lambda\|_{1+\sigma} + \|\Xi\|_{1+\sigma}\leq cK\}
\end{equation*}
for suitable $c > 0$.
Indeed, from (\ref{e5:9}) we have
\begin{equation*}
\left|e^{\kappa_0(1+\sigma)t}\bar{A}_1(\Lambda, \Xi)\right|\lesssim e^{(\sigma-\frac{7}{3})\kappa_0 t}d + \|\phi\|_{a, \sigma} + K + e^{-\varepsilon t_0}\|\Lambda\|_{1+\sigma} + e^{-\varepsilon t_0}\|\Xi\|_{1+\sigma}
\end{equation*}
and
\begin{equation*}
\left|e^{\kappa_0(1+\sigma)t}\bar{A}_2(\Lambda, \Xi)\right|\lesssim \|\phi\|_{a, \sigma} + K + e^{-\varepsilon t_0}\|\Lambda\|_{1+\sigma} + e^{-\varepsilon t_0}\|\Xi\|_{1+\sigma}.
\end{equation*}
Thus, for $d$ satisfying $e^{(\sigma-\frac{7}{3})\kappa_0 t}d < K$ and the constant $c$ chosen large enough, $\mathcal{A}(\mathcal{B})\subset \mathcal{B}$. For the Lipschitz property of $\mathcal{A}$, we have
\begin{equation*}
\begin{aligned}
e^{\kappa_0(1+\sigma)t}\left|\bar{A}_1(\Lambda_1, \Xi)-\bar{A}_1(\Lambda_2, \Xi)\right| &= e^{\kappa_0(1+\sigma)t}\left|\mathcal{L}_1(\hat{\Pi}_1[\Lambda_1, \Xi, \phi]-\hat{\Pi}_1[\Lambda_2, \Xi, \phi])\right|\\
&\leq e^{\kappa_0(1+\sigma)t}e^{-\varepsilon t_0}\left|\mathcal{L}_1(\Theta(\Lambda_1, \Xi)-\Theta_2(\Lambda_2, \Xi))\right|\\
& \leq e^{-\varepsilon t_0}\|\Lambda_1 - \Lambda_2\|_{1+\sigma}.
\end{aligned}
\end{equation*}
The same estimate holds for $\left|\bar{A}_1(\Lambda, \Xi_1)-\bar{A}_1(\Lambda, \Xi_2)\right|$, thus
\begin{equation*}
\|\mathcal{A}(\Lambda_1, \Xi_1) - \mathcal{A}(\Lambda_2, \Xi_2)\|_{1+\sigma}\leq e^{-\varepsilon t_0}\|\Lambda_1 - \Lambda_2\|_{1+\sigma}.
\end{equation*}
Since $e^{-\varepsilon t_0} < 1$ when $t_0$ is large enough, $\mathcal{A}$ is a contraction map. Hence, from the Contraction Mapping Theorem, there exists a solution $(\lambda,\xi)$ to system (\ref{e5:9}) satisfying (\ref{e:assumptionsondotlambda}) and (\ref{e:assumptionsonlambda}).

To prove (\ref{e5:12}) and (\ref{e5:13}), we observe that $\bar{\lambda} = \lambda[\phi_1] - \lambda[\phi_2]$, $\bar{\xi} = \xi[\phi_1] - \xi[\phi_2]$ satisfy
\begin{equation*}
\dot{\lambda}(t)+\frac{10}{3}\kappa_0\lambda(t) = \bar{\Pi}_1(t),~~\dot{\xi} = \bar{\Pi}_{2}(t),
\end{equation*}
where
\begin{equation*}
\begin{aligned}
\bar{\Pi}_1(t) &= \int_{B_{2R}}\frac{2\frac{\mu_0}{\mu}}{1+\left|\frac{\mu_0}{\mu}y\right|^2}\Pi_{\omega^\perp}\left[\psi[\phi_1]-\psi[\phi_2]\right](\xi + \mu_0 y) Z_{3}(y)dy\\
&\quad + \int_{B_{2R}}(B(\psi[\lambda,\xi,\dot{\lambda},\dot{\xi}, \phi_1])-B(\psi[\lambda,\xi,\dot{\lambda},\dot{\xi}, \phi_2]))(\xi+\mu_0 y, t)Z_{3}(y)dy\\
&\quad + \int_{B_{2R}}(B^i[\phi_1]-B^i[\phi_2])(y, t)Z_{3}(y)dy
\end{aligned}
\end{equation*}
and
\begin{equation*}
\begin{aligned}
\bar{\Pi}_2(t) &= \int_{B_{2R}}\frac{2\frac{\mu_0}{\mu}}{1+\left|\frac{\mu_0}{\mu}y\right|^2}\Pi_{\omega^\perp}\left[\psi[\phi_1]-\psi[\phi_2]\right](\xi + \mu_0 y) Z_{2}(y)dy\\
&\quad + \int_{B_{2R}}(B(\psi[\lambda,\xi,\dot{\lambda},\dot{\xi}, \phi_1])-B(\psi[\lambda,\xi,\dot{\lambda},\dot{\xi}, \phi_2]))(\xi+\mu_0 y, t)Z_{2}(y)dy\\
&\quad + \int_{B_{2R}}(B^i[\phi_1]-B^i[\phi_2])(y, t)Z_{2}(y)dy.
\end{aligned}
\end{equation*}
Here $B(\psi[\lambda,\xi,\dot{\lambda},\dot{\xi}, \phi])$ is defined in (\ref{e5:105}).
Then (\ref{e5:12}) and (\ref{e5:13}) follow from (\ref{e5:104}). This completes the proof.
\end{proof}

\subsection{Final argument}
After we have chosen parameters $\lambda = \lambda[\phi]$ and $\xi = \xi[\phi]$ such that (\ref{e5:7}) holds, we only need to solve problem (\ref{e5:5.1}) in the class of functions with $\|\phi\|_{a,\sigma}$ bounded. Proposition 5.1 states that there exists a linear operator $\mathcal{T}$ associating any function $h(y,\tau)$ with $\|h\|_{1+a, \sigma, \eta}$-bounded the solution of (\ref{e5:5.1}). Thus problem (\ref{e5:5.1}) is reduced to a fixed point problem
\begin{equation*}
\phi = \mathcal{A}(\phi): = \mathcal{T}(H[\lambda,\xi,\dot{\lambda},\dot{\xi},\phi])).
\end{equation*}
We claim the validity of the following estimates,
\begin{equation}\label{e6:1}
(1+|y|^\eta)\left[H[\lambda,\xi,\dot{\lambda},\dot{\xi},\phi](\cdot, t)\right]_{\eta, B_{3R}(0)}\chi_{\{|y|\leq 3R\}} + \left|H[\lambda,\xi,\dot{\lambda},\dot{\xi},\phi](y, t)\right|\lesssim e^{-\varepsilon t_0}\frac{\mu_0^{\sigma}(t)}{1+|y|^{1+a}}
\end{equation}
and
\begin{equation}\label{e6:2}
\begin{aligned}
(1+|y|^\eta)\left[H[\phi^{(1)}](\cdot, t)-H[\phi^{(2)}](\cdot, t)\right]_{\eta, B_{3R}(0)}\chi_{\{|y|\leq 3R\}} &+ \left|H[\phi^{(1)}]-H[\phi^{(2)}]\right|(y, t)\\ &\lesssim e^{-\varepsilon t_0}\frac{\mu_0^{\sigma}(t)}{1+|y|^{1+a}}\|\phi^{(1)} - \phi^{(2)}\|_{a,\sigma}.
\end{aligned}
\end{equation}
From (\ref{e6:1}) and (\ref{e6:2}), $\mathcal{A}$ has a fixed point $\phi$ within the set of functions $\|\phi\|_{a,\sigma}\leq ce^{-\varepsilon t_0}$ for some large positive constant $c$. This proves Theorem \ref{t:main}.

Estimate (\ref{e6:1}) follows from the definition of $H$, (\ref{e3:estimateoferror}), (\ref{e4:pointwiseestimate}) and (\ref{e:estimateholder}).
As for (\ref{e6:2}), from (\ref{e5:12}) and (\ref{e5:13}), we have
\begin{equation*}
\mu_{0}\left|\Pi_{U^\perp}\mathcal{E}^*[\lambda_1,\xi_1](\xi_{1}+\mu_0y, t) - \Pi_{U^\perp}\mathcal{E}^*[\lambda_2,\xi_2](\xi_{2}+\mu_0y, t)\right|\lesssim e^{-\varepsilon t_0}\frac{\mu_0^\sigma(t)}{1+|y|^{1+a}}\|\phi^{(1)} - \phi^{(2)}\|_{a,\sigma}
\end{equation*}
where
\begin{equation*}
\lambda_i = \lambda[\phi^{(i)}],\quad \xi_i = \xi[\phi^{(i)}],\quad i = 1, 2.
\end{equation*}
By Proposition \ref{p4:4.2}, it holds that
\begin{eqnarray*}
&&\left|\frac{2\frac{\mu_0}{\mu_1}}{1+\left|\frac{\mu_0}{\mu_1}y\right|^2}\Pi_{\omega^\perp}\psi[\phi^{(1)}](\xi_{1} + \mu_{0}y, t) - \frac{2\frac{\mu_0}{\mu_2}}{1+\left|\frac{\mu_0}{\mu_2}y\right|^2}\Pi_{\omega^\perp}\psi[\phi^{(2)}](\xi_{2} + \mu_{0}y, t)\right|\\
&&\quad\quad\quad\lesssim e^{-\varepsilon t_0}\frac{\mu_0^{\sigma}(t)}{1+|y|^{1+a}}\|\phi^{(1)} - \phi^{(2)}\|_{a,\sigma}
\end{eqnarray*}
where
\begin{equation*}
\mu_{i} = \mu_0+\lambda[\phi^{(i)}],\quad \psi[\phi^{(i)}] = \Psi[\lambda_i, \xi_i, \dot{\lambda}_i, \dot{\xi}_i, \phi^{(i)}],\quad i = 1, 2.
\end{equation*}
From the definitions in Section 3, for $j = 1, 2, 3$, we have
\begin{equation*}
\left|B^j[\phi^{(1)}]-B^j[\phi^{(2)}]\right|\lesssim e^{-\varepsilon t_0}\frac{\mu_0^{\sigma}(t)}{1+|y|^{1+a}}\|\phi^{(1)} - \phi^{(2)}\|_{a,\sigma}
\end{equation*}
Finally, for $B(\psi[\lambda,\xi,\dot{\lambda},\dot{\xi}, \phi])$ defined in (\ref{e5:105}), the estimate
\begin{eqnarray*}
\left|B(\psi[\phi^{(1)}])(\xi_1+\mu_0 y, t) - B(\psi[\phi^{(2)}])(\xi_2+\mu_0 y, t)\right|\lesssim e^{-\varepsilon t_0}\frac{\mu_0^{\sigma}(t)}{1+|y|^{1+a}}\|\phi^{(1)} - \phi^{(2)}\|_{a,\sigma}
\end{eqnarray*}
hold. The H\"{o}lder part is similar. This proves estimate (\ref{e6:2}).\qed


\begin{thebibliography}{10}
 \bibitem{BHL} T. Boulenger, D. Himmelsbach  and E. Lenzmann,
 \newblock  Blowup for fractional NLS.
 \newblock {\it J. Funct. Anal.} 271 (2016), no. 9, 2569–-2603.


\bibitem{CabreRoquejoffreCMP}
Xavier Cabr\'e and Jean-Michel Roquejoffre.
\newblock The influence of fractional diffusion in {F}isher-{KPP} equations.
\newblock {\it Comm. Math. Phys.}, 320(3):679--722, 2013.

\bibitem{Chang-Ding-Ye}
\newblock K.-C. Chang, W.Y. Ding and R. Ye,
\newblock  Finite-time blow-up of the heat flow of harmonic maps from surfaces.
\newblock  {\it J. Differential Geom.} 36 (1992), no. 2, 507--515.




\bibitem{chen-struwe}
Yu Mei Chand and Michael Struwe
\newblock Existence and partial regularity results for the heat flow for harmonic maps
\newblock {\it Math. Z.}, 201(1):83--103, 1989.




\bibitem{cortazar2016green}
Carmen Cortazar, Manuel del Pino and Monica Musso.
\newblock Green's function and infinite-time bubbling in the critical nonlinear
  heat equation.
\newblock {\it Journal of the European Mathematical Society}, to appear.

\bibitem{DaLioCVPDE2013}
Francesca Da~Lio.
\newblock Fractional harmonic maps into manifolds in odd dimension {$n>1$}.
\newblock {\it Calc. Var. Partial Differential Equations}, 48(3-4):421--445,
  2013.

\bibitem{DaLioAIHAN2015}
Francesca Da~Lio.
\newblock Compactness and bubble analysis for 1/2-harmonic maps.
\newblock {\it Ann. Inst. H. Poincar\'e Anal. Non Lin\'eaire}, 32(1):201--224,
  2015.

\bibitem{DaLioAdvMath2011}
Francesca Da~Lio and Tristan Rivi\`ere.
\newblock Sub-criticality of non-local {S}chr\"odinger systems with
  antisymmetric potentials and applications to half-harmonic maps.
\newblock {\it Adv. Math.}, 227(3):1300--1348, 2011.

\bibitem{DaLioAnalPDE2011}
Francesca Da~Lio and Tristan Rivi\`ere.
\newblock Three-term commutator estimates and the regularity of
  {$\frac12$}-harmonic maps into spheres.
\newblock {\it Anal. PDE}, 4(1):149--190, 2011.

\bibitem{ddw-JDE}
 Juan Davila, Manuel del Pino and Juncheng Wei,
 \newblock Concentrating standing waves for the fractional nonlinear Schrodinger equation,
 \newblock {\it  J. Diff. Eqns. } 256(2014), no.2, 858-892.


\bibitem{davila2017singularity}
Juan Davila, Manuel del Pino and Juncheng Wei.
\newblock Singularity formation for the two-dimensional harmonic map flow into
  ${S}^2$.
\newblock {\it arXiv:1702.05801}.

\bibitem{del2012type}
Manuel del Pino, Panagiota Daskalopoulos and Natasa Sesum.
\newblock Type {II} ancient compact solutions to the Yamabe flow.
\newblock {\it Journal f\"{u}r die reine und angewandte Mathematik}, to appear.

\bibitem{del2017ancient}
Manuel del Pino and Konstantinos~T Gkikas.
\newblock Ancient multiple-layer solutions to the {A}llen-{C}ahn equation.
\newblock {\it arXiv:1703.08796}.

\bibitem{del2017ancientAHNL}
Manuel del Pino and Konstantinos~T Gkikas.
\newblock Ancient shrinking spherical interfaces in the {A}llen-{C}ahn flow.
\newblock In {\it Annales de l'Institut Henri Poincare (C) Non Linear
  Analysis}. Elsevier, 2017.

\bibitem{delpinoKWeiCPAM2007}
Manuel del Pino, Michal Kowalczyk and Jun-Cheng Wei.
\newblock Concentration on curves for nonlinear {S}chr\"odinger equations.
\newblock {\it Comm. Pure Appl. Math.}, 60(1):113--146, 2007.

\bibitem{delwei2011Degiorgi}
Manuel del Pino, Michal Kowalczyk and Juncheng Wei.
\newblock On {D}e {G}iorgi's conjecture in dimension {$N\geq 9$}.
\newblock {\it Ann. of Math. (2)}, 174(3):1485--1569, 2011.

\bibitem{delkowalczykweijdg2013entire}
Manuel del Pino, Michal Kowalczyk and Juncheng Wei.
\newblock Entire solutions of the {A}llen-{C}ahn equation and complete embedded
  minimal surfaces of finite total curvature in {$\Bbb R^3$}.
\newblock {\it J. Differential Geom.}, 93(1):67--131, 2013.

\bibitem{del2017infinite}
Manuel del Pino, Monica Musso and Juncheng Wei.
\newblock Infinite time blow-up for the 3-dimensional energy critical heat
  equation.
\newblock {\it arXiv:1705.01672}.

\bibitem{FraserSchoenInventMath2016}
Ailana Fraser and Richard Schoen.
\newblock Sharp eigenvalue bounds and minimal surfaces in the ball.
\newblock {\it Invent. Math.}, 203(3):823--890, 2016.

\bibitem{Grafakos2004}
Loukas Grafakos.
\newblock {\it Classical and modern {F}ourier analysis}.
\newblock Pearson Education, Inc., Upper Saddle River, NJ, 2004.

\bibitem{helein}
Fr\'ed\'eric H\'elein
\newblock R\'egularit\'e des applications faiblement harmoniques entre une surface et une sph\`ere
\newblock {\it  C. R. Acad. Sci. Paris S\'er. I Math.}, 311(9), 519-524, 1990





\bibitem{jost2016qualitative}
J{\"u}rgen Jost, Lei Liu and Miaomiao Zhu.
\newblock The qualitative behavior at the free boundary for approximate
  harmonic maps from surfaces.
\newblock preprint, 2016.

\bibitem{Guan-Gustafson-Tsai}
M. Guan, S. Gustafson and T. P.  Tsai,
\newblock  Global existence and blow-up for harmonic map heat flow.
\newblock   {\it J. Differential Equations}   246 (2009), no. 1, 1--20.

 \bibitem{KLR} J. Krieger, E. Lenzmann and P. Rapha\"el,
 \newblock Nondispersive solutions to the L2-critical half-wave equation.
 \newblock {\it Arch. Ration. Mech. Anal.}  209 (2013), no. 1, 61–129.

\bibitem{KS} Joachim Krieger and Yannick Sire.
\newblock Small data global regularity for half-wave maps.
\newblock preprint, arXiv:161001216, 2016.

\bibitem{laurain2017regularity}
Paul Laurain and Romain Petrides.
\newblock Regularity and quantification for harmonic maps with free boundary.
\newblock {\it Advances in Calculus of Variations}, 10(1):69--82, 2017.


\bibitem{LS}Enno Lenzmann, Armin Schikorra,
\newblock On energy-critical half-wave maps into $S^2$.
\newblock preprint,   arXiv:1702.05995v2.


\bibitem{LiDongMRL2013}
Dong Li.
\newblock On a frequency localized {B}ernstein inequality and some generalized
  {P}oincar\'e-type inequalities.
\newblock {\it Math. Res. Lett.}, 20(5):933--945, 2013.



\bibitem{Lin}
F.H. Lin,
\newblock   Gradient estimates and blow-up analysis for stationary harmonic
maps.
\newblock {\it Annal Math.} 149 (1999), 785--829.


\bibitem{Lin-Wang1}
F.H. Lin, C.Y. Wang,
\newblock   Energy identity of harmonic map flows from surfaces at finite singular time.
\newblock {\it Calc. Var. Partial Differential Equations } 6 (1998), 369--380.



\bibitem{Lin-Wang2}
F.H. Lin, C.Y. Wang,
\newblock  {\it The analysis of harmonic maps and their heat flows.}
\newblock  World Scientific Publishing Co. Pte. Ltd., Hackensack, NJ, 2008.

\bibitem{MRR1}
Frank Merle, Pierre Rapha\"el, Igor Rodnianski
\newblock Blowup dynamics for smooth data equivariant solutions to the critical Schrödinger map problem.
\newblock  {\it Invent. Math.} 193 (2013), no. 2, 249-365.

\bibitem{MRR2}
Frank Merle, Pierre Rapha\"el, Igor Rodnianski
\newblock Type II blow up for the energy supercritical NLS
\newblock  {\it  Camb. J. Math.} 3 (2015), no. 4, 439-617


\bibitem{MillotSireARMA2015}
Vincent Millot and Yannick Sire.
\newblock On a fractional {G}inzburg-{L}andau equation and 1/2-harmonic maps
  into spheres.
\newblock {\it Arch. Ration. Mech. Anal.}, 215(1):125--210, 2015.

\bibitem{RR}
P. Rapha\"el, I. Rodnianski.
\newblock Stable blow up dynamics for the critical co-rotational wave maps and equivariant Yang-Mills problems.
\newblock {\it Publ. Math. Inst. Hautes Études Sci.} 115 (2012), 1-122

\bibitem{rs1}
P. Rapha\"el, R. Schweyer,
\newblock Stable blowup dynamics for the 1-corotational energy critical harmonic heat flow.
\newblock {\it Comm. Pure Appl. Math.} 66 (2013), no. 3, 414--480.


\bibitem{rs2} P. Rapha\"el, R. Schweyer,
\newblock Quantized slow blow-up dynamics for the corotational energy-critical harmonic heat flow.
\newblock{\it Anal. PDE } 7 (2014), no. 8, 1713--1805.

\bibitem{riviere}
Tristan Rivi\`ere
\newblock Conservation laws for conformally invariant variational problems
\newblock {\it  Invent. Math.}, 168(1), 1-22, 2007



\bibitem{SchikorraJDE2012}
Armin Schikorra.
\newblock Regularity of {$n/2$}-harmonic maps into spheres.
\newblock {\it J. Differential Equations}, 252(2):1862--1911, 2012.

\bibitem{silvestreium2012differentiability}
Luis Silvestre.
\newblock On the differentiability of the solution to an equation with drift
  and fractional diffusion.
\newblock {\it Indiana Univ. Math. J.}, 61(2):557--584, 2012.

\bibitem{silvestre2014regularity}
Luis Silvestre.
\newblock Regularity estimates for parabolic integro-differential equations and
  applications.
\newblock In {\it Proceedings of the International Congress of Mathematicians
  (Seoul)}, volume 901, 2014.

\bibitem{sire2017nondegeneracy}
Yannick Sire, Juncheng Wei and Youquan Zheng.
\newblock Nondegeneracy of half-harmonic maps from {R} into ${S}^1$.
\newblock {\it arXiv:1701.03629}.

\bibitem{Topping}
 P. M. Topping,
 \newblock Repulsion and quantization in almost-harmonic maps, and asymptotics of the harmonic map flow.
\newblock {\it Annals of Math. } 159 (2004), 465--534.

\end{thebibliography}
\end{document}